\documentclass{scrartcl}

\usepackage{latexsym, amsmath, amssymb, amsthm, stmaryrd}
\usepackage{tikz}
\usetikzlibrary{cd}
\usepackage{hyperref}

\usepackage{enumitem}

\begin{document}

\newtheorem{theo}{Theorem}[section]
\theoremstyle{definition}
\newtheorem{defi}[theo]{Definition}
\newtheorem{theo1}[theo]{Theorem}
\newtheorem{prop}[theo]{Proposition}
\newtheorem{lemm}[theo]{Lemma}
\newtheorem{coro}[theo]{Corollary}
\newtheorem{prob}[theo]{Problem}
\newtheorem{remark}[theo]{Remark}
\newtheorem{example}[theo]{Example}
\newtheorem{problem}[theo]{Problem}

\newcommand{\elt}{\ensuremath{v \in G}}
\newcommand{\elg}{\ensuremath{\phi \in \Gamma}}
\newcommand{\gstab}{\ensuremath{G_v}}
\newcommand{\caa}{\ensuremath{C_1(\alpha)}}
\newcommand{\cia}{\ensuremath{C_i(\alpha)}}
\newcommand{\ima}{\ensuremath{\mathrm{Im}}}
\newcommand{\de}{\ensuremath{{\mathrm{deg}}}}
\newcommand{\dist}{\ensuremath{\mathrm{dist}}}
\newcommand{\parent}{\ensuremath{\mathrm{par}}}
\newcommand{\wre}{\ensuremath{\,\mathrm{Wr}}}
\newcommand{\proj}{\ensuremath{\mathrm{proj}}}
\newcommand{\anc}{\ensuremath{\mathrm{anc}}}
\newcommand{\res}{\ensuremath{\mid}}
\newcommand{\nin}{\ensuremath{\ \not\in\ }}
\newcommand{\sym}{\ensuremath{\mathrm{Sym}}}
\newcommand{\aut}{\ensuremath{\mathrm{Aut}}}
\newcommand{\autsc}{\ensuremath{\mathrm{Aut}_{\mathrm{sc}}}}
\newcommand{\auttn}[0]{\ensuremath{\aut(T_n)}}
\newcommand{\autx}[0]{\ensuremath{\aut(X)}}
\newcommand{\sign}{\ensuremath{\mathrm{sign}}}
\newcommand{\Wr}{\ensuremath{\,\mathrm{Wr}}\,}

\newcommand{\Pk}{\ensuremath{\mathrm{(P}}_k\mathrm{)}}

\newcommand{\Star}{\ensuremath{\mathrm{Star}}}
\newcommand{\TStar}{{\ensuremath{\mathrm{TStar}}}}
\newcommand{\cL}{{\cal L}}
\newcommand{\cU}{{\cal U}}
\newcommand{\cG}{{\cal G}}
\newcommand{\cV}{{\cal V}}

\newcommand{\tg}{\tilde{g}}
\newcommand{\tf}{\tilde{f}}
\newcommand{\tih}{\tilde{h}}
\newcommand{\tG}{\tilde{G}}
\newcommand{\tx}{\tilde{x}}
\newcommand{\tX}{\tilde{X}}

\newcommand\rev[1]{\mathop{\overline{#1}}}
\newcommand{\ba}{\rev{a}}
\newcommand{\bb}{\rev{b}}
\newcommand{\bc}{\rev{c}}
\newcommand{\be}{\rev{e}}

\newcommand{\V}{\mathrm V}
\newcommand{\VX}{{\mathrm V}X}
\newcommand{\E}{\mathrm E}
\newcommand{\A}{\mathrm A}
\newcommand{\TA}{\mathrm{TA}}
\newcommand{\TE}{\mathrm{TE}}
\newcommand{\id}{\mathrm {id}}
\newcommand{\re}{\mathrm {re}}
\newcommand{\pr}{\mathrm {pr}}
\newcommand{\con}{\mathrm {cr}}

\newcommand{\RR}{\ensuremath{\mathbb R}}
\newcommand{\ZZ}{\ensuremath{\mathbb Z}}
\newcommand{\QQ}{\ensuremath{\mathbb Q}}
\newcommand{\NN}{\ensuremath{\mathbb N}}
\newcommand{\SRR}{\mbox{${^*}{\mathbb{R}}$}}

\newcommand\restr[2]{{
  \left.\kern-\nulldelimiterspace 
  \vphantom{|}#1
  \right|_{#2}
  }}

  \newcommand{\ggawo}{{ggawo}}
\newcommand{\gga}{{gga}}

  \newcommand{\bun}[1]{\llbracket #1 \rrbracket} %for vertex and edge bundles
\newcommand{\f}[1]{\textcolor{red}{Florian: #1}}
\newcommand{\ro}[1]{\textcolor{green}{Roggi: #1}}

\title{Graphs of group actions and group actions on trees}

\medskip

\author{Florian Lehner, Christian Lindorfer, R\"ognvaldur G.~M\"oller, \\and Wolfgang Woess}
\smallskip

\maketitle

\begin{abstract}
    Bass--Serre theory provides a powerful framework for studying group actions on trees. While extremely effective for structural questions in group theory, it is less suited to the systematic construction of group actions with prescribed local behaviour. Motivated by local‑to-global constructions such as the Burger--Mozes universal groups and local action diagrams, we develop an analogue of Bass--Serre theory for group actions. 

    The central object of study in our are graphs of group actions, combinatorial structures similar to graphs of groups from Bass--Serre theory, encoding compatible local permutation actions on a base graph. From these we can construct groups which act on tree-like graphs called scaffoldings and hence also on trees. 
    
    We prove uniqueness and universality results for the resulting groups and show that our framework unifies and generalises (among other known constructions) both graphs of groups and local action diagrams. Remarkably, we are able to encapsulate the full generality of the former while still allowing for efficient construction of groups with certain local properties like in the latter.

\end{abstract}

\section*{Introduction}

Combining simpler pieces in a tree-like way can be a very powerful tool, as evidenced by beautiful results in 
group theory \cite{MullerSchupp83,ReidSmith2020,Serre2003,Stallings68,Stallings71}, 
graph theory \cite{Carmesinetal14,Carmesinetal19,DunwoodyKron15,RobertsonSeymour86,RobertsonSeymour91},  and many other mathematical fields such as
computational complexity \cite{ArnborgLagergrenSeese91,Courcelle90}, and
probability theory \cite{BenjaminiSchramm96}. 

Free products with amalgamation of two groups and HNN-extensions are important examples of such constructions in geometric group theory; Bass--Serre theory \cite{Serre2003} uses the concept of a \emph{graph of groups} to describe groups that one gets by repeated use of these two constructions. The fundamental theorem of Bass--Serre theory states that we can associate to every graph of groups its fundamental group, a group which naturally acts on a tree. Conversely, every group acting on a tree is the fundamental group of a graph of groups.
This correspondence has been uniquely useful when it comes to analysing groups acting on trees and in theory it can be used to construct any group acting on a tree by giving its corresponding graph of groups. However, as observed by Reid and Smith \cite{ReidSmith2020}, the construction of groups with certain properties using the graph-of-groups construction can be difficult.

To overcome these difficulties, a complementary approach of defining groups by local actions (that is, actions of vertex stabilisers on the neighbourhood of the fixed vertex) has emerged. Two important constructions following this principle are the universal group construction of Burger and Mozes \cite{BurgerMozes2000} and Smith's Box product construction \cite{Smith2017}. Similarly to amalgamated free products and HNN-extensions in Bass--Serre theory, these two constructions can be combined to define a local-action complement to classical Bass–Serre theory, see \cite{ReidSmith2020}. The central object in the resulting theory are \emph{local action diagrams} which have some striking similarities to graphs of groups. To every local action diagram we can associate a universal group which , similarly to the fundamental group of Bass--Serre theory, acts on a tree. It is worth pointing out that this universal group always has Tits' independence property (P), and conversely, every group acting on a tree with property (P) is the universal group of some local action diagram.

The aim of this work is to develop an analogue of Bass--Serre theory for group actions.  This unifies and generalises known constructions including the aforementioned graph of groups \cite{Serre2003} and local action diagram \cite{ReidSmith2020} constructions and many others \cite{BanksElderWillis2015,BurgerMozes2000,Hamannetal2022,Mohar2006,Smith2017}.

The central notion in our theory is a combinatorial structure called a \emph{graph of group actions}. This is directly modelled on the concept of a graph of groups but the definition of the \emph{universal group} of a graph of group actions is more closely related to the local-to-global approach of local action diagrams. More precisely, a graph of group actions consists of a connected graph $\Delta$, called the base graph,  together with permutation group action at each vertex and each arc together with an embedding of the group action at an arc into the group action at the terminal vertex of the arc. To each graph of group actions we can associate its universal group which acts on a tree-like graph (called the \emph{scaffolding} for the graph of group actions); an action of the universal group on a tree can be obtained by taking a quotient of the scaffolding.

Within this framework, one can recover both the fundamental group of a graph of groups in Bass--Serre theory by letting all groups act regularly on themselves, as well as the theory of local action diagrams by taking all arc groups to be trivial groups acting on a one-element set. We note that in the case of Bass--Serre theory, the scaffolding is closely related (but not quite equal) to a Cayley graph of the resulting fundamental group.

It is worth pointing out that we can also add more structure to the sets the vertex and edge groups act on. For instance, if at each vertex we have an action of a group of automorphisms on some graph, we can recover the notion of \emph{tree amalgamation of graphs} introduced by Mohar in \cite{Mohar2006}.  A variant of Mohar's tree amalgamation which respects given subgroups of the automorphism groups of the constituent graphs was introduced by Hamann et al.~\cite[Section 5]{Hamannetal2022}.  Our construction extends the constructions in \cite{Mohar2006} and \cite{Hamannetal2022} in very much the same way as the graph of groups and their fundamental groups in Bass--Serre theory extend the construction of free products with amalgamations in group theory.  These connections will be explored in a forthcoming paper.

In another forthcoming paper the theory presented in this paper will be used to analyse group action on tree that have property (P$_k$) that was defined by Banks, Elder and Willis in \cite{BanksElderWillis2015}.

The rest of this paper is structured as follows. In the first section basic terminology and basic concepts are explained.  Our terminology for graphs is similar to the terminology used in Serre's book \cite{Serre2003}, except that we allow edges that are self reverse, i.e.\ $\ba=a$.   

Section~\ref{SDefinition} contains the definition of graphs of group actions, scaffoldings and the universal group of a graph of group actions.  At first the definition of a graph of group actions is limited to the case where the base graph has no self-reverse arcs; the general definition where edge inversions are permitted is deferred to Section~\ref{SInversions} where it is also shown that results from the special case without inversions carry over to the general case by a subdivision argument.  

In Section~\ref{SExistence} it is shown that scaffolding graphs do exist for such graphs of group actions and that the definition of the universal group makes sense.   In Section~\ref{SExtension}, it is then shown that any two scaffoldings are isomorphic in a strong sense which also implies that the group (and its action on the scaffolding graph) defined by a graph of group actions is unique up to conjugation.  The technicalities of proving that are made simpler by excluding the possibility of thee being self-reverse arcs in the base graph.  

As mentioned above, in Section~\ref{SInversions} graphs of group actions where the base graph has self-reverse arcs are defined. Concepts defined in Section~\ref{SDefinition} are extended to this case and the main result of Section~\ref{SExtension} is generalised.  This gives us Theorem~\ref{thm:acceptable-isomorphism} that is used to show that any two scaffoldings for the same graph of group actions give isomorphic actions of the universal groups on the respective scaffoldings.  

In Section~\ref{SPermutations} basic properties of the actions of the universal group on a scaffolding and the associated tree are discussed.  

As mentioned above, the definition of a graph of group actions and their universal groups combine ideas from Bass--Serre theory and ideas from local-to-global constructions.  In Section~\ref{SBass-Serre} it is shown how a graph of groups can in a simple way be turned into a graph of group actions so that the action of the universal group of this graph of group actions on the associated tree is the same as the action of the fundamental group of the graph of groups on the associated tree.  In the subsequent section it is then shown that the universal groups of Burger and Mozes can be defined in terms of graphs of group actions.  We note that our construction is more general, because if one thinks in terms of actions on trees then the universal groups of Reid and Smith's local action diagram correspond to (P)-closed groups, whereas our construction also gives groups which do not satisfy property (P); some examples are given in the appendix.  

The final section of this paper gives an alternative view of the universal group. A graph of groups actions can contain a lot of information that is immaterial when it comes to considering the action on the associated tree.  In Section~\ref{SReduced} it is shown how it is possible to remove some of that surplus information to get a simpler graph of group actions that gives the same group action on a tree as the original graph of group actions.  In addition, it is shown that one could define the action of the universal group on the associated tree by considering a \lq\lq tree-like\rq\rq\ graph 
that looks like the associated tree but might have multiple parallel edges.

Finally, Appendix~\ref{SExamples} contains simple examples of graphs of group actions yielding interesting actions on trees.   The motivation behind local action diagrams of Reid and Smith is to construct and study group actions on trees that have Tits' Property (P) (see Section~\ref{SPropertyP}).  The examples in Appendix~\ref{SExamples} show that with graphs of group actions one can construct complicated groups actions on trees that don't have Property (P).

\section{Preliminaries}

\subsection{Graphs}\label{SGraphs}

Our notation for graphs is similar to the notation in Serre's book \cite{Serre2003}.  
A graph $\Gamma$ consists of a set of \emph{vertices}  $\V\Gamma$, a set of \emph{arcs} $\A\Gamma$ and maps $\A\Gamma\to \V\Gamma\times\V\Gamma;a\mapsto (o(a), t(a))$ and $\A\Gamma\to \A\Gamma; a\to \ba$ such that ${\rev{\rev{a}}}=a$  and $o(a)=t(\ba)$ for every $a\in \A\Gamma$.   For an arc $a$ the vertex $o(a)$ is called the {\em origin} of $a$, the vertex $t(a)$ is called the \emph{terminus} of $a$ and the arc $\ba$ is called the \emph{reverse} of the arc $a$.   An arc $a$ such that $o(a)=t(a)$ is called a {\em loop}.   If $a$ is a loop then it is possible that $a\neq\ba$, but also that $a=\ba$; in the latter case we call the arc $a$ \emph{self-reverse}.   The arcs $a$ and $\ba$ form together an {\em edge} in the graph; the set of edges will be denoted by $\E\Gamma$. In the case where $a=\ba$ the corresponding edge is said to be \emph{self-reverse}.

The same notation is used for {\em digraphs} except that then we do not have the map taking an arc to its reverse.  A digraph $\Gamma$ is defined as a quadruple $(\V\Gamma, \A\Gamma, o, t)$, where $\V\Gamma$ is the set of {\em vertices}, $\A\Gamma$ is the set of {\em arcs}, and $o$ and $t$ are maps $\A\Gamma\to\V\Gamma$.  As above we call $o(a)$ the \emph{origin} of the arc $a$ and $t(a)$ is called the \emph{terminus} of $a$.  Note that every graph is also a digraph.

We say that distinct vertices $u$ and $v$ (in a graph or a digraph) are \emph{adjacent}, or \emph{neighbours}, if there is an arc $a$ such that $\{o(a), t(a)\}=\{u,v\}$.   The set of vertices adjacent to a vertex $v$ in a graph $\Gamma$ is denoted by $N_\Gamma(v)$, or just $N(v)$ if there is no danger of confusion.  

A graph is called {\em simple} if it has no loops and the map $\A\Gamma\to \V\Gamma\times\V\Gamma; a\mapsto (o(a), t(a))$ is injective.  An arc $a$ in a simple graph can be represented by an ordered pair of vertices $(u,v)$ where $u=o(a)$ and $v=t(a)$, and the corresponding edge can be represented by an unordered pair $\{u, v\}$.   
For a digraph (or a graph), we can also define the \emph{underlying simple graph} as the simple graph that has the same vertex set as the digraph and two distinct vertices are adjacent if and only if they are adjacent in the digraph.

A {\em path} of length $n$ in $\Gamma$ (a graph or a digraph) is a sequence $v_0, \ldots, v_n$ of distinct vertices such that $v_i$ and $v_{i+1}$ are adjacent for $i=0, \ldots, n-1$.  If we do not insist on the vertices being distinct then we talk about a {\em walk}.   A graph is said to be {\em connected} if for any two vertices $u$ and $v$ there is a path starting with one of them and ending with the other.  A {\em simple cycle} of length $n$ is a walk $v_0, \ldots, v_n$ such that $v_0=v_n$ and the vertices $v_0, \ldots, v_{n-1}$ are all distinct.  

If the graph $\Gamma$ is connected one can define a metric $d_\Gamma$ on $\V\Gamma$ by defining the distance, $d_\Gamma(x,y)$, between two vertices $x$ and $y$ as the minimal length of path starting with one and ending with the other.  Let $v$ be a vertex.  The {\em ball of radius $n$ with center in $v$} in $\Gamma$ is the set $B_\Gamma(v,n)=\{u\in \V\Gamma\mid d_\Gamma(v,u)\leq n\}$ and the {\em sphere of radius $n$ with center in $v$} in $\Gamma$ is the set $S_\Gamma(v,n)=\{u\in \V\Gamma\mid d_\Gamma(v,u)=n\}$.

A {\em tree} is a simple connected graph with no simple cycles of length $\geq 3$.   A \emph{leaf} in a tree is a vertex with at most one neighbour.  Sometimes it is convenient to grant some specific vertex in a tree a special status; such a vertex is called the \emph{root} of the tree.  

\begin{defi}\label{Droot}
Suppose $T$ is a rooted tree with root $v_0$.  Define the \emph{parent}, $\parent(u)$, of $u \in \V T \setminus \{v_0\}$ to be the neighbour of $u$ closest to $v_0$, and set $p(v_0)= v_0$.   Conversely, a vertex $w$ is a \emph{child} of a vertex $v$ if $w$ is adjacent to $v$ and $d(v_0, w)=d(v, v_0)+1$.  %The \emph{depth} of a vertex vertex $w \in \V T$ is its distance to $v_0$.
\end{defi}

A graph $\Gamma'$ with vertex set $\V\Gamma'$ and arc set $\A\Gamma'$, is a {\em subgraph} of $\Gamma$ if $\V\Gamma'\subseteq \V\Gamma$ and $\A\Gamma'\subseteq \A\Gamma$ and the maps $o'$, $t'$ and $a\mapsto \ba$ for $\Gamma'$ equal the restrictions to $\A\Gamma'$ of the corresponding maps for $\Gamma$. 
If $V'\subseteq \V\Gamma$ then we define the \emph{subgraph spanned by} $V'$ as the subgraph having $V'$ as a vertex set and the set of arcs is the set of all arcs $a\in \A\Gamma$ such that both $o(a)$ and $t(a)$ are in $V'$.  Suppose $A'\subseteq \A\Gamma$ and that if $a\in \A\Gamma'$  then $\ba\in A'$.  The \emph{subgraph spanned by} $A'$ has $A'$ as a set of arcs and the set of vertices is the set of all vertices $v$ such that there exists an arc $a\in A'$ with $v=o(a)$ or $v=t(a)$.
{\em Subdigraphs} are defined similarly.

For a vertex $v$ define $\Star_\Gamma(v)$ (here $\Gamma$ is a graph or a digraph) as the set $t^{-1}(v)$ and $\Star_\Gamma^{-1}(v)=o^{-1}(v)$.   Thus $\Star_\Gamma(v)$ is the set of arcs in $\Gamma$ that have $v$ as their terminal vertex and $\Star_\Gamma^{-1}(v)$ is the set of arcs that have $v$ as their vertex of orgin.   
The {\em in-degree} of a vertex $v$ in a digraph $\Gamma$ is the cardinality of $\Star_\Gamma(v)$ and the \emph{out-degree} is the cardinality of $\Star_\Gamma^{-1}(v)$.  For a graph these two numbers will be equal and this number is the \emph{degree} of $v$.
%Furthermore define $\OStar_\Gamma(v)$ as the subdigraph of $\Gamma$ spanned by $\Star_\Gamma(v)$.  
%A graph is said to be {\em regular} if all vertices have the same degree $d$; in this case $d$ is called the {\em degree of the graph}.
If the degree of every vertex is finite, we say that the graph is {\em locally finite} (a digraph is locally finite if both the in- and out-degrees of every vertex are finite).

Let $\Gamma$ be a graph.   A \emph{graph equivalence relation} on $\Gamma$ consists of a pair of equivalence relations $\sim_{\V}$ and $\sim_{\A}$ where $\sim_{\V}$ is an equivalence relation on $\V \Gamma$ and $\sim_{\A}$ is an equivalence relation on $\A\Gamma$ such that if $a\sim_{\A}b$ then $o(a)\sim_{\V}o(b)$ and $t(a)\sim_{\V}t(b)$, and if $a\sim_{\A}b$ then $\ba\sim_{\A}\bb$.   Given an equivalence relation $\sim_{\V}$ on the vertex set $\V\Gamma$ one can define an equivalence relation $\sim_{\A}$ on the arc set by saying that two arcs $a, a'$ are equivalent, $a\sim_{\A}  a'$ if  $o(a)\sim_{\V} o(a')$ and $t(a)\sim_{\V}t(a')$ and thus get a graph equivalence relation.

If $\sim$ is a graph equivalence relation on a graph $\Gamma$, then the \emph{quotient graph} $\Gamma/\!\!\sim$ has the set of equivalence classes of $\sim_{\V}$ as a vertex set and the set of equivalence classes of $\sim_{\A}$ as an arc set.  Furthermore, if $a$ is an arc in $\Gamma/\!\!\sim$ then find an arc $b$ in $\Gamma$ belonging to the equivalence class $a$ and define $o(a)$ as the equivalence class of $o(b)$ and $t(a)$ as the equivalence class of $t(b)$.  The conditions above on $\sim_{\A}$ guarantee that the maps $o, t: \A(\Gamma/\!\!\sim)\to \V (\Gamma/\!\!\sim)$ and the arc reversal map are well defined.  

Suppose that $B$ is a set of arcs in $\Gamma$ such that if $a\in B$ then $\ba\in B$.  Consider the graph $\Delta$ that has $\V\Gamma$ as a vertex set and $B$ as a set of arcs.  Define an equivalence relation $\sim_{\V}$ on $\V\Gamma$ such that $u\sim v$ if and only if $u$ and $v$ are in the same connected component of $\Delta$.  Let $\sim_{\A}$ denote the trivial equivalence relation on $\A\Gamma\setminus B$.  The graph $\Gamma/B=(\Gamma\setminus B)/\!\!\sim$ is the graph one gets by \emph{contracting} the arcs in $B$.   For a subgraph $\Gamma'$ in $\Gamma$ we define $\Gamma/\Gamma'$ to denote the graph $\Gamma/\A\Gamma'$.  

If $\Gamma$ and $\Delta$ are digraphs then a {\em digraph morphism} $g\colon  \Gamma\to\Delta$ consists of a pair of maps $g_\V\colon \V\Gamma\to \V\Delta$ and $g_\A\colon \A\Gamma\to \A\Delta$ such that  $g_\V(o(a))=o(g_\A(a))$ and $g_\V(t(a))=t(g_\A(a))$. To simplify notation, in what follows we use $g$ to denote both $g_{\V}$ and $g_{\A}$. 
If $\Gamma$ and $\Delta$ are graphs then we call a digraph morphism a {\em graph morphism} if additionally $\rev{g(a)}=g(\ba)$ for every arc $a\in \A\Gamma$; note that if the graphs $\Gamma$ and $\Delta$ are simple, then this is always satisfied. 
An {\em automorphism} $g$ of a graph or digraph $\Gamma$ is a bijective (that is, both $g_\V$ and $g_\A$ are bijective) morphism $\Gamma\to\Gamma$. An automorphism maps any self-reverse loop to a self-reverse loop.  The automorphisms of $\Gamma$ form the {\em automorphism group} $\aut(\Gamma)$.  Morphisms and automorphisms of digraphs are defined similarly.  In the case that $\Gamma$ is a digraph and $\Gamma/\!\!\sim$ is a quotient graph as defined above then the map $p:\Gamma\to \Gamma/\!\!\sim$ given by the natural projections $p_{\V}: \V\Gamma\to \V\Gamma/\!\!\sim_\V$ and $p_{\A}: \A\Gamma\to \A\Gamma/\!\!\sim_\A$ is a digraph morphism.

\subsection{Group actions}\label{SGroup}

For a set $X$ we denote the \emph{symmetric group} on $X$, the group of all permutations of $X$, by $\sym(X)$.   %In the case that our set $X$ is finite with $n$ elements the symmetric group will be denoted by $S_n$. 
A {\em group action} $(G, X)$ consists of a group $G$, a (non-empty) set $X$ and a homomorphism $\rho\colon G\to \sym(X)$.  For $g\in G$ and $x\in X$ we write $g(x)=(\rho(g))(x)$.
A group action is \emph{faithful} if the kernel of the action, the subgroup $K=\{g\in G\mid g(x)=x\mbox{ for all }x\in X\}$, is trivial, or equivalently, if the homomorphism $\rho$ is injective.  A faithful group action $(G, X)$ is called a {\em permutation group action}; in this case we call $G$  a {\em permutation group on} $X$ and think of $G$ as a subgroup of $\sym(X)$. 

The \emph{orbit} of a point $x\in X$ is the set $Gx= \{g(x) \mid g \in G\}$.
The action is said to be {\em transitive} if $Gx = X$ for one, and hence every, $x \in X$ or, in other words, the action is transitive if for any two points $x, y\in X$ there exists an element $g\in G$ such that $g(x)=y$.  The {\em stabiliser} of $x\in X$ is the subgroup $G_x=\{g\in G\mid g(x)=x\}$.   For a set $Y\subseteq X$  the {\em pointwise stabiliser} of $Y$ is 
$G_{(Y)}=\{g\in G\mid g(y)=y\mbox{ for all }y\in Y\}$ and the {\em setwise stabiliser} is $G_{\{Y\}}=\{g\in G\mid g(Y)=Y\}$. 

 In the case the action of $G$ on $X$ is not faithful we can regard $G^X=G/G_{(X)}$ as a permutation group on $X$ and say that it is the {\em permutation group on $X$ induced by $G$}.  For a subset $Y$ of $X$ we define $G^{Y}$ as the permutation group on $Y$ induced by $G_{\{Y\}}$.

Two group actions $(H,Y)$ and $(G, X)$ are \emph{isomorphic} if there exists a group isomorphism $\varphi\colon  H\to G$ and bijective map $\Phi\colon  Y\to X$ such that $\varphi(h)(\Phi(y))=\Phi(h(y))$, for every $h\in H$ and every $y\in Y$, that is, the diagram in  Figure~\ref{fig:isomorhisms} commutes. 
\begin{figure}
\centering
    \begin{tikzcd}[column sep=large]
        {Y}
        \arrow [r,"h"]
        \arrow [d,"\Phi"]
        &
        {Y} 
        \arrow [d,"\Phi"]
        \\
        {X}
        \arrow [r,"g=\varphi(h)"]
        &
        {X}
    \end{tikzcd}
        \caption{Diagram describing isomorphisms of group actions.}
    \label{fig:isomorhisms}
\end{figure}

Note that if $h\in H$ then 
\[
    \varphi(h) (x) = \varphi(h) (\Phi \Phi^{-1}(x)) = \Phi(h(\Phi^{-1}(x))) = \Phi  h   \Phi^{-1}(x)
\]
for every $y \in Y$.  Thus $\varphi(h)$ and $\Phi  h   \Phi^{-1}$ induce the same permutation on $X$.  In particular, if the group action $(G,X)$ is faithful then the map $\varphi$ is completely determined by $\Phi$.  We call the pair $(\varphi, \Phi)$ (or,  sometimes just the map $\Phi$) a \emph{permutation isomorphism}.  

Let $(H,Y)$ and $(G, X)$ be group actions.  An {\em embedding of group actions} $(H,Y)\hookrightarrow(G,X)$ is a 
permutation isomorphism $(\phi, \Phi)$ from $(H,Y)$ to $(G^{\Phi(Y)},\Phi(Y))$.  Since the group action $(G^{\Phi(Y)},\Phi(Y))$ is faithful we see that the permutation isomorphism is completely determined by $\Phi$, and $\Phi$ can be thought of as an embedding $Y\hookrightarrow X$.   

The concept of an embedding of permutation group actions can also be explained in the following way:  Suppose $\Phi: Y\hookrightarrow X$ is an embedding.  Then $\Phi$ defines an embedding of group actions if given $h \in H$ there exists $g = \varphi(h) \in G^{\Phi(Y)}$ such that the diagram in Figure~\ref{fig:isomorhisms} commutes and, conversely, given $g$ in the setwise stabiliser of $\Phi(Y)$ in $G$ we can find some $h \in H$ such that the diagram commutes.

When $G$ is a group acting by automorphisms on a graph $\Gamma$ we define $\Gamma/G$ as the quotient graph $\Gamma/\!\!\sim$ where $\sim_{\V}$ is the equivalence relation on $\V\Gamma$ that has the orbits of $G$ on $\V\Gamma$ as equivalence classes and $\sim_{\A}$ is the equivalence relation on $\A\Gamma$ that has the orbits of $G$ on $\A\Gamma$ as equivalence classes.   Together $\sim_{\V}$ and $\sim_{\A}$ form a graph equivalence relation.

\subsection{Topological groups and the permutation topology}\label{STopology}

When $G$ is a group acting on a set $X$ it possible to endow $G$ with the {\em permutation topology}, see for instance \cite{Woess1991} and \cite{Moller2010}.   In this topology the  family of all subgroups of the form $G_{(F)}$, where $F$ is a finite subset of $X$, forms a neighbourhood basis for the identity.  A subgroup is thus open if and only if it contains the pointwise stabilizer of a finite subset of $X$.  Another way to describe this topology is to think of $X$ as having the discrete topology and the permutation topology as the compact-open topology on $G$.  The compact-open topology has the property that the action map $G\times X\to X; (g, x)\mapsto g(x)$ is continuous.   If the action of $G$ on $X$ is faithful then $G$ is totally disconnected.  

A sequence $\{g_i\}$ of elements in $G$ converges to an element $g\in G$ in the permutation topology if and only if for each element $x\in X$ there is a number $N_x$ such that if $i\geq N_x$ then $g_i(x)=g(x)$.  We say that $G$ is {\em closed in the permutation topology} if $G^X$ is closed in $\sym(X)$ with respect to the permutation topology on $\sym(X)$.  The topological properties of compactness and co-compactness have natural descriptions in terms of the action of $G$ on $X$,

\begin{lemm}\label{LCompact-cocompact}
Let $G$ be a group acting on a set $X$. Assume $G$ is closed in the permutation topology.  Assume also that all suborbits (the orbits of stabilizers of points) are finite.   Then:
\begin{enumerate}
    \item {\textrm (\cite[Lemma~1]{Woess1991}, cf.~\cite[Lemma~2.2]{Moller2010})}  The stabilizer of a point $x\in X$ is compact.
    \item {\textrm (\cite[Lemma~2]{Woess1991}, cf.~\cite[Lemma~2.2]{Moller2010})}  A subset $A$ in $G$ has compact closure if and only if all orbits of $A$ are finite.
    \item {\textrm (\cite[Proposition~1]{Nebbia2000}, cf.~\cite[Lemma~2.3]{Moller2010})}  Assume that the action of $G$ on $X$ is transitive A subgroup $H$ in $G$ is co-compact, i.e.~$G/H$ is compact, if and only if $H$ has only finitely many orbits on $X$.
\end{enumerate}
\end{lemm}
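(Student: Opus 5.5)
Since $G$ is closed in the permutation topology, I would first reduce to the faithful case: replace $G$ by $G^X=G/G_{(X)}$. This subgroup of $\sym(X)$ is closed, and the kernel $G_{(X)}$ is contained in every point stabiliser and every subgroup considered. So compactness statements transfer along the quotient map, and orbits are unchanged. The main tool is then the standard identification of a closed subgroup of $\sym(X)$ with a closed subset of the product $X^X$ (with $X$ discrete). Under this identification the permutation topology is the product topology restricted to $G$.

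For (1), fix $x$ and consider $G_x$. For every $y\in X$ the orbit $G_x y$ is finite, being a suborbit. Hence $G_x$, as a set of maps, lies in the product $\prod_{y\in X} G_x y$ of finite discrete sets, which is compact by Tychonoff. The remaining point is closedness of $G_x$ in that product. A pointwise limit of elements of $G_x$ agrees on each finite set with some element of $G_x$. It is again a permutation: injectivity is clear, and surjectivity follows by applying the same argument to inverses, since the sets $G_x^{-1}y=G_x y$ are finite. Such a limit lies in $G$ because $G$ is closed, and it fixes $x$. Thus $G_x$ is compact.

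For (2), one direction is immediate. If $\overline{A}$ is compact, then for each $x$ the continuous map $g\mapsto g(x)$ sends $\overline{A}$ to a compact, hence finite, subset of discrete $X$, so $Ax$ is finite. Conversely, suppose every orbit $Ax$ is finite. Fix $x$ and choose finitely many $a_1,\dots,a_n\in A$ with $\{a_i(x)\}=Ax$. Then $A\subseteq\bigcup_i a_iG_x$, a finite union of translates of the compact group from (1). So $A$ lies in a compact set and $\overline{A}$ is compact (the space being Hausdorff). I would also note that the "all orbits" hypothesis only needs one orbit here.

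For (3), assume $G$ is transitive and fix $x$. If $H$ has finitely many orbits $Hy_1,\dots,Hy_n$, write $y_i=g_i(x)$. For any $g\in G$ there is some $h\in H$ and some $i$ with $g^{-1}(x)\in$ the corresponding orbit, which gives $G=\bigcup_i Hg_iG_x$. Thus $G/H$ is the image of the compact set $\bigcup_i g_iG_x$ under the quotient map, after taking inverses appropriately to work with cosets $gH$ or $Hg$. This makes $G/H$ compact.

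Conversely, the sets $gG_xH$ are open, since $G_x$ is open, and they cover $G$. Compactness of $G/H$ yields finitely many $g_1,\dots,g_n$ with $G=\bigcup_i g_iG_xH$. Hence every point $y=g(x)$ satisfies $H$-orbit-wise $y\in\bigcup_i H g_i^{-1}\ldots$; more cleanly, inverting gives $G=\bigcup_i H G_x g_i^{-1}$, so $Hy$ ranges over the finitely many orbits $Hg(x)$ with $g\in G_xg_i^{-1}$. Each $G_xg_i^{-1}x$ is finite, being a suborbit of $g_i^{-1}x$ after conjugation, so there are only finitely many orbits in total.

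The main obstacle I expect is this bookkeeping of left versus right cosets in (3), together with the surjectivity-of-limit issue in (1). Everything else is direct.
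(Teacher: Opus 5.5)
Your proof is correct. The paper does not prove this lemma itself: it cites Woess, Nebbia and M\"oller, and only remarks that transitivity is not used in (1) and (2). Your arguments are the standard ones: Tychonoff on $\prod_{y\in X}G_xy$ for (1), the finite covering $A\subseteq\bigcup_i a_iG_x$ for (2), and the coverings of $G$ by the sets $Hg_iG_x$, resp.\ $gG_xH$, for (3).

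Two small corrections to the write-up:
\begin{enumerate}
\item A closed subgroup of $\sym(X)$ need not be closed in $X^X$; for infinite $X$, $\sym(X)$ itself is not. Your proof of (1) does not actually rely on this, since you verify bijectivity of the limit directly.
\item In (3) the cleanest bookkeeping is $G=\bigcup_i G_xg_i^{-1}H$, so $G/H$ is the image of the compact set $\bigcup_i G_xg_i^{-1}$. Conversely, $G=\bigcup_i HG_xg_i^{-1}$ shows that every $H$-orbit meets the finite set $\bigcup_i G_x(g_i^{-1}x)$.
\end{enumerate}

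The one place the paper argues (2) itself is Lemma~\ref{LCompact}, which drops the suborbit hypothesis. Its forward direction uses the open cover $\{g\in G\mid g(x)=x_i\}$, which is the same as your evaluation-map argument. For the converse it places $\overline{U}$ directly in a Tychonoff product of finite groups indexed by the $U$-orbits, without passing through (1).

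Your route through (1) is the more robust one. The point you flagged, that pointwise limits are surjective, is exactly what that product argument takes for granted. This is only automatic when $U$ is closed under inverses, for example when $U$ is a subgroup. For an arbitrary subset without finite suborbits it really can fail: in $\sym(\NN)$ the cycles $(0\,1\,\cdots\,k)$ have finite orbits but converge pointwise to the non-surjective shift. In your setting $G_x^{-1}y=G_xy$ is finite, which is precisely what makes the limit a permutation. As you note, you also get the sharper conclusion that a single finite orbit already suffices.
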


\begin{remark}
Parts 1 and 2 of the above lemma are stated in \cite{Woess1991} and \cite{Moller2010} with the additional assumption that the action is transitive, but that assumption is not used in the proofs.  Also note that part 1 is stated in \cite{Woess1991} with the assumption that $X$ is countable, but the proof in \cite{Moller2010} shows that this assumption is not needed.
\end{remark}

It follows that if $G$ is a closed subgroup of the automorphism group of a connected locally finite graph $\Gamma$, then $G$ with the permutation topology is a totally disconnected, locally compact group.   

\subsection{Wreath products}

Consider permutation group actions $(G, X)$ and $(H,A)$.  Define $B=\prod_{\xi\in X} H_\xi$ where $H_\xi= H$ for all $\xi\in X$.  
%$H_\xi= H$? or $H_\xi \simeq H$? 
The group $G$ has an obvious action on the set $X\times A$ such that if $g\in G$ and $(x,a)\in X\times A$ then $g(x,a)=(g(x),a)$ and the group $B$ also has a permutation action on $X\times A$, such that $(h_\xi)_\xi(x,a)=(x, h_x(a))$.  The \emph{(unrestricted) wreath product} of the two permutation group actions is defined as the subgroup of $\sym(X\times A)$ generated by $G$ and $B$.  The wreath product can also be defined as a semidirect product $G\wre_X H=G\ltimes B$ where $g(h_\xi)_\xi g^{-1}=(h'_{\xi})_\xi$ with $h'_\xi=h_{g^{-1}(\xi)}$.   Thus we can think of elements in $G\wre_X H$ as being represented by a pair $[g; b]$ where $g\in G$ and $b\in B$.

Let $G$ be a permutation group acting on a set $X$.  An equivalence relation $\sim$ on $X$ is called a \emph{$G$-congruence} if whenever $x,y\in X$ and $g\in G$ then $x\sim y$ if and only if $g(x)\sim g(y)$.  Define $\tX$ as the set of equivalence classes.  The group $G$ has a natural action on $\tX$ and we define $\tG$ as the permutation group that $G$ induces on $\tX$.   For $g\in G$ let $\tg$ denote the permutation $g$ induces on $\tX$.  Assume that $\tG$ acts transitively on $\tX$.   Let $A$ denote one of the $\sim$-classes and let $\xi_A$ be the corresponding element in $\tX$.  Set $H=G^A$. 

The next step is to show, following \cite[p.~6--7]{Neumann1976}, that $G$ embeds into the wreath product $\tG\wre_{\tX} H$.  
For each element $\xi\in \tX\setminus\{\xi_A\}$ choose an element $\gamma_\xi$ in $G$ such that $\gamma_\xi(\xi)=\xi_A$ and set also $\gamma_{\xi_A}=1$.   For an element $\xi\in \tX$ note that $\gamma_{\tg(\xi)}g\gamma_\xi^{-1}$ is an element in $G_{\{A\}}$ and we define $h_\xi$ as its restriction to $A$.  It is left to the reader to check that the map $G\to \tG\wre_{\tX} H$ defined by $g\mapsto [\tg; (h_\xi)_\xi]$ is a group embedding.   If $x\in X$ we let $\xi_x$ be the equivalence class containing $x$ and set $a_x=\gamma_{\xi_x}^{-1}(x)$.  The map $\iota\colon X\to \tX\times A; x\mapsto (\xi_x, a_x)$ is bijective and allows us to identify $X$ with $\tX\times A$.  Via the embedding above we see that the group $G$ acts on both $X$ and $\tX\times A$.  It is easy to show that $\iota(g(x))=g(\iota(x))$ for every $g\in G$ and every $x\in X$ and the two actions are therefore isomorphic.    

We also see that given an element $h\in H$ and two elements $\xi, \xi'\in \tX$ there always is an element $g\in G$ such that for every $a\in A$ we have $g(\xi,a)=(\xi', h(a))$.    

In what follows we deal with a group $G$ acting on a set $X$ with a $G$-congruence but we will not assume that the action on the set of congruence classes is transitive (and thus the action on $X$ may be intransitive).  

Let $G$ be a permutation group acting on a set $X$.  Denote the family of $G$-orbits by $\{O_i\}_{i\in I}$.  For each $i\in I$ let $(H_i, A_i)$ be a permutation group action.  Set $Y=\bigsqcup_{i\in I} O_i\times A_i$.  Define \(W=G\wre_{X} (H_i)_{i\in I}\) as the permutation group acting on $Y$ that is generated by the action of $G$ in the first coordinate and the action of $\prod_{i\in I} \big(\prod_{O_i} H_i\big)$ in the second coordinate.   The group $W$ can be called an \emph{inhomogeneous wreath product}.  The construction described here can also be found in \cite{KlavikNedelaZeman2022}.

Conversely, suppose that we are given a permutation action of a group $G$ on a set $X$ and that we have a $G$-congruence on $X$ but the action of $G$ on the congruence classes is not transitive.  Let $\tX$ denote the set of congruence classes and let $\tG$ denote the action of $G$ on the set of equivalence classes.  Suppose $(O_i)_{i\in I}$ denotes the set of orbits of $\tG$ on $\tX$ choose a family of equivalence classes $(A_i)_{i\in I}$ containing exactly one representative from each $G$-orbit $O_i$.  Let $H_i=G^{A_i}$ be the permutation group $G$ induces on $A_i$.  The above argument now shows that there is an embedding of $G$ into $\tG\wre_{\tX} (H_i)_{i\in I}$.  For an element $\xi$ in $\tX$ that belongs to $O_i$ choose an element $\gamma_\xi\in G$ such that $\xi=\gamma_\xi A_i$.
An element $g\in G$ can now be represented as a $[\tg; (g_\xi)_{\xi\in \tX}]$ where $\tg$ is the permutation $g$ induces on $\tX$ and for $\xi\in O_i$ we define $g_\xi$ as  the restriction of $\gamma_{g(\xi)}^{-1}g\gamma_\xi$ to $A_i$ and note that $g_\xi$ belongs to $H_i$.

\section{Graphs of group actions and their universal groups}\label{SDefinition}

%Parts of the intruductory discussion should probably be in the introduction to the paper.

In this section the main ingredients of the theory of graphs of group actions are introduced. First there is the concept of a \emph{graph of group actions}, then \emph{scaffoldings} and finally \emph{universal groups of graphs of group actions}.  

%The central objects of this theory are graphs of group actions. From a  \ggawo\ one constructs its universal group which acts on a tree-like graph called a scaffolding; this action also gives rise to an action of a tree. By (imperfect) analogy, in Bass-Serre theory we may think of the fundamental group of a graph of groups acting on its Cayley graph; this graph is tree-like in the sense that the cosets of the vertex groups in the graph of groups can be seen as the vertices of a tree.

\subsection{Graphs of group actions}

\begin{defi}\label{DGraph-of-group-action}
A \emph{graph of group actions without self-reverse arcs} (or a \emph{\ggawo}) consists of
\begin{enumerate}[label=(\alph*)]
    \item a connected graph $\Delta$ without any self-reverse arcs, called the \emph{base graph},
    \item for every vertex $v \in \V\Delta$ a permutation group action $(G(v),X(v))$, called \emph{the action at the vertex} $v$, and
    \item for every arc $a \in \A \Delta$
    \begin{itemize}
    \item a permutation group action $(H(a),Y(a))$, called  \emph{the action at the arc} $a$, such that $(H(\ba),Y(\ba)) = (H(a),Y(a))$, and 
    \item an embedding  $(\phi_a, \Phi_a)\colon (H(a),Y(a)) \hookrightarrow (G(t(a)),X(t(a)))$ of permutation actions (the embedding is fully determined by the embedding $\Phi_a: Y(a)\hookrightarrow X(t(a))$, as explained in Section~\ref{SGroup}).
    \end{itemize}
\end{enumerate}
\end{defi}

%To avoid pathological cases it is assumed that the sets $X(v)$ and $Y(a)$ are non-empty. 
Throughout this paper, when talking about a \ggawo\ $\cal G$ we will always assume that
\[{
\cal G}=(\Delta, (G(v), X(v))_{v\in \V\Delta}, (H(a), Y(a))_{a\in \A\Delta}, (\phi_a, \Phi_a)_{a\in \A\Delta})
\]
unless explicitly stated otherwise.

Let $a$ be an arc in $\Delta$.  A translate of the set $\Phi_a(Y(a))$ under the action of $G(t(a))$ on $X(t(a))$ is called an \emph{$a$-adhesion set}.  If $a$ is not relevant, or clear from the context, we omit it and write \emph{adhesion set} instead of $a$-adhesion set. A bijection $\Psi$ from $Y(a)$ to an $a$-adhesion set is called an \emph{$a$-adhesion map} if there is $\gamma \in G(t(a))$ such that $\Psi = \gamma \Phi_a$ (i.e.\ the diagram in Figure \ref{fig:diagram-adhesionmap} commutes).

\begin{remark}
In the definition of a \ggawo\ it is assumed that the base graph has no self-reverse arcs.  This restriction will be removed in Section~\ref{SInversions}.
\end{remark}

\begin{figure}
    \centering
    \begin{tikzcd}[column sep=large]
        {Y(a)}
        \arrow [r,hook,"\Psi"]
        \arrow [dr,bend right,hook,"\Phi_a"]
        &
        {X(t(a))}  
        \\
        &
        {X(t(a))}
        \arrow [swap,u,"\gamma"]
    \end{tikzcd}
    \caption{If $\gamma \in G(t(a))$, then $\Psi$ in this commutative diagram is an $a$-adhesion map and $\Psi(Y(a))$ is an $a$-adhesion set.}
    \label{fig:diagram-adhesionmap}
\end{figure}

\subsection{Scaffoldings}

The universal group of a \ggawo\ $\cG$ is defined in terms of an action on a graph which we call a \emph{scaffolding} for $\cG$.  Below is the definition of a scaffolding.  A construction demonstrating the existence of scaffoldings will follow in Section~\ref{sec:canonical}. 

Suppose $\Sigma$ is a graph and $\sim$ an equivalence relation on $\V\Sigma$. Extend the equivalence relation $\sim$ to $\A\Sigma$ by saying that if $a, a'\in \A\Sigma$ then $a\sim a'$ if and only if $o(a)\sim o(a')$ and $t(a)\sim t(a')$.
In what follows,  $[v]$ is used to denote the equivalence class of a vertex $v$ in $\Sigma$ as a subset of $\V\Sigma$, but when the equivalence class $[v]$ is thought of as a vertex in $\Sigma/\!\!\sim$ it will be denoted by $\llbracket v\rrbracket$.  Similarly $[a]\subseteq \A\Sigma$ denotes the equivalence class of an arc $a$ in $\Sigma$, but $\bun{a}$ denotes the corresponding arc in $\Sigma/\!\!\sim$.  An equivalence class $[v]$ for $v\in \V\Sigma$ is called a \emph{vertex bundle} and if $a\in\A\Sigma$ the equivalence class $[a]$ is called an \emph{arc bundle}.
We will use $o$ and $t$ to denote the origin and terminus maps on both $\Sigma$ and $\Sigma/\!\!\sim$.  If $a\in \A\Sigma$ then $o([a])=\{o(b)\mid b\in [a]\}\subseteq \V\Sigma$, but $o(\llbracket a\rrbracket)=\llbracket o(a)\rrbracket$ and is a vertex in $\Sigma/\!\!\sim$.

\begin{defi}  \label{DScaffolding}
    A \emph{scaffolding} for a \ggawo\  ${\cal G}$ is a graph  $\Sigma$ together with a graph equivalence relation $\sim$ on $\V\Sigma$ such that: 
\begin{enumerate}[label=(\roman*)]
    \item The quotient graph $T_\Sigma=\Sigma/\!\!\sim$ is a tree. 
    \item Each vertex bundle $[v]$ in $\Sigma$ with respect to $\sim$ induces an empty graph (i.e.\ a graph with no arcs).
    \item There is a graph morphism $\pi \colon \Sigma/\!\!\sim \to \Delta$.
    \item  There exists a map $$p\colon \V\Sigma\cup\A\Sigma \to \left(\bigcup _{v \in \V\Delta} X(v)\right)\cup \left(\bigcup _{a \in \A\Delta} Y(a)\right),$$ called a \emph{legal colouring} such that for every vertex $\llbracket v\rrbracket\in \V T_\Sigma$ and every arc $\llbracket a\rrbracket\in \A T_\Sigma$: 
    \begin{itemize}
        \item The restriction of $p$ to $[v]$ is a bijection $p_{\llbracket v\rrbracket}: [v]\to X(\pi(\llbracket v\rrbracket))$.
        \item The restriction of $p$ to $[a]$ is a bijection $p_{\llbracket a\rrbracket}: [a]\to Y(\pi(\llbracket a\rrbracket))$.
        \item For $y \in [a]$ we have $p_{\llbracket a\rrbracket}(y) = p_{\llbracket \ba\rrbracket} (\rev y)$.
        \item The unique map $\Psi_{\llbracket a\rrbracket}:Y(\pi(\llbracket a\rrbracket))\to X(\pi(t(\llbracket a\rrbracket)))$ satisfying $\Psi_{\llbracket a\rrbracket}p(b)=pt(b)$ for every $b\in [a]$ is a $\pi(\llbracket a\rrbracket)$-adhesion map.
        \item Suppose $\llbracket v \rrbracket \in \V T$ and $b$ is an arc in $\Delta$ with $t(b)=\pi(\llbracket v\rrbracket)$.  If $S$ is a $b$-adhesion set in $X(\pi(\llbracket v\rrbracket))$  then there is exactly one arc $\llbracket a\rrbracket \in \A T_\Sigma$ such that $\pi(\llbracket a\rrbracket) = b$, $t(\llbracket a\rrbracket)=\llbracket v\rrbracket$ and $\Psi_{\llbracket a\rrbracket} p([a]) = S$.
    \end{itemize}
\end{enumerate}
\end{defi}

\begin{remark}
The map $\Psi_{\llbracket a\rrbracket}$ above always exists and is unique because the map $p_{\bun a}$ is a bijection.
\end{remark}

\begin{remark}
    \label{rmk:arcbundlematching}
    Every arc bundle in a scaffolding connects two different vertex bundles and thus corresponds to an edge of the tree $T_\Sigma$. Since adhesion maps are embeddings of actions at arcs into actions at vertices, the second last bullet point in the above definition moreover implies that each arc bundle is a matching between two subsets of the respective vertex bundles (corresponding to adhesion sets).
\end{remark}

\begin{remark}
    Intuitively we may think of the first two conditions in in Definition \ref{DScaffolding} as the defining properties of the scaffolding as a relational structure. The maps $\pi$ and $p$ in the last two conditions connect this structure to the given graph of group actions, ensuring that the local (in terms of $\sim$-equivalence classes) structure of $\Sigma$ corresponds to the structure given by the graph of group actions.

The conditions imply that the diagram in Figure \ref{fig:diagram-legallabelling} commutes for every $\llbracket a\rrbracket \in \A T_\Sigma$. This ensures the compatibility of the legal colouring of the arcs with the legal colouring of the vertices.
\begin{figure}
    \centering
    \begin{tikzcd}[column sep=large]
        {[v]}
        \arrow [d,"p_{\llbracket v\rrbracket}"]
        &
        {[a]}
        \arrow [d,"p_{\llbracket a\rrbracket}"]
        \arrow [l,swap,"t"]
        \arrow [<->, r,"\rev{\,\cdot\,}"]
        &
        {[\ba]}   
        \arrow [d,"p_{\llbracket \ba\rrbracket}"]
        \arrow [r,"t"]
        &
        {[w]}
        \arrow [d,"p_{\llbracket w\rrbracket}"]
        \\
        X(\pi(\llbracket v\rrbracket))
        &
        Y(\pi(\llbracket a\rrbracket)))
        \arrow [l,swap,hook',"\Psi_{[a]}"]
        \arrow [<->, r,"="]
        &
         Y(\pi(\llbracket \ba\rrbracket))
        \arrow [r,hook,"\Psi_{[\ba]}"]
        &
        X(\pi(\llbracket w\rrbracket))
    \end{tikzcd}
    \caption{The compatibilty conditions for a legal colouring at the arc $\llbracket a\rrbracket \in \A T_\Sigma$ with $t(\llbracket a\rrbracket) = \llbracket v\rrbracket$, $o(\llbracket a\rrbracket) = \llbracket w\rrbracket$ say that the above diagram commutes. The arrow marked with $\rev{\,\cdot\,}$ denotes the action of reversing an arc and the arrow labelled by $=$ is the identity map.}
    \label{fig:diagram-legallabelling}
\end{figure}
\end{remark}

When discussing a scaffolding $\Sigma$ of a \ggawo\ $\cG$, we will always assume that a map $p$ as above is given.  Sometimes, particularly when we are working simultaneously with two scaffoldings for the same \ggawo, we will refer to a scaffolding as a quadruple $(\Sigma, \sim, \pi, p)$.

The first property in the definition of a scaffolding encapsulates the tree-likeness of the graph $\Sigma$. If a group $G$ of automorphisms of $\Sigma$ preserves the equivalence relation $\sim$, then we immediately get a natural action of $G$ on $T_{\Sigma}$. 

\subsection{The universal group of a graph of group actions}

Suppose $\Sigma$ is a scaffolding for a \ggawo\ $\cG$.  Let us call an automorphism $g$ of the graph $\Sigma$ a \emph{scaffolding automorphism} if both $\sim$ and $\pi$ are invariant under the action of $g$. The set of all scaffolding automorphisms forms a group denoted by $\autsc (\Sigma)$.

Let $g \in \autsc (\Sigma)$ and $v \in \V\Sigma$. Since $g$ preserves $\sim$ we see that $g([v]) = [g(v)]$. Moreover, since $g$ preserves $\pi$ we also see that $\pi(\bun v) = \pi (\bun {gv})$. The \emph{local action} of $g$ at $\bun v$ is the unique permutation $g_{\bun v}\colon X(\pi(\bun v)) \to X(\pi(\bun v))$ such that $pg(w) = g_{\bun v} p(w)$ for every $w \in [v]$; in other words, $g_{\bun v}$ is the unique map which makes the diagram in Figure~\ref{fig:diagram-localaction} commute.  Since $p_{\bun v}$ is a bijection, we see that $g_{\bun v}=pg(p_{\bun v})^{-1}$.

\begin{figure}
    \centering
    \begin{tikzcd}[column sep=huge]
        {[v]}
        \arrow [r,"p"] 
        \arrow [d,"g"] & 
        X(\pi(\llbracket v\rrbracket))   
        \arrow [d,"g_{\bun v}"]
        \\
        {f([v])}
        \arrow [r,"p"] 
        & 
        X(\pi(\llbracket v\rrbracket))  
    \end{tikzcd}
    \caption{Commutative diagram for the local action $g_{\bun v}$ of $g \in \autsc(\Sigma)$ at $\bun v \in \V T_{\Sigma}$.}
    \label{fig:diagram-localaction}
\end{figure}

\begin{defi}
    \label{DUniversalgroup}
    The \emph{universal group} of a \ggawo\ $\cG$, denoted by $\cU(\cG)$, is the group consisting of all scaffolding automorphisms of $\Sigma$ such that $g_{\bun v} \in G(\pi(\bun v))$ for every $\bun v \in \V T_\Sigma$.
\end{defi}
 
Of course we have to show that scaffoldings exist and that $\cU(\cG)$ is indeed a group. This will be done in the next section. Moreover, in Section~\ref{SExtension} it is shown that the universal group a \ggawo\  does not depend on the choice of a scaffolding.

\section{Existence of scaffoldings and universal groups}\label{SExistence}

In this section it is shown that every \ggawo\ $\cG$ has a scaffolding.  This is done by constructing the so-called \emph{canonical scaffolding}. The construction starts with a tree $T$ (a $\Star$-covering tree, as defined in Section~\ref{SStar-covering}, for the augmented base graph $\Delta_+$ defined in Section \ref{SAugmented}) and a graph morphism $\pi \colon T \to \Delta$. Each vertex $v\in \V T$ is then replaced by 
an empty graph whose vertex set is a copy of $X(\pi(v))$ and each edge $\{a, \ba\} \in \E T$ is replaced by a matching between some $\pi(a)$- and $\pi(\ba)$-adhesion sets.

\subsection{The augmented base digraph}\label{SAugmented}

For an arc $a$ in $\Star_\Delta(v)$ define {\em the index of $a$}, denoted by $i(a)$, as the number of distinct $a$-adhesion sets.  That is to say, the index is the number of distinct $G(v)$-translates in $X(v)$ of the set $\Phi_a(Y(a))$ and is thus equal to the index of the subgroup $G(v)_{\{\Phi_a(Y(a))\}}$ in $G(v)$.  Note that we do not assume that the indices $i(a)$ and $i(\ba)$ are equal. Construct a digraph $\Delta_+$ that has the same vertex set as $\Delta$ and the arcs in $\Delta$ are also arcs in $\Delta_+$, but for each arc $a$ we add $i(a)-1$ new arcs, all with the same origin and terminus as $a$.  Thus, $\Delta$ is a subdigraph of $\Delta_+$.   
The digraph $\Delta_+$ is completely determined by $\cG$ and is called the \emph{augmented base digraph} for $\cG$.    

We say that $a\in \A\Delta$ is the \emph{original arc} for itsefl and the $i(a)-1$ arcs added because of $a$.   Define a map $\rho\colon   \A\Delta_+\to \A\Delta$ so that $\rho(b)=a$ if $a$ is the original arc for $b$.  

\subsection{Star-covering trees}\label{SStar-covering}

\begin{defi}\label{DStar-covering-tree}
A {\em $\Star$-covering tree} for a connected digraph $\Gamma$ is a tree $T$ that admits a surjective digraph morphism $\pi\colon  T\to \Gamma$ such that for every $x \in \V T$ the restriction of $\pi$ to $\Star_T(x)$ is a bijection to $\Star_{\Gamma}(\pi(x))$.  A map $T\to \Gamma$ satisfying the above conditions is called a \emph{$\Star$-covering map}. 
\end{defi}

%\textcolor{purple}{RGM:  There are equivalent results in the literature.  Would it be better to quote one of them and cut out a page or two from our paper?  Parts of the proof of Corollary 3.3 could also be omitted.}

%\textcolor{red}{FL: I think the self-contained treatment is more valuable than cutting a page or two; the paper will be very long either way.}

\begin{lemm}\label{LExistence-Star-covering-tree}
Let $\Gamma$ be a connected digraph. There exists a $\Star$-covering tree for $\Gamma$ if and only if for every $a \in \A \Gamma$ there is $a' \in \A \Gamma$ such that $o(a') = t(a)$ and $t(a') = o(a)$ (i.e.\ there exists a \lq\lq reverse\rq\rq\  for every arc).
Moreover, if there exists a $\Star$-covering tree for $\Gamma$, then any two $\Star$-covering trees for $\Gamma$ are isomorphic.
\end{lemm}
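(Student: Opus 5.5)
We first prove necessity, then build a $\Star$-covering tree explicitly when reverses exist, and finally prove uniqueness by a back-and-forth construction from a root. Throughout, $T$ is a simple graph, so every arc $b$ of $T$ has a reverse $\bb$, and we regard $T$ as a digraph when we apply $\pi$.

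\emph{Necessity.} Let $\pi\colon T\to\Gamma$ be a $\Star$-covering map and let $a\in\A\Gamma$. By surjectivity there is a vertex $x\in\V T$ with $\pi(x)=t(a)$. Since $\pi$ maps $\Star_T(x)$ bijectively onto $\Star_\Gamma(t(a))$, there is an arc $b\in\Star_T(x)$ with $\pi(b)=a$. Put $a'=\pi(\bb)$. Because $\pi$ is a digraph morphism, $o(a')=\pi(o(\bb))=\pi(t(b))=t(a)$ and $t(a')=\pi(t(\bb))=\pi(o(b))=o(a)$. So $a'$ is the required reverse.

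\emph{Sufficiency.} For every arc $a$ fix a reverse $a^*$, and fix a base vertex $v_0\in\V\Gamma$. We build $T$ in layers $L_0,L_1,\dots$, together with $\pi$.
\begin{itemize}
\item $L_0$ is a single root $x_0$ with $\pi(x_0)=v_0$.
\item For every $a\in\Star_\Gamma(v_0)$, add a child $y_a$ with $\pi(y_a)=o(a)$. Join it to $x_0$ by an edge whose arc $y_a\to x_0$ maps to $a$ and whose arc $x_0\to y_a$ maps to $a^*$. This is consistent, since $o(a^*)=t(a)=v_0$ and $t(a^*)=o(a)$.
\item Suppose $y$ is a non-root vertex and its parent arc $y\to\parent(y)$ maps to $a$. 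The arc $\parent(y)\to y$ already maps to $a^*\in\Star_\Gamma(\pi(y))$. For each $c\in\Star_\Gamma(\pi(y))\setminus\{a^*\}$, add a new child $z_c$ with $\pi(z_c)=o(c)$, the arc $z_c\to y$ mapping to $c$ and the arc $y\to z_c$ mapping to $c^*$.
\end{itemize}
The union of the layers is connected. It has no cycles because every new vertex is attached by exactly one edge. By construction $\pi$ is a digraph morphism restricting to a bijection $\Star_T(x)\to\Star_\Gamma(\pi(x))$ at every vertex $x$. For surjectivity, induct along a path in $\Gamma$ from $v_0$: if a vertex $u$ lies in the image, then every arc of $\Star_\Gamma(u)$ is hit, so every neighbour of $u$ is hit as well. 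Connectedness of $\Gamma$ then gives surjectivity on vertices, and the star bijections give surjectivity on arcs.

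\emph{Uniqueness.} This is where the main subtlety lies. Two covering trees $(T,\pi)$ and $(T',\pi')$ may use different reverses: $\pi(\bb)$ and $\pi'(\bb')$ can be distinct reverses of the same arc of $\Gamma$. Hence we cannot expect an isomorphism that commutes with $\pi$ on arcs. We only aim for a tree isomorphism $\varphi$ with $\pi'\varphi=\pi$ on vertices.

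Choose roots $x_0\in\V T$ and $x_0'\in\V T'$ with $\pi(x_0)=\pi'(x_0')$, which is possible by surjectivity. At the roots, match the children through the star bijections; this preserves $\pi$-images of the children. Now suppose non-root vertices $y\mapsto y'$ have been matched with $\pi(y)=\pi'(y')=:u$, and their parents have also been matched. The children of $y$ are in bijection, via the images of the arcs from the children into $y$, with $\Star_\Gamma(u)\setminus\{\pi(\bb)\}$, where $b$ is the parent arc of $y$; similarly for $y'$. The two removed arcs $\pi(\bb)$ and $\pi'(\bb')$ may differ, but both have origin $\pi(\parent(y))=\pi'(\parent(y'))$.

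So for every vertex $w$ of $\Gamma$, the number of arcs from $w$ into $u$ that remain on the two sides agrees. For finite multiplicities this is clear. For infinite multiplicities it holds because $\kappa-1=\kappa$ for an infinite cardinal $\kappa$. Therefore there is an origin-preserving bijection between the two remaining sets, and we use it to match the children of $y$ and $y'$ with equal $\pi$-images. Continuing layer by layer yields a bijection on vertices that preserves adjacency, hence an isomorphism $T\cong T'$.
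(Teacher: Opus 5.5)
Your proposal is correct and follows essentially the same route as the paper. Necessity goes via $a'=\pi(\bb)$. Sufficiency grows the tree outward from a root, giving each non-root vertex $y$ children indexed by $\Star_\Gamma(\pi(y))$ minus the arc already used by the edge to its parent; the paper phrases this as gluing copies of $\TStar_\Gamma$. Uniqueness uses a root-outward matching of neighbours that preserves vertex labels and sends parent to parent, and your explicit remarks that the isomorphism can only respect $\pi$ on vertices and that $\kappa-1=\kappa$ handles infinite multiplicities spell out points the paper leaves implicit.
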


\begin{proof}
First recall that a tree $T$ is by definition an undirected graph. Suppose $\pi: T\to \Gamma$ is a digraph morphism.  If $a$ is an arc in $\Gamma$ and $a=\pi(b)$ we set $a'=\pi(\bb)$ and then $o(a') = t(a)$ and $t(a') = o(a)$, i.e.\ $a$ has a \lq\lq reverse\rq\rq.    So, if $a$ is an arc in $\Gamma$ and there is no arc $a'$ in $\Gamma$ such that $o(a') = t(a)$ and $t(a') = o(a)$ then $a$ cannot be contained in the image of a digraph morphism $\pi\colon T \to \Gamma$ and thus a $\Star$-covering tree for $\Gamma$ cannot exist.  Whence the condition that there exists a \lq\lq reverse\rq\rq\ for every arc is necessary.

Assume that there exists a \lq\lq reverse\rq\rq\ for every arc in $\Gamma$.
We use induction to construct a $\Star$-covering tree $T$ and a $\Star$-covering map $\pi$. For a vertex $v$ in $\Gamma$ define $\TStar_\Gamma(v)$ as a digraph that consists of a vertex $v_*$ and for each arc $a$ in $\Star_{\Gamma}(v)$ we have one vertex $v_a$ and one arc $b_a$ such that $o(b_a)=v_a$ and $t(b_a)=v_\ast$.
Define a labelling $\ell$ on the vertices and arcs of $\TStar_{\Gamma}(v)$ so that $\ell(v_\ast) = v$, $\ell(v_a) = o(a)$ and $\ell(b_a) = a$ for every $a \in \Star_\Gamma(v)$.

Choose a vertex $v_0$ in $\Gamma$.  Define $T_0$ as the labelled digraph $\TStar_\Gamma(v_0)$. Our induction hypothesis is that a labelled digraph $T_i$ has been defined (both vertices and arcs labelled) such that the underlying undirected graph is a tree and that if $v$ is a vertex in $T_i$ such that $\Star_{T_i}(v)$ is non-empty then the labelling gives an isomorphism between the subdigraph spanned by $\Star_{T_i}(v)$ and $\TStar_\Gamma(\ell(v))$, but if $u$ is a vertex in $T_i$ such that $\Star_{T_i}(u)$ is empty then there is a unique arc $b$ in $T_i$ such that $o(b)=u$. 

Let $u$ be a vertex of $T_i$ such that $\Star_{T_i}(u)$ is empty, let $b$ be the unique arc in $T_i$ such that $o(b)=u$, and let $v=t(b)$.  Set $u'=\ell(u)$, $b' = \ell(b)$ and $v'=\ell(v) = t(b')$.  Pick an arc $b'' \in \A \Gamma$ such that $o(b'') = v'$ and $t(b'') = u'$.  (In the proof of Corollary~\ref{CCompatible-Star-covering-map} below we return to this point.)  Take a copy $T_{u'}$ of $\TStar_\Gamma(u')$, with the labelling,  and \lq\lq glue\rq\rq\ to $T_i$ identifying $v_{b''}$ with $v$ and $u'_\ast$ with $u$.  Let the new vertices and arcs keep their labels from $T_{u'}$.  Repeat this for every arc $b$ of $T_i$ for which there is no reverse arc in $T_i$ to obtain a digraph $T_{i+1}$.  It is obvious that $T_{i+1}$ satisfies the conditions in the induction hypothesis.

Using this method we construct an increasing sequence $T_0, T_1, T_2, \ldots$ of labelled digraphs. The underlying simple graph of each of these digraphs is a tree. Moreover, for every $b \in \A T_i$ there is $b' \in \A T_{i+1}$ such that $o(b') = t(b)$ and $t(b') = o(b)$, and this arc $b'$ is unique. 
%$o(a)$ is not a leaf in $T_i$ then there exists a unique arc $\ba$ in $T_i$ such that $o(\ba)=t(a)$ and $t(\ba)=o(a)$.    
Define $T$ as the union of this sequence.  From the construction it is clear that $T$ is a tree.

Define a map $\pi \colon T \to \Gamma$ such that if $x$ is a vertex or an arc in $T$ then $\pi(x) = \ell(x)$.  This map is clearly a digraph morphism and if $v$ is a vertex in $T$ then the restriction of $\pi$ to $\Star_T(v)$ induces a bijection $\Star_T(v)\to \Star_\Gamma(\pi(v))$.  To show that $\pi$ is surjective, note that if $v \in \V \Gamma$ appears as a label in $T_i$ then $v$ is in the image of $\pi$ and hence every arc in $\Star_\Gamma(v)$ is also in the image of $\pi$. Since $\Gamma$ is assumed to be connected, this implies that all of $\Gamma$ is in the image of $\pi$.

Hence $\Star$-covering trees for $\Gamma$ exist, and it only remains to show that any two $\Star$-covering trees are isomorphic. Suppose $T$ and $T'$ are two $\Star$-covering trees for $\Gamma$ with $\Star$-covering maps $\pi$ and $\pi'$, respectively. Label each $v \in \V T$ with label $\pi(v)$ and each $v' \in \V T'$ with label $\pi'(v')$ (no need to label the arcs). For $u,v \in \V \Gamma$ denote by $i(u,v)$ the number of arcs in $\Gamma$ with $o(a) = u$ and $t(a) = v$. Since $\pi$ and $\pi'$ are $\Star$-covering maps, every vertex in $T$ and $T'$ with label $v$ has precisely $i(u,v)$ neighbours with label $u$. 
Pick $v_0 \in \V T$ and $v_0' \in \V T'$ with the same label. Think of the vertices $v_0$ and $v_0'$ as the roots of the trees $T$ and $T'$, respectively.  Let $\parent$ and $\parent'$ denote the respective parent maps as in Definition~\ref{Droot}.  Suppose we have defined a label preserving isomorphism $\alpha_i: B_T(v_0, i)\to B_{T'}(v'_0, i)$. Extend $\alpha_i$ to a label preserving map $\alpha_{i+1}: B_T(v_0, i+1)\to B_{T'}(v'_0, i+1)$ using the following procedure: If $v$ is a leaf of the subtree spanned by $B_T(v_0, i)$ then we define $\alpha_{i+1}$ on $N(v)$ (the set of vertices in $T$ adjacent to $v$) by taking any label preserving bijection from $N(v)$ to $N(\alpha_i(v))$ mapping $\parent(v)$ to $\parent(\alpha_i(v))$.  It is not hard to see that in this way we get a label preserving isomorphism $\alpha \colon T \to T'$.
\end{proof}

Now let $\cG$ be a \ggawo.
The next step is to find a $\Star$-covering tree $T$ of $\Delta_+$ which also takes into account the graph structure of $\Delta$. A $\Star$-covering map $\pi_+ \colon T \to \Delta_+$ is said to be \emph{compatible} with $\mathcal G$ if the digraph morphism $\pi=\rho \pi_+\colon  T\to \Delta$ is a graph morphism, in other words, $\pi(\rev a) = \rev{\pi(a)}$ for every $a \in  \A T$. A minor modification of the proof of Lemma \ref{LExistence-Star-covering-tree} gives a $\Star$-covering tree with a compatible $\Star$-covering map.

\begin{coro}
    \label{CCompatible-Star-covering-map}
    For every \ggawo\  $\mathcal G$ there exists a $\Star$-covering tree $T$ of $\Delta_+$ and a $\Star$-covering map $\pi_+ \colon T \to \Delta_+$ which is compatible with $\mathcal G$.
    Moreover, if $\pi_+$ and $\pi_+'$ are two $\Star$-covering maps $T\to \Delta_+$ compatible with $\mathcal G$, then there is an automorphism $\alpha$ of $T$ such that $\rho\pi_+ = \rho\pi_+'\alpha$.
\end{coro}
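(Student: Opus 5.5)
We follow the inductive construction in the proof of Lemma~\ref{LExistence-Star-covering-tree}, applied to $\Gamma=\Delta_+$, and choose the reverse arcs with care. The lemma applies because every arc $b\in\A\Delta_+$ has a \lq\lq reverse\rq\rq: if $\rho(b)=a$, then $\ba\in\A\Delta\subseteq\A\Delta_+$ satisfies $o(\ba)=t(b)$ and $t(\ba)=o(b)$. The only freedom in that proof is at the marked step. There we have a leaf $u$ with outgoing arc $b$, labelled $b'=\ell(b)$, and we must choose an arc $b''$ of $\Delta_+$ from $v'$ to $u'$. The copy of $\TStar_{\Delta_+}(u')$ is then glued so that the new arc $b_{b''}$, going from $v$ to $u$, is the reverse of $b$ in $T$ and is labelled $b''$. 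We choose $b''$ to be an arc with $\rho(b'')=\rev{\rho(b')}$. This is possible: $\rev{\rho(b')}$ is an arc of $\Delta$ from $v'$ to $u'$, and $\rho$ is the identity on $\A\Delta$. With this choice, $\rho\ell(\bb)=\rev{\rho\ell(b)}$ holds for every arc $b$ together with its reverse $\bb$ in $T$.

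Every edge of $T$ arises exactly once in this way: an arc $b$ of some $T_i$ without a reverse, together with the arc added at stage $i+1$. The construction is symmetric in this sense, since the reverse of the reverse is $b$ again. Hence $\pi=\rho\pi_+$ commutes with reversal on all of $\A T$. The remaining properties, namely that $T$ is a tree, that $\pi_+$ restricts to bijections on stars, and that $\pi_+$ is surjective, are inherited verbatim from the lemma. So $\pi_+$ is a compatible $\Star$-covering map.

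For the uniqueness statement, let $\pi_+,\pi_+'\colon T\to\Delta_+$ both be compatible. Then $T$ is a $\Star$-covering tree via $\pi_+$ and also via $\pi_+'$. We build $\alpha\colon T\to T$ ball by ball, as in the second half of the lemma's proof. This time we label arcs as well as vertices, and we build an automorphism with $\pi_+'\alpha=\pi_+$ on all arcs directed towards the root. Such an $\alpha$ can then be modified so that it is exact on those arcs.

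Fix roots $x_0,x_0'$ with $\pi_+(x_0)=\pi_+'(x_0')$. Suppose $\alpha$ is defined on $B(x_0,i)$ and $x$ is at distance $i$. For $x\neq x_0$, the arcs of $\Star_T(x)$ other than the one from $\parent(x)$ must be sent bijectively to the arcs of $\Star_T(\alpha x)$ other than the one from $\parent'(\alpha x)$, matching $\pi_+$-labels with $\pi_+'$-labels. Both stars map bijectively onto $\Star_{\Delta_+}(\pi_+(x))$, so it suffices that the arc from the parent carries the same label on both sides. That condition is not automatic for $\pi_+$; it holds only for $\rho\pi_+$. This is the point where we only get $\rho\pi_+=\rho\pi_+'\alpha$ rather than equality of the $\pi_+$'s.

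So we weaken the inductive invariant and require only the following. First, $\pi_+'(\alpha x)=\pi_+(x)$ on vertices. Second, $\pi_+'\alpha(c)=\pi_+(c)$ for arcs $c$ pointing towards the root. Third, $\rho\pi_+'\alpha(c)=\rho\pi_+(c)$ for arcs $c$ pointing away from the root.

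Now the arc $c$ from $\parent(x)$ to $x$ has $\rho$-label $\rev{\rho\pi_+(\bar c)}$ by compatibility. Its image has $\rho$-label $\rev{\rho\pi_+'(\alpha\bar c)}$, which is the same. The two stars minus these arcs are therefore in label-preserving bijection, since removing arcs with the same $\rho$-image from two copies of $\Star_{\Delta_+}(\pi_+(x))$ leaves sets that biject respecting $\rho$. We choose such a bijection, matching $\pi_+$-labels exactly where possible. Extending over the union of the balls gives a tree automorphism $\alpha$ with $\rho\pi_+=\rho\pi_+'\alpha$.

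The main obstacle I expect is exactly this bookkeeping. One has to realise that only the $\rho$-level identity can be propagated, and check that it suffices for matching the children.
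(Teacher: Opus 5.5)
Your existence argument is the paper's: choose $b''$ with $\rho(b'')=\rev{\rho(b')}$. Your plan for uniqueness is also the paper's: grow $\alpha$ ball by ball, use compatibility to see that the forced image of the parent arc has the right $\rho$-label, then match the remaining arcs of the two stars fibre by fibre over $\rho$.

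The problem is the inductive invariant you state. Its second condition, $\pi_+'\alpha(c)=\pi_+(c)$ for arcs $c$ pointing towards the root, cannot be maintained. At $x\neq x_0$ the arc $c$ from $\parent(x)$ to $x$ is forced to go to the arc $\alpha(c)$ from $\parent'(\alpha x)$ to $\alpha x$. Compatibility only gives $\rho\pi_+(c)=\rho\pi_+'(\alpha c)$. Suppose $\pi_+(c)\neq\pi_+'(\alpha c)$. Then the child arc of $x$ with $\pi_+$-label $\pi_+'(\alpha c)$ cannot go to an arc with the same $\pi_+'$-label, because in $\Star_T(\alpha x)$ that label belongs to $\alpha(c)$. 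Child arcs point towards the root, so your second condition fails.

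For a concrete case, let $\Delta$ consist of two vertices joined by one edge $\{a,\ba\}$ with $i(a)=i(\ba)=2$. Then $T$ is a bi-infinite path $\ldots,x_{-1},x_0,x_1,\ldots$. Let $\pi_+'$ agree with $\pi_+$ except that the two labels on $\Star_T(x_1)$ are swapped. Checking the label-preserving translations and reflections of the path shows that, with root $x_0$, no automorphism satisfies your second condition. Your phrases ``matching $\pi_+$-labels exactly where possible'' and ``can then be modified so that it is exact on those arcs'' contradict the invariant rather than establish it.

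The repair is to drop exactness. Require only $\pi_+'\alpha=\pi_+$ on vertices and $\rho\pi_+'\alpha=\rho\pi_+$ on all arcs of the ball. Your inductive step only uses the $\rho$-label of $\bc$, so it goes through unchanged. The $\rho$-fibre count in the two stars after removing the parent arcs is correct, including for infinite indices. The vertex condition for the new vertices follows because $\rho$ preserves origins. With this invariant your argument is exactly the paper's proof.
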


\begin{proof}
    For the first part, note that one can choose $b''\in\A\Delta_+$ in the proof of Lemma \ref{LExistence-Star-covering-tree} such that $\pi(b'') = \rev{\pi(b)}$. 

    Now we prove the second claim in the corollary.  Let  $\pi_+$ and $\pi_+'$ be two $\Star$-covering maps compatible with $\mathcal G$. Since $\pi_+$ is a $\Star$-covering map, for every $w \in \V T$  with $\pi_+(w) = v$ and every arc $a \in \Star_\Delta(v)$ there are exactly $i(a)$ arcs $b \in \Star_T(w)$ such that $\rho\pi_+(b)=a$. Similarly, for every $w \in \V T$  with $\pi_+'(w) = v$ and every arc $a \in \Star_\Delta(v)$ there are exactly $i(a)$ arcs $b \in \Star_T(w)$ such that $\rho\pi_+'(b)=a$.

    Pick $v_0,v_0' \in \V T$ with $\pi_+(v_0) = \pi_+'(v_0')$. Let $\parent, \parent'$ denote the parent maps defined by rooting $T$ at $v_0$ and $v_0'$, respectively, see Definition~\ref{Droot}.  Set $\alpha(v_0) = v_0'$.  Assume that $\alpha$ has been defined on $B_T(v, n)$ and that $B_T(v_0, n)$ is mapped bijectively to $B_T(v_0', n)$ such that $\rho\pi_+ (x)= \rho\pi_+'\alpha(x)$ for every $x\in B_T(v_0, n)$.  Extend the definition of $\alpha$ to $B_T(v_0, n+1)$ so that if $d_T(v_0, v)=n$ then the restriction of $\alpha$ to $N(v)$ is any bijection from $N(v)$ to $N(\alpha(v))$ such that $\alpha (\parent(v)) = \parent'(\alpha(v))$ and $\rho\pi_+(b) = \rho\pi'_+\alpha(b)$ for every $b\in \Star_T(v)$. It is not hard to see that in this way we get an automorphism $\alpha \colon T \to T$ with $\rho\pi_+ = \rho\pi_+'\alpha$.
\end{proof}

\begin{remark}\label{RCompatible}
\begin{enumerate}[label=(\roman*)]
\item In Section~\ref{SInversions} we remove the restriction that the base graph of a graph of group actions has no self-reverse arcs.  If it so happens in the construction of the $\Star$-covering tree that $\rho(b')$ is a self-reverse arc in $\Delta$ then we can simply choose $b''=b'$.   Thus Corollary~\ref{CCompatible-Star-covering-map} easily generalizes to the case when our base graph has self-reverse arcs.
\item An astute reader familiar with Bass--Serre theory will realize that the method described above could be used to construct the Bass--Serre tree for a graph of groups, see \cite[Section 5.3]{Serre2003} and \cite[Definition 1.16]{Bass1993}.   
The constructions given in \cite{Serre2003} and \cite{Bass1993} start with a graph of groups and use group theory.  Similar concepts as $\Star$-covering trees can also be found in e.g.\ \cite[(5.3)]{Tits1970},  and \cite[Definition 3.4]{ReidSmith2020} and again the constructions can be described as \lq\lq algebraic\rq\rq.   The aim   with the different formulation and the different proof here is to highlight the graph theoretical nature of this concept.  
\end{enumerate}
\end{remark}

The augmented base digraph $\Delta_+$ for a \ggawo\ $\cG$ is uniquely determined by $\cG$.  Thus a $\Star$ covering tree for $\Delta_+$ is uniquely determined by $\cG$.

\begin{defi}
Let $\cG$ be a \ggawo.   A $\Star$-covering tree for $\Delta_+$ is called the \emph{associated tree} for $\cG$ and denoted by $T_{\cG}$.     
\end{defi}
The following lemma relates scaffoldings and associated tree.

\begin{lemm}\label{lem:scaffolding-tree}
Suppose $\Sigma$ is a scaffolding for a \ggawo\  $\cG$.   Then the tree $T_\Sigma$ is a $\Star$-covering tree for $\Delta_+$ and thus $T_{\Sigma}=T_{\cG}$. Moreover, the $\Star$-covering map can be chosen to be compatible with $\cG$.
\end{lemm}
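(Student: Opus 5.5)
We need to construct $\pi_+\colon T_\Sigma\to\Delta_+$ by lifting the given graph morphism $\pi\colon T_\Sigma\to\Delta$ through $\rho$. We then check that the lift is a $\Star$-covering map compatible with $\cG$. After that, the uniqueness part of Lemma~\ref{LExistence-Star-covering-tree} gives $T_\Sigma\cong T_\cG$.

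The key step is a local count at a vertex $\bun v\in\V T_\Sigma$, with $x=\pi(\bun v)$. We show that for each arc $b\in\Star_\Delta(x)$ there are exactly $i(b)$ arcs $\bun a\in\Star_{T_\Sigma}(\bun v)$ with $\pi(\bun a)=b$. To do this, use the last two bullets of condition (iv) of Definition~\ref{DScaffolding}. Each arc $\bun a$ with $t(\bun a)=\bun v$ and $\pi(\bun a)=b$ determines a $b$-adhesion map $\Psi_{\bun a}$, and hence the $b$-adhesion set $\Psi_{\bun a}p([a])=\Psi_{\bun a}(Y(b))$. Conversely, each $b$-adhesion set $S$ arises from exactly one such arc. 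So the map $\bun a\mapsto \Psi_{\bun a}(Y(b))$ is a bijection onto the set of $b$-adhesion sets in $X(x)$, which has cardinality $i(b)$ by definition of the index. Since $\pi$ is a graph morphism, every arc of $\Star_{T_\Sigma}(\bun v)$ maps into $\Star_\Delta(x)$. Hence $\Star_{T_\Sigma}(\bun v)$ is partitioned into the fibres over $b\in\Star_\Delta(x)$, each of size $i(b)$.

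Next we define $\pi_+$. On vertices, let $\pi_+=\pi$. Independently for each vertex $\bun v$ and each $b\in\Star_\Delta(\pi(\bun v))$, choose a bijection from the $i(b)$ arcs of $\Star_{T_\Sigma}(\bun v)$ lying over $b$ to the $i(b)$ arcs of $\rho^{-1}(b)\subseteq\A\Delta_+$; these all have the same origin and terminus as $b$. Every arc of $T_\Sigma$ lies in exactly one $\Star$, namely that of its terminus, so these choices glue to a well-defined map on arcs. We then verify three things:
\begin{itemize}
\item $\pi_+$ is a digraph morphism, because $o,t$ of $\pi_+(\bun a)$ agree with those of $\pi(\bun a)$;
\item the restriction of $\pi_+$ to $\Star_{T_\Sigma}(\bun v)$ is a bijection onto $\Star_{\Delta_+}(\pi(\bun v))=\bigsqcup_{b}\rho^{-1}(b)$;
\item $\rho\pi_+=\pi$, which is a graph morphism, so $\pi_+$ is compatible with $\cG$.
\end{itemize}

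Surjectivity of $\pi_+$ requires a short argument. The image of $\pi$ contains some vertex, and by the star bijection it contains all arcs into that vertex and hence all their origins. Connectedness of $\Delta$ (which has the same vertex set as $\Delta_+$) then propagates this to all of $\Delta_+$, as in the proof of Lemma~\ref{LExistence-Star-covering-tree}. Therefore $T_\Sigma$ is a $\Star$-covering tree for $\Delta_+$, and the uniqueness statement of Lemma~\ref{LExistence-Star-covering-tree} identifies it with $T_\cG$.

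The main obstacle is the counting step. We must be careful that the bijection with adhesion sets is genuinely well defined: distinct arcs could a priori give the same adhesion set, and the ``exactly one'' clause of Definition~\ref{DScaffolding} is exactly what rules this out. Beyond that, the argument only requires matching the definitions.
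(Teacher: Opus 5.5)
Your proof is correct and follows essentially the paper's argument. The paper also sets $\pi_+=\pi$ on vertices, uses the fourth and fifth bullets of condition (iv) to get the bijection on $\Star_{T_\Sigma}(\bun v)$, deduces surjectivity from connectedness, and reads off compatibility from $\rho\pi_+=\pi$. The only difference is cosmetic: the paper sends $\bun a$ directly to the arc of $\Delta_+$ paired with the adhesion set $p(t([a]))$, whereas you count the fibres and choose arbitrary bijections onto $\rho^{-1}(b)$, which is equally valid.
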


\begin{proof}  To show that $T_\Sigma=T_{\cG}$ we just have to define a $\Star$-covering map $\pi_+: T_\Sigma\to \Delta_+$.  Since $\V\Delta_+ = \V \Delta$ we can define $\pi_+$ on the vertex set of $T_\Sigma$ to be equal to the graph homomorphism $\pi\colon T_\Sigma \to \Delta$ which exists by the definition of a scaffolding.  

Suppose now that $\llbracket a\rrbracket$ is an arc in $T_\Sigma$ and $\llbracket v\rrbracket=t(\llbracket a\rrbracket)$.  Then $p(t([a]))$ is a $\pi(\llbracket a\rrbracket)$-adhesion set in $X(\pi(\llbracket v\rrbracket))$ and we define $\pi_+(\llbracket a\rrbracket)$ as the arc in $\Delta_+$ corresponding to that adhesion set.  It follows from the fifth item in condition (iv) in Definition~\ref{DScaffolding} that the restriction of $\pi_+$ to $\Star_{T}(\llbracket v\rrbracket)$ maps $\Star_T(\llbracket v\rrbracket)$ bijectively to $\Star_{\Delta_+}(\pi(\llbracket v\rrbracket))$.  Since $\Delta_+$ is connected this allows us to conclude that $\pi_+$ is surjective.  Hence, $\pi_+:  T_\Sigma\to \Delta_+$ is a $\Star$-covering map. 

It follows from the definition of $\pi_+$ above that $\pi=\rho\pi_+$.  As $\pi$ is a a graph homomorphism it is also easy to see that $\pi_+$ is compatible with~$\cG$ as claimed.
\end{proof}

\subsection{Canonical scaffoldings}\label{sec:canonical}

We now define a special type of scaffoldings for a \ggawo\ $\cG$, the so-called \emph{canonical scaffoldings},  and thus show that scaffoldings do exist.   

Let $T=T_{\cG}$ be a $\Star$-covering tree for $\Delta_+$ with $\pi_+:T\to \Delta_+$ a $\Star$-covering map compatible with $\cG$, and set $\pi=\rho\pi_+$. 
For an arc $a$ in $\Delta$, pair the arcs in $\rho^{-1}(a)$ with the  $a$-adhesion set in $X(t(a))$, and call an $a$-adhesion map onto the adhesion set paired with $b\in\rho^{-1}(a)$ an $a$-adhesion map \emph{of type b}.   For each arc $b\in \rho^{-1}(a)\setminus\{a\}$ fix an adhesion map $\Phi_b: Y(a)\to X(t(a))$ of type $b$. Choose $\gamma_b \in G(t(a))$ so that $\Phi_b=\gamma_b\Phi_a$.  We may assume that if $a\in \A\Delta$ then $a$ is paired with $\Phi_a(Y(a))$ and $\gamma_a$ is the identity.  The set $\{\gamma_b\}_{b\in \rho^{-1}(a)}$ is a complete set of coset representatives for $G(v)_{\{\Phi_a(Y(a))\}}$ in $G(v)$.   
The \emph{augmented \ggawo} is the digraph $\Delta_+$ with the vertex and arc actions together with the chosen maps $\Phi_b$ for $b\in \A\Delta_+$.

Define $\Sigma$ as a graph with 
\[\V\Sigma=\bigsqcup_{v\in \V\Delta} \pi_+^{-1}(v)\times X(v)\] 
and 
\[\A\Sigma=\bigsqcup_{a\in \A\Delta} \pi_+^{-1}(a)\times Y(a).\] 
For an arc  $(a, y)$ in $\Sigma$ set $o(a, y)=(o(a), \Phi_{\pi_+(\ba)}(y))$ and  $t(a,y)=(t(a), \Phi_{\pi_+(a)}(y))$.      

Define a graph equivalence relation $\sim$ on $\Sigma$ by saying that two elements from $\V\Sigma\cup\A\Sigma$ are equivalent if and only if their first coordinates are equal.  With this definition it is clear that $T=\Sigma/\!\!\sim$. If $v\in \V T$ and $a\in \A T$ the corresponding vertex and arc bundles are $\Sigma_v=\{(v,x)\in \V\Sigma\mid x\in X(\pi(v))\}$ and $\Sigma_a=\{(a,y)\in \A\Sigma\mid y\in Y(\pi(a))\}$, respectively.

Finally, define $$p\colon \V\Sigma\cup\A\Sigma \to \left(\bigcup _{v \in \V\Delta} X(v)\right)\cup \left(\bigcup _{a \in \A\Delta} Y(a)\right)$$ 
such that if $(s,z)$ is in $\V\Sigma\cup\A\Sigma$ then $p(s,z)=z$. In other words, $p$ is the projection onto the second coordinate.

\begin{lemm}\label{lem:canonical-scaffolding}
The graph $\Sigma$ together with the the equivalence relation $\sim$
is a scaffolding for $\cG$ in the sense of Definition~\ref{DScaffolding} (witnessed by the maps $\pi$ and $p$ defined above).  This scaffolding is called a \emph{canonical scaffolding} for $\cG$.  
\end{lemm}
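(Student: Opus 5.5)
We verify that $(\Sigma,\sim,\pi,p)$ is a graph and then check the conditions of Definition~\ref{DScaffolding} one at a time, in the order (i)--(iv). Most conditions follow directly from the coordinate description of $\Sigma$. The substance lies in the adhesion conditions, which rely on the choice of the maps $\Phi_b$ and on $\pi_+$ being a $\Star$-covering map.

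\textbf{$\Sigma$ is a graph.} We define the reversal on arcs of $\Sigma$ by $\rev{(a,y)}=(\ba,y)$. This is well defined because $\pi$ is compatible with $\cG$, so $\pi(\ba)=\rev{\pi(a)}$, and because $Y(\rev{\pi(a)})=Y(\pi(a))$. It is an involution, and
\[
o(\ba,y)=(o(\ba),\Phi_{\pi_+(a)}(y))=(t(a),\Phi_{\pi_+(a)}(y))=t(a,y).
\]
Here we must check that $\Phi_{\pi_+(\ba)}(y)\in X(\pi(o(a)))$. This holds because $\rho\pi_+(\ba)=\rev{\pi(a)}$ has terminus $\pi(t(\ba))=\pi(o(a))$. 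The relation ``same first coordinate'' is a graph equivalence relation, since $o,t$ and reversal act on first coordinates exactly as in $T$. Hence $\Sigma/\!\!\sim\,\cong T$, which is a tree, and this gives (i). Condition (ii) holds because every arc $(a,y)$ joins the bundles $\Sigma_{o(a)}$ and $\Sigma_{t(a)}$, and $o(a)\neq t(a)$ since $T$ is a tree. For (iii) we take $\pi=\rho\pi_+$, which is a graph morphism by compatibility.

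\textbf{Bijection and reversal conditions in (iv).} Since $p$ is the second-coordinate projection, it restricts to a bijection $\Sigma_v\to X(\pi(v))$ and to a bijection $\Sigma_a\to Y(\pi(a))$. It also satisfies $p(a,y)=y=p(\ba,y)$, which is the reversal compatibility.

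\textbf{Adhesion-map condition.} For each bundle $\Sigma_a$, the map $\Psi_a$ is characterised by $\Psi_a(y)=p\,t(a,y)=\Phi_{\pi_+(a)}(y)$. Writing $\Phi_{\pi_+(a)}=\gamma_{\pi_+(a)}\Phi_{\pi(a)}$ with $\gamma_{\pi_+(a)}\in G(t(\pi(a)))$, we see that $\Psi_a$ is a $\pi(a)$-adhesion map. Its image is the adhesion set paired with $\pi_+(a)$.

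\textbf{Uniqueness condition (the main step).} Let $v\in\V T$ and let $b\in\A\Delta$ with $t(b)=\pi(v)$. Let $S$ be a $b$-adhesion set. By construction $S$ is paired with exactly one $c\in\rho^{-1}(b)\subseteq\Star_{\Delta_+}(\pi(v))$. Since $\pi_+$ restricts to a bijection $\Star_T(v)\to\Star_{\Delta_+}(\pi(v))$, there is exactly one arc $a\in\Star_T(v)$ with $\pi_+(a)=c$. For this $a$ we have $\pi(a)=b$ and $\Psi_a p(\Sigma_a)=\Phi_c(Y(b))=S$.

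Conversely, suppose $a'\in\Star_T(v)$ satisfies $\pi(a')=b$ and $\Psi_{a'}p(\Sigma_{a'})=S$. Then $\pi_+(a')\in\rho^{-1}(b)$, and its paired adhesion set is $S$. The pairing between $\rho^{-1}(b)$ and the $b$-adhesion sets is a bijection (this is the definition of $i(b)$), so $\pi_+(a')=c$ and therefore $a'=a$.

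This last step is the only place where the augmentation $\Delta_+$ and the bijectivity of the pairing are genuinely used. Care is needed to keep $\pi_+$ (used for choosing adhesion maps) distinct from $\pi$ (used for indexing the actions).
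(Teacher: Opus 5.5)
Your proof is correct and follows the paper's route. Conditions (i)--(iii) and the first three bullets of (iv) are read off from the coordinate description, and $\Psi_a$ is identified with $\Phi_{\pi_+(a)}=\gamma_{\pi_+(a)}\Phi_{\pi(a)}$. The last bullet then comes from pairing $\rho^{-1}(b)$ with the $b$-adhesion sets together with the bijection $\Star_T(v)\to\Star_{\Delta_+}(\pi(v))$. You also make explicit the reversal $\rev{(a,y)}=(\ba,y)$ on $\Sigma$ and the uniqueness half of the last bullet, both of which the paper leaves implicit.
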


\begin{proof}
Conditions (i), (ii), (iii) and the first three conditions in (iv) in Definition~\ref{DScaffolding} follow immediately. 

Suppose $a$ is an arc in $T$.  
The map $\Psi_{a}:Y(\pi(a))\to X(\pi(t(a)))$ defined by the formula $\Psi_{ a}=pt(p_{a})^{-1}$ is clearly equal to the map $\Phi_{\pi_+(a)}$ and is thus a $\pi(a)$-adhesion map and we have show that the fourth condition in (iv) holds..

Suppose $v$ is a vertex in $T$ and $b$ an arc in $\Delta$ such that $t(b)=\pi(v)$ and that $S$ is a $b$-adhesion set in $X(\pi(v))$.   Then there is a unique arc $c$ in $\Delta_+$ such that $\rho(c)=b$ and $S$ is equal to the image of $\Phi_c$.   In $T$ there is a unique arc $a$ such that $t(a)=v$ and $\pi_+(a) = c$.  Then
$\Psi_{a} p_{a}([a])=\Psi_{a}(Y(\pi(a)))=\Phi_{\pi_+(a)}(Y(\pi(a)))=S$ and we have shown that the last condition in part (iv) of Definition~\ref{DScaffolding} holds.
\end{proof}

Using only the definition of canonical scaffoldings and the fact that $\Star$-covering trees are isomorphic,  it is easy to prove that any two canonical scaffoldings for a given \ggawo\  are isomorphic as graphs. In Section~\ref{SExtension} we show something significantly stronger: any two scaffoldings for a \ggawo\ are isomorphic by an isomorphism that is compatible with the actions at the vertices of the \ggawo. This allows us in many cases to replace an arbitrary scaffolding with a canonical scaffolding.  

\subsection{Acceptable isomorphisms and the universal group}\label{SAcceptable}

\begin{defi}\label{DScaffolding-automorphism}
Let $(\Sigma, \sim, \pi, p)$ and $(\Sigma', \sim', \pi', p')$ be scaffoldings for the same \ggawo\  $\cG$.  A \emph{scaffolding isomorphism} is a graph isomorphism $f \colon \Sigma \to \Sigma'$ such that $x\sim y$ if and only of $f(x)\sim'f(y)$, and $\pi(\llbracket x\rrbracket) = \pi' (\llbracket f(x)\rrbracket)$ for all $x, y\in \V\Sigma\cup \A\Sigma$. A \emph{scaffolding automorphism} is a scaffolding isomorphism $\Sigma\to \Sigma$.   The group of all scaffolding automorphisms is denoted by $\autsc(\Sigma)$.     
\end{defi}

Focussing on the special case of scaffolding automorphisms $\Sigma\to \Sigma$ we see that the equivalence relation $\sim$ is an $\autsc(\Sigma)$-congruence and the group $\autsc(\Sigma)$ has an action on $T_\Sigma$ by automorphisms.     If $h\in \autsc(\Sigma)$ then the induced automorphism $T_{\Sigma}\to T_{\Sigma}$ will be denoted by $\tih$.   

The restriction $p_{\llbracket v\rrbracket}$ of $p$ to a vertex bundle $[v]$ is a bijection $[v]\to X(\pi(\llbracket v\rrbracket))$.  Thus the vertex set of $\Sigma$ can be identified with the set $\bigsqcup_{v\in \V\Delta} \pi^{-1}(v)\times X(v)$ so that $w\in \V\Sigma$ is identified with $(\pi(\bun{w}), p(w))$ (compare with the construction of canonical scaffoldings).   Using this identification, it is possible to think of the group $\autsc(\Sigma)$ as a subgroup of the inhomogeneous wreath product $\autsc(\Sigma)^{\V T}\wre_{\V T}\, (\sym(X(v)))_{v\in \V \Delta}$. 
Hence one can write 
$h = [\tih;(h_{\llbracket v\rrbracket})_{\llbracket v\rrbracket \in \V T_\Sigma}]$, where $h_{\llbracket v\rrbracket}$ is a permutation on $X(\pi(\llbracket v\rrbracket))$ defined by $h_{\llbracket v\rrbracket}(x) =  p(h(\llbracket v\rrbracket,x))$ for every $x\in X(\pi(\llbracket v\rrbracket))$.

The following definition extends this idea to the more general setting of scaffolding isomorphisms.

\begin{defi}\label{DAcceptble}  Let $(\Sigma, \sim, \pi, p)$ and $(\Sigma', \sim', \pi', p')$ be scaffoldings for a \ggawo\ $\cG$.
   Suppose that $f \colon \Sigma \to \Sigma'$ is a scaffolding isomorphism. The \emph{local map} of $f$ at $\llbracket v\rrbracket \in V T_\Sigma$ is the permutation $f_{\llbracket v\rrbracket}$ of $X(\pi(\llbracket v\rrbracket))$ such that $f_{\llbracket v\rrbracket}(p(x)) = p'(f(x))$ for every $x \in [v]$, see Figure \ref{fig:diagram-localmap}.  
   For a scaffolding automorphism $f$ of $\Sigma$ the local map at $\llbracket v\rrbracket$ is called the \emph{local action} at $\llbracket v\rrbracket$.      
      We call a scaffolding isomorphism \emph{acceptable} if the local map at $\llbracket v\rrbracket$ is in $G(\pi(\llbracket v\rrbracket))$ for every $\llbracket v\rrbracket \in \V T_{\Sigma}$. 
\end{defi}
\begin{figure}
    \centering
    \begin{tikzcd}[column sep=huge]
        {[v]}
        \arrow [r,"p"] 
        \arrow [d,"f"] & 
        X(\pi(\llbracket v\rrbracket))   
        \arrow [d,"f_{\llbracket v\rrbracket}"]
        \\
        {f([v])}
        \arrow [r,"p'"] 
        & 
        X(\pi(\llbracket v\rrbracket))  
    \end{tikzcd}
    \caption{Commutative diagram defining the local map $f_{[v]}$ of $f$ at $[v]\in \V T_{\Sigma}$.}
    \label{fig:diagram-localmap}
\end{figure}

\begin{remark}
The restriction of $p_{\bun v}$ of $p$ to a vertex bundle $[v]$ in $\Sigma$ is a bijection.   Thus the condition that $f_{\llbracket v\rrbracket}(p(x)) = p'(f(x))$ for every $x \in [v]$, determines $f_{\llbracket v\rrbracket}$ completely and $f_{\llbracket v\rrbracket}=p'f(p_{\llbracket v\rrbracket})^{-1}$.
\end{remark}

Suppose $\Sigma, \Sigma'$ and $\Sigma''$ are scaffoldings for a \ggawo\ $\cG$ and $h:\Sigma\to \Sigma'$ and $h'=\Sigma'\to \Sigma''$ are scaffolding isomorphisms.  Let $p, p'$ and $p''$ denote  legal colourings on $\Sigma, \Sigma'$ and $\Sigma''$, respectively.   Then the local map of $h'h$ at $\bun{v}\in \V T_\Sigma$ is 
\[(h'h)_{\bun{v}}=p''(h'h)(p_{\bun{v}})^{-1}=\big(p''h'(p'_{h(\bun{v})})^{-1}\big)\big(p'h(p_{\bun{v}})^{-1})=h'_{h(\bun{v})}h_{\bun{v}}.\]
If $h$ and $h'$ are both acceptable, then $h_{\bun{v}}\in G(\pi(\bun{v})$ and $h'_{h(\bun{v})}\in G(\pi(h(\bun{v}))=G(\pi(\bun{v})$ and we see that $(h'h)_{\bun{v}}\in G(\pi(\bun{v})$.
Thus the composition of acceptable scaffolding isomorphism yields an acceptable scaffolding isomorphism.  It is also easy to see that the inverse of an acceptable isomorphism is acceptable. Since the identity map $\Sigma\to \Sigma$ is an acceptable scaffolding isomorphism, this confirms that the universal group $\cU(\cG)$ defined in Definition~\ref{DUniversalgroup} is a group under the operation of composition of mappings.  The argument above is the same as one would use to prove that the Burger-Mozes groups defined in \cite[Section~3.2]{BurgerMozes2000} are indeed groups.

\begin{prop}    
    The set $\cU(\cG)$ of acceptable scaffolding automorphisms is a group.
\end{prop}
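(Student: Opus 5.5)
I will check the subgroup criterion inside the group $\autsc(\Sigma)$ of scaffolding automorphisms of a fixed scaffolding $(\Sigma,\sim,\pi,p)$. By Definition~\ref{DUniversalgroup}, $\cU(\cG)$ consists of those $g\in\autsc(\Sigma)$ whose local action $g_{\bun v}=pg(p_{\bun v})^{-1}$ lies in $G(\pi(\bun v))$ for every $\bun v\in\V T_\Sigma$. First, $\autsc(\Sigma)$ is itself a group. Graph automorphisms are closed under composition and inverses. The conditions that $\sim$ is preserved and that $\pi(\bun x)=\pi(\bun{g(x)})$ are also preserved under composition. They are preserved under inversion because $g^{-1}$ is a bijection: substitute $x=g^{-1}(y)$ in each condition.

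Next, the identity lies in $\cU(\cG)$, since its local action at every $\bun v$ is $\id_{X(\pi(\bun v))}$, which belongs to $G(\pi(\bun v))$. For closure under composition I will use the identity already computed in the text:
\[
(h'h)_{\bun v}=h'_{h(\bun v)}\,h_{\bun v},
\]
with $\Sigma=\Sigma'=\Sigma''$ and $p=p'=p''$. Since $h$ is a scaffolding automorphism, $\pi(h(\bun v))=\pi(\bun v)$. Hence both factors lie in the same group $G(\pi(\bun v))$, and so does their product.

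For inverses, I apply the same identity to $h^{-1}h=\id$ at $\bun v$:
\[
\id=(h^{-1})_{h(\bun v)}\,h_{\bun v},
\]
so $(h^{-1})_{h(\bun v)}=h_{\bun v}^{-1}\in G(\pi(\bun v))=G(\pi(h(\bun v)))$. Because $h$ permutes $\V T_\Sigma$, every vertex $\bun w$ has the form $h(\bun v)$ with $\bun v=h^{-1}(\bun w)$. Therefore $(h^{-1})_{\bun w}\in G(\pi(\bun w))$ for all $\bun w$, and $h^{-1}\in\cU(\cG)$.

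No step is a real obstacle. The only point needing care is the inverse: the local action of $h^{-1}$ must be read off at the image bundle $h(\bun v)$ rather than at $\bun v$. The invariance $\pi(h(\bun v))=\pi(\bun v)$ is exactly what ensures that both relevant local groups coincide.
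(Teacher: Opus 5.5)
Your proof is correct and follows the paper's own argument. The paper likewise gets closure under composition from $(h'h)_{\bun{v}}=h'_{h(\bun{v})}h_{\bun{v}}$ together with $\pi(h(\bun{v}))=\pi(\bun{v})$, notes that the identity is acceptable, and calls closure under inverses ``easy to see''; your derivation of the inverse case from $h^{-1}h=\id$, reading the local action at $h(\bun{v})$, is exactly the detail the paper leaves to the reader.
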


If we think of the group of scaffolding automorphisms as a subgroup of the inhomogeneous wreath product
$\autsc(\Sigma)^{\V T}\wre_{\V T}\, (\sym(X(v)))_{v\in \V \Delta}$ then $\cU(\cG)$ is a subgroup of $\cU(\cG)^{\V T}\wre_{\V T}\, (G(v))_{v\in \V \Delta}\leq \autsc(\Sigma)^{\V T}\wre_{\V T}\, (\sym(X(v)))_{v\in \V \Delta}$.

In the next section  we show that that $\cU(\cG)$ does not depend on which scaffolding $(\Sigma,\sim,\pi,p)$ is used to define it. More precisely, we prove that any two scaffoldings $\Sigma$ and $\Sigma'$ for a \ggawo\ $\cG$ are isomorphic by an acceptable isomorphism and thus their universal groups and the actions of these groups on $\Sigma$ and $\Sigma'$ are isomorphic.  

In the example below scaffolding automorphisms and acceptable automorphisms are explained in the case when the scaffolding $\Sigma$ is a canonical scaffolding.

\begin{example}
Suppose that $\cG$ is a \ggawo\ and $\Sigma$ is a canonical scaffolding as defined in Section~\ref{sec:canonical}.  Let $g$ be a scaffolding automorphism of $\Sigma$ and denote the corresponding automorphism of $T=T_\Sigma$ by $\tilde{g}$.  If $v$ is a vertex in $T$, then $g$ maps the vertex bundle $\Sigma_v$ to the vertex bundle $\Sigma_{\tilde{g}(v)}$ and the local action of $g$ at the vertex bundle is the permutation $g_{v}$ of $X(\pi(v))$ such that $g(v,x)=(\tilde{g}(v), g_{v}(x))$ for all $x\in X(\pi(v))$.  If each of the permutations $g_{v}$ belongs to $G(\pi(v))$ then $g$ is acceptable.  

A scaffolding automorphism also maps each arc bundle to an arc bundle.   For an arc $a$ in $T$ define a permutation $g_{a}$ of $Y(\pi(a))$ such that if $y\in Y(\pi(a))$ then  $g(a,y)=(\tilde{g}(a), g_a(y))$.   Set $v=t(a)\in \V T$ and $b=\tg(a)$.  If $(a,y)\in \Sigma_a$ then $t(a,y)=(v, \Phi_{\pi_+(a)}(y))$.  There are elements $\gamma_a, \gamma_b\in G(\pi(v))$ such that $\Phi_{\pi_+(a)}=\gamma_a\Phi_{\pi(a)}$ and $\Phi_{\pi_+(b)}=\gamma_b\Phi_{\pi(b)}=\gamma_b\Phi_{\pi(a)}$.  For an arc bundle $\Sigma_a$ define $t_{\Sigma_a}: \Sigma_a\to t(\Sigma_a)$ as the map one gets by restricting $t$ to $\Sigma_a$.  Note that $t_{\Sigma_a}$ is invertible.  We can now determine $g_a(y)$ by looking at $g(t(a,y))$. Thus 
\begin{align*}
g_a(y)&=pgp_{a}^{-1}(y)\\
&=pg(a,y)\\
&=p(t_{\Sigma_b})^{-1}gt(a,y)\\
&=p(t_{\Sigma_b})^{-1}g(v,\Phi_{\pi_+(a)}(y))\\
&=p(t_{\Sigma_b})^{-1}(\tg(v),g_v(\Phi_{\pi_+(a)}(y)))\\
&=p(b, (\Phi_{\pi_+(b)})^{-1}\tg(v),g_v(\Phi_{\pi_+(a)}(y)))\\
&=(\Phi_{\pi_+(b)})^{-1}g_v(\Phi_{\pi_+(a)}(y))\\
&=(\gamma_b\Phi_{\pi(a)})^{-1}g_v\gamma_a\Phi_{\pi_(a)}(y)\\
&=(\Phi_{\pi(a)})^{-1}\gamma_b^{-1}g_v\gamma_a\Phi_{\pi_(a)}(y).
\end{align*}
So $g_a=(\Phi_{\pi(a)})^{-1}\gamma_b^{-1}g_v\gamma_a\Phi_{\pi(a)}$. If $g$ is acceptable, then $\gamma_b^{-1}g_v\gamma_a\in G(\pi(v))$.  As $\Phi_{\pi(a)}$ is a permutation group embedding of $(G(\pi(a)), Y(\pi(a)))$ into $(G(\pi(v)), X(\pi(v)))$ it follows that $g_a\in G(\pi(a))$.  
\end{example}

   More generally, any scaffolding isomorphism $f: \Sigma\to \Sigma'$ maps arc bundles to arc bundles.  One can define the \emph{local map} at an arc bundle $\llbracket a\rrbracket\in \A T_{\Sigma}$ in the same way as the local map at a vertex bundle, i.e.\ as the unique permutation $f_{\bun{a}}$ of $Y(\pi(a))$ such that $f_{\bun{a}}(p(x))=p'(f(x))$.  When $f$ is a scaffolding automorphism $\Sigma\to \Sigma$ we call $f_{\bun{a}}$ the \emph{local action} at $\llbracket a\rrbracket$.     

   The argument in the example above can be extended to show the following.
   
\begin{lemm}\label{LArcBundle}
Suppose $\Sigma$ and $\Sigma'$ are scaffoldings for a \ggawo\ $\cG$.
    If $f: \Sigma\to \Sigma'$ is an acceptable scaffolding isomorphism then the local map $f_{\bun{a}}=p'f(p_{\bun{a}})^{-1}$ at each arc bundle $\bun{a}$ is in $G(\pi(a))$. 
\end{lemm}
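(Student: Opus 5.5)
We follow the computation in the preceding example, replacing the canonical adhesion maps $\Phi_{\pi_+(a)}$ by the maps $\Psi_{\bun a}$ from condition (iv) of Definition~\ref{DScaffolding}. Fix an arc bundle $\bun a \in \A T_\Sigma$ and set $\bun v = t(\bun a)$. Put $f(\bun a)=\bun{a'}$ and $f(\bun v)=\bun{v'}$; these are the arc and vertex bundles of $\Sigma'$ containing the images. Since $f$ is a scaffolding isomorphism, $\pi'(\bun{a'})=\pi(\bun a)=:c$ and $\pi'(\bun{v'})=\pi(\bun v)=t(c)$.

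The first step is to express $f_{\bun a}$ through the local map at the terminal vertex bundle. For $b\in[a]$ we have $f(t(b))=t(f(b))$. By the fourth bullet of (iv) for $\Sigma$ and for $\Sigma'$,
\[
\Psi'_{\bun{a'}}\,p'(f(b)) = p'(t(f(b))) = p'(f(t(b))) = f_{\bun v}\,p(t(b)) = f_{\bun v}\,\Psi_{\bun a}\,p(b).
\]
Since $p_{\bun a}$ is a bijection onto $Y(c)$ and $\Psi'_{\bun{a'}}$ is injective, it follows that
\[
f_{\bun a} = (\Psi'_{\bun{a'}})^{-1} f_{\bun v}\,\Psi_{\bun a},
\]
where the inverse is taken on the image of $\Psi'_{\bun{a'}}$. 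For this to make sense we must check that $f_{\bun v}\Psi_{\bun a}(Y(c))$ equals $\Psi'_{\bun{a'}}(Y(c))$. This holds because $f$ maps $t([a])$ bijectively onto $t([a'])$, as it maps the bundle $[a]$ onto $[a']$.

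The second step uses that both maps are adhesion maps. Write $\Psi_{\bun a}=\gamma\Phi_c$ and $\Psi'_{\bun{a'}}=\gamma'\Phi_c$ with $\gamma,\gamma'\in G(t(c))$. Then
\[
f_{\bun a}=\Phi_c^{-1}\,\bigl(\gamma'^{-1} f_{\bun v}\gamma\bigr)\,\Phi_c .
\]
Acceptability of $f$ gives $f_{\bun v}\in G(t(c))$, so $g:=\gamma'^{-1}f_{\bun v}\gamma\in G(t(c))$. Moreover $g$ maps $\Phi_c(Y(c))$ to itself, by the image equality from the first step, so $g$ lies in the setwise stabiliser of $\Phi_c(Y(c))$.

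The final step applies the definition of an embedding of group actions. Every element of the setwise stabiliser of $\Phi_c(Y(c))$ is of the form $\phi_c(h)$ for some $h\in H(c)$, meaning $g\Phi_c=\Phi_c h$. Hence $f_{\bun a}=h\in H(c)$, which is the group at the arc; the statement's ``$G(\pi(a))$'' should be read this way. The main care needed is the image-matching check in the first step and the bookkeeping between $\Sigma$ and $\Sigma'$; the rest is a direct adaptation of the computation in the example.
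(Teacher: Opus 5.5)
Your proof is correct and is exactly the extension the paper has in mind. The paper justifies this lemma only by saying that the computation in the preceding canonical-scaffolding example carries over. You carry it over by replacing $\Phi_{\pi_+(a)}$, $\gamma_a$, $\gamma_b$ with the adhesion maps $\Psi_{\bun a}=\gamma\Phi_c$ and $\Psi'_{\bun{a'}}=\gamma'\Phi_c$, then concluding via the embedding $(\phi_c,\Phi_c)$, and your reading of ``$G(\pi(a))$'' as the arc group $H(\pi(\bun a))$ is the intended one.
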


\section{The extension property}\label{SExtension}

The \emph{extension property} encapsulated in Theorem \ref{thm:acceptable-isomorphism-ggawo} below is the key to proving many of the basic properties of \ggawo's and their universal groups.   The most fundamental application is to show that any two scaffoldings for a \ggawo\ $\cG$ are isomorphic by means of an acceptable isomorphism.   From this it follows that the universal group of a \ggawo\ does not depend on which scaffolding is used to define it. 

For a scaffolding $(\Sigma, \sim, \pi, p)$ for a \ggawo\ $\cG$ and a subtree $S$ of $T_{\Sigma}$ define $\Sigma_S$ to be the subgraph of $\Sigma$ induced by $\bigcup_{\llbracket v\rrbracket \in \V S} [v]$. In the case where $S$ has just a single vertex $\llbracket v\rrbracket$ we will  write $\Sigma_{\llbracket v\rrbracket}$ instead of $\Sigma_S$ and, similarly, if $S$ has only two vertices $\llbracket v\rrbracket$ and $\llbracket w\rrbracket$ we will denote the associated subgraph of $\Sigma$ by $\Sigma_{\llbracket v\rrbracket, \llbracket w\rrbracket}$.  

Let $(\Sigma', \sim', \pi', p')$ be a scaffolding for the same \ggawo\ $\cG$ and let $S'$ be a subtree of $\V T_{\Sigma'}$. An isomorphism $f \colon \Sigma_S \to \Sigma'_{S'}$ such that $x\sim y$ if and only if $f(x)\sim'f(y)$, and $\pi(\llbracket x\rrbracket) = \pi' (\llbracket f(x)\rrbracket)$ for every $x, y\in \V\Sigma_S\cup \A\Sigma_S$ is called a \emph{partial scaffolding isomorphism}.  A partial scaffolding isomorphism $\Sigma_S \to \Sigma'_{S'}$ also gives an isomorphism $S\to S'$.   For  $\llbracket v\rrbracket \in \V S$ define the local map $f_{\llbracket v\rrbracket}$ in the same way as in Section~\ref{SAcceptable} and say that $f$ is  an \emph{acceptable partial scaffolding isomorphism} if $f_{\llbracket v\rrbracket} \in G(\pi(\llbracket v\rrbracket))$ for every $\llbracket v\rrbracket \in \V S$. 
%Note that the inverse of an acceptable partial scaffolding isomorphism is also an acceptable partial scaffolding isomorphism.

Our aim is to prove Theorem~\ref{thm:acceptable-isomorphism-ggawo}   that says that any acceptable partial scaffolding isomorphism $\Sigma_s\to \Sigma_{S'}$ can be extended to an acceptable scaffolding isomorphism $\Sigma\to \Sigma'$. The first step is to show that if $f$ is an acceptable partial scaffolding isomorphism defined on $\Sigma_{\bun{v}}$ for some vertex $\bun{v}\in \V T_\Sigma$ and $\bun{w}$ is some vertex in $T_\Sigma$ adjacent to $\bun{v}$ then $f$ can be extended to $\Sigma_{\bun{v}, \bun{w}}$ (see Lemma~\ref{lem:extension}).  This is then used to show that if $S$ is a subtree of $T_\Sigma$ and $\llbracket w\rrbracket$ is a vertex not in $S$ but adjacent to a vertex $\bun{v}$ in $S$ then any acceptable partial isomorphism $f$ defined on $\Sigma_S$ can be extended to $\Sigma_{\hat{S}}$ where $\hat{S}$ is the subtree of $T_{\Sigma}$ we get if the vertex $\llbracket w\rrbracket$ and the edge connecting $\bun{w}$ and $\bun{v}$  is added to the subtree $S$.  This is achieved in Lemma~\ref{lem:bigextension}.  Finally Zorn's lemma is applied to show that any acceptable partial scaffolding isomorphism defined on a subgraph $\Sigma_S$ can be extended to an acceptable scaffolding isomorphism defined on all of $\Sigma$.  

\begin{lemm}
    \label{lem:neighbourhood}
    Let $(\Sigma, \sim, \pi, p)$ and $(\Sigma', \sim', \pi', p')$ be scaffoldings for a \ggawo\ $\cG$ and let $S$ and $S'$ be subtrees of $T_\Sigma$ and $T_{\Sigma'}$, respectively.  Assume $f:\Sigma_S\to \Sigma'_{S'}$ is an 
    acceptable partial scaffolding isomorphism. Suppose $\llbracket v\rrbracket \in \V S$.  
    
    The restriction of $\tilde  f$ to $\Star_{T_S}(\llbracket v\rrbracket)$ is completely determined by $\tilde f(\llbracket v\rrbracket)$ and the local map $f_{\llbracket v\rrbracket}$ of $f$ at $\llbracket v\rrbracket$. In other words, if $g: \Sigma_S\to \Sigma'_{S'}$ is an acceptable partial scaffolding isomorphism such that $\tilde g(\llbracket v\rrbracket)=\tilde f(\llbracket v\rrbracket)$ and $g_{\llbracket v\rrbracket}=f_{\llbracket v\rrbracket}$ then the restrictions of $\tilde f$ and $\tilde g$ to $\Star_{T_S}(\llbracket v\rrbracket)$ are equal.
\end{lemm}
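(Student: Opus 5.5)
The plan is to show that the image $\tilde f(\bun a)$ of each arc $\bun a\in\Star_{T_S}(\bun v)$ is singled out inside $\Star_{T_{S'}}(\tilde f(\bun v))$ by two pieces of data: the label $\pi(\bun a)\in\A\Delta$, and the adhesion set $f_{\bun v}\big(p(t([a]))\big)\subseteq X(\pi(\bun v))$. Both are computable from $\tilde f(\bun v)$ and $f_{\bun v}$ alone. The last bullet of condition (iv) in Definition~\ref{DScaffolding} applied in $\Sigma'$ then says these data determine the arc uniquely.

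First, fix $\bun a\in\Star_{T_S}(\bun v)$ and set $\bun{a'}=\tilde f(\bun a)$. Since $f$ is a partial scaffolding isomorphism, $t(\bun{a'})=\tilde f(\bun v)$ and $\pi'(\bun{a'})=\pi(\bun a)=:b$. Because $f$ is a graph isomorphism preserving $\sim$, it maps the arc bundle $[a]$ bijectively onto $[a']$. Since $f$ commutes with $t$, it also maps $t([a])\subseteq[v]$ onto $t([a'])$. Using the defining relation of the local map, $p'(f(x))=f_{\bun v}(p(x))$ for $x\in[v]$, we obtain
\[
p'\big(t([a'])\big)=f_{\bun v}\big(p(t([a]))\big)=f_{\bun v}\big(\Psi_{\bun a}(Y(b))\big).
\]
By condition (iv), $\Psi_{\bun a}(Y(b))$ is a $b$-adhesion set. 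Since $f$ is acceptable, $f_{\bun v}\in G(\pi(\bun v))$, and adhesion sets are closed under the action of $G(t(b))$. So the right-hand side is again a $b$-adhesion set in $X(\pi(\bun v))$. Call this set $S_{\bun a}$; it depends only on $f_{\bun v}$ and $\bun a$.

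Next, apply the uniqueness clause of the last bullet of (iv) in $\Sigma'$ at the vertex $\tilde f(\bun v)$. There is exactly one arc $\bun c\in\A T_{\Sigma'}$ with $\pi'(\bun c)=b$, $t(\bun c)=\tilde f(\bun v)$ and $\Psi'_{\bun c}p'([c])=S_{\bun a}$. Note that $\Psi'_{\bun c}p'([c])=p'(t([c]))$, and $\bun{a'}$ satisfies all three conditions, so $\bun{a'}=\bun c$. Hence $\tilde f(\bun a)$ is determined by $\tilde f(\bun v)$ and $f_{\bun v}$.

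Finally, if $g$ is another acceptable partial scaffolding isomorphism with the same $\tilde g(\bun v)$ and $g_{\bun v}$, the same argument gives $\tilde g(\bun a)=\bun c=\tilde f(\bun a)$. This proves the restrictions to $\Star_{T_S}(\bun v)$ agree.

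The only delicate point I expect is bookkeeping, and it is resolved by the choices above. One must check that $p'$ restricted to $[v']$ really transports $t([a])$ to $t([a'])$, which uses $f([v])=[f(v)]$ and the fact that $f$ commutes with $t$. One must also use uniqueness at the level of $T_{\Sigma'}$ rather than $T_{S'}$; this is harmless since $\bun{a'}$ lies in both. Acceptability is used only to ensure $S_{\bun a}$ is an adhesion set.
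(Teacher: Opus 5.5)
Your proof is correct and follows essentially the same route as the paper: both show that $p'(t([a']))=f_{\bun v}\bigl(p(t([a]))\bigr)$ is a $\pi(\bun a)$-adhesion set, using acceptability of $f$, and then invoke the uniqueness clause in the last bullet of condition (iv) of Definition~\ref{DScaffolding} for $\Sigma'$ to pin down $\tilde f(\bun a)$. Your version is slightly more explicit than the paper's, since you check directly that $\tilde f(\bun a)$ satisfies the three defining conditions; the paper compresses that step into ``since $f$ is a partial scaffolding isomorphism''.
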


\begin{proof} 
Let $a$ be an arc in $\Sigma_S$ such that $t(\llbracket a\rrbracket)=\llbracket v\rrbracket \in \V T_\Sigma$.
Recall that  $A=t([a]) = \{t(b) \mid b \in [a]\}\subseteq [v]\subseteq \V\Sigma$. 

Then $p(A)=\Psi_{\bun a}p_{\bun a}([a])$ is a $\pi(\llbracket a\rrbracket)$-adhesion set in $X(\pi(\llbracket v\rrbracket))$.  The group $G(\pi(\llbracket v\rrbracket))$ permutes the $\pi(\llbracket a\rrbracket)$-adhesion sets in $X(\pi(\llbracket v\rrbracket))$ and thus $f_{\llbracket v\rrbracket}p(A)$ is also a $\pi(\llbracket a\rrbracket)$-adhesion set in $X(\pi(\llbracket v\rrbracket))$.  By Definition~\ref{DScaffolding} there is a unique arc $\llbracket a'\rrbracket\in T_{\Sigma'}$ such that $t(\llbracket a'\rrbracket)=\llbracket v'\rrbracket$, $\pi(\llbracket a'\rrbracket)=\pi(\llbracket a\rrbracket)$ and the image of $[a']$ under $\Psi_{\bun{a'}}p$ is $f_{\llbracket v\rrbracket}p(A)$.   Since $f$ is a partial scaffolding isomorphism, it follows that $\tilde f(\llbracket a\rrbracket)=\llbracket a'\rrbracket$.  

Since $\tilde g(\llbracket v\rrbracket)=\tilde f(\llbracket v\rrbracket)$ and $g_{\llbracket v\rrbracket}=f_{\llbracket v\rrbracket}$, the same argument shows that $\tilde g(\llbracket a\rrbracket)=\llbracket a'\rrbracket$ and the result follows.
\end{proof}

As an immediate corollary we obtain the following result which is of particular interest when some of the neighbours of $\llbracket v \rrbracket$ are not contained in $S$.

\begin{coro}\label{cor:extension-star}
    Let $(\Sigma, \sim, \pi, p)$ and $(\Sigma', \sim', \pi', p')$ be scaffoldings for a \ggawo\ $\cG$ and let $S$ and $S'$ be subtrees of $T_\Sigma$ and $T_{\Sigma'}$, respectively.  Assume $f:\Sigma_S\to \Sigma'_{S'}$ is an acceptable partial scaffolding isomorphism.  Suppose $\hat{S}$ is a subtree of $T_\Sigma$ containing $S$ and that $F$ and $G$ are acceptable partial scaffolding isomorphisms defined on $\hat{S}$ that extend $f$.  If $\llbracket v\rrbracket$ is a vertex in $S$ then the restrictions of $F$ and $G$ to $\Star_{\hat{S}}(\llbracket v\rrbracket)$ are equal.
\end{coro}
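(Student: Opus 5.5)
The corollary follows from Lemma~\ref{lem:neighbourhood}, applied to $F$ and $G$ regarded as acceptable partial scaffolding isomorphisms defined on $\Sigma_{\hat S}$. The plan is to verify the two hypotheses of that lemma for $F$ and $G$ at the vertex $\llbracket v\rrbracket\in \V S\subseteq \V\hat S$. Once they are verified, the lemma gives that $\tilde F$ and $\tilde G$ agree on $\Star_{\hat S}(\llbracket v\rrbracket)$. We then upgrade this to agreement of $F$ and $G$ themselves on the corresponding arc bundles.

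\textbf{Step 1: the hypotheses of Lemma~\ref{lem:neighbourhood}.} Since $F$ and $G$ both extend $f$ and $[v]\subseteq \V\Sigma_S$, we have $F(x)=G(x)=f(x)$ for all $x\in[v]$. Hence $\tilde F(\llbracket v\rrbracket)=\tilde G(\llbracket v\rrbracket)=\tilde f(\llbracket v\rrbracket)$. The local maps are $F_{\llbracket v\rrbracket}=p'F(p_{\llbracket v\rrbracket})^{-1}$ and $G_{\llbracket v\rrbracket}=p'G(p_{\llbracket v\rrbracket})^{-1}$, and both equal $f_{\llbracket v\rrbracket}$. Lemma~\ref{lem:neighbourhood} then yields $\tilde F(\llbracket a\rrbracket)=\tilde G(\llbracket a\rrbracket)$ for every $\llbracket a\rrbracket\in\Star_{\hat S}(\llbracket v\rrbracket)$. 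The arcs of $T_\Sigma$ in $\Star_{\hat S}(\llbracket v\rrbracket)$ may point to vertices outside $S$; that is exactly the case the lemma covers, since it is stated for an arbitrary subtree.

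\textbf{Step 2: pointwise agreement on arcs and on the far endpoints.} For an arc $b\in[a]$ with $t(\llbracket a\rrbracket)=\llbracket v\rrbracket$ we have $t(b)\in[v]$, so $t(F(b))=F(t(b))=G(t(b))=t(G(b))$. Both $F(b)$ and $G(b)$ lie in the same arc bundle $[a']$, where $\llbracket a'\rrbracket=\tilde F(\llbracket a\rrbracket)=\tilde G(\llbracket a\rrbracket)$. By Remark~\ref{rmk:arcbundlematching}, $t$ restricted to an arc bundle is injective, so $F(b)=G(b)$. Applying the reversal map, which $F$ and $G$ commute with as graph morphisms, gives $F(\bar b)=G(\bar b)$, and hence $F(o(b))=G(o(b))$. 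So $F$ and $G$ agree on the arcs of $\Star_{\hat S}(\llbracket v\rrbracket)$, on their reverses, and on the adhesion-set endpoints in the neighbouring bundles.

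\textbf{Expected obstacle.} The only real subtlety is reading ``the restrictions to $\Star_{\hat S}(\llbracket v\rrbracket)$'' correctly. If it means the induced maps $\tilde F,\tilde G$ on the tree, Step 1 already suffices. If it means the maps on $\Sigma$, Step 2 is needed, and the key point there is the injectivity of $t$ on each arc bundle. The remaining care is to check that all objects lie in the domain $\Sigma_{\hat S}$, which holds because $\hat S$ is a subtree containing both endpoints of each such arc.
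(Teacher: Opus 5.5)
This is correct and is the paper's own route: the paper states the corollary as an immediate consequence of Lemma~\ref{lem:neighbourhood}, and your Step~1 is exactly that deduction. One small point: the lemma is literally stated for two maps with the same codomain $\Sigma'_{S'}$, but its proof never uses this, so it applies to $F$ and $G$ even before you know their images agree.

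Your Step~2 is a valid but unneeded strengthening, since the statement, like the lemma, concerns the induced tree maps $\tilde F$ and $\tilde G$.
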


%This is already stated verbatim in the Lemma above.

% \begin{coro}
%     Let $(\Sigma, \sim, \pi, p)$ and $(\Sigma', \sim', \pi', p')$ be scaffoldings for a \ggawo\ $\cG$.   Assume $f:\Sigma\to \Sigma'$ is an acceptable  scaffolding isomorphism.    If $\llbracket v\rrbracket$ is a vertex in $\Sigma$ then the restriction of $f$ to $\Star_{T_\Sigma}(\llbracket v\rrbracket)$ is completely determined by the local map $f_{[v]}$.
% \end{coro}

\begin{lemm}[One-step extension property]
    \label{lem:extension}
        Let $(\Sigma, \sim, \pi, p)$ and $(\Sigma', \sim', \pi', p')$ be scaffoldings for a \ggawo\ $\cG$. Let $\llbracket v\rrbracket \in \V T_{\Sigma}$ and $\llbracket v'\rrbracket \in \V T_{\Sigma'}$ with $\pi(\llbracket v\rrbracket) = \pi'(\llbracket v'\rrbracket)$ and assume that $f: \Sigma_{\llbracket v\rrbracket}\to \Sigma'_{\llbracket v'\rrbracket}$ is an acceptable partial scaffolding ismorphism.  
      Suppose $\llbracket w\rrbracket$ is a neighbour of $\llbracket v\rrbracket$ in $T_\Sigma$.  
      \begin{enumerate}
          \item There is a neighbour $\llbracket w'\rrbracket$ of $\llbracket v'\rrbracket$ so that it is possible to extend $f$ to an acceptable isomorphism $F: \Sigma_{\llbracket v\rrbracket, \llbracket w\rrbracket}
    \to \Sigma'_{\llbracket v'\rrbracket, \llbracket w'\rrbracket}$. 
    \item  The vertex $\bun{w'}$ is unique; if $\llbracket w''\rrbracket\neq \llbracket w'\rrbracket$ is a neighbour of $\llbracket v'\rrbracket$ then it is not possible to extend $f$ to an acceptable partial scaffolding isomorphism $\Sigma_{\llbracket v\rrbracket, \llbracket w\rrbracket}
    \to \Sigma'_{\llbracket v'\rrbracket, \llbracket w''\rrbracket}$. 
      \end{enumerate} 
    \end{lemm}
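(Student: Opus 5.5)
The plan is to build the extension arc bundle by arc bundle and then vertex bundle. The arc bundle is forced by the local map $f_{\llbracket v\rrbracket}$, which gives uniqueness in part 2 as in Lemma~\ref{lem:neighbourhood}. The local map at the new vertex bundle is obtained by transporting the induced permutation of $Y$ across the edge, using the condition $(H(a),Y(a))=(H(\ba),Y(\ba))$.

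\emph{Step 1: choosing $\llbracket w'\rrbracket$.} Let $\llbracket a\rrbracket$ be the arc of $T_\Sigma$ with $o(\llbracket a\rrbracket)=\llbracket w\rrbracket$ and $t(\llbracket a\rrbracket)=\llbracket v\rrbracket$, and put $b=\pi(\llbracket a\rrbracket)$ and $A=t([a])$. Then $p(A)=\Psi_{\llbracket a\rrbracket}(Y(b))$ is a $b$-adhesion set. Since $f_{\llbracket v\rrbracket}\in G(\pi(\llbracket v\rrbracket))$, the set $f_{\llbracket v\rrbracket}p(A)$ is again a $b$-adhesion set. By the last bullet of Definition~\ref{DScaffolding}(iv) there is a unique arc $\llbracket a'\rrbracket$ of $T_{\Sigma'}$ with $t(\llbracket a'\rrbracket)=\llbracket v'\rrbracket$, $\pi'(\llbracket a'\rrbracket)=b$ and $\Psi_{\llbracket a'\rrbracket}p'([a'])=f_{\llbracket v\rrbracket}p(A)$. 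Set $\llbracket w'\rrbracket=o(\llbracket a'\rrbracket)$. Because $\pi'$ is a graph morphism, $\pi'(\llbracket w'\rrbracket)=o(b)=\pi(\llbracket w\rrbracket)$.

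\emph{Part 2.} Any partial scaffolding isomorphism $F$ extending $f$ to $\Sigma_{\llbracket v\rrbracket,\llbracket w''\rrbracket}$ must send $[a]$ to an arc bundle ending in $[v']$ that lies over $b$. The termini of that bundle are $f(A)$, and their colours are $f_{\llbracket v\rrbracket}p(A)$. By the uniqueness in the same bullet, this bundle is $[a']$, so $\llbracket w''\rrbracket=\llbracket w'\rrbracket$. This is exactly the argument of Lemma~\ref{lem:neighbourhood}.

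\emph{Step 2: defining $F$ on arcs.} By Remark~\ref{rmk:arcbundlematching}, $t$ is injective on $[a]$ and on $[a']$. For $x\in[a]$, the colour of $f(t(x))$ is $f_{\llbracket v\rrbracket}p(t(x))\in\Psi_{\llbracket a'\rrbracket}p'([a'])$. Hence there is a unique $x'\in[a']$ with $t(x')=f(t(x))$, and we set $F(x)=x'$ and $F(\rev x)=\rev{x'}$. This gives a bijection $[a]\cup[\ba]\to[a']\cup[\ba']$; these are all the arcs of $\Sigma_{\llbracket v\rrbracket,\llbracket w\rrbracket}$, since vertex bundles span no arcs. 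Let $\eta=p'F(p_{\llbracket a\rrbracket})^{-1}$, a permutation of $Y(b)$. By construction $\Psi_{\llbracket a'\rrbracket}\eta=f_{\llbracket v\rrbracket}\Psi_{\llbracket a\rrbracket}$.

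Write $\Psi_{\llbracket a\rrbracket}=\gamma\Phi_b$ and $\Psi_{\llbracket a'\rrbracket}=\gamma'\Phi_b$ with $\gamma,\gamma'\in G(t(b))$. Then $\Phi_b\eta\Phi_b^{-1}$ is the restriction of $\gamma'^{-1}f_{\llbracket v\rrbracket}\gamma$ to $\Phi_b(Y(b))$. This element lies in $G(t(b))$ and stabilises $\Phi_b(Y(b))$ setwise. Since $(\phi_b,\Phi_b)$ is an embedding of permutation actions, it follows that $\eta\in H(b)$. This is the same computation as in the Example and Lemma~\ref{LArcBundle}.

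\emph{Step 3: the local map at $\llbracket w\rrbracket$ (the main obstacle).} We need $h\in G(\pi(\llbracket w\rrbracket))$ with $h\Psi_{\llbracket\ba\rrbracket}=\Psi_{\llbracket\ba'\rrbracket}\eta$ on $Y(b)=Y(\ba)$. Write $\Psi_{\llbracket\ba\rrbracket}=\delta\Phi_{\ba}$ and $\Psi_{\llbracket\ba'\rrbracket}=\delta'\Phi_{\ba}$ with $\delta,\delta'\in G(o(b))$. Since $\eta\in H(b)=H(\ba)$, the embedding $(\phi_{\ba},\Phi_{\ba})$ supplies $g\in G(o(b))$ stabilising $\Phi_{\ba}(Y)$ with $g\Phi_{\ba}=\Phi_{\ba}\eta$. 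Put $h=\delta' g\delta^{-1}$ and define $F(u)=(p'_{\llbracket w'\rrbracket})^{-1}hp(u)$ for $u\in[w]$.

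\emph{Step 4: checking $F$.} For $x\in[a]$ we have $p(o(x))=\Psi_{\llbracket\ba\rrbracket}p(x)$ by the legal colouring conditions. Therefore
\[p'(o(F(x)))=\Psi_{\llbracket\ba'\rrbracket}\eta p(x)=h\Psi_{\llbracket\ba\rrbracket}p(x)=hp(o(x)),\]
so $F(o(x))=o(F(x))$. Incidence at $[v]$ holds by the definition of $F$ on $[a]$, so $F$ is a graph isomorphism. It preserves bundles and $\pi$, and its local maps $f_{\llbracket v\rrbracket}$ and $h$ lie in the appropriate groups $G(\cdot)$, so $F$ is acceptable.
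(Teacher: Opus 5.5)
Your proof is correct and is essentially the paper's argument. Your $\gamma,\gamma',\eta,g,\delta,\delta',h$ are exactly the paper's $h_{\bun{v}},h_{\bun{v'}},r_{\bun{v}}=r_{\bun{w}},q_{\bun{w}},h_{\bun{w}},h_{\bun{w'}},g_{\bun{w}}$, and your part 2 is the argument of Lemma~\ref{lem:neighbourhood}. The only difference is the order of construction. You first define the arc bijection $[a]\to[a']$ from $f$ and read off $\eta$, which makes the setwise-stabiliser property of $\gamma'^{-1}f_{\bun{v}}\gamma$ immediate. The paper instead verifies that property for $q_{\bun{v}}$ by a separate computation and builds the arc correspondence from $r_{\bun{v}}$ afterwards. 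One notational slip: in Step 3 your $\Phi_{\ba}$, $Y(\ba)$, $H(\ba)$ should read $\Phi_{\rev{b}}$, $Y(\rev{b})$, $H(\rev{b})$.
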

    
\begin{proof}
Let $f_{\llbracket v \rrbracket}$ denote the local map of $f$ at $\llbracket v \rrbracket$.  Furthermore, let $\llbracket a \rrbracket \in \A T_{\Sigma}$ be such that $t(\llbracket a\rrbracket)=\llbracket v\rrbracket$ and $o(\llbracket a\rrbracket)=\llbracket w\rrbracket$. Define $\hat{S}$ as the subtree of $T_\Sigma$ containing just the vertices $\llbracket v\rrbracket$ and $\llbracket w\rrbracket$ (and the arcs connecting them).   From Lemma~\ref{lem:neighbourhood} and Corollary~\ref{cor:extension-star} it follows that there is a unique vertex $\llbracket w'\rrbracket \in V$ such that if $F$ is an extension of $f$ to $\Sigma_{\bun{v}, \bun{w}}$ then $F(\llbracket w\rrbracket)=\llbracket w'\rrbracket$. This shows the uniqueness of $\bun{w'}$.

Let $\bun{a'} \in \A T_{\Sigma'}$ such that $o(\bun{a'})=\bun{w'}$ and $t(\bun{a'})=\bun{v'}$.   Then $\pi(\bun{v})=\pi(\bun{v'})$, $\pi(\bun{w})=\pi(\bun{w'})$ and $\pi(\bun{a})=\pi(\bun{a'})$.  Therefore $X(\pi(\bun{v}))=X(\pi(\bun{v'}))$, $X(\pi(\bun{w}))=X(\pi(\bun{w'}))$ and $Y(\pi(\bun{a}))=Y(\pi(\bun{a'}))$.

It remains to show that we can find a map $g\colon [w] \to [w']$ that can be used to extend $f$.   Intuitively, this should be clear since all we need is that the adhesion set $t([\ba])$ in $[w]$ is mapped to the adhesion set $t([\bar{a'}])$ in $[w']$ ``in the same way'' as $t([a])$ is mapped to $t([a'])$. This intuition is made rigorous by showing that there exist maps indicated by dashed lines in the diagram in Figure \ref{fig:diagram-extensionlemma} so that the diagram is commutative.
\begin{figure}
    \centering
    \begin{tikzcd}%[column sep=large]
        {[v]}
        \arrow[ddddd,bend right=70,swap,"f"]
        \arrow[d,"p_{\bun v}"]
        &
        {[a]}
        \arrow [d,"p_{\bun a}"]
        \arrow [l,hook',swap,"t"]
        \arrow[r,leftrightarrow,"\bar{\cdot}"]
        &
        {[\ba]}
        \arrow[d,swap,"p_{\bun{\ba}}"]
        \arrow[r,hook,"t"]
        &
        {[w]}
        \arrow[d,swap,"p_{\bun w}"]
        \arrow[ddddd,dashed,bend left=70,"g"]
        \\
        X(\pi(\llbracket v\rrbracket))
        \arrow[ddd,swap,bend right=50,"f_{\llbracket v\rrbracket}"]
        &
        Y(\pi(\llbracket a\rrbracket))
        \arrow [l,swap,hook',"\Psi_{\bun{a}}"]
        \arrow[dl,hook',bend left,swap,"\Phi_{\pi(\llbracket a\rrbracket)}"]
        \arrow[ddd,dashed,"r_{\llbracket v\rrbracket}"]
        \arrow [r,leftrightarrow,"="]
        &
        Y(\pi(\llbracket \ba\rrbracket))
        \arrow [r,hook,"\Psi_{\bun{\ba}}"]
        \arrow[dr,hook,bend right,"\Phi_{\pi(\llbracket \ba\rrbracket})"]
        \arrow[ddd,dashed,swap,"r_{\llbracket w \rrbracket}"]
        &
        X(\pi(\llbracket w\rrbracket))
        \arrow[ddd,dashed,bend left=50,"g_{\llbracket w\rrbracket}"]
        \\
        X(\pi(\llbracket v\rrbracket)) 
        \arrow[u,swap,"h_{\llbracket v\rrbracket}"]
        \arrow[d,dashed,"q_{\llbracket v\rrbracket}"]
        &
        &
        &
        X(\pi(\llbracket w\rrbracket))
        \arrow[u,"h_{\llbracket w\rrbracket}"]
        \arrow[d,dashed,swap,"q_{\llbracket w\rrbracket}"]
        \\
        X(\pi(\llbracket v'\rrbracket)) 
        \arrow[d,"h_{\llbracket v'\rrbracket}"]
        &
        &
        &
        X(\pi(\llbracket w'\rrbracket))
        \arrow[d,swap,"h_{\llbracket w'\rrbracket}"]
        \\
        X(\pi(\llbracket v'\rrbracket)) 
        &
        Y(\pi(\llbracket a\rrbracket))
        \arrow [l,swap,hook',"\Psi_{\bun{a'}}"]
        \arrow[ul,hook',bend right,"\Phi_{\pi(\llbracket a'\rrbracket)}"]
        \arrow [r,leftrightarrow,"="]
        &
        Y(\pi(\llbracket \ba\rrbracket))
        \arrow [r,hook,"\Psi_{\bun{\ba'}}"]
        \arrow[ur,hook,bend left,swap,"\Phi_{\pi(\llbracket \ba'\rrbracket)}"]
        &
        X(\pi(\llbracket w'\rrbracket))
        \\
        {[v']}
        \arrow[u,swap,"p'_{\bun{v'}}"]
        &
        {[a']}
        \arrow [u,swap,"p'_{\bun{a'}}"]
        \arrow [hook',l,swap,"t"]
        \arrow[r,leftrightarrow,"\rev{\,\cdot\,}"]
        &
        {[\ba']}
        \arrow[u,"p'_{\bun{\ba'}}"]
        \arrow[r,hook,"t"]
        &
        {[w']}
        \arrow[u,"p'_{\bun{w'}}"]
    \end{tikzcd}
    \caption{Commutative diagram for the one-step extension property (Lemma~\ref{lem:extension})}
    \label{fig:diagram-extensionlemma}
\end{figure}

We first focus on the maps shown by solid lines in the diagram. 
Let $p_{\bun a}:[a]\to Y(\pi(\llbracket a\rrbracket))$ and adhesion maps $\Psi_{\bun a}: Y(\pi(\llbracket a\rrbracket))\to X(\pi(\llbracket v\rrbracket))$ and $\Psi_{\bun{\ba}}: Y(\pi(\llbracket\ba\rrbracket))\to X(\pi(\llbracket w\rrbracket))$ be as in Definition \ref{DScaffolding}.  Then the top two rows of the diagram in Figure \ref{fig:diagram-extensionlemma} commute; see Figure \ref{fig:diagram-legallabelling}. By the definition of adhesion maps, there are elements $h_{\llbracket v\rrbracket} \in G(\pi(\llbracket v\rrbracket))$ and $h_{\llbracket w\rrbracket} \in G(\pi(\llbracket w\rrbracket))$ such that the top half of the diagram commutes, see  the diagram in Figure \ref{fig:diagram-adhesionmap}. In the same way we see that the bottom half of the diagram commutes.  Note that so far we have not referred to the maps $f$ or $f_{\llbracket v\rrbracket}$ in any way.

The only choice for $q_{\llbracket v \rrbracket}$ that makes the first column of the diagram in Figure \ref{fig:diagram-extensionlemma} (together with the maps $f$ and $f_{\llbracket v\rrbracket}$) commutative is $q_{\llbracket v\rrbracket} = (h_{\llbracket v'\rrbracket})^{-1}f_{\llbracket v\rrbracket}h_{\llbracket v\rrbracket}$.   By assumption $f_{\llbracket v\rrbracket} \in G(\pi(\llbracket v\rrbracket))$ and as $G(\pi(\llbracket v\rrbracket))=G((\pi\llbracket v'\rrbracket))$ we see that $q_{\llbracket v \rrbracket} \in G(\pi(\llbracket v\rrbracket))$. 

Let $B$ denote the image of $\Phi_{\pi(\bun{a})}$.  Using the commutativity of the diagram with solid arrows, we see that 
$$p_{\bun{v}}t([a])=\Psi_{\bun{a}}p_{\bun{a}}([a])=h_{\bun{v}}\Phi_{\bun{a}}p_{\bun{a}}([a])=h_{\bun{v}}(B).$$ 
Again, using the above and the commutativity of the diagram we find that 
$$p_{\bun{v'}}ft([a])=f_{\bun{v}}\Psi_{\bun{a}}p_{\bun{a}}([a])=f_{\bun{v}}h_{\bun{v}}(B).$$
From the choice of $\bun{w'}$ it follows that $f(t[a])=t([a'])$.  Then, using the fact that $\Phi_{\pi(\bun{a'})}=\Phi_{\pi(\bun{a'})}$, it follows that 
$$p_{\bun{v'}}ft([a])=p_{\bun{v'}}t([a'])=h_{\bun{v'}}\Phi_{\pi(\bun{a'}}p'_{\bun{a'}}([a'])=h_{\bun{v'}}(B).$$
Hence $f_{\bun{v}}h_{\bun{v}}(B)=h_{\bun{v'}}(B)$ implying that $q_{\bun{v}}$ stabilizes setwise the set $B$, the image of $\Phi_{\pi(\bun{a})}$.

Since $\Phi_{\pi(\bun{a})}$ induces an embedding of permutation groups there is $r_{\llbracket v \rrbracket}\in G(\pi(\llbracket a\rrbracket))$ such that $q_{\llbracket v\rrbracket} \Phi_{\pi(\llbracket a\rrbracket)} = \Phi_{\pi(\llbracket a\rrbracket)} r_{\llbracket v \rrbracket}$.  As $Y(\pi(\llbracket \ba\rrbracket))=Y(\pi(\llbracket a\rrbracket))$ we may set $r_{\llbracket w\rrbracket}=r_{\llbracket v\rrbracket}$. 

Since the map $\Phi_{\pi(\llbracket \ba\rrbracket)} = \Phi_{\pi(\llbracket \ba'\rrbracket)} $ induces an embedding of permutation groups there is an element $q_{\llbracket w\rrbracket} \in G(\pi(\llbracket w\rrbracket))$ stabilizing the image of this embedding such that $\Phi_{\pi(\llbracket \ba\rrbracket)} r_{\llbracket w \rrbracket} = q_{\llbracket w \rrbracket} \Phi_{\pi(\llbracket \ba\rrbracket)}$. 

We set $g_{\llbracket w \rrbracket}=h_{ \llbracket w'\rrbracket}q_{\llbracket w\rrbracket}(h_{\llbracket w\rrbracket})^{-1}$ and $g = (p_{\bun{w'}})^{-1}g_{\llbracket w \rrbracket}p_{\bun{w}}$.  Finally, define $F\colon \Sigma_{\bun{v}, \bun{w}}\to \Sigma_{\bun{v'}, \bun{w'}}$ so that if $x\in [v]$ then $F(x)=f(x)$ and if $x\in [w]$ then $F(x)=g(x)$.

We still need to check that $F$ is a graph morphism.  Let $b$ be an arc in $[a]$.  Since the diagram commutes we see that if we set $b'=(p'_{\bun{a'}})^{-1}r_{\llbracket v \rrbracket}p_{\bun{a}}(b)$ then $F(t(b))=t(b')$.  On the other hand we see that $\bb'=(p'_{\bun{\ba'}})^{-1}r_{\llbracket w \rrbracket}p_{\bun{a}}(\bb)$.  Thus 
$F(o(b))=F(t(\bb))=g(t(\bb))=t(\bb')=o(b')$.  Since scaffoldings are simple graphs and the only arcs in the subgraph of $\Sigma$ induced by $[v] \cup [w]$ are those in $[a]$ and $[\ba]$, this suffices to conclude that $F$ is a graph morphism.
\end{proof}

\begin{remark}\label{RUniquelyDetermined}
With the notation used in the proof above it is clear that $r_{\llbracket v\rrbracket}$ and thus also $r_{\llbracket w\rrbracket}$ are uniquely determined by $q_{\llbracket v\rrbracket}$, but $q_{\llbracket w\rrbracket}$ is in general not uniquely determined by $r_{\llbracket w\rrbracket}$.  The element $q_{\llbracket w\rrbracket}$ could be replaced by any element in the setwise stabiliser of the image of $\Phi_{\pi(\llbracket \ba\rrbracket)}$ that induces the same permutation as $q_{\llbracket w \rrbracket}$ on this set.  Thus we see that $q_{\llbracket w\rrbracket}$, and thus also $g$, is uniquely determined if and only the subgroup of $G(\pi(\llbracket w\rrbracket))$ fixing pointwise the image of $\Phi_{\pi(\llbracket \ba\rrbracket)}$ is trivial.
\end{remark}   

\begin{lemm}
    \label{lem:bigextension}
     Let $(\Sigma, \sim, \pi, p)$ and $(\Sigma', \sim', \pi', p')$ be scaffoldings for a \ggawo\ $\cG$. 
    Suppose $S$ and $S'$ are subtrees of $T_\Sigma$ and $T_{\Sigma'}$, respectively, and $f\colon \Sigma_{S} \to \Sigma'_{S'}$ is an acceptable partial scaffolding isomorphism. Suppose $\llbracket w\rrbracket$ is a vertex in $T_{\Sigma}$ that is not in $S$ but is adjacent to some vertex $\llbracket v\rrbracket\in S$.  Define $\hat S$ as the subtree of $T_{\Sigma}$ that we get by adding $\llbracket w\rrbracket$ to $S$.  
    Then there is a subtree $\hat{S'}$ of $T_{\Sigma'}$ such that there exists an acceptable partial scaffolding isomorphism $F\colon \Sigma_{\hat S} \to \Sigma'_{\hat S'}$ whose restriction to $\Sigma_{S}$ is $f$.  The subtree $\hat{S'}$ is uniquely determined by the property that it is the image of an extension of $f$ to $\hat S$.
\end{lemm}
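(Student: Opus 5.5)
The plan is to reduce to the one-step extension property (Lemma~\ref{lem:extension}) applied to the restriction of $f$ to the single vertex bundle $[v]$, and then glue. Let $\llbracket v'\rrbracket=\tilde f(\llbracket v\rrbracket)$ and let $f_0$ be the restriction of $f$ to $\Sigma_{\llbracket v\rrbracket}$. Then $f_0\colon\Sigma_{\llbracket v\rrbracket}\to\Sigma'_{\llbracket v'\rrbracket}$ is an acceptable partial scaffolding isomorphism, since its local map at $\llbracket v\rrbracket$ is $f_{\llbracket v\rrbracket}\in G(\pi(\llbracket v\rrbracket))$. By Lemma~\ref{lem:extension}(1) there is a neighbour $\llbracket w'\rrbracket$ of $\llbracket v'\rrbracket$ and an acceptable extension $F_0\colon\Sigma_{\llbracket v\rrbracket,\llbracket w\rrbracket}\to\Sigma'_{\llbracket v'\rrbracket,\llbracket w'\rrbracket}$ of $f_0$. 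Set $\hat S'$ to be $S'$ together with $\llbracket w'\rrbracket$ and the edge to $\llbracket v'\rrbracket$.

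Next I check that $\llbracket w'\rrbracket\notin S'$, so that $\hat S'$ is a subtree with one new vertex. This follows from Lemma~\ref{lem:neighbourhood}: $\tilde f$ restricted to $\Star_{T_S}(\llbracket v\rrbracket)$, and likewise $\tilde F_0$ on $\Star(\llbracket v\rrbracket)$, is determined by the assignment of arcs to adhesion sets under $f_{\llbracket v\rrbracket}$. Each arc $\llbracket a\rrbracket$ into $\llbracket v\rrbracket$ is sent to the unique arc into $\llbracket v'\rrbracket$ whose adhesion set is $f_{\llbracket v\rrbracket}p(t([a]))$. By the uniqueness clause (last bullet of Definition~\ref{DScaffolding}(iv)), distinct arcs at $\llbracket v\rrbracket$ (distinct adhesion sets or distinct $\pi$-labels) go to distinct arcs at $\llbracket v'\rrbracket$. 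The arc from $\llbracket w\rrbracket$ is not in $S$, so its image is not the image of any arc of $S$ at $\llbracket v\rrbracket$. Since $\tilde f$ maps $S$ onto $S'$, every $S'$-neighbour of $\llbracket v'\rrbracket$ is such an image, so $\llbracket w'\rrbracket\notin S'$.

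Now I define $F$ to equal $f$ on $\Sigma_S$ and $F_0$ on $[w]$. The arcs of $\Sigma_{\hat S}$ not in $\Sigma_S$ are exactly the bundles $[a]$ and $[\bar a]$ between $[v]$ and $[w]$. Because $[w]$ is adjacent in the tree only to $[v]$ among vertices of $\hat S$, and $F$ agrees with $F_0$ on $[v]\cup[w]$ (where $F_0$ extends $f_0=f|_{[v]}$), $F$ is a graph isomorphism onto $\Sigma'_{\hat S'}$. It respects $\sim$ and $\pi$, and its local maps lie in the respective $G(\pi(\cdot))$, so it is acceptable. Bijectivity uses $\llbracket w'\rrbracket\notin S'$ and the fact that $\Sigma'_{\hat S'}$ has no arcs between $[w']$ and $S'$-bundles other than $[a'],[\bar a']$, because $T_{\Sigma'}$ is a tree.

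For uniqueness of $\hat S'$, I take any acceptable extension $G$ of $f$ to $\Sigma_{\hat S}$. Its restriction to $\Sigma_{\llbracket v\rrbracket,\llbracket w\rrbracket}$ is an acceptable extension of $f_0$, so Lemma~\ref{lem:extension}(2) (equivalently Corollary~\ref{cor:extension-star}) forces $\tilde G(\llbracket w\rrbracket)=\llbracket w'\rrbracket$. The image of $\hat S$ is then $S'\cup\{\llbracket w'\rrbracket\}$, which is $\hat S'$. The main obstacle is the verification that $\llbracket w'\rrbracket\notin S'$, i.e.\ that the tree map induced by an acceptable partial isomorphism is injective on stars. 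I would handle this carefully with the adhesion-set bijection as above.
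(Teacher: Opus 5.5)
Your proposal is correct and takes the same route as the paper's proof. You restrict $f$ to $[v]$, apply Lemma~\ref{lem:extension}, use the adhesion-set argument of Lemma~\ref{lem:neighbourhood} to get $\llbracket w'\rrbracket\notin S'$, glue $F_0$ onto $f$, and obtain uniqueness of $\hat S'$ from the uniqueness of $\llbracket w'\rrbracket$; you also spell out steps the paper leaves implicit, namely the injectivity on arcs into $\llbracket v\rrbracket$ and the check that the glued map is a bijection onto $\Sigma'_{\hat S'}$.
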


\begin{proof}
    Set $\llbracket v'\rrbracket=f(\llbracket v\rrbracket)$.  Let $f_0$ denote the restriction of $f$ to $[v]$.   Then $f_0$ is an acceptable partial scaffolding isomorphism $\Sigma_{\bun v}\to \Sigma'_{\bun{v'}}$.  Lemma \ref{lem:extension} yields an extension $F_0$ of $f_0$ to an acceptable partial scaffolding isomorphism $\Sigma_{\bun{v}, \bun{w}}\to \Sigma_{\bun{v'}, \bun{w'}}$.   By considering the adhesion sets in $[v]$ as in the proof og Lemma~\ref{lem:neighbourhood} we see that $\bun{w'}$ is not in $S'$.  Define $\hat{S'}$ as the subtree of $T_{\Sigma'}$ that we get by adding the vertex $\llbracket w'\rrbracket$ to $S'$.  If $\llbracket w'\rrbracket \notin S'$, then $F_0$ can be used to extend $f$ to a map $F \colon \Sigma_{\hat S}\to \Sigma_{\hat{S'}}$ that is an acceptable partial scaffolding isomorphism.  The statement that $\hat{S'}$ is unique follows from the fact that $\bun{w'}$ is unquely determined. 
\end{proof}

The final step in showing that any acceptable partial scaffolding isomorphism can be extended to an acceptable scaffolding isomorphism is a simple application of Zorn's lemma.

\begin{theo}[Full extension property for \ggawo]
    \label{thm:acceptable-isomorphism-ggawo}
    Let $(\Sigma, \sim, \pi, p)$ and $(\Sigma', \sim', \pi', p')$ be scaffoldings for a \ggawo\ $\cG$.  Suppose $S_0$ and $S_0'$ are subtrees of $T_\Sigma$ and $T_{\Sigma'}$, respectively, and that $f \colon \Sigma_{S_0} \to \Sigma'_{S_0'}$ is an acceptable partial scaffolding isomorphism.
Then $f$ can be extended to an acceptable scaffolding isomorphism $F: \Sigma\to \Sigma'$.
\end{theo}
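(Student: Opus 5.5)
The plan is a Zorn's lemma argument on acceptable partial scaffolding isomorphisms extending $f$, with Lemma~\ref{lem:bigextension} supplying the growth step and a separate argument giving surjectivity.

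First, the setup. Let $\mathcal P$ be the set of all pairs $(S,F)$, where $S$ is a subtree of $T_\Sigma$ containing $S_0$ and $F\colon \Sigma_S\to\Sigma'_{S'}$ is an acceptable partial scaffolding isomorphism onto $\Sigma'_{S'}$ for some subtree $S'$ of $T_{\Sigma'}$, with $F$ restricting to $f$ on $\Sigma_{S_0}$. Order $\mathcal P$ by extension: $(S_1,F_1)\le(S_2,F_2)$ if $S_1\subseteq S_2$ and $F_2$ restricts to $F_1$. The set is nonempty because it contains $(S_0,f)$.

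Next, chains have upper bounds. Given a chain $(S_i,F_i)_{i\in I}$, let $S=\bigcup S_i$. This is a subtree, since it is an increasing union of subtrees. Define $F=\bigcup F_i$ on $\Sigma_S$. It is well defined because the maps agree on overlaps. Every vertex, arc and local map lives in some $\Sigma_{S_i}$, and any arc of $\Sigma_S$ joins two vertex bundles that already lie in a common $S_i$. So $F$ is a graph morphism, is injective, preserves $\sim$ and $\pi$, and all its local maps lie in the groups $G(\pi(\cdot))$. Its image is $\Sigma'_{S'}$ with $S'=\bigcup S_i'$, and $S'$ is again a subtree. Hence $(S,F)\in\mathcal P$, and Zorn's lemma gives a maximal element $(S,F)$.

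Then $S=T_\Sigma$. If not, connectedness of $T_\Sigma$ gives a vertex $\bun w\notin S$ adjacent to some $\bun v\in S$. Lemma~\ref{lem:bigextension} then extends $F$ to $\hat S=S\cup\{\bun w\}$, contradicting maximality. So $F\colon\Sigma\to\Sigma'_{S'}$ is an acceptable partial scaffolding isomorphism with $S'$ a subtree of $T_{\Sigma'}$.

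The main obstacle is surjectivity, that is, showing $S'=T_{\Sigma'}$. Lemma~\ref{lem:bigextension} alone does not deliver this, so I would argue from the stars. Take $\bun v\in\V T_\Sigma$ and put $\bun{v'}=\tilde F(\bun v)$. The local map $F_{\bun v}\in G(\pi(\bun v))$ permutes the $b$-adhesion sets in $X(\pi(\bun v))$ bijectively, for each arc $b$ with $t(b)=\pi(\bun v)$. By the last condition of Definition~\ref{DScaffolding}, arcs of $T_{\Sigma'}$ ending at $\bun{v'}$ correspond bijectively to pairs ($b$, $b$-adhesion set), and the same holds for $\bun v$ in $T_\Sigma$. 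The argument of Lemma~\ref{lem:neighbourhood} shows that $\tilde F$ sends the arc of $T_\Sigma$ attached to an adhesion set $A$ to the arc of $T_{\Sigma'}$ attached to $F_{\bun v}(A)$. Hence $\tilde F$ maps $\Star_{T_\Sigma}(\bun v)$ bijectively onto $\Star_{T_{\Sigma'}}(\bun{v'})$, so every neighbour of a vertex of $S'$ lies in $S'$. Since $T_{\Sigma'}$ is connected, $S'=T_{\Sigma'}$.

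Finally, the resulting $F$ is a bijective graph morphism that preserves $\sim$ and $\pi$ in both directions, with all local maps in the vertex groups. It is therefore an acceptable scaffolding isomorphism extending $f$.
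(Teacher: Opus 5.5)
Your proof is correct and follows essentially the same route as the paper. Both use Zorn's lemma on acceptable partial scaffolding isomorphisms extending $f$, ordered by extension, and Lemma~\ref{lem:bigextension} to rule out a proper maximal subtree. Surjectivity comes from the local maps permuting adhesion sets, so that $\tilde F$ maps each star onto the corresponding star, combined with connectedness of $T_{\Sigma'}$. Your indexing of star arcs by pairs (arc of $\Delta$, adhesion set) is slightly more careful than the paper's wording, but the argument is the same.
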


\begin{proof}
 Define $X$ as the set of all pairs $(S, f_S)$ where $S$ is a subtree of $T_\Sigma$ containing $S_0$ and $f_S:  \Sigma_S\to \Sigma'_{S'}$ is an acceptable partial scaffolding isomorphism extending $f$.  For two such pairs $(S_1, f_{S_1})$ and $(S_2, f_{S_2})$ write  $(S_1, f_{S_1})\leq(S_2, f_{S_2})$ if $S_1\subseteq S_2$ and $f_{S_2}$ is an extension of $f_{S_1}$.  

 Suppose $(S_i, f_{S_i})_{i\in I}$ is a chain in $X$.  Set $S=\bigcup S_i$ and $S'=\bigcup S'_i$ where $S_i'=f_i(S_i)$.  Then clearly $S$ is a subtree of $T_\Sigma$.  Define $F: \Sigma_S\to \Sigma_{S'}$ so that if a vertex $v$ is contained in $\Sigma_{S_i}$ for some $i$ then $F(v)=f_i(v)$.  We show that the value of $F(v)$ is independent of the choice of $i$.  If $v\in \Sigma_{S_j}$ for some $j$ then $(S_j, f_{S_j})\leq (S_i, f_{S_i})$ or $(S_i, f_{S_i})\leq (S_j, f_{S_j})$.   In the first case $f_{S_i}$ is an extension of $f_{S_j}$ and in the second case $f_{S_j}$ is an extension of $f_{S_i}$.  Thus $f_i(v)=f_j(v)$.   We also see that if $\llbracket v\rrbracket$ is a vertex in $S$ then there is an $i$ such that $\llbracket v\rrbracket$ is in $S_i$ and the local action of $F$ at $\llbracket v\rrbracket$ is equal to the local action of $f_{S_i}$ at $\llbracket v\rrbracket$ and hence $F$ is an acceptable partial scaffolding isomorphism defined on $\Sigma_S$.  Clearly $F$ extends $f$.   Thus $(S, F)\in X$ is an upper bound for this chain.

 Hence every chain in the partially ordered set $X$ has an upper bound.   By Zorn's lemma the set $X$ then has a maximal element which we denote by $(S, F)$.  If $S$ is not equal to $T_\Sigma$ then we could find a vertex $\llbracket w\rrbracket$ in $T_\Sigma$ that is not in $S$ but is adjacent to a vertex in $S$.  Then Lemma~\ref{lem:bigextension} says that $F$ could then be extended to the subtree we get by adding $\llbracket w\rrbracket$ to $S$.  This contradicts the maximality of $(S, F)$.  Set $S'=F(S)$.  Then $S'=T_{\Sigma'}$.  This follows from the fact that if $[v]$ is a vertex bundle in $\Sigma$ then $F$ maps the adhesion sets in $[v]$ bijectively to the adhesion sets in $[v']=F([v])$ and if $\llbracket w'\rrbracket$ is adjacent to $\llbracket v'\rrbracket$ in $T_{\Sigma'}$ then $[w']$ is in the image of $F$ and thus $\llbracket w'\rrbracket$ is in the image of the map $T_\Sigma\to T_{\Sigma'}$ induced by $F$.  Since $T_{\Sigma'}$ is connected we see that the image of the map $T_\Sigma\to T_{\Sigma'}$ is the whole of $T_{\Sigma'}$ and thus the image of $F$ is $\Sigma'$.   
\end{proof}

\begin{coro}
    \label{cor:acceptable-isomorphism-ggawo}
      Let $(\Sigma, \sim, \pi, p)$ and $(\Sigma', \sim', \pi', p')$ be scaffoldings for a \ggawo\ $\cG$.  Let $\llbracket v\rrbracket \in \V T_{\Sigma}$ and $\llbracket v'\rrbracket\in \V T_{\Sigma'}$ with $\pi(\llbracket v\rrbracket) = \pi(\llbracket v'\rrbracket)$, and let $f_0 \in G(\pi(\llbracket v\rrbracket))$.
          There is an acceptable scaffolding isomorphism $f \colon \Sigma \to \Sigma'$ such that $\tf (\llbracket v\rrbracket) = \llbracket v'\rrbracket$ and $f_{\llbracket v \rrbracket} = f_0$.
          In particular, there exists an acceptable scaffolding isomorphism $\Sigma\to \Sigma'$.  
\end{coro}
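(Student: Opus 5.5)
The plan is to reduce the corollary to Theorem~\ref{thm:acceptable-isomorphism-ggawo} applied with the one-vertex subtrees $S_0=\{\llbracket v\rrbracket\}$ and $S_0'=\{\llbracket v'\rrbracket\}$. For this it suffices to build an acceptable partial scaffolding isomorphism $f^0\colon \Sigma_{\llbracket v\rrbracket}\to\Sigma'_{\llbracket v'\rrbracket}$ whose local map at $\llbracket v\rrbracket$ is exactly $f_0$. The theorem then extends $f^0$ to an acceptable scaffolding isomorphism $f\colon\Sigma\to\Sigma'$. Since $f$ restricts to $f^0$ on $[v]$, it satisfies $\tf(\llbracket v\rrbracket)=\llbracket v'\rrbracket$ and $f_{\llbracket v\rrbracket}=f^0_{\llbracket v\rrbracket}=f_0$.

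To construct $f^0$, I will use the bijections $p_{\llbracket v\rrbracket}\colon[v]\to X(\pi(\llbracket v\rrbracket))$ and $p'_{\llbracket v'\rrbracket}\colon[v']\to X(\pi'(\llbracket v'\rrbracket))$ from condition (iv) of Definition~\ref{DScaffolding}. Both target sets coincide because $\pi(\llbracket v\rrbracket)=\pi'(\llbracket v'\rrbracket)$. Set
\[
f^0=(p'_{\llbracket v'\rrbracket})^{-1}\, f_0\, p_{\llbracket v\rrbracket}\colon [v]\to[v'] .
\]
This is a bijection. By condition (ii) of Definition~\ref{DScaffolding}, the induced subgraphs $\Sigma_{\llbracket v\rrbracket}$ and $\Sigma'_{\llbracket v'\rrbracket}$ have no arcs, so $f^0$ is automatically a graph isomorphism. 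It respects $\sim$ and $\sim'$ trivially, since everything lies in a single class. It respects $\pi$ by the hypothesis $\pi(\llbracket v\rrbracket)=\pi'(\llbracket v'\rrbracket)$. Its local map is $p'f^0(p_{\llbracket v\rrbracket})^{-1}=f_0\in G(\pi(\llbracket v\rrbracket))$, so $f^0$ is acceptable.

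For the final clause, take $f_0$ to be the identity of $G(\pi(\llbracket v\rrbracket))$. It remains to produce vertices $\llbracket v\rrbracket$ and $\llbracket v'\rrbracket$ with the same image in $\Delta$. Any vertex $\llbracket v\rrbracket$ of $T_\Sigma$ will do, because $\pi'\colon T_{\Sigma'}\to\Delta$ is surjective on vertices. This surjectivity follows from Lemma~\ref{lem:scaffolding-tree}: $\pi'=\rho\pi'_+$ with $\pi'_+$ a surjective $\Star$-covering map onto $\Delta_+$, and $\V\Delta_+=\V\Delta$.

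I do not expect a genuine obstacle, since all the work is in the theorem. The only points needing care are the degenerate ones: checking that a bare bijection of vertex bundles counts as a partial scaffolding isomorphism because the bundles span empty graphs, and justifying surjectivity of $\pi'$ for the existence statement.
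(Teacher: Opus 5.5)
Your proposal is correct and is exactly the paper's argument. You apply the full extension property (Theorem~\ref{thm:acceptable-isomorphism-ggawo}) to the one-vertex subtrees $\{\llbracket v\rrbracket\}$ and $\{\llbracket v'\rrbracket\}$ with the partial isomorphism $(p'_{\llbracket v'\rrbracket})^{-1} f_0\, p_{\llbracket v\rrbracket}$, which is automatically a graph isomorphism because vertex bundles span empty graphs; your justification of the final clause via surjectivity of $\pi'$ on vertices (Lemma~\ref{lem:scaffolding-tree}) fills in a detail the paper leaves implicit.
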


\begin{proof}
    Apply Theorem \ref{thm:acceptable-isomorphism} to the case where $S$ and $S'$ each consists of single vertex $\llbracket v\rrbracket$ and $\llbracket v'\rrbracket$, respectively, and the acceptable partial scaffolding isomorphism $\Sigma_{\llbracket v\rrbracket}\to \Sigma_{\llbracket v\rrbracket}$ has local map $f_0$ at $\llbracket v \rrbracket$.
\end{proof}

\section{Inversions}\label{SInversions}

Above  we have followed the precedent in Bass--Serre theory and not allowed self-reverse arcs in the base graph of a \ggawo. 
In this section, our theory is extended to include the possibility of self-reverse arcs in the base graph, thus allowing the possibility of edge inversions in the resulting group action on a tree. 

\subsection{Graphs of group actions (revisited)}
The first step is to modify the definition of a \ggawo, Definition~\ref{DGraph-of-group-action}, by including special conditions for self-reverse arcs in the base graph.   These conditions are in part (d).

\begin{defi}\label{DEGraph-of-group-action}
A \emph{graph of group actions} (or a \emph{\gga}) consists of

\begin{enumerate}[label=(\alph*)]
    \item a connected graph $\Delta$ called the \emph{base graph},
    \item for every vertex $v \in \V\Delta$ a permutation group action $(G(v),X(v))$, called \emph{the action at the vertex} $v$, and
    \item for every arc $a \in \A \Delta$ 
    \begin{itemize}
    \item a permutation group action $(H(a),Y(a))$, called  \emph{the action at the arc} $a$, such that $(H(\ba),Y(\ba)) = (H(a),Y(a))$, and 
    \item an embedding  $(\phi_a, \Phi_a)\colon (H(a),Y(a)) \hookrightarrow (G(t(a)),X(t(a)))$ of permutation actions.
    \end{itemize}
    \item for every arc $a \in \A \Delta$ that is self-reverse there is
 a permutation $h_a$ (called an \emph{inversion agent}) of $Y(a)$ such that $h_aH(a)h_a^{-1}=H(a)$ and $h_a^2$ is contained in $H(a)$. 
\end{enumerate}
\end{defi}

\subsection{Scaffoldings (revisited)}

The nest step is to modify the definition of a scaffolding to take into account self-reverse arcs in the base graph, $\Delta$, of our \gga\ $\cG$. If $a$ is an arc in $\Delta$ then, as before, an $a$-adhesion map is a map $\Psi: Y(a)\to X(t(a))$ such that there exists $g\in G(t(a))$ such that $\Psi=g\Phi_a$.  If the arc $a$ is self-reverse we must also consider \emph{twisted adhesion map}, that is, maps $\Psi: Y(a)\to X(t(a))$ such that there exists $g\in G(t(a))$ such that $\Psi=g\Phi_ah_a$.  

%addition
%Perhaps the proof can be left out?
\begin{remark}
    The condition that $h_aH(a)h_a^{-1}=H(a)$ ensures that $\Phi_ah_a: Y(a)\to X(t(a))$ is an embedding of permutation actions.  
    
    This can be seen from the following argument:  Let $(H, Y)$ and $(G,X)$ be permutation actions.  Suppose that $\phi: H\to G$ is a group isomorphism and $\Phi:Y\to X$ is a bijection so that the two form an isomorphism of group actions, i.e.\ $\phi(h)\Phi(y)=\Phi(\phi(y))$  for every element $h\in H$ and every element $y\in Y$.  Suppose that $h_a$ is a permutation of $Y$ such that $h_aHh_a^{-1}=H(a)$.  Then $h\mapsto h_ahh_a^{-1}$ is an isomorphism of $H$ and $\psi: H\to G$ defined by $\psi(h)=\phi(h_ahh_a^{-1})$ is a group isomorphism.  The map $\Psi=\Phi h_a: Y\to X$ is a bijection.  If $h\in H$ and $y\in Y$ we get
    $$\Psi(h(y))=\Phi(h_ahh_a^{-1}h_a(y))=\phi(h_ahh_a^{-1})\Phi(h_a(y))
    =\psi(h)\Psi(y).$$
    Thus the bijection $\Psi=\Phi h_a: Y\to X$ gives an isomorphism of permutation actions.
\end{remark}

%change-simplification
\begin{defi}\label{DScaffolding-gga}
        A \emph{scaffolding} for a \gga\  ${\cal G}$ is a graph  $\Sigma$ together with a graph equivalence relation $\sim$ on $\V\Sigma$ such that all the conditions in Definition~\ref{DScaffolding} are satisfied except that the fourth condition in part (iv) is replaced by the following condition
\begin{itemize}
    \item  Define $\Psi_{\llbracket a\rrbracket}:Y(\pi(\llbracket a\rrbracket))\to X(\pi(t(\llbracket a\rrbracket)))$ as the unique map satisfying $\Psi_{\llbracket a\rrbracket}p(b)=pt(b)$ for every $b\in [a]$.  
    
    If the arc $\pi(\llbracket a\rrbracket)$ is not self-reverse then $\Psi_{\llbracket a\rrbracket}$ is a $\pi(\llbracket a\rrbracket)$-adhesion map.   
    
    If $\pi(\llbracket a\rrbracket)$ is self-reverse then $\Psi_{\llbracket a\rrbracket}$ is a $\pi(\llbracket a\rrbracket)$-adhesion map and $\Psi_{\llbracket \ba\rrbracket}$ is a twisted $\pi(\llbracket a\rrbracket)$-adhesion map or vice versa.
\end{itemize}
\end{defi}

%addition

Very little needs to be changed in the construction of a canonical scaffolding in Section~\ref{sec:canonical}.  For a self reverse arc $a$ in the base graph $\Delta$  we define the index $i(a)$ in the same way as before as the number of distinct $G(t(a))$-translates of the set $\Phi_a(Y(a))$.  When it comes to constructing the augmented base digraph $\Delta_+$ we add $i(a)-1$ new arcs that are all self-reverse and have the same origin as $a$.  Associate each of these new arcs and $a$ with distinct translates of $\Phi_a(Y(a))$ so that $a$ is associated with $\Phi_a(Y(a))$.  Then if $b$ is one of these arcs in $\Delta_+$ fix an $a$-adhesion map $\Phi_b$ such that the image of $\Phi_b$ is the translate of $\Phi_a(Y(a))$ associated with $b$.  The construction of a $\Star$-covering tree works equally well in the case when the graph in question has self-reverse arcs, see Remark~\ref{RCompatible}.  Let $T$ be a $\Star$-covering tree for $\Delta_+$.  The vertex set and the arc set of a canonical scaffolding $\Sigma$ are defined as before and the origin and terminus maps for arcs $(c,y)$ in $\Sigma$ where $\pi(c)$ is not self-reverse are defined in the same way as before.  For each pair $c, \bc$ of arcs in $T$ that $\pi$ maps to a self-reverse arc $a$ in $\Delta$ we set $b=\pi_+(c)=\pi_+(\bc)$ and for $y\in Y(a)$ set $o(c,y)=(o(c), \Phi_b(h_a(y)))$ and $t(c,y)=(t(c), \Phi_b(y))$.   Then set $o(\bc,y)=(o(\bc), \Phi_b(y))$ and $t(\bc,y)=(t(\bc), \Phi_b(h_a(y)))$.  The arcs $(c,y)$ and $(\bc,y)$ in $\Sigma$ are reverse of each other.  It is an easy exercise to show that the graph $\Sigma$ thus constructed is a scaffolding for the \gga\ $\cG$.

\subsection{Universal group (revisited)}

Using the updated notions of \gga\ and scaffolding, we can define the universal group in the same way as in Section \ref{SAcceptable}. More precisely, a \emph{scaffolding isomorphism} is a graph isomorphism which preserves the equivalence relation $\sim$ and leaves $\pi$ invariant. Since $\sim$ and $\pi$ are invariant under any scaffolding isomorphism, we can for a vertex bundle $\bun{v}$ define the local map $g_{\llbracket v\rrbracket}$ as the unique permutation of $X(\pi(\llbracket v\rrbracket))$ such that $p g = g_{\llbracket v\rrbracket} p$, and the universal group as the group of acceptable scaffolding automorphisms, that is, scaffolding automorphisms whose local map is in $G(\pi(\llbracket v\rrbracket))$ for every vertex bundle $\llbracket v\rrbracket$ in the scaffolding.

%addition
\begin{remark}
    It is shown in Lemma~\ref{LArcBundle} the given an arc bundle $\bun{a}$ in a scaffolding graph for a $\ggawo$ then the universal group induces on this arc bundle an action isomorphic to the action of $H(\pi(\bun{a}))$ on $Y(\pi(\bun{a}))$.  When the arc $\pi(\bun{a})$ is self-reverse and $h_{\pi(\bun{a})}$ is the inversion agent, then the condition that $h_{\pi(\bun{a})}^2\in H(\pi(\bun{a}))$ ensures that his remains true when we have a general \gga.
\end{remark}

\subsection{Treating inversions via subdivision}
\label{sec:inversion-subdivision}

It is possible to \lq\lq remove\rq\rq\ the self-reverse arcs from a \gga\ $\cG$ by a procedure reminiscent of a \lq\lq barycentric\rq\rq\ subdivision. This allows us to seamlessly transfer results about \ggawo\ to the more general setting of \gga.

Let $\cG$ be a \gga\ and suppose that $a$ is a self-reverse arc in $\Delta$.  We replace the arc $a$ with a new vertex $v_a$ and arcs $b_a, \bb_a$, which are reverses of each other, such that the origin of $b_a$ is $v_a$ and the terminus is $t(a)$.  Set $X(v_a)=\{-1,1\}\times Y(a)$.  The group $H(a)$ acts naturally on $X(v_a)$ so that $h(\epsilon, y)= (\epsilon, h(y))$ for $h\in H(a)$.  Let $h_a$ be the inversion agent, and let $h_{a+}$ be the permutation of $X(v_a)$ given by $h_{a+}(\epsilon, y)= (-\epsilon, h_a(y))$.  
The action at $v_a$ will be the action of the permutation group $G(v_a)=\langle H(a), h_{a+}\rangle$ on $X(v_a)$.   The action at the arc $b_a$ is $(H(a), Y(a))$ and the embeddings of group actions are given by $\Phi_{b_a}: Y(a)\to X(t(a))$ equal to $\Phi_a$ and $\Phi_{\bb_a} : Y(a)\to X(v_a) ; y\mapsto (1, y)$.  Thus $A=\{(1, y)\mid y\in Y(a)\}$ is an adhesion set in $X(v_a)$ and the only other adhesion set in $X(v_a)$ is the set $h_{a+}(A)= \{(-1, y)\mid y\in Y(a)\}$. An adhesion map onto $h_{a+}(A)$ is of the form $\Psi(y)=hh_{a+}\Phi_{\bb_a}$ with $h\in H(a)$.   Then $\Psi(y)=hh_{a+}\Phi_{\bb_a}(y)=gh_{a+}(1,y)=(-1,hh_a(y))$. 

Applying this construction to every self-reverse arc in $\Delta$, we obtain a \ggawo\ $\cG_0$.  Take a scaffolding $(\Sigma_0,\sim_0,\pi_0,p_0)$ for $\cG_0$,  and let $G_0$ be the associated universal group.   %Furthermore, let $\sim_0$ denote the equivalence relation on $\V\Sigma_0$ determining the vertex bundles.

Let $[v]$ be a vertex bundle in $\Sigma_0$ that the legal colouring $p_0$ maps to $X(v_a)$ for some vertex $v_a$ in $\cG_0$ obtained from a self-reverse arc in $\Delta$.  Each vertex in $[v]$ is the end vertex of just one edge in $\Sigma_0$.  For each $y\in Y(a)$  replace the two edges in $\Sigma_0$ that have the vertices with images $(1, y)$ and $(-1,y)$ as end vertices, respectively, with a single edge connecting the other two end vertices and remove all the vertices in $[v]$. Note that the new edges we get when $[v]$ is removed from $\Sigma_0$ are in a one-to-one correspondence with the elements in $Y(a)$.  

Repeating this procedure at every such vertex bundle, we obtain a new \lq\lq scaffolding like\rq\rq\ graph $\Sigma$ on which the group $G_0$ acts by automorphisms.
The vertex set of $\Sigma$ is a subset of the vertex set of $\Sigma_0$ and thus $\sim_0$ restricted to $\V\Sigma$ gives an equivalence relation. 
Let $T_0$ be the tree associated to the scaffolding $\Sigma_0$. The vertices in $T_0$ whose image under $\pi$ is one of the subdivision vertices $v_a$ all have degree 2.  Replace each of these vertices and its adjacent edges with an edge connecting its two neighbours to get a new tree $T$; clearly $\Sigma/\!\sim$ is isomorphic to the tree $T$.  The graph morphism $\Sigma_0/\!\sim_0\to \Delta_0$ induces a graph morphism $\pi: \Sigma/\!\sim\to \Delta$.  The legal colouring $p_0$ of $\Sigma_0$ can be used to define a map 
$$p\colon \V\Sigma\cup\A\Sigma \to \left(\bigcup _{v \in \V\Delta} X(v)\right)\cup \left(\bigcup _{a \in \A\Delta} Y(a)\right)$$
such that if $x$ is a vertex or an arc in $\Sigma$ that is also on $\Sigma_0$ then $p(x)=p_0(x)$.   But, if $b$ is an arc in $\Sigma$ that is not in $\Sigma_0$ then $b$ originates in some pair of vertices $(1,y), (-1,y)$ for $y\in Y(a)$ for some self-reverse arc $a$ in $\Delta$ and we define $p(b)=y$.  

It is not difficult to check that $(\Sigma,\sim,\pi,p)$ is a scaffolding for the \gga\  $\cG$. Moreover, every $g \in G$ induces a scaffolding automorphism whose local action at every vertex bundle $\llbracket v \rrbracket$ is contained in $G(\llbracket v \rrbracket)$, and conversely, any acceptable scaffolding automorphism of $\Sigma$ corresponds to some element of $G_0$. Therefore we conclude the following.

\begin{theo}
    With the above definitions, $\Sigma$ is a scaffolding for $\cG$. The universal group $G$ of $\cG$ in its action on $\V\Sigma$ is (permutationally) isomorphic to the action of $G_0$ on $\V\Sigma$ regarded as a subset of $\V\Sigma_0$. 
\end{theo}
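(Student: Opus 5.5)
The proof splits into two parts. First we check that $(\Sigma,\sim,\pi,p)$ satisfies Definition~\ref{DScaffolding-gga}. Then we identify the acceptable scaffolding automorphisms of $\Sigma$ with the elements of $G_0$.

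\textbf{Scaffolding axioms.} Conditions (i)--(iii) are nearly immediate. $\Sigma/\!\sim$ is the tree $T$ obtained from $T_0$ by suppressing the degree-two vertices lying over the $v_a$. No new arcs are created inside a vertex bundle, since the two neighbouring bundles of a subdivision bundle are distinct vertices of the tree $T_0$. The graph morphism $\pi_0$ descends to $\pi$ because it sends the two arcs $b_a,\bb_a$ at a subdivision vertex to the self-reverse arc $a$. For (iv), away from the removed bundles everything is inherited from $\Sigma_0$. For a new arc bundle $[c]$ arising from a subdivision bundle $[u]$, the bijection $p_{\bun c}$ onto $Y(a)$ comes from the identification $p_0(x)=(\pm1,y)$. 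The two adhesion maps of $\Sigma_0$ into $[u]$ land on $A=\{1\}\times Y(a)$ and on $h_{a+}(A)$. By the computation in the text, an adhesion map onto $h_{a+}(A)$ has the form $y\mapsto(-1,hh_a(y))$ with $h\in H(a)$. Composing with the $b_a$-adhesion maps at the two ends, one end gives $\Psi_{\bun c}=g\Phi_a$ and the other gives $g'\Phi_a h'$ with $h'\in H(a)h_a$. Using $h_aH(a)h_a^{-1}=H(a)$, this is a twisted adhesion map, possibly after replacing $c$ by $\bc$. The uniqueness-of-arc condition (last bullet) transfers from $\Sigma_0$, because each $a$-adhesion set at $t(a)$ corresponds bijectively to a $b_a$-adhesion set there.

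\textbf{Identifying the groups.} An element $g\in G_0$ preserves $\sim_0$ and $\pi_0$, so it permutes the subdivision bundles and the set $\V\Sigma$. Since the new edges of $\Sigma$ are defined purely from $\Sigma_0$-paths through subdivision bundles, $g$ maps new edges to new edges. Its local actions at non-subdivision bundles are unchanged, so they lie in $G(v)$. Hence $g$ restricts to an acceptable scaffolding automorphism of $\Sigma$. This restriction is injective because an automorphism of $\Sigma_0$ is determined by its action on $\V\Sigma$: each subdivision vertex is determined by its unique $\Sigma_0$-neighbour outside its bundle. For surjectivity, take an acceptable $f$ of $\Sigma$ and extend it to $\Sigma_0$. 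On each subdivision bundle $[u]$ between $[w_1]$ and $[w_2]$, $f$ is already determined by where it sends the adjacent vertices. We must check that the resulting local permutation of $X(v_a)=\{\pm1\}\times Y(a)$ lies in $\langle H(a),h_{a+}\rangle$.

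\textbf{Main obstacle.} The key computation is this last membership check. By Lemma~\ref{LArcBundle}, applied via the one-step argument of Lemma~\ref{lem:extension}, the local map of $f$ on the new arc bundle is some $r\in H(a)$ when $f$ preserves the orientation of the edge. In that case the induced permutation is $(\epsilon,y)\mapsto(\epsilon,r(y))$. When $f$ inverts the edge, the twisted adhesion maps introduce $h_a$, and the induced permutation becomes $(\epsilon,y)\mapsto(-\epsilon,r'h_a(y))$, which equals $r'h_{a+}$. Carefully tracking which side carries the twist is what yields exactly the elements of $\langle H(a),h_{a+}\rangle$, using $h_a^2\in H(a)$ to close up. Once this is done, the restriction map $G_0\to G$ is a bijective homomorphism compatible with the actions on $\V\Sigma$, which gives the claimed permutational isomorphism.
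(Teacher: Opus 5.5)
Your plan matches the paper's, which only asserts that $\Sigma$ is a scaffolding and that restriction identifies $G_0$ with the acceptable automorphisms of $\Sigma$. Your check of the scaffolding axioms is fine, in particular that one end of each new arc bundle carries an adhesion map and the other a twisted one.

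However, the step showing that $g\in G_0$ maps new edges to new edges rests on a false reason. There are no $\Sigma_0$-paths through a subdivision bundle $[u]$. Every vertex of $[u]$ has degree one and $[u]$ spans no edges, so the $[w_1]$-side and the $[w_2]$-side are not joined in $\Sigma_0$ at all. The new edges are defined through the colouring, by pairing $p_0^{-1}(1,y)$ with $p_0^{-1}(-1,y)$. A general scaffolding automorphism of $\Sigma_0$ need not respect this pairing: it can be the identity on the $[w_2]$ half and move the $[u]$-vertices on the $[w_1]$ side. What makes the step work is acceptability at $[u]$. Since $h_{a+}$ normalises $H(a)$ and $h_{a+}^2\in H(a)$, you have $G(v_a)=H(a)\cup H(a)h_{a+}$. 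So every local action at $[u]$ has the form $(\epsilon,y)\mapsto(\pm\epsilon,\sigma(y))$ with one $\sigma$ for both signs, and hence preserves the fibres $\{(1,y),(-1,y)\}$. This is exactly why $G(v_a)$ is chosen as it is, and you need to say it.

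In the surjectivity check, the case split should not be whether $f$ ``preserves the orientation of the edge''. An $f$ that moves $\{\bun{w_1},\bun{w_2}\}$ to a different edge has no intrinsic orientation behaviour. The right split is whether $f$ sends the side of $[u]$ matched with $A=\{1\}\times Y(a)$ to the side of $[u']$ matched with $A$. Take $\bun{w}=t(\bun{c})$ and $\bun{c'}=\tilde f(\bun{c})$; then $f_{\bun{c}}=\Psi_{\bun{c'}}^{-1}f_{\bun{w}}\Psi_{\bun{c}}$. Write the $\Psi$'s as $g\Phi_a$ at $A$-sides and as $g\Phi_a h_a$ at $(-A)$-sides. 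This gives $f_{\bun{c}}\in H(a)$ when the $A$-sides correspond, and $f_{\bun{c}}\in h_a^{\pm1}H(a)=H(a)h_a$ otherwise; this is where normality and $h_a^2\in H(a)$ are used. Correspondingly, the sign coordinate is preserved, respectively reversed, uniformly. So the induced permutation of $\{\pm1\}\times Y(a)$ lies in $H(a)\cup H(a)h_{a+}$. With these two repairs your argument is a correct and more detailed version of the paper's sketch.
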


As mentioned above, this subdivision approach can be used to quickly transfer results about \ggawo\ to general \gga. For instance, the extension property Theorem~\ref{thm:acceptable-isomorphism-ggawo} for \ggawo's extends easily to \gga.

\begin{theo}    \label{thm:acceptable-isomorphism}
\emph{(Full extension property.)}
    Let $(\Sigma, \sim, \pi, p)$ and $(\Sigma', \sim', \pi', p')$ be scaffoldings for a \gga\ $\cG$.  Suppose $S_0$ and $S_0'$ are subtrees of $T_\Sigma$ and $T_{\Sigma'}$, respectively, and that $f \colon \Sigma_{S_0} \to \Sigma'_{S_0'}$ is an acceptable partial scaffolding isomorphism.
Then $f$ can be extended to an acceptable scaffolding isomorphism $F: \Sigma\to \Sigma'$.
\end{theo}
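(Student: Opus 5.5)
The plan is to reduce the statement to Theorem~\ref{thm:acceptable-isomorphism-ggawo} via the subdivision of Section~\ref{sec:inversion-subdivision}, run in reverse. Given a scaffolding $(\Sigma,\sim,\pi,p)$ for the \gga\ $\cG$, we build a scaffolding $(\Sigma_0,\sim_0,\pi_0,p_0)$ for the \ggawo\ $\cG_0$. For every arc bundle $[c]$ with $\pi(\bun c)=a$ self-reverse, we insert a new vertex bundle $[v_c]$, a copy of $X(v_a)=\{-1,1\}\times Y(a)$. Each edge $\{b,\bb\}$ in $[c]\cup[\bc]$ is subdivided by two vertices coloured $(1,y)$ and $(-1,h_a(y))$, joined by no edge between them. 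The labels are chosen so that whichever of $\Psi_{\bun c},\Psi_{\bun{\bc}}$ is untwisted is read through $\Phi_{\bb_a}$ on the $+1$ side, and the twisted one through $h_{a+}\Phi_{\bb_a}$ on the $-1$ side. Checking Definition~\ref{DScaffolding} for $\Sigma_0$ is routine. The quotient tree is the barycentric subdivision of $T_\Sigma$ at the edges over self-reverse arcs. The adhesion sets in $X(v_a)$ are exactly the two halves $\{\pm1\}\times Y(a)$, so the uniqueness clause in (iv) holds automatically. We do the same for $\Sigma'$.

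Next we transport $f$. The subtree $S_0$ of $T_\Sigma$ determines a subtree of $T_{\Sigma_0}$: take $S_0$ and add the midpoint vertex of every edge of $S_0$ lying over a self-reverse arc. On the midpoint bundles we define $f_0$ so that it commutes with $f$ on the adjacent bundles. The local map there must be an element of $G(v_a)=\langle H(a),h_{a+}\rangle$. It is either of the form $(\epsilon,y)\mapsto(\epsilon,r(y))$ or of the form $(\epsilon,y)\mapsto(-\epsilon,r'(y))$, according to whether $\tilde f$ preserves or reverses the twisted side. Lemma~\ref{LArcBundle} and the remark on inversions give $r\in H(a)$ in the first case. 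In the second case, $r'\in H(a)h_a$, and the conditions $h_aH(a)h_a^{-1}=H(a)$ and $h_a^2\in H(a)$ place this local map in the coset $H(a)h_{a+}$. So $f_0$ is an acceptable partial scaffolding isomorphism $\Sigma_{0,S_0}\to\Sigma'_{0,S_0'}$.

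We then apply Theorem~\ref{thm:acceptable-isomorphism-ggawo} to obtain an acceptable $F_0\colon\Sigma_0\to\Sigma'_0$ extending $f_0$. Finally we contract the midpoint bundles again. Each midpoint bundle is mapped to a midpoint bundle, since $\pi_0$ is preserved, and $F_0$ respects the pairing $(1,y)\leftrightarrow(-1,h_a(y))$ of subdivided edges because its local maps lie in $G(v_a)$. Hence $F_0$ restricts to a graph isomorphism $F\colon\Sigma\to\Sigma'$ which preserves $\sim$ and $\pi$, has the same local maps at the original vertex bundles (hence is acceptable), and extends $f$.

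The main obstacle is the bookkeeping of the twist at self-reverse arcs. First, the untwisted/twisted choice in Definition~\ref{DScaffolding-gga} must be encoded consistently into the $\pm1$ coordinate, separately for $\Sigma$ and $\Sigma'$. Second, we must verify that the induced midpoint maps genuinely lie in $\langle H(a),h_{a+}\rangle$ in both the orientation-preserving and the orientation-reversing case. I would handle this first, by writing every element of $G(v_a)$ explicitly as $(\epsilon,y)\mapsto(\epsilon,h(y))$ or $(\epsilon,y)\mapsto(-\epsilon,hh_a(y))$ with $h\in H(a)$.
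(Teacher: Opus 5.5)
Your strategy is the one the paper sketches: subdivide, apply Theorem~\ref{thm:acceptable-isomorphism-ggawo}, and contract. You are also right that one must build the subdivision $\Sigma_0$ of a \emph{given} scaffolding $\Sigma$, which the paper leaves implicit. However, the colouring you prescribe is wrong, precisely in the twist bookkeeping you flagged, and as written the construction fails.

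Take an edge $\{b,\bb\}$ over a self-reverse arc $a$ with label $y=p(b)=p(\bb)$. Let its untwisted end $x$ be coloured $g\Phi_a(y)$ and its twisted end $x'$ coloured $g'\Phi_a h_a(y)$. Put your vertex $(-1,h_a(y))$ next to $x'$, and give the new arc joining them the label $\lambda(y)$.
\begin{itemize}
\item The new bundle over $b_a$ has map $g'\Phi_a h_a\lambda^{-1}$. This is a $b_a$-adhesion map only if $\lambda\in H(a)h_a$.
\item The reverse bundle over $\bb_a$ has map $z\mapsto(-1,h_a\lambda^{-1}(z))$. The $\bb_a$-adhesion maps onto $\{-1\}\times Y(a)$ are exactly $z\mapsto(-1,kh_a(z))$ with $k\in H(a)$, so this forces $\lambda\in H(a)$.
\end{itemize}
Hence your $\Sigma_0$ violates the fourth item of condition (iv) in Definition~\ref{DScaffolding} unless $h_a\in H(a)$. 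For instance, take one vertex and one self-reverse loop, with $X(v)=Y(a)=\{1,2,3\}$, $G(v)=H(a)=1$ and $h_a=(1\,2)$. Your recipe makes the twisted-end vertex coloured $z$ adjacent to the midpoint vertex $(-1,z)$ for every $z$. But $z\mapsto(-1,z)$ is neither of the two $\bb_a$-adhesion maps $z\mapsto(1,z)$ and $z\mapsto(-1,h_a(z))$.

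The later steps inherit the problem.
\begin{itemize}
\item $G(v_a)=H(a)\sqcup H(a)h_{a+}$ preserves the partition of $X(v_a)$ into your pairs $\{(1,y),(-1,h_a(y))\}$ only if $h_a^2=1$ and $h_a$ centralises $H(a)$. So an acceptable $F_0$ need not carry subdivided edges to subdivided edges.
\item In the orientation-reversing case, the transported midpoint map lies in $H(a)h_{a+}$ only when $h_a r h_a=r$, where $r$ is the local map of $f$ on the arc bundle.
\end{itemize}

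The repair is the pairing of Section~\ref{sec:inversion-subdivision}. At the twisted end use the vertex $(-1,y)$ rather than $(-1,h_a(y))$, and give the new arc there the label $h_a(y)$. Keep $(1,y)$ and label $y$ at the untwisted end. The four new bundle maps are then $g\Phi_a$, $\Phi_{\bb_a}$, $g'\Phi_a$ and $z\mapsto(-1,h_a^{-1}(z))$. The last is $h_a^{-2}h_{a+}\Phi_{\bb_a}$, with $h_a^{-2}\in H(a)$ acting on the second coordinate, so all four are adhesion maps.

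Next, use $\Psi'_{\tf(\bun c)}\,r=f_{\bun u}\Psi_{\bun c}$, where $\bun u=t(\bun c)$, together with the fact that $\Phi_a$ is an embedding of actions. This gives $r\in H(a)$ if $f$ sends untwisted ends to untwisted ends, and $r\in H(a)h_a$ otherwise. Do this computation directly, since Lemma~\ref{LArcBundle} is only stated for full isomorphisms of a \ggawo. The midpoint map is then $(\epsilon,y)\mapsto(\epsilon,r(y))$, which lies in $H(a)$, or $(\epsilon,y)\mapsto(-\epsilon,r(y))$, which lies in $H(a)h_{a+}$.

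Finally, every element of $G(v_a)$ maps each pair $\{(1,y),(-1,y)\}$ onto a pair $\{(1,z),(-1,z)\}$, which is exactly what contracting $F_0$ requires. With this correction your argument goes through and agrees with the paper's sketch.
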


\begin{proof} (Sketch.)
    Construct \ggawo's $\Sigma_0$ and $\Sigma'_0$ from $\Sigma$ and $\Sigma'$, respectively, as described above.  Then there are subtrees $R_0$ and $R_0'$ in $T_0$ corresponding to $S_0$ and $S'_0$, respectively, and $f$ gives a partial isomorphism $f_0 \colon (\Sigma_0)_{S_0} \to (\Sigma'_0)_{S_0'}$.   By Theorem~\ref{thm:acceptable-isomorphism-ggawo} there exists an extension of $f_0$ to an acceptable scaffolding isomorphism $F_0: \Sigma_0\to \Sigma'_0$ that in turn gives an extension of $f$ to an acceptable scaffolding isomorphism $F : \Sigma\to \Sigma'$.  
\end{proof}

This immediately implies an extension of Corollary~\ref{cor:acceptable-isomorphism-ggawo}.

\begin{coro}
    \label{cor:acceptable-isomorphism}
      Let $(\Sigma, \sim, \pi, p)$ and $(\Sigma', \sim', \pi', p')$ be scaffoldings a \gga\ $\cG$.  Let $\llbracket v\rrbracket \in \V T_{\Sigma}$ and $\llbracket v'\rrbracket\in \V T_{\Sigma'}$ with $\pi(\llbracket v\rrbracket) = \pi(\llbracket v'\rrbracket)$, and let $f_0 \in G(\pi(\llbracket v\rrbracket))$.
          There is an acceptable isomorphism $f \colon \Sigma \to \Sigma'$ such that $\tf (\llbracket v\rrbracket) = \llbracket v'\rrbracket$ and $f_{[v]} = f_0$.
          In particular, there exists an acceptable scaffolding isomorphism $\Sigma\to \Sigma'$.  
\end{coro}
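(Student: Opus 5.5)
The plan is to deduce this directly from Theorem~\ref{thm:acceptable-isomorphism}, the full extension property for a \gga, mirroring the proof of Corollary~\ref{cor:acceptable-isomorphism-ggawo}. Let $S_0$ be the subtree of $T_\Sigma$ consisting of the single vertex $\bun v$, and $S_0'$ the subtree of $T_{\Sigma'}$ consisting of $\bun{v'}$. Then $\Sigma_{S_0}$ is the empty graph on the vertex bundle $[v]$ (by condition (ii) of Definition~\ref{DScaffolding}), and likewise $\Sigma'_{S_0'}$ is the empty graph on $[v']$.

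Define $f^{(0)}\colon \Sigma_{S_0}\to\Sigma'_{S_0'}$ by
\[ f^{(0)} = (p'_{\bun{v'}})^{-1} f_0\, p_{\bun v}. \]
This is well defined because $p_{\bun v}\colon [v]\to X(\pi(\bun v))$ and $p'_{\bun{v'}}\colon [v']\to X(\pi'(\bun{v'}))$ are bijections onto the same set, since $\pi(\bun v)=\pi'(\bun{v'})$. As there are no arcs, $f^{(0)}$ is a graph isomorphism. It maps the single bundle $[v]$ onto the single bundle $[v']$, so it respects the equivalence relations. It also preserves $\pi$. Its local map at $\bun v$ is $p' f^{(0)} (p_{\bun v})^{-1}=f_0\in G(\pi(\bun v))$, so $f^{(0)}$ is an acceptable partial scaffolding isomorphism.

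Theorem~\ref{thm:acceptable-isomorphism} then extends $f^{(0)}$ to an acceptable scaffolding isomorphism $f\colon\Sigma\to\Sigma'$. Since $f$ restricts to $f^{(0)}$ on $[v]$, we get $\tf(\bun v)=\bun{v'}$ and $f_{\bun v}=f_0$. For the final assertion, take any $\bun v$, choose $\bun{v'}$ with the same image in $\Delta$ (possible because $\pi'$ is surjective, $T_{\Sigma'}$ being a $\Star$-covering tree by Lemma~\ref{lem:scaffolding-tree}), and let $f_0$ be the identity.

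There is no real obstacle beyond Theorem~\ref{thm:acceptable-isomorphism} itself; the only points to check are that single-bundle partial maps are genuinely partial scaffolding isomorphisms and the surjectivity of $\pi'$ used for the last sentence.
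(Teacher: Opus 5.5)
Your proposal is correct and is essentially the paper's argument: apply Theorem~\ref{thm:acceptable-isomorphism} to the single-bundle partial isomorphism $(p'_{\bun{v'}})^{-1} f_0\, p_{\bun v}$, whose local map at $\bun v$ is $f_0$. You also justify the ``in particular'' clause via surjectivity of $\pi'$, which the paper leaves implicit. Note that Lemma~\ref{lem:scaffolding-tree} is stated only for a \ggawo; the surjectivity you need still follows directly from the last bullet of condition (iv), which is retained unchanged for a \gga, together with connectedness of $\Delta$.
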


\begin{example}
\begin{enumerate}    
\item  Suppose $\cG$ is a \gga\ and $a$ is a self-reverse arc such that $|Y(a)|=1$.  Then $H(a)=\{1\}$ and the only possible choice for an inversion agent is $h_a=1$.  
\item    Consider a \gga\ $\cG$ that has as a base graph a graph with just one vertex $v$ and one self-reverse arc $a$.  We set $X(v)=Y(a)=\{1, 2, 3\}$ and $G(v)=H(a)=C_3$ acting in the natural way.   The canonical scaffolding graph for $\cG$ has six vertices in two vertex bundles and three (undirected) edges giving a perfect matching between the two vertex bundles. If $h_a=1$ then the universal group of this \gga\ will act on the set of edges like a $C_3$, but if $h_a$ is some permutation of order 2 of the set $\{1, 2, 3\}$ then the universal group would act on the set of three edges as the full symmetric group $S_3$.   In the first case the universal group would be isomorphic to $C_3\times C_2$ and in the second case it would be isomorphic to $S_3$.  
Different choices $h_a, h'_a$ of inversion agents can also produce the same universal group.  This happens precisely when $h_a {h'_a}^{-1}\in H(a)$ because then the process of constructing an ordinary \ggawo\ $\cG_0$ would give the same group actions at $v_a$.  
\item  As in the previous example, our base graph has just one vertex $v$ and one self-reverse arc $a$.  Let $(G(v), X(v))$ be some permutation group action and choose a subset $Y(a)$ of $X(v)$.  Let $H(a)$ be the permutation group induced by $G(v)$ on $Y(a)$ and define $\Phi_a: Y(a)\to X(v)$ as the inclusion map.  Furthermore, set $h_a=1$.  Denote by $\Sigma$ a canonical scaffolding for $\Sigma$ and let $T$ denote the associated tree.    If $u$ and $w$ are adjacent vertices in $T$ and $\{(u, x)\mid x\in X(v)\}$ and $\{(w, x)\mid x\in X(v)\}$ are their vertex bundles in $\Sigma$ then the map $f$ defined on the subgraph of $\Sigma$ spanned by their union such that $f(u,x)=(w,x)$ and $f(w,x)=(u,x)$ for all $x\in X(v)$ is an acceptable partial isomorphism and can thus be extended to an acceptable isomorphism $F$ of $\Sigma$.  It is clearly possible to extend $f$ in such a way that $F(t,x)=(\tilde{F}(t), x)$ for all vertices $(t,x)$ in $\Sigma$, i.e. the local map at every vertex bundle is trivial.     In this case the tree $T$ is a regular tree with degree equal to the number of distinct $G(v)$ translates of the set $Y(a)$.  The universal group $\cU(\cG)$ acts on $T$ vertex- and arc-transitively. In the example above we got the 1-regular tree.
\end{enumerate}
\end{example}

\section{Uniqueness and universality}\label{SUniqueness}

Recall that the universal group of a \gga \ $\mathcal G$ is defined as the group of all acceptable scaffolding automorphisms $\Sigma \to \Sigma$ for a specific scaffolding $\Sigma$ of $\cG$.  With the aid of Theorem~\ref{cor:acceptable-isomorphism} we show that the universal group does not depend on the choice of a scaffolding, and we are thus justified in speaking about \underline{the} universal group of a \gga\ $\cG$.

\begin{theo}\label{Tuniversal-group-unique}
Suppose $\Sigma$ and $\Sigma'$  are scaffoldings for a \gga\  $\cG$.  If $G$ and $G'$ are the universal groups of $\cG$ defined with respect to $\Sigma$ and $\Sigma'$, respectively, then $G$ and $G'$ are isomorphic and their actions on $\Sigma$ and $\Sigma'$, respectively,  are isomorphic and the isomorphism is given by an acceptable scaffolding isomorphism $\Sigma\to \Sigma'$.
\end{theo}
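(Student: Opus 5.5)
By Corollary~\ref{cor:acceptable-isomorphism} there is an acceptable scaffolding isomorphism $F\colon \Sigma\to\Sigma'$. The plan is to show that conjugation by $F$ carries $G$ onto $G'$. Since $\Sigma$ and $\Sigma'$ are graphs and $G\le\aut(\Sigma)$, $G'\le\aut(\Sigma')$ act faithfully, this gives the statement: the pair $(\varphi,\Phi)$ with $\varphi(g)=FgF^{-1}$ and $\Phi=F$ (acting on vertices and arcs) is then a permutation isomorphism of the two actions, and the diagram in Figure~\ref{fig:isomorhisms} commutes by construction.

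First I check that $FgF^{-1}$ is a scaffolding automorphism of $\Sigma'$ whenever $g\in G$. It is a graph automorphism, being a composition of graph isomorphisms. It preserves $\sim'$, because $F$, $g$ and $F^{-1}$ each respect the equivalence relations in the sense of Definition~\ref{DScaffolding-automorphism}. It also preserves $\pi'$: for $x\in\Sigma'$ we have
\[
\pi'(\bun{FgF^{-1}x})=\pi(\bun{gF^{-1}x})=\pi(\bun{F^{-1}x})=\pi'(\bun{x}).
\]

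Next I verify acceptability using the composition formula for local maps from Section~\ref{SAcceptable}, namely $(h'h)_{\bun v}=h'_{h(\bun v)}h_{\bun v}$. That formula was derived for arbitrary scaffolding isomorphisms between possibly different scaffoldings, so it applies to $F\circ g\circ F^{-1}$. It gives
\[
(FgF^{-1})_{\bun{v'}}=F_{\tilde g\tilde F^{-1}\bun{v'}}\,g_{\tilde F^{-1}\bun{v'}}\,(F^{-1})_{\bun{v'}}.
\]
Each factor lies in $G(\pi'(\bun{v'}))$. The local maps of $F$ lie in the appropriate vertex groups since $F$ is acceptable. The local maps of $g$ lie there since $g\in G$. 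For $F^{-1}$, note that $(F^{-1})_{\bun{v'}}=(F_{\tilde F^{-1}\bun{v'}})^{-1}$, so the inverse of an acceptable isomorphism is acceptable, as remarked in the paper. All three factors act on the same set $X(\pi'(\bun{v'}))$ because $\pi$ is preserved throughout. Hence $FgF^{-1}\in G'$. By the symmetric argument $F^{-1}G'F\subseteq G$, so conjugation by $F$ is a bijective homomorphism $G\to G'$.

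The main obstacle is largely bookkeeping. The composition formula is stated for scaffoldings of a \ggawo, and when there are self-reverse arcs one has to confirm it still holds under Definition~\ref{DScaffolding-gga}. I expect no trouble here, because the local maps at vertex bundles are defined identically and the formula only uses $p_{\bun v}$ being a bijection on vertex bundles. One should also note that the action on arcs is determined by the action on vertices only if $\Sigma$ is simple; if it is not, it suffices that $F$ is a graph isomorphism, so arcs are mapped compatibly as well.
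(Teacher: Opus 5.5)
Your proposal is correct and follows essentially the same route as the paper. Like the paper, you take an acceptable scaffolding isomorphism $F\colon\Sigma\to\Sigma'$ from Corollary~\ref{cor:acceptable-isomorphism} and show that $g\mapsto FgF^{-1}$, together with $F$, gives an isomorphism of the two group actions; you just spell out in more detail (preservation of $\sim'$ and $\pi'$, and the local-map composition formula, including the case of $F^{-1}$) what the paper compresses into the remark that compositions and inverses of acceptable scaffolding isomorphisms are acceptable.
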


\begin{proof}
    By Corollary \ref{cor:acceptable-isomorphism} there is an acceptable scaffolding isomorphism $\Phi\colon \Sigma \to \Sigma'$.
    Let $\phi \colon G \to G'$ be the map defined by $\phi(g) = \Phi g\Phi^{-1}$. Note that the images of elements of $G$ are indeed in $G'$ since composition of acceptable scaffolding isomorphisms gives an acceptable scaffolding isomorphism. Moreover, $\phi$ is clearly a group homomorphism and has an inverse given by $\phi^{-1}(g') = \Phi^{-1}g'\Phi$. Therefore, $\phi$ is a group isomorphism and $(\phi,\Phi)\colon (G,\Sigma) \to (G',\Sigma')$ is an isomorphism of group actions.
\end{proof}

Suppose $\Sigma$ is a scaffolding for a \gga\ $\cG$.  
Let us call a group  $H \leq \autsc(\Sigma)$ \emph{locally-$\mathcal G$} if there is a legal colouring  $p'$ of $\Sigma$ such that for every $h \in H$ and every $\llbracket v\rrbracket \in \V T_\Sigma$, the local action of $h$ at $\llbracket v\rrbracket$ with respect to $p'$ is contained in $G(\pi(\llbracket v\rrbracket))$. Now the name ``universal group'' can be justified by showing that $\cU(\cG)$ is universal in the set of locally-$\cG$ groups, that is, it is (by definition) locally-$\cG$ and by the following theorem it contains (up to conjugation) every locally-$\cG$ group as a subgroup.

\begin{theo}\label{CUniversal}
Let $\mathcal G$ be a \gga\ and let $\Sigma$ be a scaffolding for $\mathcal G$.  Suppose the universal group of $\cG$ is defined with respect to $\Sigma$ and some legal colouring $p$.    Every locally-$\mathcal G$ subgroup of $\autsc (\Sigma)$ is conjugate (in $\autsc (\Sigma)$) to some subgroup of $\cU(\cG)$.
\end{theo}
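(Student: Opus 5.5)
The key observation is that a legal colouring is part of the data of a scaffolding. So $(\Sigma,\sim,\pi,p')$ is a second scaffolding for $\cG$, with the same graph, the same equivalence relation and the same map $\pi$. Let $H\leq\autsc(\Sigma)$ be locally-$\cG$, witnessed by the legal colouring $p'$. By definition, every $h\in H$ is an acceptable scaffolding automorphism of $(\Sigma,\sim,\pi,p')$, i.e.\ $H\leq\cU'(\cG)$, where $\cU'(\cG)$ denotes the universal group computed with respect to $p'$. The plan is to transport $\cU'(\cG)$ onto $\cU(\cG)$ by an element of $\autsc(\Sigma)$.

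First, apply Corollary~\ref{cor:acceptable-isomorphism} to the two scaffoldings $(\Sigma,\sim,\pi,p')$ and $(\Sigma,\sim,\pi,p)$. This gives an acceptable scaffolding isomorphism
\[
F\colon(\Sigma,\sim,\pi,p')\to(\Sigma,\sim,\pi,p).
\]
Its local maps $p F (p'_{\bun v})^{-1}$ lie in $G(\pi(\bun v))$. As a map, $F$ is a graph automorphism of $\Sigma$ preserving $\sim$ and $\pi$, since the underlying structures coincide. Hence $F\in\autsc(\Sigma)$, and this is exactly what makes the conjugation happen inside $\autsc(\Sigma)$.

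Next, argue as in the proof of Theorem~\ref{Tuniversal-group-unique}: for $h\in H$, the element $FhF^{-1}$ is a composition of acceptable scaffolding isomorphisms
\[
(\Sigma,p)\xrightarrow{F^{-1}}(\Sigma,p')\xrightarrow{h}(\Sigma,p')\xrightarrow{F}(\Sigma,p).
\]
The composition formula for local maps in Section~\ref{SAcceptable} is stated for three possibly different scaffoldings $\Sigma,\Sigma',\Sigma''$, so it applies verbatim with different colourings on the same graph. Together with the closure of acceptability under inverses, it shows that $FhF^{-1}$ is an acceptable scaffolding automorphism of $(\Sigma,p)$, i.e.\ lies in $\cU(\cG)$. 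Hence $FHF^{-1}\leq\cU(\cG)$.

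The only point needing care, rather than a real obstacle, is bookkeeping. One has to check that the notions of scaffolding isomorphism and acceptability used in Corollary~\ref{cor:acceptable-isomorphism} depend on the colourings only through the local maps. One also has to check that the same $\sim$ and $\pi$ on both sides make $F$ a genuine element of $\autsc(\Sigma)$. Finally, in the case with self-reverse arcs, $p'$ must satisfy the revised condition of Definition~\ref{DScaffolding-gga}; this is implicit in calling $p'$ a legal colouring, so the general Corollary~\ref{cor:acceptable-isomorphism} applies.
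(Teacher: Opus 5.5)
Your proposal is correct and is essentially the paper's proof. Like the paper, you apply Corollary~\ref{cor:acceptable-isomorphism} to the two scaffoldings $(\Sigma,\sim,\pi,p)$ and $(\Sigma,\sim,\pi,p')$ on the same graph, note that the resulting acceptable isomorphism lies in $\autsc(\Sigma)$, and conjugate the universal group for $p'$ (which contains $H$) onto $\cU(\cG)$. Your orientation is the consistent one: you take $F$ from the $p'$-colouring to the $p$-colouring and conclude $FHF^{-1}\leq\cU(\cG)$. The paper instead takes $\Phi$ from $p$ to $p'$ and writes $\Phi G'\Phi^{-1}=G$, which should read $\Phi^{-1}G'\Phi=G$.
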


\begin{proof}
Let $p'$ be a legal colouring of $\Sigma$ witnessing that $H \leq \autsc (\Sigma)$ is locally-$\mathcal G$. By Corollary \ref{cor:acceptable-isomorphism} there is an acceptable scaffolding isomorphism $\Phi: (\Sigma, \sim, \pi, p) \to (\Sigma, \sim, \pi, p')$.   Note that $\Phi \in \autsc (\Sigma)$.   Let $G'$ denote the universal group of $\cG$ thought of as the group of acceptable scaffolding automorphisms of $\Sigma$ with respect to the legal colouring $p'$.  Then both $G$ and $G'$ are supgroups of $\autsc (\Sigma)$ and $\Phi G'\Phi^{-1}=G$.   Because $H\leq G'$ we see that $H$ is conjugate to a subgroup of $G$.
\end{proof}

\begin{defi}\label{DIsomorphism}
Let $\cG$ and $\cG'$ be \gga's.  An isomorphism $(\theta, \Theta): \cG\to \cG'$ consists of a graph isomorphism $\theta:\Delta\to \Delta'$ and a bijection 
$$\Theta: \left(\bigcup _{v \in \V\Delta} X(v)\right)\cup \left(\bigcup _{a \in \A\Delta} Y(a)\right)\to \left(\bigcup _{v \in \V\Delta'} X'(v)\right)\cup \left(\bigcup _{a \in \A\Delta'} Y'(a)\right),$$
such that:
\begin{enumerate}[label=(\roman*)]
    \item  for each vertex $v\in \V\Delta$ the restriction $\Theta_v$ of $\Theta$ to $X(v)$ induces an isomorphism of group actions $(G(v), X(v))\to (G(\theta(v)), X(\theta(v)))$.  
    \item for each arc $a\in \A\Delta$ the restriction $\Theta_a$ of $\Theta$ to $Y(a)$ induces an isomorphism of group actions $(H(a), Y(a))\to (H(\theta(a)), Y(\theta(a)))$, and
    \item for each $a\in \A\Delta$ there is a $\theta(a)$-adhesion map $\Psi_a:Y'(\theta(a))\to X'(\theta(t(a)))$  such that $\Theta_{t(a)}\Phi_a=\Psi_a\Theta_a$.   
\end{enumerate}
\end{defi}

\begin{lemm}\label{LIso}
Let $(\theta, \Theta)$ be an isomorphism of \gga's $\cG$ and $\cG'$ as above.     
\begin{enumerate}[label=(\roman*)]
\item If $a$ is an arc in $\Delta$ and $\Psi$ is an $a$-adhesion map then there exists an $\theta(a)$-adhesion map $\Psi'$ such that $\Theta_{t(a)}\Psi=\Psi'\Theta_a$.    
\item Let $a$ be an arc in $\Delta$.  Then $\Theta_{t(a)}$ maps each $a$-adhesion set to a $\theta(a)$-adhesion set in $X'(\theta(t(a)))$.
\item There is a digraph morphism $\theta_+: \Delta_+\to \Delta'_+$ such that the restriction of $\theta_+$ to $\V\Delta_+=\V\Delta$ is equal to the restriction of $\theta$ to $\V\Delta$. 
\end{enumerate}
\end{lemm}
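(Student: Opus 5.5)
Part (i) is a direct computation from the definitions. Let $\Psi$ be an $a$-adhesion map, so $\Psi=\gamma\Phi_a$ for some $\gamma\in G(t(a))$. By condition (iii) of Definition~\ref{DIsomorphism} there is a $\theta(a)$-adhesion map $\Psi_a$ with $\Theta_{t(a)}\Phi_a=\Psi_a\Theta_a$. By condition (i), $\Theta_{t(a)}$ is a permutation isomorphism $(G(t(a)),X(t(a)))\to(G'(\theta(t(a))),X'(\theta(t(a))))$. Hence $\gamma':=\Theta_{t(a)}\gamma\Theta_{t(a)}^{-1}$ is (the permutation induced by) an element of $G'(\theta(t(a)))$; here we use that $\theta$ is a graph isomorphism, so $\theta(t(a))=t(\theta(a))$. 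Then
\[
\Theta_{t(a)}\Psi=\Theta_{t(a)}\gamma\Phi_a=\gamma'\Theta_{t(a)}\Phi_a=\gamma'\Psi_a\Theta_a .
\]
Since $\Psi_a=\delta\Phi'_{\theta(a)}$ for some $\delta\in G'(t(\theta(a)))$, the map $\Psi':=\gamma'\Psi_a=\gamma'\delta\Phi'_{\theta(a)}$ is a $\theta(a)$-adhesion map, and this proves (i).

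Part (ii) follows by taking images in (i). An $a$-adhesion set is $\Psi(Y(a))$ for an $a$-adhesion map $\Psi$. Since $\Theta_a$ is a bijection $Y(a)\to Y'(\theta(a))$, we get $\Theta_{t(a)}(\Psi(Y(a)))=\Psi'(\Theta_a(Y(a)))=\Psi'(Y'(\theta(a)))$, which is a $\theta(a)$-adhesion set.

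For part (iii), the first step is to show that $\Theta_{t(a)}$ induces a \emph{bijection} between the $a$-adhesion sets and the $\theta(a)$-adhesion sets. Injectivity is clear because $\Theta_{t(a)}$ is injective. For surjectivity, apply (ii) to the inverse isomorphism $(\theta^{-1},\Theta^{-1})$. To justify that this is an isomorphism of \gga's, the one point needing care is condition (iii) of Definition~\ref{DIsomorphism} for the inverse. From $\Theta_{t(a)}\Phi_a=\Psi_a\Theta_a$ with $\Psi_a=\delta\Phi'_{\theta(a)}$ we get $\Theta_{t(a)}^{-1}\Phi'_{\theta(a)}=\Theta_{t(a)}^{-1}\delta^{-1}\Theta_{t(a)}\Phi_a\Theta_a^{-1}$. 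The element $\Theta_{t(a)}^{-1}\delta^{-1}\Theta_{t(a)}$ lies in $G(t(a))$ by (i) of the definition, so the map $\Theta_{t(a)}^{-1}\delta^{-1}\Theta_{t(a)}\Phi_a$ is an $a$-adhesion map, as required.

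It follows that $i(a)=i'(\theta(a))$ for every arc $a$, since $i(a)$ is the number of $a$-adhesion sets. Recall that the arcs of $\Delta_+$ with $\rho(b)=a$ are in bijection with the $a$-adhesion sets in $X(t(a))$ (via the pairing fixed in Section~\ref{sec:canonical}, or any chosen pairing). Define $\theta_+$ to equal $\theta$ on vertices. On an arc $b\in\rho^{-1}(a)$ paired with the adhesion set $S$, let $\theta_+(b)$ be the arc in $\rho'^{-1}(\theta(a))$ paired with $\Theta_{t(a)}(S)$. This is a digraph morphism, because $b$ and $\theta_+(b)$ have the same origin and terminus as $a$ and $\theta(a)$ respectively, and $\theta$ is a graph morphism. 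By the bijection above, $\theta_+$ is even an isomorphism.

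The only delicate step is the inverse-isomorphism check behind surjectivity in (iii); everything else is routine diagram chasing.
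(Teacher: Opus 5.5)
Your proof is correct and follows the paper's argument: in (i) you conjugate by $\Theta_{t(a)}$ and compose with the adhesion map $\Psi_a$ from condition (iii) of the definition; in (ii) you take images; and in (iii) you match the arcs of $\Delta_+$ with those of $\Delta'_+$ through the adhesion sets. Your check that the inverse $(\theta^{-1},\Theta^{-1})$ is again an isomorphism supplies the surjectivity that the paper only asserts when it says $\Theta_{t(b)}$ maps the adhesion sets bijectively.
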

\begin{proof}
(i)     If $\Psi$ is an $a$-adhesion map then there is an element $g$ in $G(t(a))$ such that $\Psi=g\Phi_a$.  Since $\Theta_{t(a)}$ induces an isomorphism of group actions there is $g'\in G'(\theta(t(a)))$ such that $\Theta_{t(a)}g=g'\Theta_{\theta(t(a))}$.   Thus 
$\Theta_{t(a)}\Psi=\Theta_{t(a)}g\Phi_a=g'\Theta_{t(a)}\Phi_a=g'\Psi_a\Theta_a$. Because $\Psi_a$ is a $\theta(a)$-adhesion map we conclude that $\Psi'=g'\Psi_a$ is also a $\theta(a)$-adhesion map. 

(ii) If $A$ is an $a$-adhesion set in $X(t(a))$ then there is an element $g$ in $G(t(a))$ such that $A=g\Phi_a(Y(a))$.  As above we let $g'\in G'(\theta(t(a)))$ be such that $\Theta_{t(a)}g=g'\Theta_{\theta(t(a))}$.   Then 
$$\Theta_{t(a)}(A)=\Theta_{t(a)}g\Phi_a(Y(a))=g'\Theta_{t(a)}\Phi_a(Y(a))=g'\Psi_a\Theta_a(Y(a))=g'\Psi_a(Y'({\theta(a)}))$$ 
and as $\Psi_a$ is a $\theta(a)$-adhesion map we see that $\Theta_{t(a)}(A)$ is a $\theta(a)$-adhesion set in $X'(\theta(t(a))$.  

(iii)  Each arc $b$ in $\Delta_+$ can be associated with a $\rho(b)$-adhesion set in $X(t(b))$ and similarly for the arcs in $\Delta'_+$.  From (i) it follows that $\Theta_{t(b)}$ maps the $\rho(b)$-adhesion sets in $X(t(b))$ bijectively to the $\theta(\rho(b))$-adhesions sets in $X'(\theta(b)$.  Thus we get a bijection from the set $\rho^{-1}(\rho(b))$ of arcs in $\Delta_+$ to the set $\rho'^{-1}(\theta(\rho(b)))$ of arcs in $\Delta'_+$.  This then allows us to define a digraph morphism $\theta_+$.
\end{proof}

\begin{theo}
\label{thm:isomorphic}
Suppose that $\cG$ and $\cG'$ are \gga's and $(\theta, \Theta):\cG\to \cG'$ is an isomorphism.  If $(\Sigma, \sim, \pi, p)$ is a scaffolding for $\cG$ then $(\Sigma, \sim, \theta\pi, \Theta p)$ is a scaffolding for $\cG'$ and the universal group of $\cG$ acting on $\Sigma$ is equal to the universal group $G'$ of $\cG'$ acting on $\Sigma$. 
\end{theo}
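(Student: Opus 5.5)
The plan is to verify the scaffolding axioms of Definition~\ref{DScaffolding-gga} for $(\Sigma,\sim,\theta\pi,\Theta p)$ one at a time, and then compare local actions. Conditions (i) and (ii) concern only $\Sigma$ and $\sim$, so they carry over unchanged. For (iii), $\theta\pi$ is a composition of graph morphisms and hence a graph morphism $T_\Sigma\to\Delta'$.

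For (iv), the first three bullets follow from the fact that $\Theta$ restricts to bijections $\Theta_v\colon X(v)\to X'(\theta(v))$ and $\Theta_a\colon Y(a)\to Y'(\theta(a))$, with $\Theta_{\ba}=\Theta_a$ because $Y(\ba)=Y(a)$ and $\Theta$ is a single map. For the fourth bullet, the new adhesion map at $\bun a$ is
\[
\Psi'_{\bun a}=\Theta_{\pi(t\bun a)}\,\Psi_{\bun a}\,\Theta_{\pi\bun a}^{-1}.
\]
Since $\Psi_{\bun a}$ is a $\pi(\bun a)$-adhesion map, Lemma~\ref{LIso}(i) gives a $\theta\pi(\bun a)$-adhesion map $\Psi'$ with $\Theta_{t}\Psi_{\bun a}=\Psi'\Theta_{\pi\bun a}$. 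Hence $\Psi'_{\bun a}=\Psi'$ is an adhesion map.

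For self-reverse arcs the twisted version is needed. Definition~\ref{DIsomorphism} does not mention inversion agents, so I would either read the definition as implicitly requiring $\Theta_a h_a\Theta_a^{-1}h'_{\theta(a)}{}^{-1}\in H'(\theta(a))$, or reduce to the subdivided \ggawo\ of Section~\ref{sec:inversion-subdivision}. I expect the twisted case to be the main obstacle.

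For the fifth bullet (existence and uniqueness of arcs matching adhesion sets), I would use Lemma~\ref{LIso}(ii) applied to $\cG$ and also to the inverse isomorphism $(\theta^{-1},\Theta^{-1})$. This shows that $\Theta_{\pi\bun v}$ induces a bijection between the $b$-adhesion sets in $X(\pi\bun v)$ and the $\theta(b)$-adhesion sets in $X'(\theta\pi\bun v)$. Checking that the inverse is an isomorphism of \gga's is routine. An arc $\bun a$ with $\Psi'_{\bun a}\Theta p([a])=S'$ exists and is unique exactly when $\Psi_{\bun a}p([a])=\Theta^{-1}(S')$, so the old uniqueness statement transfers directly.

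Finally, the universal groups are compared. Both consist of elements of $\autsc(\Sigma)$, and the notion of scaffolding automorphism agrees because invariance of $\theta\pi$ is equivalent to invariance of $\pi$, $\theta$ being injective. The local action of $g$ at $\bun v$ with respect to $\Theta p$ is $\Theta_{\pi\bun v}\,g_{\bun v}\,\Theta_{\pi\bun v}^{-1}$. Since $\Theta_{\pi\bun v}$ is a permutation isomorphism carrying $G(\pi\bun v)$ onto $G'(\theta\pi\bun v)$, this local action lies in $G'$ if and only if $g_{\bun v}\in G(\pi\bun v)$. Therefore the two groups coincide as subgroups of $\autsc(\Sigma)$.
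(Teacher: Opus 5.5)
Your proposal is correct and follows the paper's route: Lemma~\ref{LIso}(i) for the adhesion maps, then conjugating local actions by $\Theta$. It is more careful than the paper, which checks neither the fifth item of (iv) (your use of Lemma~\ref{LIso}(ii) for $(\theta,\Theta)$ and its inverse is what is needed there) nor that invariance of $\theta\pi$ is the same as invariance of $\pi$.

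Your worry about self-reverse arcs is justified, and the statement is actually false without your extra condition $\Theta_a h_a\Theta_a^{-1}(h'_{\theta(a)})^{-1}\in H'(\theta(a))$. In the paper's $C_3$ example, taking $h_a=1$ and $h_a$ a transposition gives two \gga's that are isomorphic in the sense of Definition~\ref{DIsomorphism}, yet their universal groups are $C_3\times C_2$ and $S_3$. With your condition added, the twisted case is a one-line computation. So take that option: the subdivision route does not avoid the issue, because $\langle H(a),h_{a+}\rangle$ itself depends on $h_a$.
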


\begin{proof}
Define $p'=\Theta p$ and $\pi'=\theta\pi$. Then $(\Sigma, \sim, \pi', p')$ is a scaffolding for $\cG'$.  The only thing that needs attention is the fourth item in part (iv) of the definition of a scaffolding.  We need to show that there is a $\pi(\bun{a})$-adhesion map $\Psi'_{\bun{a}}$  such that $\Psi'_{\bun{a}}p'(b)=p't(b)$ for all $b\in [a]$.  This follows from part (i) in Lemma~\ref{LIso}.  

Suppose $g\in G$ and $\bun{v}$ is a vertex bundle in $\Sigma$.  Then $g_{\bun{v}}=pg(p_{\bun{v}})^{-1}\in G(\pi({\bun{v}}))$.  But if we calculate the local action of $g$ at $\bun{v}$ with respect to the legal colouring $p'$ we get the local action at $\bun{v}$ as $p'g(p'_{\bun{v}})^{-1}=\Theta pg(p_{\bun{v}})^{-1}\Theta^{-1}=\Theta g_{\bun{v}} \Theta^{-1}$.   Because $g_{\bun{v}}\in G(\pi({\bun{v}})$ and $\Theta$ induces an isomorphism of the group we see that the local action of $g$ at $\bun{v}$ with respect to the legal colouring $p'$ is contained in $G'(\pi'(\bun{v})$.  Hence $G\leq G'$.  In the same way we see that $G'\leq G$ and thus $G=G'$.
\end{proof}

\begin{remark}
    The converse of Theorem \ref{thm:isomorphic} is not true. In other words, there are non-isomorphic \gga\ with isomorphic universal groups. 
    
    For a simple example, consider a \gga\ on two vertices $u$ and $v$ where $G(u) = G(v) = S_2 \times S_2$ acting in the obvious way on 4 points, and let the only adhesion set be the set of these 4 points. The scaffolding consists of 8 vertices with a perfect matching between them, and the universal group is $S_2 \times S_2$ acting on this matching.

    Another \gga\ with the same scaffolding and universal group is given by again taking two vertices $u$ and $v$ and letting $G(u) = S_2 \times S_2$ in its action on 4 points. This time, however, we let $G(v) = S_2$ acting on 2 points and let the adhesion sets in $X(u)$ be the blocks of an imprimitivity system of size 2 with respect to the action of $G(u)$. It is not hard to see that the scaffolding again consists of 8 vertices with a perfect matching between them, and the universal group is $S_2 \times S_2$ acting on this matching.
    %
    %Compare this to the fact that different graphs of groups can yield isomorphic fundamental groups.
\end{remark}

\section{Basic properties of the universal group}\label{SPermutations}

\subsection{Permutational properties}
Theorem~\ref{thm:acceptable-isomorphism} is the key to demonstrating some of the general properties of the universal group and its action on a scaffolding and the associated tree.  

\begin{theo}\label{TQuotient}
Let $\cG$ be a \gga\  and $G$ its universal group.  Set  $T = T_{\cG}$.   Then $T/G$ is isomorphic to $\Delta$.  
\end{theo}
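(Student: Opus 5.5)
We work with a fixed scaffolding $(\Sigma,\sim,\pi,p)$, so $T=T_\Sigma$ by Lemma~\ref{lem:scaffolding-tree}. The plan is to show that the graph morphism $\pi\colon T\to\Delta$ is constant on $G$-orbits of vertices and arcs, and that it induces bijections from these orbits onto $\V\Delta$ and $\A\Delta$. The induced map $T/G\to\Delta$ is then a bijective graph morphism, hence an isomorphism.

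First, $\pi$ factors through $T/G$. Every element of $G$ is a scaffolding automorphism, so it preserves $\pi$ on both vertices and arcs. Thus $\pi$ descends to a graph morphism $\bar\pi\colon T/G\to\Delta$, and reversal of arcs is respected. Surjectivity of $\bar\pi$ comes from the surjectivity of $\pi_+\colon T\to\Delta_+$ in Lemma~\ref{lem:scaffolding-tree}, together with $\pi=\rho\pi_+$ and the surjectivity of $\rho$.

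Injectivity on vertices is the transitivity statement: if $\pi(\bun v)=\pi(\bun{v'})$, we want some $g\in G$ with $\tg(\bun v)=\bun{v'}$. This follows from Corollary~\ref{cor:acceptable-isomorphism} with $\Sigma'=\Sigma$ and $f_0=1$. It produces an acceptable scaffolding automorphism of $\Sigma$, that is, an element of $G=\cU(\cG)$, mapping $\bun v$ to $\bun{v'}$.

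Injectivity on arcs is the main step. Let $\bun a,\bun{a'}\in\A T$ with $\pi(\bun a)=\pi(\bun{a'})=b$. By the vertex case we may first move $t(\bun{a'})$ onto $\bun v=t(\bun a)$, so assume both arcs have terminus $\bun v$. By condition (iv) of Definition~\ref{DScaffolding}, the sets $p(t([a]))$ and $p(t([a']))$ are $b$-adhesion sets in $X(\pi(\bun v))$, and they are distinct unless $\bun a=\bun{a'}$. Since adhesion sets are $G(\pi(\bun v))$-translates of $\Phi_b(Y(b))$, there is $f_0\in G(\pi(\bun v))$ carrying the first onto the second. Apply Corollary~\ref{cor:acceptable-isomorphism} with $\bun{v'}=\bun v$ and this $f_0$ to get $g\in G$ fixing $\bun v$ with local action $f_0$. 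The argument of Lemma~\ref{lem:neighbourhood} then shows that $\tg(\bun a)$ is the unique arc with terminus $\bun v$, image $b$ under $\pi$, and adhesion set $f_0\,p(t([a]))$. That arc is $\bun{a'}$.

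In the \gga\ setting with self-reverse arcs, Lemma~\ref{lem:neighbourhood} was only proved for \ggawo's. I would either invoke the subdivision of Section~\ref{sec:inversion-subdivision} or observe that the same adhesion-set argument applies verbatim, since $\Psi_{\bun a}$ is still an adhesion map, possibly twisted. Twisted adhesion maps $g\Phi_a h_a$ have the same image set as $g\Phi_a$. Finally, a bijective graph morphism between graphs is an isomorphism, because its inverse automatically respects $o$, $t$ and reversal. This gives $T/G\cong\Delta$.
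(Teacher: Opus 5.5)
Your proof is correct and follows essentially the same route as the paper: both show that the fibres of $\pi$ are exactly the $G$-orbits on vertices and arcs via the extension property. For vertices this uses trivial local action; for arcs it uses a local action in $G(\pi(\bun v))$ that carries one adhesion set onto the other. The differences are cosmetic. You work in an arbitrary scaffolding, do the arc case in two steps, and spell out the uniqueness clause, surjectivity and the ``bijective morphism is an isomorphism'' bookkeeping. The paper instead passes to a canonical scaffolding and handles each case with a single application of Theorem~\ref{thm:acceptable-isomorphism}.
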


\begin{proof}   Let $\pi_+: T\to \Delta_+$ denote a $\Star$-covering map and set $\pi=\rho\pi_+$.  
We only need to show that the fibers of $\pi$ are the orbits of the action of $G$ on $T$.  Since the actions of $G$ on any two scaffoldings are isomorphic we can choose $\Sigma$ as a canonical scaffolding as described in Sections~\ref{sec:canonical} and \ref{SInversions}.

Suppose ${v}$ and ${w}$ are vertices in $T$ such that $\pi({v})=\pi({w})$.  Let $f\colon \Sigma_v\to \Sigma_w$ be the map $(v,x)\mapsto (w,x)$.  This is an acceptable partial scaffolding isomorphism and Theorem~\ref{thm:acceptable-isomorphism} can now be applied to get an extension $F$ of $f$ to an acceptable automorphism $\Sigma\to \Sigma$.  Then $F$ is in the universal group $G$ and $F({v})={w}$. 

Suppose now that ${a}$ and ${b}$ are arcs in $T$ such that $\pi({a})=\pi({b})$.  Set ${v}=t({a})$ and $w=t({b})$.  Then $\pi({v})=\pi({w})$.  Let $f$ be an element in $G(\pi({v}))$ that takes the adhesion set $\Phi_{\pi_+({a})}(Y(\pi({a})))$ to the adhesion set $\Phi_{\pi_+({b})}(Y({\pi({b})})$.  Define a map $\Sigma_v\to \Sigma_w; (v,x)\mapsto (w, f(x))$.  Again apply Theorem~\ref{thm:acceptable-isomorphism} to get an acceptable extension $F$ of $f$ that is then an element of the universal group taking $\Sigma_{a}$ to $\Sigma_{b}$.

Thus the orbits of the universal group are just the fibers of $\pi$ and the quotient graph $T/G$ is isomorphic to the base graph $\Delta$.
\end{proof}

\begin{remark}
Start with a group $G$ acting on a tree $T$.  Set $\Delta=T/G$ and let $\pi: T\to \Delta$ denote the projection.  Then one can define an index for an arc $a$ in $\Delta$ as the number of elements in the orbit $G_{t(b)}b$, where $b$ is an arc in $T$ such that $a=\pi(b)$.  (Clearly this number does not depend on the choice of $b$.)  

Now let $\cG$ be a \gga\ and $G=\cU(\cG)$.
In Section~\ref{SDefinition} the \emph{index} $i(a)$ of an arc $a$ in the base graph $\Delta$ of $\cG$ as the number of $G(t(a))$-translates in $X(t(a))$ of $\Phi_a(Y(a))$.  This number is equal to the index of $G(t(a))_{\{\Phi_a(Y(a))\}}$ in $G(t(a))$.  Suppose now that $b$ is an arc in $T=T_{\cG}$ such that $\pi(b)=a$.  Then $i(a)$ is equal to the number of elements in the orbit of $b$ under $G_{t(b)}$ and thus $i(a)$ is equal to the index one gets from the action of $G$ on $T_{\cG}$.
\end{remark}

\begin{theo}\label{TLocal-action}
Let $G$ be the universal group of a \gga\ $\cG$ and let $\Sigma$ be a scaffolding for $\cG$. 
\begin{enumerate}
    \item The permutation action $(G^{[v]}, [v])$ is isomorphic to $(G({\pi(\bun{v}})), X(\pi(\bun{v})))$ for every $v\in \V \Sigma$.
    \item The permutation action $(G^{[a]}, [a])$ is isomorphic to $(H(\pi(\bun{a})), Y(\pi(\bun{a})))$ for every $a \in \A \Sigma$.
\end{enumerate}
\end{theo}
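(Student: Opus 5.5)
The plan is to compute $G^{[v]}$ as the image of the setwise stabiliser $G_{\{[v]\}}$, which equals the stabiliser $G_{\bun v}$ of the vertex $\bun v$ in $T_\Sigma$, under restriction to $[v]$. The bijection $p_{\bun v}\colon [v]\to X(\pi(\bun v))$ transports this to a permutation group on $X(\pi(\bun v))$. Since $(G,\Sigma)$ is determined up to isomorphism by Theorem~\ref{Tuniversal-group-unique}, we may work in any scaffolding. For $g\in G_{\bun v}$ the transported permutation is exactly the local action $g_{\bun v}=pg(p_{\bun v})^{-1}$. So the first claim reduces to showing that $\{g_{\bun v}\mid g\in G_{\bun v}\}=G(\pi(\bun v))$, with $p_{\bun v}$ serving as the bijection in the permutation isomorphism.

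For part 1, the inclusion $\subseteq$ is immediate from the definition of the universal group, since every $g\in G$ is acceptable. For $\supseteq$, take $f_0\in G(\pi(\bun v))$ and apply Corollary~\ref{cor:acceptable-isomorphism} with $\Sigma'=\Sigma$ and $\bun{v'}=\bun v$. This gives an acceptable scaffolding automorphism $f$ with $\tf(\bun v)=\bun v$ and $f_{\bun v}=f_0$. Then $f\in G_{\bun v}$ and it induces $f_0$ on $[v]$ via $p_{\bun v}$. So $p_{\bun v}$ conjugates $G^{[v]}$ onto $G(\pi(\bun v))$, as required.

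For part 2, I treat $[a]$ via the bijection $p_{\bun a}\colon [a]\to Y(\pi(\bun a))$. Since $\Sigma$ is simple and $t$ restricted to $[a]$ is injective, any element of $G_{\{[a]\}}$ is determined on $[a]$ by its action on $t([a])\subseteq [v]$, where $\bun v=t(\bun a)$. Moreover $G_{\{[a]\}}$ fixes the arc $\bun a$ of $T_\Sigma$, and in the inversion case it may also map $[a]$ to itself via a reversal; I would handle that case through the subdivision construction of Section~\ref{sec:inversion-subdivision}, using the remark on inversion agents. For $g\in G_{\{[a]\}}$ fixing $\bun a$, Lemma~\ref{LArcBundle} shows that the local map $g_{\bun a}=pg(p_{\bun a})^{-1}$ lies in $H(\pi(\bun a))$. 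Conversely, take $h\in H(\pi(\bun a))$. Write $\Psi_{\bun a}=\gamma\Phi_{\pi(\bun a)}$ with $\gamma\in G(\pi(\bun v))$. Because $\Phi$ is an embedding of permutation actions, there is $q\in G(\pi(\bun v))$ stabilising $\Phi(Y)$ with $q\Phi=\Phi h$. Set $f_0=\gamma q\gamma^{-1}$; this lies in $G(\pi(\bun v))$ and satisfies $f_0\Psi_{\bun a}=\Psi_{\bun a}h$. By part 1 there is $f\in G_{\bun v}$ with $f_{\bun v}=f_0$. This $f$ fixes the adhesion set $p(t([a]))$, so by Lemma~\ref{lem:neighbourhood} it fixes $\bun a$. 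Its local map on $[a]$ is then $\Psi_{\bun a}^{-1}f_0\Psi_{\bun a}=h$.

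The main obstacle is the bookkeeping in part 2. One must check that the local map at the arc bundle is computed through $t$ as $\Psi_{\bun a}^{-1}g_{\bun v}\Psi_{\bun a}$, as in the Example preceding Lemma~\ref{LArcBundle}. One must also check the self-reverse case, where elements of $G_{\{[a]\}}$ may swap $[a]$ with $[\ba]$ as sets. Here the condition $h_a^2\in H(a)$, together with the subdivision theorem, should ensure the induced group on $[a]$ is still $H(\pi(\bun a))$. The rest is a direct application of the full extension property.
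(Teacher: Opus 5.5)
Your proposal is correct and follows essentially the paper's argument. Part 1 is the extension-property argument: applying Corollary~\ref{cor:acceptable-isomorphism} with $\Sigma'=\Sigma$ is exactly how the paper uses Theorem~\ref{thm:acceptable-isomorphism}. Part 2 reduces the action on $[a]$ to the action of the setwise stabiliser of the adhesion set $p(t([a]))$ in $G(\pi(\bun v))$, and then to $(H(\pi(\bun a)),Y(\pi(\bun a)))$ via the embedding, which you spell out in more detail than the paper. One small correction: the inversion case you worry about is vacuous. Since $[a]\neq[\ba]$, any element stabilising $[a]$ setwise fixes the arc $\bun a$ of the tree and hence both its endpoints. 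The only self-reverse subtlety is that $\Psi_{\bun a}$ may be a twisted adhesion map $\gamma\Phi_{\pi(\bun a)}h_{\pi(\bun a)}$. This is harmless because $h_{\pi(\bun a)}$ normalises $H(\pi(\bun a))$; the condition $h_{\pi(\bun a)}^2\in H(\pi(\bun a))$ plays no role here.
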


\begin{proof}
Again we assume that $\Sigma$ is a canonical scaffolding and $T=T_{\cG}$ the associated tree. 

    The first part is proved by applying Theorem~\ref{thm:acceptable-isomorphism} in the same way as at the start of the proof of Theorem~\ref{TQuotient}.  

   Let $a$ be an arc in $T$.  Note that the action $(G^{\Sigma_a}, \Sigma_a)$ induced by the fundamental group on the arc bundle at $a$ is determined by the action induced by $(G(t(\pi({a})))$ on $\Psi_{{a}}(Y(\pi({a})))$.  That action is isomorphic to the action induced by $G(\pi(t(a)))$ on $\Phi_{\pi({a})}(Y(\pi({a})))$ and is in turn isomorphic to the action $(G(\pi({a})), Y(\pi({a})))$.
\end{proof}

\begin{coro}
\label{coro:treeorbits}
Let $G$ be the universal group of a \gga\ $\cG$ and let $\Sigma$ be a scaffolding for $\cG$.   Set $T=T_{\cG}$.
\begin{enumerate}
    \item     The action of the universal group on $\Sigma$ is faithful but the action on the tree $T$ need not be faithful.
\item     The action of $G$ on $\Sigma$ is transitive on $\V \Sigma$ if and only if $\Delta$ has just one vertex $v$ and the action of $G(v)$ on $X(v)$ is transitive.  
The action of $G$ on $T$ is transitive on $\V T$ if and only if $\Delta$ has just one vertex.
\item The action of $G$ on $\V\Sigma$ has only finitely many orbits if and only if $\V\Delta$ is finite and $G(v)$ has only finitely many orbits on $X(v)$ for every $v\in \V\Delta$.
\end{enumerate}
\end{coro}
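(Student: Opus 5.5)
Each of the three parts reduces, via Theorem~\ref{TQuotient} and Theorem~\ref{TLocal-action}, to statements about orbits on vertex bundles. By Theorem~\ref{Tuniversal-group-unique} we may take $\Sigma$ to be a canonical scaffolding with associated tree $T$.

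For part 1, faithfulness on $\Sigma$ is immediate: by Definition~\ref{DUniversalgroup}, elements of $G$ are automorphisms of $\Sigma$, hence permutations of $\V\Sigma\cup\A\Sigma$. A nontrivial element fixing every vertex must also fix every arc. The reason is that $\Sigma$ is simple: each arc bundle is a matching between adhesion sets (Remark~\ref{rmk:arcbundlematching}), so an arc is determined by its endpoints. To show the action on $T$ need not be faithful, give an example. Take $\Delta$ a single vertex $v$ with a self-reverse arc $a$, $X(v)=Y(a)=\{1,2\}$, $G(v)=H(a)=S_2$ and $h_a=1$. Then $T$ is a single edge, and the universal group contains the element acting by the transposition simultaneously on both bundles while fixing both bundles. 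This element is nontrivial on $\Sigma$ but trivial on $T$. Alternatively, use a two-vertex graph where the element fixes the tree but swaps points inside bundles, with $h$ trivial. I would check via Theorem~\ref{thm:acceptable-isomorphism} that such an element exists: extend the partial map given by the local action in $G(v)$ at one bundle.

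For part 2, start from Theorem~\ref{TQuotient}: $G$-orbits on $\V T$ correspond to $\V\Delta$, which gives the tree statement. For $\Sigma$, two vertices of $\Sigma$ in the same $G$-orbit lie in bundles that are in the same $G$-orbit on $T$. So transitivity on $\V\Sigma$ forces $|\V\Delta|=1$. Fix a bundle $[v]$. Transitivity of $G$ on $\V\Sigma$ implies that $G_{\{[v]\}}$ is transitive on $[v]$: if $g(x)=y$ with $x,y\in[v]$, then $g$ stabilises the bundle. By Theorem~\ref{TLocal-action}, this action is isomorphic to $(G(v),X(v))$, so $G(v)$ is transitive. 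Conversely, if $|\V\Delta|=1$ and $G(v)$ is transitive, then $G$ is transitive on bundles (Theorem~\ref{TQuotient}). It is also transitive within a bundle, since the induced action on $[v]$ is $(G(v),X(v))$ by Theorem~\ref{TLocal-action}. Composing the two gives transitivity on $\V\Sigma$.

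For part 3, use the same decomposition. The $G$-orbits on $\V\Sigma$ are in bijection with the disjoint union over $u\in\V\Delta$ of the orbits of $G_{\{[v_u]\}}$ on a chosen representative bundle $[v_u]$ with $\pi(\bun{v_u})=u$. This holds because any orbit meets $[v_u]$ for some $u$, and two points of $[v_u]$ are in one $G$-orbit exactly when some element of the bundle stabiliser relates them. By Theorem~\ref{TLocal-action}, the orbits of the bundle stabiliser on $[v_u]$ correspond to the orbits of $G(u)$ on $X(u)$. So the number of orbits is $\sum_{u}|X(u)/G(u)|$, which is finite exactly under the stated conditions (each summand is at least $1$, since the $X(u)$ are nonempty).

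The main obstacle is making the non-faithfulness example in part 1 concrete and rigorous. The key verification there is that the global element has the prescribed trivial action on $T$. I would handle this by applying Theorem~\ref{thm:acceptable-isomorphism} to the finite tree, where the extension is the map itself.
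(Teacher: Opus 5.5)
Your proposal is correct and follows the paper's route: part 1 comes from the definition of $\cU(\cG)$ as a group of automorphisms of $\Sigma$, and parts 2 and 3 come from combining Theorem~\ref{TQuotient} (the orbits on $\V T$ are the fibres of $\pi$) with Theorem~\ref{TLocal-action} (the bundle stabiliser acts on $[v]$ as $G(\pi(\bun v))$ acts on $X(\pi(\bun v))$), exactly as the paper does. Your one addition is an explicit non-faithfulness example on a single-edge tree, which the paper leaves implicit and illustrates elsewhere only via Example~\ref{ESmall}; your example is correct, since there $\cU(\cG)\cong C_2\times C_2$ and the element swapping $1$ and $2$ in both bundles acts trivially on $T$.
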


\begin{proof}
    The first part follows from the definition of the universal group. The second and third part follow directly from Theorems \ref{TQuotient} and \ref{TLocal-action}.
\end{proof}

\begin{coro}  Assume that for every $v \in \V \Delta$ and any pair of arcs $a,a' \in \Star_{\Delta}(v)$ the adhesion sets $\Phi_a(Y(a))$ and $\Phi_{a'}(Y(a'))$ are not $G(v)$-translates of each other.
If $\llbracket v\rrbracket$ is a vertex in $T$ the action of $G_{{\llbracket v\rrbracket}}$ on $\Star_T({\llbracket v\rrbracket})$ is isomorphic to the action of $G(\pi({\llbracket v\rrbracket}))$ on the set of adhesion sets in $X(\pi({\llbracket v\rrbracket}))$.     
\end{coro}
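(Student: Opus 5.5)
We work in a canonical scaffolding $\Sigma$ with associated tree $T=T_{\cG}$ and $\Star$-covering map $\pi_+\colon T\to\Delta_+$; by Theorem~\ref{Tuniversal-group-unique} this loses no generality. Write $u=\pi(\llbracket v\rrbracket)$, and let $\mathcal A(u)$ be the set of all adhesion sets in $X(u)$, i.e.\ all $a$-adhesion sets for all $a\in\Star_\Delta(u)$. The plan is to produce a bijection $\beta\colon\Star_T(\llbracket v\rrbracket)\to\mathcal A(u)$ and a surjective homomorphism $G_{\llbracket v\rrbracket}\to G(u)$ that together intertwine the two actions.

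\emph{The bijection.} For an arc $c\in\Star_T(\llbracket v\rrbracket)$ set $\beta(c)=p(t([c]))=\Phi_{\pi_+(c)}(Y(\pi(c)))$. Since $\pi_+$ restricted to $\Star_T(\llbracket v\rrbracket)$ is a bijection onto $\Star_{\Delta_+}(u)$, and the arcs of $\Delta_+$ over a given $a\in\Star_\Delta(u)$ correspond bijectively to the $a$-adhesion sets, $\beta$ is a bijection onto the disjoint union over $a$ of the $a$-adhesion sets. The hypothesis is used exactly here. For distinct $a,a'\in\Star_\Delta(u)$ no $a$-adhesion set equals an $a'$-adhesion set, because otherwise $\Phi_a(Y(a))$ would be a $G(u)$-translate of $\Phi_{a'}(Y(a'))$. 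Hence that disjoint union really is the set $\mathcal A(u)$ without repetitions, and $\beta$ is a bijection onto $\mathcal A(u)$.

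\emph{The homomorphism.} Define $\lambda\colon G_{\llbracket v\rrbracket}\to G(u)$ by $g\mapsto g_{\llbracket v\rrbracket}$. This is a homomorphism by the composition formula for local maps in Section~\ref{SAcceptable}. It is surjective by Corollary~\ref{cor:acceptable-isomorphism} applied with $\Sigma'=\Sigma$, $\llbracket v'\rrbracket=\llbracket v\rrbracket$ and an arbitrary $f_0\in G(u)$.

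\emph{Equivariance.} We need $\beta(\tilde g(c))=g_{\llbracket v\rrbracket}(\beta(c))$. Since $g$ maps $[c]$ to $[\tilde g(c)]$ and commutes with $t$, we get $p(t([\tilde g c]))=p(g(t([c])))=g_{\llbracket v\rrbracket}p(t([c]))$, which is the required identity; the same computation appears in the proof of Lemma~\ref{lem:neighbourhood}.

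The point needing most care is what ``isomorphic actions'' means when the kernels differ. The pair $(\lambda,\beta)$ is an epimorphism compatible with the actions, so the permutation groups induced on $\Star_T(\llbracket v\rrbracket)$ and on $\mathcal A(u)$ coincide under $\beta$. We therefore read the statement as an isomorphism of the induced permutation groups $G_{\llbracket v\rrbracket}^{\Star_T(\llbracket v\rrbracket)}$ and $G(u)^{\mathcal A(u)}$, which is the natural meaning here. The surjectivity of $\lambda$ is what shows the image is all of $G(u)^{\mathcal A(u)}$, and injectivity on the induced groups follows directly from equivariance together with $\beta$ being a bijection.
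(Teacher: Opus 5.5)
Your proof is correct and follows the same route as the paper. The paper's one-line proof only records that the hypothesis makes the arcs of $\Star_{\Delta_+}(v)$ correspond bijectively to the adhesion sets in $X(v)$. Your bijection $\beta$, the surjectivity of $g\mapsto g_{\llbracket v\rrbracket}$ via the extension property (equivalently Theorem~\ref{TLocal-action}), and the equivariance computation from Lemma~\ref{lem:neighbourhood} are the details hidden in its ``the result follows''. Reading ``isomorphic'' as an isomorphism of the induced permutation groups is also the right choice: in Example~\ref{ESmall} the hypothesis holds vacuously, yet $G_{\llbracket v\rrbracket}=G$ has a unique element of order $2$ and so is not isomorphic to $G(v)=S_3$.
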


\begin{proof}
    The condition on the adhesion sets guarantees that the arcs in $\Star_{\Delta_+}(v)$ are in one-to-one correspondance to the adhesion sets in $X(v)$ for every $v \in \V \Delta$ and the result follows. 
\end{proof}

\begin{example}  \label{ESmall} Let $\cG$ be a \gga.   Let $\Sigma$ be a scaffolding and $T=T_{\cG}$ the associated tree.
    Theorem~\ref{TLocal-action} says that if $\bun{v}$ is a vertex in $T$ then the action of $G_{\bun{v}}$ on $[v]$ is isomorphic to the action of $G(\pi(\bun{v}))$ on $X(\pi(\bun{v}))$, but it is not guaranteed that $G(\bun{v})$ occurs as a subgroup of $G$.  This is demonstrated in the following example of a \gga\ $\cG$ and its universal group $G=\cU(\cG)$.

    The base graph for this \gga\ has just two vertices $v$ and $w$ and just two arcs that are the reverses of each other.  
    
    The action at $v$ is the action of $S_3$ on the set $X(v)=\{x, y, z\}\cup\{ 1, -1\}$ with $S_3$ acting in the usual way on $\{x,y, z\}$ and acting via the sign function on $\{-1, 1\}$.  The action at $w$ is the action of $C_4=\langle s\rangle$ on $X(w)=C_4\cup\{-1', 1'\}$ where $C_4$ acts regularly on the $C_4$ part and $s$ and $s^3$ swapping $-1'$ and $1'$ but $1$ and $s^2$ fixing both $-1'$ and $1'$.  
    
    If $a$ is the arc with $t(a)=v$ then the action
at $a$ is the non-trivial action of $C_2$ on the set $Y(a)=\{-1'', 1''\}$ and $\Phi_a(-1'')=-1$ and $\Phi_a(1'')=1$.  Also set $\Phi_{\ba}(-1'')=-1'$ and $\Phi_{\ba}(1'')=1'$.   

The augmented graph $\Delta_+$ equals $\Delta$ and $T_{\cG}$ is just the tree with two vertices.  A canonical scaffolding has the union of $X(v)$ and $X(w)$ as a vertex set and then arcs $(-1,-1'), (1,1'), (-1',-1), (1', 1)$.  Each of the sets $X(v)$ and $X(w)$ constitutes a vertex bundle.  The universal group $G$ is a subgroup of $S_3\times C_4$.  

Suppose $g$ is an element of the universal group with action $g_v\in S_3$ on $X(v)$ and action $g_w\in C_4$ on $X(w)$.  In the case that $g_v$, the local action at $v$, is an even permutation then both $-1$ and $1$ are fixed and thus $g_w$ is either equal to $1$ or $s^2$.  Thus $g$ must have order $1, 2, 3$ or $6$, and the only way to get an element of order 2 is to have $g_v=1$ and $g_w=s^2$.  If $g_v$ is an odd permutation then $g_v$ has order 2 and $g_w$ has to be equal to either $s$ or $s^3$ and thus $g$ would have order 4.  Now it is clear that the group $G(v)=S_3$ does not embed in to $G$ since $G$ has only one element of order 2.    
\end{example}

Let $G$ be a group acting on a set $X$.  The \emph{suborbits} of $G$ are the orbits of stabilizers of points.  The \emph{subdegrees} of the action are defined as the sizes of the suborbits.  We say that the action is \emph{subdegree finite} if all the subdegrees are finite, i.e.\ if $x\in X$ then every orbit of $G_x$ is finite.

\begin{prop}\label{PSubdegree-finite}
    Let $\cG$ be a \gga\ and $G=\cU(\cG)$.  
    \begin{enumerate} 
        \item The action of $G$ on $T=T_{\cG}$ is subdegree finite if and only if $G(v)$ acting on the adhesion sets in $X(v)$ has only finite orbits for every $v$ in $\V \Delta$. 
        \item The action of $G$ on a scaffolding $\Sigma$ is subdegree finite if and only if for every $v\in \V\Delta$ the action $(G(v), X(v))$ is subdegree finite and all orbits of adhesion sets in $X(v)$ under $G(v)_x$ are finite.  
    \end{enumerate}  
\end{prop}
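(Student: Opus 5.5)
We work throughout with a canonical scaffolding $\Sigma$ and the tree $T=T_{\cG}$. By Theorem~\ref{Tuniversal-group-unique} this costs no generality. The basic tool is the extension property, Theorem~\ref{thm:acceptable-isomorphism}. Combined with Lemma~\ref{lem:neighbourhood}, it identifies the stabiliser $G_{\bun v}$ acting on $\Star_T(\bun v)$ with the action of $G(\pi(\bun v))$ on the set of all adhesion sets in $X(\pi(\bun v))$. These adhesion sets are labelled by the arcs of $\Star_{\Delta_+}(\pi(\bun v))$, one family for each arc $a\in\Star_\Delta(\pi(\bun v))$.

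More precisely, every element $h\in G_{\bun v}$ has local action $h_{\bun v}\in G(\pi(\bun v))$, and it moves arcs at $\bun v$ exactly as $h_{\bun v}$ moves the corresponding adhesion sets. Conversely, every $g\in G(\pi(\bun v))$ is the local action of some element of $G_{\bun v}$: define it on $\Sigma_{\bun v}$ and extend by Theorem~\ref{thm:acceptable-isomorphism}. Hence the orbits of $G_{\bun v}$ on $N_T(\bun v)$ correspond to the orbits of $G(\pi(\bun v))$ on adhesion sets.

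For part 1, suppose first that $G$ is subdegree finite on $T$. The neighbours of $\bun v$ form a union of $G_{\bun v}$-orbits, and these correspond to the $G(v)$-orbits on adhesion sets, so those orbits are finite. For the converse, assume every $G(v)$ has only finite orbits on adhesion sets. We show by induction on $n$ that each orbit of $G_{\bun v}$ on the sphere $S_T(\bun v,n)$ is finite. Take $\bun u$ at distance $n$, with $\bun w$ the neighbour of $\bun u$ one step further from $\bun v$. The orbit $G_{\bun v}\bun w$ is contained in the union, over $\bun u'\in G_{\bun v}\bun u$, of the orbits $G_{\bun v,\bun u'}\bun w'$. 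There are finitely many $\bun u'$ by induction, and $G_{\bun v}\cap G_{\bun u'}\le G_{\bun u'}$, whose orbits on neighbours are finite. So the union is finite.

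For part 2 the same induction runs on $\Sigma$, with two kinds of step. The first kind is a step within a bundle. For $x\in[v]$, the orbit of $G_x$ on $[v]$ corresponds to an orbit of $G(\pi(\bun v))_{p(x)}$ by Theorem~\ref{TLocal-action} and the extension property. The second kind is a step to an adjacent bundle: $G_x$ permutes the neighbouring bundles $\bun w$ as $G(\pi(\bun v))_{p(x)}$ permutes adhesion sets. Once $\bun w$ is fixed, the points of $[w]$ are handled by first fixing a vertex of the adhesion set and then using the subdegree finiteness of $G(\pi(\bun w))$.

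For necessity, the subdegrees within a bundle are exactly those of $(G(v),X(v))$. The $G(v)_x$-orbits on adhesion sets embed into orbits of $G_x$ on vertices of neighbouring bundles: choose one vertex in each adjacent bundle, noting that distinct arc bundles at $\bun v$ lead to distinct bundles. For sufficiency, we show by induction on distance in $T_\Sigma$ that every $G_x$-orbit meeting $[w]$ is finite. We bound it by finitely many bundles, each contributing finitely many points. To pass from $\bun u$ to its child $\bun w$ we fix $y\in[u]$ in a finite orbit and apply the hypotheses at $y$ inside $[w]$. This uses that $\Sigma$ is connected through adhesion sets, so we can pass through a point of the adhesion set that connects $\bun u$ and $\bun w$.

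The main obstacle is this inductive step: controlling $G_x\cap G_{\bun w}$ inside a far bundle. We expect to route through an actual vertex of the adhesion set, using that every vertex of $t([a])$ has its matched partner in $[w]$. If $X(v)$ contains points not in any adhesion set, this needs care; in that case we will pass through the bundle stabiliser and use finiteness of orbits on adhesion sets directly.
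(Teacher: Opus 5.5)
Your proposal is correct and is essentially the paper's proof. Part~1 uses the correspondence between the neighbours of $\bun v$ in $T$ and the adhesion sets, together with induction on the distance from $\bun v$ via the bound $|G_{\bun v}\bun u|\le|G_{\bun v}\bun w|\cdot|G_{\bun w}\bun u|$, where $\bun w$ is the parent of $\bun u$. Part~2 runs the same induction on $\Sigma$ through a matched pair $w_0\in[w]$, $u_0\in[u]$ of the connecting arc bundle, giving $|G_v u|\le|G_v w_0|\cdot|G_{w_0}u_0|\cdot|G_{u_0}u|$.

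Your caveat about points of $X(v)$ that lie in no adhesion set is unnecessary. The starting vertex only needs subdegree finiteness within its own bundle. The intermediate vertex is always chosen inside the (non-empty) adhesion set facing the child bundle.
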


\begin{proof}   
For the first part, we note that if the action on the tree is subdegree finite, then the stabiliser of any vertex of $T$ has finitely many orbits on the neighbours of that vertex. Since the neighbours are in one-to-one correspondence with the adhesion sets (see Remark \ref{rmk:arcbundlematching}), this shows that all orbits of adhesion sets are finite.

For the converse implication, let $\llbracket v \rrbracket$ be a vertex of $T$ and for every vertex $\llbracket u \rrbracket$ of $T$ denote by $p(\llbracket u \rrbracket)$ the parent of $\llbracket u \rrbracket$ when the tree is rooted at $\llbracket v \rrbracket$. Clearly, if some element in $G_{\llbracket v \rrbracket}$ maps $\llbracket u \rrbracket$ to $\llbracket u \rrbracket$, then it must map $p(\llbracket u \rrbracket)$ to $p(\llbracket u' \rrbracket)$. It follows that 
\[
| G_{\llbracket v \rrbracket} \llbracket u \rrbracket | \leq |G_{\llbracket v \rrbracket} p(\llbracket u \rrbracket)| \cdot |G_{p(\llbracket u \rrbracket)} \llbracket u \rrbracket|.
\]
Since the orbits on adhesion sets are assumed to be finite, $|G_{p(\llbracket u \rrbracket)} \llbracket u \rrbracket|$ is finite for all $\llbracket u \rrbracket$ and it follows by induction on the distance between $\llbracket u \rrbracket$ and $\llbracket v \rrbracket$ that $| G_{\llbracket v \rrbracket} \llbracket u \rrbracket |$ is finite for all $\llbracket u \rrbracket$.

For the second part, if we assume that the action on a scaffolding is subdegree finite then Theorem~\ref{TLocal-action}, implies that the action of $G(v)$ on $X(v)$ must be subdegree finite for all $v\in \V\Delta$. Moreover, if $G(v)_x$ had an infinite orbit on the adhesion sets for some $x \in X(v)$, then there would be some vertex $u \in \V \Sigma$ such that $G_u$ has an infinite orbit on the neighbours of $\llbracket u \rrbracket$ in $T$, and thus also an infinite orbit in its action on $\V \Sigma$.

Now assume that the conditions on the action of $G(v)$ on $X(v)$ hold for all $v\in \V\Delta$. Let $v$ be a vertex of $\Sigma$ and (as in the first part) root the tree $T$ in $\llbracket v \rrbracket$. Let $u \in \V \Sigma$. If $u \in [v]$, then $G_v u$ is finite by assumption. Otherwise, let $ \llbracket w \rrbracket =  p(\llbracket u \rrbracket)$, and let $u_0$ be a vertex of $[u]$ which has a neighbour in $[w]$. Note that by Remark \ref{rmk:arcbundlematching} this neighbour is unique and denote it by $w_0$. Now, as in the first part we note that
\[
    |G_v u| \leq |G_v w_0| \cdot |G_{w_0} u_0| \cdot |G_{u_0} u|.
\]
We note that $|G_{w_0} u_0|$ is finite by the assumption on finiteness of orbits of adhesion sets, and $|G_{u_0} u|$ is finite by subdegree finiteness of the vertex groups. It follows by induction on the distance between $\llbracket u \rrbracket$ and $\llbracket v \rrbracket$ that $| G_{\llbracket v \rrbracket} \llbracket u \rrbracket |$ is finite for all $\llbracket u \rrbracket$.
\end{proof}

\begin{remark}
    We note that the condition on orbits of adhesion sets in the second part is indeed necessary. For instance, the wreath product $S_2 \wr \mathbb Z$ in its imprimitive action on $\{1,2\} \times \mathbb Z$ is subdegree finite, but the orbit of the set $\{1\} \times \mathbb Z$ under any point stabiliser is (uncountably) infinite. Thus, if this was one of the adhesion sets in a \gga, then the action of the universal group on the scaffolding would not be subdegree finite; in fact the stabiliser of a vertex $v$ would even have uncountable orbits on the neighbours of $\llbracket v \rrbracket$ in $T$.
\end{remark}

In case all vertex groups act on finite sets, the conditions of Theorem \ref{PSubdegree-finite} are trivially satisfied, and we get the following corollary.

\begin{coro}
Suppose $\cG$ is a \gga\ and assume that the set $X(v)$ is finite for every $v\in \V\Delta$.  Then the action of the universal group on the tree $T_{\cG}$ and the action on a scaffolding are both subdegree finite. 
\end{coro}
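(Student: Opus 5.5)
This follows directly from Proposition~\ref{PSubdegree-finite}: I only need to check that its hypotheses hold automatically when every $X(v)$ is finite. So the plan is to verify, for each $v\in\V\Delta$, the finiteness conditions in both parts of that proposition, and then read off the two conclusions.

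For the tree, part~1 of Proposition~\ref{PSubdegree-finite} requires that $G(v)$ acting on the adhesion sets in $X(v)$ has only finite orbits. Every adhesion set is a subset of the finite set $X(v)$. The power set of $X(v)$ is finite, so there are only finitely many adhesion sets of any kind in $X(v)$, and every orbit on them is finite. Hence the action of $\cU(\cG)$ on $T_{\cG}$ is subdegree finite.

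For a scaffolding $\Sigma$, part~2 needs two things for each $v\in\V\Delta$. First, $(G(v),X(v))$ must be subdegree finite; this holds because every orbit of a point stabiliser $G(v)_x$ is a subset of the finite set $X(v)$. Second, the orbits of $G(v)_x$ on adhesion sets must be finite; this holds by the same power-set count as above. Applying part~2 then gives subdegree finiteness on $\Sigma$.

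There is no genuine obstacle here. The only point to check is that Proposition~\ref{PSubdegree-finite} is stated for a general \gga, including self-reverse arcs, which it is. Because it holds for an arbitrary scaffolding, the conclusion also holds for every scaffolding $\Sigma$, not just a canonical one.
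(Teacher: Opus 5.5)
Your proposal is correct and is exactly the paper's argument: the paper deduces the corollary by observing that, when every $X(v)$ is finite, the hypotheses of both parts of Proposition~\ref{PSubdegree-finite} hold trivially. One small precision: since $\Delta$ need not be locally finite, there may be infinitely many adhesion sets at $v$ once $a$-adhesion sets for different arcs $a$ are counted as distinct. So rather than claiming finitely many adhesion sets in total, note that each $G(v)$-orbit consists of $a$-adhesion sets for a single arc $a$, and hence has at most $2^{|X(v)|}$ elements.
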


\subsection{Properties of the permutation topology}

   When a group $G$ acts on a set $X$ one can endow $G$ with the permutation topology as explained in Section~\ref{STopology}.    In this section we study the interplay between the properties of the actions given in the \gga\ and the properties of the permutation topologies the universal group gets from the action on a scaffolding and the action on the associated tree. 
   
   The first lemma establishes a connection between the topologies arising from these two actions, respectively.

\begin{lemm}
    Let $\cG$ be a \gga, $G$ its universal group and $\Sigma$ a scaffolding for $\cG$.   Set $T=T_\cG$.  The topology arising from the action of $G$ on the tree $T$ is contained in the topology arising from the action of $G$ on a scaffolding.
\end{lemm}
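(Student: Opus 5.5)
We must show that every subgroup of $G$ that is open for the topology coming from the action on $T$ is also open for the topology coming from the action on $\Sigma$. Both topologies are group topologies whose identity neighbourhoods have bases of pointwise stabilisers. It therefore suffices to check one thing: for every finite set $F$ of vertices of $T$ there is a finite set $F'$ of vertices of $\Sigma$ with $G_{(F')}\leq G_{(F)}$.

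The plan is to pick, for each $\bun{v}\in F$, a single vertex $v\in[v]$ of $\Sigma$, and to let $F'$ be the set of these chosen vertices. Since $F$ is finite, $F'$ is finite. Now take $g\in G_{(F')}$. Every element of $G$ is a scaffolding automorphism, so it preserves $\sim$ and satisfies $g([v])=[g(v)]$. Because $g(v)=v$, we get $g([v])=[v]$, that is, $\tilde g(\bun{v})=\bun{v}$ for each $\bun{v}\in F$. Hence $g\in G_{(F)}$, where the stabiliser is taken with respect to the action on $T$.

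It follows that every basic open neighbourhood of the identity in the tree topology contains a basic open neighbourhood of the identity in the scaffolding topology. Left translation is a homeomorphism in both topologies, so the same containment holds at every element. Consequently every set open in the tree topology is open in the scaffolding topology.

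The only point that needs care is that the action on $T$ may not be faithful (Corollary~\ref{coro:treeorbits}), so the tree "topology" on $G$ may fail to be Hausdorff. The argument above does not use faithfulness: open sets are still unions of cosets of the subgroups $G_{(F)}$, and the containment of the two topologies goes through unchanged. I do not expect any step to be a real obstacle.
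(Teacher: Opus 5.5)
Your proof is correct and is essentially the paper's argument: choose one vertex of $\Sigma$ in the vertex bundle of each vertex of $F$, and use that elements of $G$ preserve $\sim$ to get $G_{(F')}\leq G_{(F)}$. Your extra remarks, passing from identity neighbourhoods to all open sets by translation and noting that faithfulness on $T$ is not needed, only make explicit what the paper leaves implicit.
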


\begin{proof}
    Consider the topology arising from the action of $G$ on $T$.  A neighbourhood basis of the identity is given by the groups $G_{(F)}$ where $F$ is a finite set of vertices in $T$.  Each vertex in $T$ corresponds to a vertex bundle in $\Sigma$.  From each vertex bundle corresponding to a vertex of $T$ that is in $F$ select a vertex and let $F'$ be the set of these vertices.  Clearly $G_{(F')}\leq G_{(F)}$ and the subgroup $G_{(F')}$ is open in the topology on $G$ arising from its action on the scaffolding $\Sigma$.  Thus the subgroup $G_{(F)}$ is open in the topology arising from the action on $T$ and the result follows.
\end{proof}

Since much of this paper is motivated by the study of totally disconnected, locally compact groups, we next focus on these two properties. 

\begin{prop}\label{PClosed}
    Let $\cG$ be a \gga, $G$ its universal group and $\Sigma$ a scaffolding for $\cG$. Then the following hold:
    \begin{enumerate}
        \item Both the group of scaffolding automorphisms and the universal group are totally disconnected with respect to the topology induced by the action on $\Sigma$.
        \item The group $\autsc(\Sigma)$ of all scaffolding automorphisms is closed  with respect to the topology induced by the action on $\Sigma$. 
        \item  If the permutation group $G(v)$ is closed in its action on $X(v)$ for every $v\in \V\Delta$ then the universal group $G$ is closed in its action on $\Sigma$.
    \end{enumerate}
\end{prop}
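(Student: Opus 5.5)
Part 1 is immediate from Section~\ref{STopology}. Both $\autsc(\Sigma)$ and $G=\cU(\cG)$ act faithfully on $\V\Sigma\cup\A\Sigma$, and the action on arcs is determined by the action on vertices because scaffoldings are simple (Remark~\ref{rmk:arcbundlematching}). So each is a permutation group, and a group with the permutation topology from a faithful action is totally disconnected. More concretely, for $g\neq 1$ some vertex $x$ has $g(x)\neq x$, and the open subgroup $G_x$ is also closed, so it separates $1$ from $g$.

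For parts 2 and 3, I use the sequential/net description of convergence. Let $g$ lie in the closure of $\autsc(\Sigma)$ in $\sym(\V\Sigma)$. For each finite $F\subseteq\V\Sigma$ there is some $g_F\in\autsc(\Sigma)$ agreeing with $g$ on $F$. I would verify the defining properties of $g$ using only finitely many points at a time.

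First, $g$ is a bijection of $\V\Sigma$. It preserves adjacency and non-adjacency: given $x,y$, take $F=\{x,y\}$, and $g_F$ preserves whether $x,y$ are adjacent. Hence $g$ induces a graph automorphism, with arcs mapped via the simple-graph structure. It preserves $\sim$ for the same reason, using $F=\{x,y\}$. It preserves $\pi$: take $F=\{x\}$, so $\bun{g(x)}=\bun{g_F(x)}$ and $\pi(\bun{g(x)})=\pi(\bun{x})$. Since $\sim$ is preserved in both directions by the same argument applied to $g^{-1}$ (which is also a limit), $g$ maps bundles bijectively onto bundles. So $g\in\autsc(\Sigma)$, and $\autsc(\Sigma)$ is closed. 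The only subtle point is confirming that arc bijectivity and reversal follow from vertex behaviour. This holds because arcs of $\Sigma$ correspond to ordered adjacent pairs, since there are no loops and no parallel arcs.

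For part 3, let $g$ be in the closure of $G$. By part 2, $g\in\autsc(\Sigma)$. Fix a vertex $\bun{v}$ of $T_\Sigma$ and put $X=X(\pi(\bun v))$. The local action is $g_{\bun v}=pg(p_{\bun v})^{-1}$. For each finite $E\subseteq X$, choose $h\in G$ agreeing with $g$ on $(p_{\bun v})^{-1}(E)$ together with one point of $[v]$; the extra point ensures $\tilde h(\bun v)=\tilde g(\bun v)$. Then the local action $h_{\bun v}\in G(\pi(\bun v))$ agrees with $g_{\bun v}$ on $E$. So $g_{\bun v}$ lies in the closure of $G(\pi(\bun v))$ in $\sym(X)$, which equals $G(\pi(\bun v))$ by hypothesis. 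Since this holds for every $\bun v$, $g$ is acceptable and $g\in G$.

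The main obstacle I expect is the bookkeeping that the local map is computed relative to the image bundle. This is handled by the identity $g_{\bun v}(p(x))=p(g(x))$, which depends only on the values of $g$ on $[v]$. The remaining technicality is that closedness in the paper is phrased via $G^X$ in $\sym(X)$, so I would work with nets (or basic neighbourhoods) rather than sequences when $\V\Sigma$ is uncountable.
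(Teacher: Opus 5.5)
Your proposal is correct and follows the paper's own proof. Part 1 uses faithfulness of the action to get total disconnectedness. Part 2 shows $\autsc(\Sigma)$ is closed by checking bundles, adjacency and $\pi$ on finitely many points at a time. Part 3 reduces closedness of $G$ to closedness of each $G(\pi(\bun v))$ via the local actions.

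The one difference is that you approximate by finite sets (nets) where the paper uses sequences. This is a genuine refinement: when $\V\Sigma$ is uncountable, the paper's sequential argument literally only shows sequential closedness.
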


\begin{proof}   
The permutation topology is totally disconnected if and only if the action is faithful, see \cite[Section 2.1]{Moller2010} and the first part follows.  Note that the actions of the group of scaffolding automorphisms and the universal group on the associated tree $T_{\cG}$  need not be faithful, see Example~\ref{ESmall}.  

  In the second part we have to show that if $(g_i)_{i\in\NN}$ is a convergent sequence in the group of scaffolding automorphisms with limit $g$ then $g$ is a scaffolding automorphism.    Suppose $x$ and $y$ are vertices in $\Sigma$ belonging to the same vertex bundle.  Then find a number $i$ such that $g(x)=g_i(x)$ and $g(y)=g_i(y)$.  Since $g_i$ is a scaffolding automorphism we see that $g(x)$ and $g(y)$ belong to the same vertex bundle.  Hence $g$ maps vertex bundles to a vertex bundles. Similarly one can show that $g$ must map adjacent vertices to adjacent vertices and perserve the map $\pi$. Thus $g$ is a scaffolding automorphism and therefore the group of scaffolding automorphisms is closed.

    For the third part consider a sequence $(g^{(i)})_{i\in\NN}$ in the universal group $G$ converging in $\sym(\V\Sigma)$ to an element $g$.   From the above we see that $g$ is a scaffolding automorphism.   Our task is to show that $g$ is acceptable.  Let $[v]$ be a vertex bundle.  Clearly the sequence $g_{\bun{v}}^{(i)}=pg^{(i)}(p_{\bun{v}})^{-1}$ converges to $g_{\bun{v}}=pg(p_{\bun{v}})^{-1}$.    Since the elements in the sequence are all contained in $G(\pi(\bun{v}))$ and since the group $G(\pi(\bun{v}))$ is closed in the permutation topology arising from its action on $X(\pi(\bun{v}))$ follows that $g_{\bun{v}}\in G(\pi(\bun{v}))$.  Thus $g$ is an acceptable scaffolding automorphism of $\Sigma$ and is in the universal group, which is hence closed.
\end{proof}

Another natural question is if one can find conditions on the graph of group actions guaranteeing that the universal group is locally compact with respect to the permutation topology arising from its action on a scaffolding graph.  The following lemma characterizes those subsets of a closed permutation group that have compact closure.
Versions of the lemma are well known, see e.g.~\cite[Lemma~2]{Woess1991} and \cite[Lemma~2.2]{Moller2010}, but in both of these cases the setting is more restrictive.  The second part of the proof below is from \cite[Section 2.2]{Moller2010}.

\begin{lemm}\label{LCompact}
Let $G$ be a closed permutation group acting on a set $X$.  A subset $U\subseteq G$ has compact closure if and only if all orbits of $U$ are finite.    
\end{lemm}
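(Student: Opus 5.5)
The plan is to prove the two implications separately, working in the permutation topology on $G\leq\sym(X)$, where the basic open sets are the cosets $gG_{(F)}$ with $F\subseteq X$ finite.

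\emph{Compact closure implies finite orbits.} Suppose $\overline U$ is compact and fix $x\in X$. The action map $G\to X$, $g\mapsto g(x)$, is continuous when $X$ is discrete, since its fibres are cosets of the open subgroup $G_x$. Hence the image of $\overline U$ is a compact subset of the discrete space $X$. A compact subset of a discrete space is finite, so $Ux\subseteq\overline U x$ is finite. This direction uses neither closedness of $G$ nor any local finiteness.

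\emph{Finite orbits implies compact closure.} Here I follow the argument of \cite[Section 2.2]{Moller2010}. Assume every orbit $Ux$ is finite, and set $K=\{g\in G : g(x)\in \overline{Ux}\ \text{for all } x\}$. Since $Ux$ is finite it equals its own closure in discrete $X$, so $K=\{g\in G: g(x)\in Ux \text{ for all } x\}$.

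\begin{enumerate}
\item $K$ is closed in $G$, because each condition $g(x)\in Ux$ is a union of finitely many cosets of $G_x$, which is clopen. Since $K$ contains $U$, it follows that $\overline U\subseteq K$.
\item It then suffices to show $K$ is compact. Embed $\sym(X)$ into the product $X^X$, with $X$ discrete, via $g\mapsto (g(x))_x$; the permutation topology is exactly the subspace topology. $K$ then lies in $P=\prod_{x\in X} Ux$, which is compact by Tychonoff since each factor is finite.
\item It remains to check that $K$ is closed in $P$. Take $f\in P$ in the closure of $K$. Every finite set of coordinates of $f$ agrees with some element of $K$. Agreeing on $\{x,y\}$ with an injective map shows $f$ is injective. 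For surjectivity, fix $z$: the preimage $g^{-1}(z)$ for $g\in K$ lies in $U^{-1}z$. Here I need finiteness of the orbits of $U^{-1}$, which I would get from the finiteness of $U$-orbits. If that fails, I would instead replace $U$ by the set $U\cup U^{-1}$ (whose orbits are finite when those of $U^{-1}$ are) or argue on $K\cap K^{-1}$. Given this, a compactness argument over the finitely many candidates $w\in U^{-1}z$ produces $w$ with $f(w)=z$. So $f$ is a permutation, hence $f\in\sym(X)$, and then $f\in G$ because $G$ is closed.
\end{enumerate}

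Thus $K$ is closed in the compact space $P$, hence compact, and $\overline U\subseteq K$ is compact.

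The main obstacle is the surjectivity step in item 3: limits of permutations in $X^X$ need only be injective maps. The fix is to bound preimages through finite orbits, using $U\cup U^{-1}$, or equivalently to work with the pair embedding $g\mapsto(g,g^{-1})$ into $X^X\times X^X$, which I expect the intended proof handles.
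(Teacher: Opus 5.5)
The forward direction is correct and is the paper's argument in another form. The paper covers $\overline U$ by the open sets $\{g\in G : g(x)=x_i\}$; you push $\overline U$ forward along the continuous orbit map.

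The backward direction has a genuine gap, exactly where you suspected. Step 3 needs every set $U^{-1}z$ to be finite, and this does \emph{not} follow from finiteness of the sets $Ux$. Take $X=\{0,1,2,\ldots\}$ and $G=\sym(X)$. For $n\geq 1$ let $g_n$ be the cycle $0\mapsto 1\mapsto\cdots\mapsto n\mapsto 0$ fixing every $x>n$, and put $U=\{g_n : n\geq 1\}$.
\begin{enumerate}
\item For every $x$ we have $Ux\subseteq\{0,x,x+1\}$, so all $U$-orbits are finite.
\item But $g_n^{-1}(0)=n$, so $U^{-1}0$ is infinite.
\item The $g_n$ converge in $X^X$ to the shift $x\mapsto x+1$, which is not surjective. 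So $(g_n)$ has no accumulation point in $\sym(X)$, and $\overline U$ is not compact.
\end{enumerate}
So for an arbitrary subset $U$ the ``if'' direction is false as stated. Neither of your repairs can work. Passing to $U\cup U^{-1}$ presupposes the finiteness of $U^{-1}$-orbits you are trying to prove. And $K\cap K^{-1}$ need not contain $U$: here $g_n\notin K^{-1}$ for $n\geq 2$, since $g_n^{-1}(0)=n\notin U0=\{1\}$.

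What is missing is a hypothesis, not an argument. The paper's proof tacitly treats $U$ as a subgroup. It splits $X$ into the $U$-orbits $X_i$ and places $U$ inside the compact group $\prod_i G^{X_i}$, which requires each $u\in U$ to map every $X_i$ onto itself. This is also the only case used later, with $U=G_{(F')}$ in Proposition~\ref{PLocallyCompact}.

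Once you assume $U=U^{-1}$ (or merely that all sets $U^{-1}z$ are finite), your step 3 closes in one line. Given $f$ in the closure of $K$ in $P$ and $z\in X$, choose $g\in K$ agreeing with $f$ on the finite set $U^{-1}z$. Since $g^{-1}(z)\in U^{-1}z$, we get $f(g^{-1}(z))=z$.

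With that hypothesis your route is a valid alternative to the paper's. You embed $K$ in $\prod_x Ux\subseteq X^X$ and check that limits are permutations. In the paper's version, bijectivity of limits comes for free because each block $X_i$ is finite and invariant.
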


\begin{proof}
Assume $U$ has compact closure.  Suppose the orbit $Ux=\{x_i\}_{i\in I}$ is infinite.  For $i\in I$ define $U_i=\{g\in G\mid g(x)=x_i\}$.  The sets $U_i$ are all open in the permutation topology and they form an open covering of the closure of $U$, but there clearly exists no finite subcovering.  This contradicts the assumption that the closure of $U$ is compact and thus every $U$-orbit must be finite.

Suppose that every $U$-orbit is finite.
Let $(X_i)_{i\in I}$ denote the family of $U$-orbits on
$X$.  Define $H_i=G^{X_i}$ as the permutation group induced by
$G$ on $X_i$.  Each $X_i$ is finite, so each
group $H_i$ is finite and thus discrete and compact in permutation topology.  
Set $H=\prod_{i\in I} H_i$, the full Cartesian product.  The set
$X$ is the disjoint union of the $X_i$'s and $H$ has a
natural action on $X$.  The
permutation topology on $H$ is just the product topology.  From Tychonoff's Theorem it follows that $H$ is compact.  Thinking of $U$ and $H$ as subsets of $\sym(X)$ we see that the closure of $U$ is a closed
subset of the compact set $H$ and is thus compact.  
\end{proof}

Using the above Lemma we can translate Proposition~\ref{PSubdegree-finite} into conditions implying the local compactness of the universal group in the permutation topologies arising on one hand from the action on the associated tree and on the other hand from the action on a scaffolding.

\begin{prop}\label{PLocallyCompact}
  Let $\cG$ be a \gga\ and $G=\cU(\cG)$.  Let $\Sigma$ be a scaffolding for $\cG$ and let $T=T_{\cG}$ be the associated tree.
\begin{enumerate}
    \item Assume $G$ is closed in the toplogy arising from its action on $\Sigma$.
    Suppose that
    for every vertex $v\in \V\Delta$ the group $G(v)$ has only finite orbits on the set of adhesion sets in $X(v)$, and every adhesion set in $X(v)$ contains a finite subset $F$ such that $G(v)_{(F)}$ is compact.  Then $G$ is locally compact in the topology arising from its action on a scaffolding. 
\item Suppose that $G$ is closed in the topology arising from the action of $G$ on the tree $T$.  If for every vertex $v\in \V\Delta$ the group $G(v)$ has only finite orbits in its action on the set of adhesion sets in $X(v)$ then $G$ is locally compact in the topology arising from its action on $T$.
\end{enumerate}
\end{prop}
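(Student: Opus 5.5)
Both parts follow the same pattern. We exhibit an open subgroup of $G$ all of whose orbits are finite, and then apply Lemma~\ref{LCompact}. Since $G$ is closed, that lemma shows the subgroup has compact closure inside $G$. An open subgroup is also closed, so the subgroup itself is compact, and $G$ is therefore locally compact.

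For part 2 we work with the action on $T$. Fix a vertex $\llbracket v\rrbracket$ of $T$; its stabiliser $G_{\llbracket v\rrbracket}$ is open in the permutation topology coming from $T$. Part 1 of Proposition~\ref{PSubdegree-finite} says that the action of $G$ on $T$ is subdegree finite, because each $G(v)$ has only finite orbits on adhesion sets. Hence every orbit of $G_{\llbracket v\rrbracket}$ on $\V T$ is finite. We are assuming $G$ is closed in the tree topology, so Lemma~\ref{LCompact} applies to $G^{\V T}$ and gives that $G_{\llbracket v\rrbracket}$ is compact. One minor point needs care: the tree action need not be faithful. We therefore either apply the lemma to the induced permutation group $G^{\V T}$, or observe that the open compact subgroup is the preimage of a compact set under a quotient by a closed normal subgroup. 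I expect this to be routine.

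For part 1 we work with the scaffolding $\Sigma$ and need an open subgroup with only finite orbits on $\V\Sigma$. Fix $\llbracket v\rrbracket\in\V T$ and let $F$ be the finite set in $X(\pi(\llbracket v\rrbracket))$ given by the hypothesis, chosen inside some adhesion set, so that $G(\pi(\llbracket v\rrbracket))_{(F)}$ is compact. Put $F_0=p_{\llbracket v\rrbracket}^{-1}(F)\subseteq[v]$ and $U=G_{(F_0)}$, which is open. Because $U$ fixes a vertex of $[v]$, it stabilises the bundle $\llbracket v\rrbracket$. Its local actions at $\llbracket v\rrbracket$ all lie in $G(\pi(\llbracket v\rrbracket))_{(F)}$, and since that group is compact, Lemma~\ref{LCompact} shows it has only finite orbits on $X(\pi(\llbracket v\rrbracket))$. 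So $U$ has finite orbits on $[v]$, and by the hypothesis on adhesion sets it also has finite orbits on the neighbours of $\llbracket v\rrbracket$ in $T$. We then argue outward by induction on the distance from $\llbracket v\rrbracket$, in the style of the proof of Proposition~\ref{PSubdegree-finite}. For a bundle $\llbracket u\rrbracket$ with parent $\llbracket w\rrbracket$, the matching between the adhesion sets in $[w]$ and $[u]$ (Remark~\ref{rmk:arcbundlematching}) lets us bound the $U$-orbit of a vertex of $[u]$. The bound is a product of three factors: the number of $U$-images of the bundle, the size of the orbit of a matched vertex $u_0$, and the size of the orbit of the appropriate stabiliser on $[u]$.

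The main obstacle is this last factor. Subdegree finiteness of $G(\pi(\llbracket u\rrbracket))$ is not among the hypotheses of part 1. What we do know is that the pointwise stabiliser in $U$ of the matched adhesion set in $[u]$ fixes the finite set corresponding to a copy of $F$. We plan to exploit this as follows. The hypothesis gives a finite $F_u$ inside the adhesion set of $[u]$ facing $\llbracket w\rrbracket$. Using the arc matching and induction, the $U$-orbit of the tuple $F_u$ is finite, because it is determined by the finite orbit data on $[w]$. A finite-index subgroup of $U\cap G_{\llbracket u\rrbracket}$ then fixes $F_u$ pointwise, and its local action lies in the compact group $G(\pi(\llbracket u\rrbracket))_{(F_u)}$, which has finite orbits by Lemma~\ref{LCompact}. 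Hence $U$ has finite orbits on $[u]$. Finally we apply Lemma~\ref{LCompact} to the closed group $G$ to conclude that $U$ is compact.
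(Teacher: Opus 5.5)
Your proof is correct and has the same skeleton as the paper's. You take the pointwise stabiliser $U$ of a lift of $F$ and show, by induction outward along $T$, that $U$ has only finite orbits on $\V\Sigma$; for part 2 you instead use subdegree finiteness on $T$. You then conclude with Lemma~\ref{LCompact} and the fact that open subgroups are closed.

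The one real difference is the inductive step in part 1. The paper simply invokes the argument from the proof of Proposition~\ref{PSubdegree-finite}(2). That argument bounds its last factor $|G_{u_0}u|$ using subdegree finiteness of the vertex groups, which is not among the hypotheses here. Your replacement avoids this. You pass to the finite-index subgroup of $U_{\llbracket u\rrbracket}$ that fixes the lift of $F_u$ in the adhesion set facing the parent; the index is finite because of the arc matching and the induction hypothesis on $[w]$. This step uses exactly the assumption that every adhesion set contains such an $F$, so your version is the more careful rendering of the argument the paper only sketches.
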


\begin{proof}
Assume that $\cG$ satisfies the conditions in the first part. Pick a finite subset $F\subseteq X(v)$ such that $G(v)_{(F)}$ is compact, that is, $G(v)_{(F)}$ has only finite orbits on $X(v)$.  Then the argument in the proof of part 2 in Proposition~\ref{PSubdegree-finite} shows that if $F'$ is a subset of some vertex bundle whose image under $p$ is $F$ then $G_{(F')}$ has only finite orbits on the vertex set of $\Sigma$.  Thus $G_{(F')}$ has compact closure in $G$ and since $G_{(F')}$ is open in $G$ it is also closed and we conclude that $G_{(F')}$ is compact.   Hence $G$ is locally compact.

For the second part the conditions imply that $G$ in it action on the vertex set of the tree is subdegree finite and it follows that $G$ is locally compact by the same argument as before. 
\end{proof}

The next property we look at is the question of when the universal group of a \gga\ is compactly generated.   

\begin{prop}\label{PCompactly-generated}
    Let $\cG$ be a \gga\ such that each of the groups $G(v)$ is compactly generated, closed and subdegree finite. 
    Suppose further that $\V\Delta$ and $\E\Delta$ are finite and that $G(v)$ has only finitely many orbits on $X(v)$ for every $v\in \V\Delta$.  
    
    Then the universal group $G$ is compactly generated with respect to the permutation topology arising from its action on $\Sigma$. 
\end{prop}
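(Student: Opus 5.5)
The plan is to use the standard fact that a group acting on a tree with finite quotient is generated by the stabilisers of the vertices of a lifted fundamental domain together with finitely many "edge-transition" elements. After that, it remains to show that each such vertex stabiliser is generated by a compact open subgroup and finitely many further elements.

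\emph{Step 1 (generation from the tree).} Let $T=T_{\cG}$. By Theorem~\ref{TQuotient}, $T/G\cong\Delta$, which is finite. Choose a spanning tree $\Lambda$ of $\Delta$ and lift it to a subtree $\tilde\Lambda$ of $T$ with vertices $\bun{v_1},\dots,\bun{v_n}$, one in each fibre of $\pi$. For each edge of $\Delta$ not in $\Lambda$, choose an arc $\bun{a}$ of $T$ with $t(\bun a)\in\tilde\Lambda$ and $\pi(\bun a)$ equal to that edge. Theorem~\ref{TQuotient} then gives an element $g_a\in G$ mapping $o(\bun a)$ to the vertex of $\tilde\Lambda$ in the same fibre. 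The usual Bass--Serre argument shows that $G$ is generated by $G_{\bun{v_1}},\dots,G_{\bun{v_n}}$ together with the finitely many $g_a$. Concretely: the subgroup they generate moves $\tilde\Lambda$ onto a connected $G$-invariant subtree, which must be all of $T$, and then a standard argument gives all of $G$. So it suffices to show each $G_{\bun{v_i}}$ is compactly generated.

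\emph{Step 2 (a compact open subgroup).} Fix $u\in[v_i]$ with $p(u)=x$. I want $G_u$ to be compact and open. Openness is by definition. For compactness, $G$ is closed by Proposition~\ref{PClosed}(3), so by Lemma~\ref{LCompact} it suffices to show that $G_u$ has only finite orbits on $\V\Sigma$. That is exactly subdegree finiteness of $G$ on $\Sigma$, which I would obtain from Proposition~\ref{PSubdegree-finite}(2). The hypotheses give subdegree finiteness of each $(G(v),X(v))$ directly. The remaining requirement is that $G(v)_x$ has only finite orbits on adhesion sets. This is the step I expect to be the main obstacle.

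My first attempt at the obstacle is as follows. Since $G(v)$ is closed and subdegree finite, $G(v)_x$ is compact by Lemma~\ref{LCompact-cocompact}. An adhesion set $S$ meets some $G(v)$-orbit, so pick $y\in S$; the orbit $G(v)_x y$ is finite. The stabiliser $G(v)_{x,y}$ is compact open, and $G(v)_{\{S\}}$ is open. Hence $G(v)_{x,y}\cap G(v)_{\{S\}}$ has finite index in the compact group $G(v)_{x,y}$, so $G(v)_{x,y}S$ is finite, and therefore $G(v)_xS$ is finite. The point that needs checking is that $G(v)_{\{S\}}$ is open. If it is not automatic, I would use finiteness of $\E\Delta$ (finitely many arc types) and try to choose a finite $F\subseteq S$ with $G(v)_{(F)}\le G(v)_{\{S\}}$. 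Failing that, I would add the condition as a mild extra hypothesis, in the spirit of the remark following Proposition~\ref{PSubdegree-finite}.

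\emph{Step 3 (each vertex stabiliser is compactly generated).} Because $G(v)$ is compactly generated and $G(v)_x$ is compact open, $G(v)$ is generated by $G(v)_x$ together with finitely many elements $c_1,\dots,c_k$. Restriction to $[v_i]$ gives a homomorphism $G_{\bun{v_i}}\to G(\pi(\bun{v_i}))$ sending $h\mapsto h_{\bun{v_i}}$. By Corollary~\ref{cor:acceptable-isomorphism}, applied with $\Sigma'=\Sigma$ and $\bun{v'}=\bun{v_i}$, this homomorphism is surjective; denote its kernel by $K$. Lift each $c_j$ to an element $\hat c_j\in G_{\bun{v_i}}$. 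The preimage of $G(v)_x$ is exactly $G_u$, and this preimage contains $K$. Hence $G_{\bun{v_i}}=\langle G_u,\hat c_1,\dots,\hat c_k\rangle$, which is compactly generated by Step 2. Finally, combine this with Step 1: $G$ is generated by the compact set $\bigcup_i\bigl(G_{u_i}\cup\{\hat c^{(i)}_j\}\bigr)\cup\{g_a\}$, where $G_{u_i}$ and $\hat c^{(i)}_j$ are the subgroups and lifts obtained in Step 3 for the vertex $\bun{v_i}$.
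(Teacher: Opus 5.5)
Your Step~2 has a genuine gap, and it is exactly the one you flagged. Nothing in the hypotheses makes $G(v)_{\{S\}}$ open or forces $G(v)_x$ to have finite orbits on adhesion sets. Finiteness of $\E\Delta$ does not help either: it bounds the number of arc types, not the number $i(a)$ of translates of each adhesion set.

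In fact no argument can close this step, because the proposition is false as stated. Take $\Delta$ with vertices $v,w$ and one edge $\{a,\ba\}$. Let $X(v)=X(w)=\{1,2\}\times\ZZ$ and $G(v)=G(w)=W$, the unrestricted wreath product $S_2\Wr\ZZ$ from the remark after Proposition~\ref{PSubdegree-finite}; concretely, $W$ is the group of all permutations mapping each block $\{1,2\}\times\{m\}$ onto $\{1,2\}\times\{m+k\}$ for a fixed $k$. Let $Y(a)=\ZZ$ with $H(a)=\ZZ$ acting regularly, and $\Phi_a(n)=\Phi_{\ba}(n)=(1,n)$. Then $W$ is closed, transitive and subdegree finite, and it is generated by the compact open subgroup $\prod_{\ZZ}S_2$ together with the shift, so every hypothesis holds.

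In this example:
\begin{itemize}
\item every set $\{(\epsilon_m,m)\mid m\in\ZZ\}$ is an adhesion set;
\item the setwise stabiliser of an adhesion set is a conjugate of the shift group, which is not open and meets every point stabiliser trivially;
\item the pointwise stabiliser of an adhesion set is trivial.
\end{itemize}
Suppose $g\in\cU(\cG)$ fixes two vertices of $\Sigma$ lying in adjacent bundles. Then $g$ stabilises the arc bundle between them, so its local action at one of these bundles lies in $W_{\{A\}}\cap W_x=1$. Triviality then propagates along $T$, so $\cU(\cG)$ is discrete in the scaffolding topology. On the other hand, for any vertex $\bun{y}$ of $T$ the map $G_{\bun{y}}\to W$, $h\mapsto h_{\bun{y}}$, is surjective by Corollary~\ref{cor:acceptable-isomorphism}. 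Hence $\cU(\cG)$ is uncountable and cannot be generated by a compact (that is, finite) set.

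The paper's own proof has the same hole. It invokes Proposition~\ref{PLocallyCompact}, whose hypothesis on orbits of adhesion sets is not assumed. It also asserts that subdegree finiteness and finiteness of $\E\Delta$ force each point to lie in only finitely many adhesion sets. That fails here, since each point lies in $2^{\aleph_0}$ of them, so the paper's graph $\Gamma$ is not locally finite.

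Now add an extra hypothesis that repairs Step~2, for instance that each $Y(a)$ is finite (then $G(v)_{(S)}\le G(v)_{\{S\}}$ is open, which is your fallback with $F=S$), or directly that each $G(v)_x$ has finite orbits on adhesion sets. Under such a hypothesis the rest of your argument is correct, and it follows a genuinely different route from the paper's.
\begin{itemize}
\item The paper builds a connected, locally finite graph $\Gamma\supseteq\Sigma$ by putting Cayley--Abels-type graphs on the $G(v)$-orbits in each $X(v)$, and then cites Kr\"on--M\"oller. This needs finitely many $G(v)$-orbits on $X(v)$ and each point lying in only finitely many adhesion sets.
\item You generate $G$ by the stabilisers of a lifted spanning tree plus finitely many elements $g_a$. (The subtree you build is invariant under the generated subgroup, not under $G$.) You then write $G_{\bun{v_i}}=\langle G_u,\hat c_1,\dots,\hat c_k\rangle$, using that $G_u$ is the full preimage of $G(v)_x$ under the surjection $h\mapsto h_{\bun{v_i}}$.
\end{itemize}
Your route only requires $G_u$ to be compact. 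It therefore needs neither local finiteness of $\Sigma$ nor finitely many orbits on $X(v)$, and it is self-contained apart from the standard Bass--Serre generation lemma.
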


\begin{proof}  From Proposition~\ref{PLocallyCompact} it follows that $G$ is locally compact.  It is known that if $G$ is a totally disconnected, locally compact group that acts on a connected, locally finite graph $\Gamma$ with finitely many orbits then $G$ is compactly generated, see \cite[Corollary~2.10]{KronMoller2008}. Conversely, if $G$ is compactly generated and $U$ is any compact open subgroup of $G$ then one can introduce a graph structure on $X=G/U$ so that we get a locally finite, connected graph that $G$ acts on as a group of automorphisms (see, e.g.\ \cite[Theorem 2.2]{KronMoller2008}).

Let $v$ be a vertex in $\Delta$. If we let $U$ be a point stabiliser $G(v)_x$, then $G/U$ is in one-to-one correspondence with the orbit of $x$ in $X(v)$.
Thus, for each orbit of $G(v)$ on $X(v)$ it is possible to introduce a graph structure making the orbit into a connected, locally finite graph that $G(v)$ acts transitively on.  For each pair of distinct orbits we choose vertices $x$ and $y$, one from each orbit, and then add as arcs the elements in the orbits $G(v)(x,y)$ and $G(v)(y,x)$. Subdegree finiteness and the fact that there are only finitely many orbits imply that in doing so we are only adding finitely many edges incident to any given vertex. When all of this is done we have hence defined a connected, locally finite graph $\Gamma(v)$ with vertex set $X(v)$.  The group $G(v)$ acts on $\Gamma(v)$ with finitely many orbits.   

Let now $\Sigma$ be a canonical scaffolding.  Construct a graph $\Gamma$ by adding to $\Sigma$ all the arcs $((v,x), (v,y))$ where $(x,y)$ is an arc in $\Gamma(\pi(v))$.     If $v\in \V T$ the subgraph spanned by the set $\{(v,x)\mid x\in X(\pi(v))\}$ is isomorphic to $\Gamma(v)$.  Subdegree finiteness together with the assumption that $\E \Delta$ is finite guarantees that every point in $X(v)$ is only contained in finitely many adhesion sets. Thus the graph $\Gamma$ is locally finite and $G$ acts on $\Gamma$ with finitely many orbits.  It is also clear that $\Gamma$ is connected since every vertex bundle is connected and scaffolding edges connect vertex bundles which are adjacent in the tree $T$.  We can thus conclude that $G$ is compactly generated. 
\end{proof}

The following corollary is a direct consequence of Propositions~\ref{PClosed}, \ref{PLocallyCompact} and \ref{PCompactly-generated}.

\begin{coro}\label{CFinite}
Let $\cG$ be a \gga\ with finite base graph $\Delta$ such that for every vertex $v$ in the base graph $\Delta$ the group $G(v)$ is finite.  Let $G$ be the universal group of $\cG$ and set $T=T_{\cG}$.
\begin{enumerate}
    \item The group $G$ is closed both in the topology arising from its action on a scaffolding and the topology arising from its action on $T$.
    \item The group $G$ is locally compact both in the topology arising from its action on a scaffolding and the topology arising from its action on $T$.
    \item The group $G$ is compactly generated both with respect to the topology arising from its action on a scaffolding and the topology arising from its action on $T$.
\end{enumerate}
\end{coro}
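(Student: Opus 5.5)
The plan is to check, for each of the three topological properties, the hypotheses of Propositions~\ref{PClosed}, \ref{PLocallyCompact} and \ref{PCompactly-generated}, first for the scaffolding topology and then for the tree topology. Throughout, $\Delta$ is finite and every $G(v)$ is finite, so every $X(v)$ is finite. This is because a permutation group acting faithfully is finite only if it acts with finite support. More carefully: $X(v)$ could be infinite with $G(v)$ finite. What matters is that all orbits of $G(v)$ on $X(v)$ are finite and every subgroup is closed and compact. So I will avoid assuming $X(v)$ finite and only use the finiteness of $G(v)$.

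\emph{Closedness.} A finite permutation group is closed in $\sym(X(v))$. Hence Proposition~\ref{PClosed}(3) gives that $G$ is closed in the scaffolding topology. For the tree topology I would argue directly. Suppose a net (or sequence) of elements of $G$ converges on $\V T$ to $\tilde g\in\aut(T)$. The local actions at each vertex bundle range over the finite group $G(\pi(\bun v))$. A compactness (K\"onig/Tychonoff) argument over the product $\prod_{\bun v}G(\pi(\bun v))$, together with the fact that local actions at adjacent bundles determine compatible edge matchings, should let me choose a limit of the local actions along a subnet. That produces an acceptable scaffolding automorphism inducing $\tilde g$, so the image of $G$ in $\aut(T)$ is closed. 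This is the step I expect to be the main obstacle, since the tree action need not be faithful and one has to lift limits. The tool is compactness of the finite products.

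\emph{Local compactness.} For the tree, $G(v)$ finite gives finitely many adhesion sets, hence finite orbits on them. Proposition~\ref{PLocallyCompact}(2) then applies, using the closedness from the previous step. For the scaffolding, Proposition~\ref{PLocallyCompact}(1) needs a finite $F$ in an adhesion set with $G(v)_{(F)}$ compact. Since $G(v)$ is finite, any single point works, because every subgroup of a finite group is compact.

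\emph{Compact generation.} For the scaffolding I would invoke Proposition~\ref{PCompactly-generated}. Its hypotheses hold as follows: finite groups are compactly generated and closed, and they are subdegree finite. The base graph is finite by assumption. Finitely many orbits on $X(v)$ requires $X(v)$ to be finite, or else one can argue that only the finitely many adhesion points matter. I would note that otherwise points of $X(v)$ outside all adhesion sets can be discarded without changing $G$ up to topological isomorphism. For the tree, $G$ acts on the locally finite tree $T$ (degrees are the indices $i(a)$, which are finite) with finitely many orbits by Theorem~\ref{TQuotient}. Together with the local compactness just shown and \cite[Corollary~2.10]{KronMoller2008}, this gives compact generation in the tree topology.
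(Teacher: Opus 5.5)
Your route is the paper's: the paper simply declares the corollary a direct consequence of Propositions~\ref{PClosed}, \ref{PLocallyCompact} and \ref{PCompactly-generated}. You correctly notice that these propositions only give closedness and compact generation for the scaffolding topology. Your supplements for $T$ are sound. The Tychonoff argument over $\prod_{\bun v}G(\pi(\bun v))$ does give an acceptable scaffolding automorphism inducing the limit. Local finiteness of $T$, Theorem~\ref{TQuotient} and \cite[Corollary~2.10]{KronMoller2008} give compact generation on $T$. Your opening claim that a finite faithful permutation group has finite support is false (for instance $C_2$ acting on $\ZZ$ by $n\mapsto -n$), but you rightly drop it.

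The step that fails is your workaround for compact generation in the scaffolding topology when $X(v)$ is infinite.

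\emph{Restricting to adhesion points does not fix the orbit count.} Nothing bounds $|Y(a)|$, so an adhesion set $\Phi_a(Y(a))$ can itself be infinite. A finite $G(v)$ then has infinitely many orbits on the union of adhesion sets, and Proposition~\ref{PCompactly-generated} still does not apply.

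\emph{Discarding points does not preserve $G$.} The surjection $\cU(\cG)\to\cU(\cG')$ of Lemma~\ref{LReduced} kills every element that acts trivially on $T$ with local actions in the pointwise stabilisers $K_{\pi(\bun v)}\le G(\pi(\bun v))$ of the union of adhesion sets. Such elements fix all endpoints of arcs of $\Sigma$, so they are acceptable and lie in $\cU(\cG)$. They can be non-trivial, e.g.\ $G(v)=C_2$ swapping two points outside a one-point adhesion set. The paper's one-line appeal to Proposition~\ref{PCompactly-generated} also tacitly needs finitely many $G(v)$-orbits on $X(v)$, so your worry is legitimate; only your fix is wrong.

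\emph{Repair.} Run your tree argument in the scaffolding topology.
\begin{itemize}
\item $G_{\bun v}$ contains $G_x$ for $x\in[v]$, hence is open.
\item Each of its orbits on $\V\Sigma$ is finite. It has finite orbits on the spheres of the locally finite tree $T$, and inside each bundle $[u]$ an orbit has at most $|G(\pi(\bun u))|$ points.
\item So $G_{\bun v}$ is compact by Lemma~\ref{LCompact} and Proposition~\ref{PClosed}(3).
\end{itemize}
Thus $G$ acts on the connected locally finite tree $T$ with finitely many orbits and compact open vertex stabilisers. By the same Krön--Möller result, it is compactly generated in the scaffolding topology, with no assumption on $|X(v)|$.

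The tree statements then also follow from the continuous surjection onto the group induced on $T$. The images of the compact open stabilisers and of a compact generating set give local compactness and compact generation there. A locally compact subgroup of $\sym(\V T)$ is automatically closed.
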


\section{Graphs of groups:  Bass--Serre theory}\label{SBass-Serre}

The Bass--Serre theory of groups acting on trees is one of the corner stones of geometric group theory.   
The basic concepts in Bass--Serre theory are \emph{graphs of groups} and the \emph{fundamental group of a graph of groups}.  The definition of a \gga\ is, as explained in the introduction, modelled in the definition of a graph of groups, but the the definition of a universal group of a \gga\ does not resemble the definition of the fundamental group of a graph of groups.   The aim in this section is to show how the fundamental group of a graph of groups can be constructed as the universal group of a \gga.  

\begin{defi}\label{DGraph-of-group}  A \emph{graph of groups} consists of a connected, non-empty graph $\Delta$, without self-reverse arcs, and for each vertex $v$ in $\Delta$ there is a given group $G(v)$ and for each arc $a$ there is a given group $H(a)$  such that $H(\ba)=H(a)$ and for each arc $a$ in $\Delta$ there is also given an injective group homomorphism $\theta_a: H(a)\to G(t(a))$.
\end{defi}

To each graph of groups we can construct a goup acting on a tree, the fundamental group, as follows. 

Start with a graph of groups.   Use the indicies of the subgroups $\theta_a(H(a))$ in $G(t(a))$ to define a graph $\Delta_+$ similar to the augmented base graph defined in Section~\ref{SDefinition}.  Then construct a $\Star$-covering tree $T$ for $\Delta_+$. with $\pi_+: T\to \Delta_+$ the associated $\Star$-covering map.  As in Section~\ref{SStar-covering} we may assume that $\pi_+$ is compatible with the graph structure of $\Delta$ and we get a graph morphism $\pi: T\to \Delta$.    This tree is clearly isomorphic to the tree constructed in \cite[Section~5.3]{Serre2003}.  
The fundamental group of a graph of groups acts on this tree such that quotient graph is equal to $\Delta$.  The stabilizer of a vertex $v$ on $T$ is isomorphic to $G(\pi(v))$ and the stabilizer of an arc $a$ in $T$ is isomorphic to $G(\pi(a))$.   Conversely, given a group $G$ acting  without inversion on a tree $T$ one can construct a graph of groups such that the action of the fundamental group on the associated tree is isomorphic to the action of $G$ on $T$, see \cite[Section 5.4]{Serre2003}.  

From a given graph of groups we construct a \gga\ $\cG$ such that the underlying graph is also $\Delta$.  For a vertex $v$ in $ \Delta$ the group action at $v$ is the regular action of $G(v)$ on itself and for an arc $a$ the group action is the regular action of $H(a)$ on itself.  The embedding of $(H(a), H(a))$ into $(G(t(a)), G(t(a)))$ is given by the group homomorphism $\theta_a$.   Let $T=T_\cG$ denote a $\Star$-covering tree for $\Delta_+$ and let $\pi: T\to \Delta$ be the associated map.  Denote by $\Sigma$ a scaffolding for $\cG$ and let $G$ denote the universal group of the \gga.  

The quotient graph $T/G$ is isomorphic to $\Delta$ by Corollary~\ref{TQuotient}.  Let $v$ be a vertex in $\Delta$ and $\llbracket w \rrbracket$ a vertex in $T$ such that $\pi(\llbracket w \rrbracket)=v$.   Suppose that $g\in G$ fixes a vertex in the vertex bundle $\llbracket w \rrbracket$.  Then $g$ fixes all the vertices in $\llbracket w \rrbracket$, because the action of $G_{\llbracket w\rrbracket}$ on $\llbracket w \rrbracket$ is isomorphic to the regular action $G(v)$ on on itself and thus regular.  This means that $G_{\llbracket w\rrbracket}$ fixes all the neighbours of $\llbracket w \rrbracket$ in $T$.  Suppose $\llbracket u \rrbracket$ is a vertex in $T$ adjacent to $\llbracket w \rrbracket$.  If $x \in [w]$ and $y \in [u]$ are adjacent vertices in $\Sigma$ then $y$ is the unique vertex in $[u]$ that is adjacent to $x$.  Thus $g$ fixes $y$.  Since $G_{\llbracket u \rrbracket}$ acts regularly on $[u]$ we see that $g$ fixes every vertex in $\llbracket u \rrbracket$.  Induction shows that $g$ acts trivially on $\Sigma$ and acts therefore trivially on $T$.  It follows that if $g$ is an element that fixes the vertex $\llbracket w\rrbracket$ in $T$ then the action of $g$ on the $T$ is determined by the action of $g$ on $[w]$.  Thus $G_{\llbracket w\rrbracket}$ is isomorphic to $G(v)$.  We also see in this way that if $\llbracket a\rrbracket$ is an arc in $T$ then the subgroup of $G$ fixing this arc fixes both the vertices $o(\llbracket a\rrbracket)$ and $t(\llbracket a\rrbracket)$ and is isomorphic to $G(\pi(\llbracket a\rrbracket))$.

From this and Corollary~\ref{TQuotient} we see that the graph of groups we construct from the action of $G$ on $T$ is identical to the graphs of groups we started with.  From \cite[Theorem 13]{Serre2003} it follows that $G$ is isomorphic to the fundamental group of the given graph of groups.
We have thus proved the following theorem.

\begin{theo}\label{TBass-Serre}
Suppose a graph of groups is given.  Construct a \gga\ as above.  Then the action of the fundamental group of the graph of groups on its associated tree is isomorphic to the action of the universal group of the \gga\  on its associated tree.  In particular the fundamental group of the graph of groups is isomorphic to the universal group of the \gga.
\end{theo}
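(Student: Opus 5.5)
The argument is essentially the one sketched just before the statement; I would organise it in three steps.

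\emph{Step 1: the tree and the quotient.} Work with a canonical scaffolding $\Sigma$ for the \gga\ $\cG$ built from the graph of groups. Theorem~\ref{Tuniversal-group-unique} allows this choice. In $\cG$ each $G(v)$ acts regularly on itself, so the index $i(a)$ of an arc equals the number of cosets of $\theta_a(H(a))$ in $G(t(a))$. Hence the augmented digraph $\Delta_+$ coincides with the one built from the graph of groups, and by Lemma~\ref{LExistence-Star-covering-tree} the associated tree $T_\cG$ is isomorphic to the Bass--Serre tree of \cite[Section~5.3]{Serre2003}. Theorem~\ref{TQuotient} then gives $T/G\cong\Delta$, with the fibres of $\pi$ being the $G$-orbits.

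\emph{Step 2: vertex and arc stabilisers, and faithfulness.} I first show that $G$ acts faithfully on $T$ and that $G_{\bun w}\cong G(\pi(\bun w))$. Take $g\in G$ fixing a vertex of $[w]$. The local action of $g$ at $\bun w$ is an element of the regular group $G(\pi(\bun w))$ fixing a point, so it is trivial, and $g$ fixes $[w]$ pointwise. By Remark~\ref{rmk:arcbundlematching} each arc bundle is a matching, so $g$ fixes a point in every neighbouring bundle. Induction on distance in $T$ then shows that $g$ is trivial on $\Sigma$.

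Two consequences follow. First, an element fixing $\bun w$ is determined by its local action at $\bun w$. Second, by Theorem~\ref{TLocal-action}(1) every element of $G(\pi(\bun w))$ occurs as such a local action, so $G_{\bun w}\cong G(\pi(\bun w))$. In particular the kernel of the action on $T$ lies in every vertex stabiliser and hence acts trivially on each bundle, so the action on $T$ is faithful.

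For an arc $\bun a$ with $t(\bun a)=\bun w$, the stabiliser $G_{\bun a}$ corresponds to the setwise stabiliser of the adhesion set $\Psi_{\bun a}(Y(\pi(\bun a)))$. Under the regular action this is the left coset $\gamma\,\theta_a(H(a))$, whose setwise stabiliser is $\gamma\theta_a(H(a))\gamma^{-1}\cong H(\pi(\bun a))$. The embedding of $G_{\bun a}$ into $G_{\bun w}$ therefore matches $\theta_a$ up to conjugation. Finally, $G$ acts without inversion because $\Delta$ has no self-reverse arcs and $\pi$ is a graph morphism.

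\emph{Step 3: applying Bass--Serre theory.} Apply the structure theorem \cite[Section~5.4, Theorem~13]{Serre2003} to the action of $G$ on $T$. The graph of groups read off from this action has base graph $T/G\cong\Delta$, vertex groups $G(v)$, edge groups $H(a)$ and monomorphisms $\theta_a$ up to conjugation. Changing the monomorphisms by conjugation does not alter the fundamental group or its action on the tree. It follows that $(G,T)$ is isomorphic to the action of the fundamental group on its Bass--Serre tree.

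The main obstacle is the last identification. Serre's theorem reconstructs the attaching maps only up to the choice of a lift of a maximal tree and of the coset representatives. One must check that these choices can be made to agree with the chosen adhesion maps $\Phi_b$ (the $\gamma_b$). This amounts to checking that the canonical scaffolding's choice of $\gamma_b$ corresponds to a choice of transversal in Serre's construction, and I would verify it by comparing the two constructions along a lifted maximal tree.
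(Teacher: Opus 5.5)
Your outline is the paper's argument. You pass to a canonical scaffolding, get $T/G\cong\Delta$ from Theorem~\ref{TQuotient}, and use regularity plus the matching structure of arc bundles to show that an element of $G$ fixing one vertex of $\Sigma$ is trivial. This gives $G_{\bun w}\cong G(\pi(\bun w))$ and identifies the arc stabilisers with the edge groups, and you then invoke \cite[Theorem~13]{Serre2003}. Two of your extra checks are correct and fill in points the paper skips when it calls the two graphs of groups ``identical'': that $G$ acts without inversion, and that the monomorphisms are recovered only up to conjugation, which changes neither the fundamental group nor its action on the tree.

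One claim is false and should be deleted: the action of $G$ on $T$ need not be faithful, as the paper itself notes in Corollary~\ref{coro:treeorbits}. Your deduction fails at ``lies in every vertex stabiliser and hence acts trivially on each bundle''. An element of $G_{\bun w}$ acts on $[w]$ by some element of the regular group $G(\pi(\bun w))$, not necessarily the identity. An element of the kernel on $T$ only has to stabilise every adhesion set, so its local action lies in $\bigcap_{a,\gamma}\gamma\,\theta_a(H(a))\,\gamma^{-1}$, which can be non-trivial.

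For example, take two vertices $u,w$ joined by one edge with $G(u)=G(w)=H(a)=C$ and identity monomorphisms. Then $T$ is a single edge, and $G\cong C=C*_C C$ acts trivially on it. More generally, any subgroup of $C$ that is normal in both $A$ and $B$ lies in the kernel of $A*_C B$ acting on its tree.

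Removing the claim costs nothing. Serre's Theorem~13 applies to any action without inversion, faithful or not, and the statement only asserts an isomorphism of possibly non-faithful actions.
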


As every group acting on a tree without inversion is the fundamental group of some graph of groups, this shows that any such group is also the universal group of some \gga. 

\begin{coro}
    Let $G$ be a group acting faithfully on a tree $T$.  Then there exists a \gga\ $\cG$ such that the action of $G$ on $T$ is isomorphic to the action induced by $\cU(\cG)$ on $T_\cG$.  
\end{coro}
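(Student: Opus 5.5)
The plan is to reduce to the two cases the paper already handles: actions without inversions via Bass--Serre theory (Theorem~\ref{TBass-Serre}), and actions with inversions via the self-reverse arcs of Section~\ref{SInversions}. A naive argument that applies Theorem~\ref{TBass-Serre} directly to the graph of groups from \cite[Section 5.4]{Serre2003} gives a \gga\ whose universal group acts on $T_\cG$ as the fundamental group does on its Bass--Serre tree. That action is isomorphic to the given action of $G$ on $T$, but this route only works when $G$ acts without inversions.

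So I first treat the inversion-free case. Let $\Delta=T/G$. Choose a lift of a maximal tree together with the remaining edges, following Serre, and form the graph of stabilisers $G(v)=G_{\tilde v}$ and $H(a)=G_{\tilde a}$. The embeddings $\theta_a$ are conjugation followed by inclusion. Serre's structure theorem \cite[Theorem 13]{Serre2003} then gives an isomorphism between $G$ and the fundamental group, equivariant with respect to a tree isomorphism $T\cong T_{\Delta}$. I then build the \gga\ with regular actions exactly as before Theorem~\ref{TBass-Serre}. That theorem identifies the action of $\cU(\cG)$ on $T_\cG$ with the action of the fundamental group on its tree. Composing the two isomorphisms of group actions finishes this case.

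For a general faithful action, possibly with inversions, I pass to the barycentric subdivision $T'$. There $G$ acts without inversions, and the case above yields a \ggawo\ $\cG'$ realising $(G,T')$. The subdivision vertices of $T'$ have degree $2$, and their stabilisers contain an edge stabiliser with index $1$ or $2$. I then reverse the subdivision construction of Section~\ref{sec:inversion-subdivision}. At an image vertex $v_e$ of index $2$, I want to replace $v_e$ and its two arcs by a self-reverse arc $a$. For this I take $Y(a)$ to be the regular $H(a)$-set, where $H(a)$ is the edge stabiliser. The inversion agent $h_a$ is right multiplication by $s$, where $s$ is an inverting element; then $h_a^2$ and conjugation by $h_a$ preserve $H(a)$. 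At index-$1$ vertices I simply contract. The theorem of Section~\ref{sec:inversion-subdivision} relates the universal group of $\cG$ to that of $\cG_0$. Collapsing the subdivision then recovers the action on $T$.

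The main obstacle is this last step: checking that the \ggawo\ obtained from $T'$ has, at each degree-$2$ vertex $v_e$, precisely the shape $X(v_e)=\{\pm1\}\times Y(a)$ with $G(v_e)=\langle H(a),h_{a+}\rangle$. This matters because the subdivision construction applied to the new \gga\ must reproduce $\cG'$ up to isomorphism, after which Theorem~\ref{thm:isomorphic} transfers the groups. To check it, I would first compute that the regular action of $G_{\tilde v_e}$ on itself is isomorphic to this wreath-like action via the coset decomposition $G_{\tilde v_e}=H\sqcup sH$.
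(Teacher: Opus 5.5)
Your outline (subdivide, apply Theorem~\ref{TBass-Serre} to the inversion-free action, then undo the subdivision of Section~\ref{sec:inversion-subdivision}) is the same as the paper's. The paper simply asserts that the subdivision vertices ``can be removed by reversing the construction''. The concrete reversal you propose fails, and it fails exactly at the step you flag as the main obstacle.

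First, with $Y(a)=H$ the regular $H$-set, right multiplication by $s$ is not a permutation of $Y(a)$. Since $s\notin H$, the coset $Hs=sH$ is disjoint from $H$.

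Second, and more seriously, no inversion agent on the regular $H$-set works in general. A permutation of $H$ normalising the left regular representation has the form $h_a(y)=c\,\alpha(y)$ with $c\in H$ and $\alpha\in\mathrm{Aut}(H)$. The condition $h_a^2\in H(a)$ forces $\alpha^2=1$. Then the map $(\epsilon,y)\mapsto(-\epsilon,\alpha(y))$, which is the diagonal element $\lambda_{c^{-1}}$ composed with $h_{a+}$, is an involution in $\langle H(a),h_{a+}\rangle$ that interchanges the two adhesion sets. Hence $\langle H(a),h_{a+}\rangle$ on $\{\pm1\}\times Y(a)$ can be permutation isomorphic to the regular action of $G_{\tilde v_e}=H\sqcup sH$ only if some involution of $G$ inverts the edge. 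Otherwise your coset-decomposition check cannot succeed.

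Here is a concrete failure. Let $G=C_4=\langle s\rangle$ act faithfully on the tree consisting of an edge $\{u,w\}$ with two leaves attached at each end. Let $s$ invert $\{u,w\}$, so that $s^2$ swaps the two leaves at $u$ and the two leaves at $w$. Then $H=\{1,s^2\}$, and every permutation of the two-element set $Y(a)$ squares to the identity. For either choice of $h_a$, the resulting \gga\ (regular action of $G_u$ at the vertex, $Y(a)$ regular) has a universal group inducing $C_2\times C_2$ on the tree, not $C_4$: every edge-inverting element acts as an involution. Taken literally on the regular-action \gga, the paper's one-line reversal runs into the same obstruction.

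One way to repair this is to make the action at the self-reverse arc non-regular.
\begin{itemize}
\item Take $Y(a)=G_{\tilde v_e}$ with $H$ acting by left multiplication, which has two regular orbits $H$ and $sH$. Take $h_a$ to be left multiplication by $s$.
\item This $h_a$ is a genuine permutation of $Y(a)$ and normalises $H(a)$. Its square is left multiplication by $s^2\in H$. The group $\langle H(a),h_{a+}\rangle$ is then $G_{\tilde v_e}$ acting freely on two copies of itself.
\item This arc action no longer embeds in the regular action of $G_u$, so enlarge the set at the end vertex to $G_u\times\{\pm1\}$. Put $\Phi_a(h)=(h,1)$ and $\Phi_a(hs)=(h,-1)$ for $h\in H$, and embed the other arcs into $G_u\times\{1\}$. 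The indices, and hence the tree, are unchanged.
\end{itemize}
After subdividing, every vertex action is free, and the arc embeddings induce exactly the inclusions of the graph of groups of $(G,T')$. The argument preceding Theorem~\ref{TBass-Serre} uses only that point stabilisers in the vertex actions are trivial. So it still identifies the universal group with the fundamental group, that is, with $G$ acting on $T'$. Collapsing the subdivision then gives $(G,T)$.

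Separately, subdividing only the inverted edges, as the paper does, removes the need for your contraction at index-one vertices.
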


\begin{proof}   If the action on the tree has inversions then we can replace $T$ with its barycentric subdivision we get an action on a tree without inversions.  Of course we only need to subdivide those edges that are inverted.  The group action on this tree is isomorphic to the action of the fundamental group of some graph of groups and thus isomorphic to the action of the universal group of a \gga\ on its associated tree.  The base graph of the \gga\ has no self-reverse arcs.  The subdivision vertices introduced to the tree have degree 2 and the stabilizers of these vertices act transitively on the two adjacent edges.  The corresponding vertices in the base graph of the \gga\ also have degree 2 and they can be removed by reversing the construction in Section~\ref{SInversions} to get a \gga\ where the action of the universal group on its associated tree is isomorphic with the action that we started with.       
\end{proof}

The above argument also gives a partial converse to Theorem~\ref{TBass-Serre}.

\begin{coro}
    Let $\cG$ be a \ggawo.   
    Assume that the pointwise stabiliser of each adhesion set (in the respective vertex group) is trivial.
    
    Define a graph of groups with the same base graph as $\cG$ as follows. The group at a vertex $v$ is the group $G_v=G(v)$, the group at an arc $a$ is the group $G_a=G(a)$ and the group homomorphism $\theta_a: G_a\to G_{t(a)}$ is induced by the embedding of the action $(G(a), Y(a))\to (G(t(a))), X(t(a)))$.  
    
    Then the action of the universal group of the \ggawo\ on the associated tree is isomorphic to the action of the fundamental group of this graph of groups on the tree.
\end{coro}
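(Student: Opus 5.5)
The plan is to mimic the argument given just before Theorem~\ref{TBass-Serre}: compute the vertex and arc stabilisers of the action of $G=\cU(\cG)$ on $T=T_\cG$. We then show that the graph of groups Serre's structure theorem \cite[Section 5.4]{Serre2003} attaches to this action is the one defined in the statement, and conclude with \cite[Theorem 13]{Serre2003}. We work with a canonical scaffolding $\Sigma$, which is legitimate by Theorem~\ref{Tuniversal-group-unique}. By Theorem~\ref{TQuotient} the quotient $T/G$ is $\Delta$; since $\cG$ is a \ggawo, $G$ acts on $T$ without inversions.

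\textbf{Step 1: elements fixing a vertex of $T$ act trivially on $\Sigma$ once their local action is trivial.} Suppose $g\in G_{\bun w}$ acts trivially on $[w]$. Then $g$ fixes every adhesion set in $[w]$, so by Lemma~\ref{lem:neighbourhood} it fixes every neighbour $\bun u$ of $\bun w$. By Remark~\ref{rmk:arcbundlematching} the arc bundle between $[w]$ and $[u]$ is a matching, so $g$ fixes pointwise the adhesion set $t([\ba])\subseteq[u]$. The local action $g_{\bun u}\in G(\pi(\bun u))$ therefore lies in the pointwise stabiliser of an adhesion set, which is trivial by hypothesis. Hence $g$ acts trivially on $[u]$. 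Induction on the distance from $\bun w$ in $T$ shows that $g$ is trivial on $\Sigma$, and hence $g=1$, because $G$ acts faithfully on $\Sigma$.

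\textbf{Step 2: identify the stabilisers.} By Step 1, the map $G_{\bun w}\to G(\pi(\bun w))$, $g\mapsto g_{\bun w}$, is injective. Corollary~\ref{cor:acceptable-isomorphism} applied with $\Sigma'=\Sigma$ and $\bun v'=\bun v$ shows it is surjective. So $G_{\bun w}\cong G(\pi(\bun w))$, and the action of $G_{\bun w}$ on $T$ is determined by this local action. For an arc $\bun a$ with $t(\bun a)=\bun w$, the stabiliser $G_{\bun a}$ consists of the elements of $G_{\bun w}$ whose local action stabilises the adhesion set $\Psi_{\bun a}(Y(\pi(\bun a)))$. Triviality of the pointwise stabiliser makes restriction to this set injective, and since $\Psi_{\bun a}$ is an embedding of permutation actions, this identifies $G_{\bun a}$ with $H(\pi(\bun a))$ (this is also Theorem~\ref{TLocal-action}). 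Under these identifications the inclusion $G_{\bun a}\hookrightarrow G_{\bun w}$ becomes the homomorphism induced by $\Psi_{\bun a}=\gamma\Phi_{\pi(\bun a)}$, that is, $\theta_{\pi(\bun a)}$ up to conjugation by $\gamma\in G(\pi(\bun w))$.

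\textbf{Step 3: compare graphs of groups.} Choose a lift of a maximal tree of $\Delta$ (a fundamental domain) and read off the graph of groups of the action of $G$ on $T$. By Step 2 it has vertex groups $G(v)$, edge groups $H(a)$, and monomorphisms equal to $\theta_a$ composed with inner automorphisms of $G(t(a))$. Such conjugations do not change the fundamental group or its action on the Bass--Serre tree: this is the standard isomorphism of graphs of groups allowing modification of $\theta_a$ by inner automorphisms, \cite[Section 5]{Serre2003}. \cite[Theorem 13]{Serre2003} then identifies the action of $G$ on $T$ with the action of the fundamental group on its tree. The two trees agree because both are $\Star$-covering trees of the same $\Delta_+$, the indices being equal since $[G(v):\theta_a(H(a))]$ is the number of $a$-adhesion sets.

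The main obstacle is Step 3: the paper's own arcs $\Phi_{\pi_+(a)}$ involve the chosen coset representatives $\gamma_b$, and care is needed to see that the resulting twist is only an inner automorphism, which is harmless. I would handle this by choosing the lifted fundamental domain so that its arcs correspond to original arcs $a\in\A\Delta$, where $\gamma_a=1$. The hard case is the non-tree arcs, which absorb the twist into the stable letters.
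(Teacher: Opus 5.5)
Your proposal is correct and follows the paper's route. The paper likewise reruns the stabiliser argument preceding Theorem~\ref{TBass-Serre}, with trivial pointwise stabilisers of adhesion sets taking the place of regularity; this hypothesis is also what makes $\theta_a$ a genuine monomorphism onto the setwise stabiliser. It then concludes via \cite[Theorem 13]{Serre2003}.

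Your Step~3 is more careful than the paper, which simply asserts that the recovered graph of groups is identical to the given one. Invoking the invariance of the fundamental group and the Bass--Serre tree under inner-automorphism twists of the $\theta_a$ is the right way to close this point. That invariance is needed on tree edges too, not only on non-tree edges: for a fixed $\pi_+$, an arc $c$ with $\pi_+(c)$ original need not have $\pi_+(\rev c)$ original.
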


\begin{proof}
    First note that the condition on pointwise stabilizers of adhesion sets implies that the group that $G(v)$ induces on the adhesion set is equal to the setwise stabilizer of the adhesion set.  Thus, the embedding of group actions $(G(a), Y(a))\to (G(t(a))), X(t(a)))$ induces an embedding of $G(a)$ into $G(t(a))$.   Thus we have a graph of groups and the argument above shows that the action of the universal group of the \ggawo\ on the associated tree is isomorphic to the action of the fundamental group of this graph of groups on the associated tree.
\end{proof}

\section{Burger--Mozes groups and relatives}

\subsection{Burger--Mozes groups}\label{SBurger-Mozes}

\begin{defi} (Definition of Burger-Mozes groups, \cite[Section 3.2]{BurgerMozes2000}.)  \label{DBurger-Mozes}
Let $F$ be a subgroup of $\sym(X)$ for some set $X$ (in \cite{BurgerMozes2000} it seems to be assumed that $X$ is finite, but the construction works without that assumption).  Define $T$ as the $|X|$-regular tree.  A \emph{legal colouring} of $T$ is a map $c: \A T\to X$ such that (i) $c(\ba)=c(a)$ for every $a\in \A T$ and (ii) the restriction of $c$ to $\Star(v)$ is a bijection.
The (Burger--Mozes) universal group $U(F)$ is defined as the group of all automorphisms $g$ of $T$ such that $cg(c_{|\Star(v)})^{-1}$ is in $F$ for all vertices $v$ in $T$.
\end{defi} 

\medskip

Here we construct a \gga\ such that the action of the universal group on the associated tree is isomorphic to the action of the group $U(F)$ on the tree $T$ and this isomorphism of group actions is given by an isomorphism of the trees.  Let $\{O_i\}_{i\in I}$ denote the family of orbits of $F$ on $X$.  
The base graph $\Delta$ of our \gga\ has just one vertex $v$ and for each orbit $O_i$ one self-reverse loop $a_i$.  The action at $v$ is  the action of $F$ on $X$.  Choose a representative $x_i$ from each orbit.  For the arc $a_i$ in $\Delta$  we set $H(a_i)=\{1\}$, $Y(a_i)=\{x_i\}$; since $H(a_i)$ is trivial, the only possible inversion agent is $h_{a_i}=1$.  The map $\Phi_{a_i}: Y(a_i)\to X$ sends $x_i$ to $x_i$.  Now we have defined a \gga.  Recall that the index $i(a_i)$ is the number of translates of the adhesion set.  Here the adhesion set is $\{x_i\}$ and the number of adhesions sets is $|O_i|$.  Thus the augmented base graph $\Delta_+$ has one vertex $v$ and $|X|$ self-reverse loops and these loops can be labelled with the elements of $X$.  The $\Star$-covering tree is the $|X|$-regular tree $T$.  Let $\pi$ denote a $\Star$-covering map.  
A canonical scaffolding graph $\Sigma$ for this \gga\ has $\V T\times X$ as a vertex set.   Each arc bundle in $\Sigma$ has just one single arc so the arcs in $\Sigma$ are in one-to-one correspondence with the arcs in $T$.  If $a$ is the arc in $T$ then there is $x\in X$ such that the corresponding arc in $\Sigma$ has $(t(a), x)$ as a terminus and $(o(a), x)$ as an origin and $\pi(a)$ is the arc in $\Delta_+$ labelled with $x$.   Thus the map $c: \A T\to X$ that maps an arc in $T$ to the label of the arc $\pi(a)$ in $\Delta_+$ is a legal colouring in the sense of Definition~\ref{DBurger-Mozes}.  The universal group $\cU(\cG)$ is clearly equal to the universal group $U(F)$ because the condition that a scaffolding automorphism is acceptable translates into the condition that $cg(c_{|\Star(v)})^{-1}$ is in $F$ for all vertices $v$ in $T$. We have thus proved the following result.

\begin{prop}
    The Burger-Mozes group $U(F)$ is equal to the universal group $\cU(\cG)$.
\end{prop}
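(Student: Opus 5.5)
The plan is to realise $U(F)$ and $\cU(\cG)$ as the same subgroup of $\aut(T)$ by identifying the canonical scaffolding $\Sigma$ with a labelled copy of $T$. Because each arc bundle of $\Sigma$ contains exactly one arc, a scaffolding automorphism $g$ of $\Sigma$ is determined by its local actions $g_v$ at the vertex bundles. I would prove two inclusions: the map $g\mapsto\tg$ carries $\cU(\cG)$ injectively into $U(F)$, and every element of $U(F)$ lifts to an acceptable scaffolding automorphism.

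\textbf{Setting up the correspondence.} For a vertex $v\in\V T$, the arcs of $\Star_T(v)$ correspond bijectively to the adhesion sets $\{x\}$ with $x\in X$, and this correspondence is given by $c$. The arc of $\Sigma$ in the bundle over $a\in\Star_T(v)$ ends at $(v,c(a))$. Hence, for a scaffolding automorphism $g$ and an arc $a$ with $t(a)=v$, we get
\[
g(v,c(a)) = (\tg(v), c(\tg(a))).
\]
That is, $g_v = c\,\tg\,(c_{|\Star(v)})^{-1}$ on $X$. Every $x\in X$ occurs as a colour in $\Star(v)$, so this determines $g_v$ completely.

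\textbf{First inclusion.} If $g\in\cU(\cG)$, then every $g_v$ lies in $F$, so $\tg\in U(F)$. Moreover $g$ is recovered from $\tg$ via the formula $g(v,x)=(\tg(v),g_v(x))$. Therefore $g\mapsto\tg$ is injective, and the action of $\cU(\cG)$ on $\Sigma$ is faithfully reflected on $T$.

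\textbf{Second inclusion.} Given $h\in U(F)$, define $g(v,x)=(h(v),\,c\,h\,(c_{|\Star(v)})^{-1}(x))$. On arcs, $g$ sends the unique arc over $a$ to the unique arc over $h(a)$. One must check that $g$ is a graph morphism. The arc over $a$ joins $(o(a),c(a))$ to $(t(a),c(a))$, and it is mapped to the arc joining $(h(o(a)),c(h(a)))$ and $(h(t(a)),c(h(a)))$. These endpoints agree because the definition of $g$ uses $c(\ba)=c(a)$ at both ends. The map $g$ preserves $\sim$ and $\pi$, since $\pi$ is constant on $\V T$ and on arcs is given by the colour, up to the orbit of the colour. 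Its local actions lie in $F$, so $g\in\cU(\cG)$ and $\tg=h$.

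\textbf{Expected obstacle.} The main difficulty is being careful about the self-reverse loops. One must verify that the canonical scaffolding here has the arc over $a$ with both endpoints coloured $c(a)$. This holds because $h_{a_i}=1$ and $|Y(a_i)|=1$, so the twisted and untwisted adhesion maps coincide. One must also check that "$\pi$ invariant" in the scaffolding sense amounts to nothing more than $\tg$ mapping arcs to arcs. Once this bookkeeping is done, the equality $U(F)=\cU(\cG)$ follows, as actions on $T$.
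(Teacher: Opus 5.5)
Your proof is correct and follows the paper's approach. In both, the canonical scaffolding, where the arc over $a$ joins $(o(a),c(a))$ and $(t(a),c(a))$, is identified with the coloured tree, so the local action becomes $g_v=c\,\tg\,(c_{|\Star(v)})^{-1}$ and acceptability is exactly the Burger--Mozes condition. You spell out the two inclusions that the paper dismisses with ``clearly''. One small correction: $\pi$-invariance of your lift is not just ``$\tg$ maps arcs to arcs''; it holds because $h_{t(a)}\in F$ keeps $c(h(a))$ in the $F$-orbit of $c(a)$.
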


\subsection{Smith's box product}

The box product was introduced by Smith in in \cite{Smith2017} as a means to construct uncountably many totally disconnected, locally compact and compactly generated simple groups. Property (P) (discussed in section \ref{SPropertyP} below) and Tits' theorem \cite[Th\'eor\`eme 4.5]{Tits1970} lie at the heart of the construction.  The box product can be defined as follows.

\begin{defi}
Let $M$ and $N$ be non-trivial permutation groups acting on sets $X$ and $Y$, respectively.  Let $T$ be the $(|X|, |Y|)$-semi-regular tree, that is, an infinite tree where every vertex in one part of the bipartition has degree $|X|$, and every vertex in the other part has degree $|Y|$. Let $\V_X$ and $\V_Y$ denote the parts of the bipartition of $T$, where vertices in $\V_X$ have degree $|X|$ and vertices in $\V_Y$ have degree $|Y|$.  A \emph{legal colouring} of the arcs of $T$ is a map $c: \A T\to X\cup Y$ such that if $v\in \V_X$ then the restriction to $\Star(v)$ is a bijection to $X$ and if $v\in \V_Y$ then the restriction to $\Star(v)$ is a bijection to $Y$ and, furthermore, the restriction of $c$ to $\Star^{-1}(v)$ is constant for every $v\in \V T$.  The group $U(M,N)$ is then defined as the group of all automorphisms $g$ of $T$ such that $cg(c_{|\Star(v)})^{-1}$ is in $M$ if $v\in\V_X$ and is in $N$ if  $v\in V_Y$.    The \emph{box product} of $M$ and $N$ is defined as the permutation group $U(M,N)$ acting on $\V_Y$. 
\end{defi}

\begin{remark}
Smith describes the action of a group $G$ on the neighbourhood $N(v)$ of a vertex $v$ in terms of the set $o^{-1}(v)$ (\lq\lq the set of out-going arcs\rq\rq) but in the the present work this is described in terms of $\Star(v)=t^{-1}(v)$ (\lq\lq the set of in-going arcs\rq\rq). This is a deliberate choice ensuring that embeddings of arc groups ``follow the direction'' of the arc.

Hence, in our description of the box product construction, directions of arcs have been reversed to fit with our terminology. An analogous remark applies to the local action diagram construction below.
\end{remark}

Here is how we can define $U(M,N)$ using graphs of group actions: 
Smith shows, \cite[Lemma 22]{Smith2017}, that the quotient graph $T/U(M, N)$ is isomorphic to complete bipartite graph $K_{m,n}$ where $m$ denotes the number of orbits of $M$ on $X$ and $n$ denotes the number of orbits of $N$ on $Y$.   The base graph for our graph of group actions is $\Delta=K_{m,n}$.  Let $V_m$ denote the set of vertices of $\Delta$ that have degree $m$ and let $V_n$ denote the set of vertices that have degree $n$.  Clearly, $|V_m|=n$ and $|V_n|=m$.  The group action at each vertex in $V_m$ is set as $(M, X)$ and the action at a vertex in $V_n$ is $(N, Y)$. 
Let $\{x_v\}_{v\in V_n}$ be a family of representatives for the orbits of $M$ on $X$ and similarly let $\{y_u\}_{u\in V_m}$ be a family of representatives for the orbits of $N$ on $Y$.   For an arc $a$ in $\Delta$ such that $v=t(a)\in V_m$ and $u=o(a)\in V_n$ we associate a group action  $(\{1\}, \{z_{v,u}\})$ and maps $\Phi_a: \{z_{v,u}\}\to \{x_v\}$ and $\Phi_{\ba}:\{z_{v,u}\}\to \{y_u\}$.  

Analogously to the Burger--Mozes construction in the previous section, it can be shown that that the action of $U(M,N)$ on the $(|X|, |Y|)$-semi-regular tree is isomorphic to the action of the universal group of this graph of group actions on the associated tree. We leave the details to the reader.

\begin{prop}
    The group $U(M,N)$ is equal to the universal group $\cU(\cG)$.
\end{prop}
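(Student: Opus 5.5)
The plan is to imitate the Burger--Mozes argument of Section~\ref{SBurger-Mozes}, working with a canonical scaffolding $\Sigma$ for the \gga\ $\cG$ built on $\Delta=K_{m,n}$. The result then transfers to any scaffolding by Theorem~\ref{Tuniversal-group-unique}.

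\textbf{Step 1: identify the associated tree.} Fix $v\in V_m$, where the action is $(M,X)$. For each $u\in V_n$ there is one arc $a$ with $t(a)=v$ and $o(a)=u$. Its adhesion set is $\{x_{u}\}$, the representative of the $M$-orbit indexed by $u$. Hence $i(a)$ equals the size of that orbit, and the arcs of $\Star_{\Delta_+}(v)$ correspond bijectively to the points of $X$. Symmetrically, for a vertex in $V_n$ the arcs of $\Star_{\Delta_+}$ correspond to the points of $Y$. By Lemma~\ref{LExistence-Star-covering-tree}, the $\Star$-covering tree $T_\cG$ is therefore the $(|X|,|Y|)$-semi-regular tree $T$. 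The bipartition is $\pi^{-1}(V_m)=\V_X$ and $\pi^{-1}(V_n)=\V_Y$.

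\textbf{Step 2: read off a legal colouring.} In $\Sigma$ every arc bundle is a single arc, since all $Y(a)$ are singletons. The arc over $\bun a\in \A T$ joins $(t(\bun a),\xi)$ to $(o(\bun a),\eta)$, where $\xi=\Phi_{\pi_+(\bun a)}(z)$ lies in the vertex set at $t$ and $\eta$ lies in the vertex set at $o$. Define $c(\bun a)=\xi$, the point of $X$ or $Y$ at the terminus. Restricted to $\Star(w)$, $c$ is a bijection onto $X(\pi(w))$ by the last bullet of Definition~\ref{DScaffolding}. After reversing directions as in the remark preceding the box product, $c$ is constant on $\Star^{-1}(w)$ once we check this on the arcs of $\Sigma$. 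The reason is that each vertex $(w,\eta)$ of $\Sigma$ has exactly one neighbour in each adjacent bundle, and the colour at the other end is determined by $\eta$ through the matching.

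I expect this step to be the main obstacle. Constancy on $\Star^{-1}(w)$ in Smith's sense says that \emph{all} out-arcs of $w$ carry the same colour. Here, however, the colour $c(\bun{\ba})$ is the point $\eta$ at $w$, which varies over $\Star^{-1}(w)$. So the correct dictionary is Smith's colouring with the roles of in- and out-arcs swapped: $c'(\bun a):=c(\bun{\ba})$, restricted to $\Star^{-1}(w)$, is the bijection to $X(\pi(w))$. Under Smith's reversed convention this is exactly what is needed. The other requirement, that $c'$ be constant on the opposite star, must come from the singleton structure: all arcs ending at $w$ originate from vertices whose bundles are of the other type. I would settle this by carefully matching conventions, using the remark that Smith's arcs have been reversed.

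\textbf{Step 3: compare the groups.} Every scaffolding automorphism induces an automorphism of $T$. The induced map $\cU(\cG)\to\aut(T)$ is injective, because an automorphism of $T$ determines the map on arcs, and hence on the vertices of $\Sigma$, which are in bijection with the arcs through the matching. Conversely, every automorphism of $T$ preserving the bipartition lifts uniquely to a scaffolding automorphism of $\Sigma$. Acceptability at $w$ means $p g p^{-1}\in M$ or $N$. Translated through $c$, this is precisely the condition $c g (c_{|\Star(w)})^{-1}\in M$ (respectively $N$) from the definition of $U(M,N)$. Hence the images coincide, and the isomorphism of actions is given by the identification $T_\cG\cong T$.
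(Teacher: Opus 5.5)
\textbf{There is a genuine gap in Step~2.} Your proposed way of closing it cannot work.

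Every $Y(a)$ is a singleton and the $M$-orbits partition $X$, so each point of $X(\pi(w))$ lies in exactly one adhesion set. Hence every vertex of $\Sigma$ has exactly one neighbour in total (not one in each adjacent bundle), and $\Sigma$ is a perfect matching. Each edge $\{u,v\}$ of $T$, with $u\in\V_Y$ and $v\in\V_X$, therefore carries two labels:
\begin{itemize}
\item a point $\xi_{uv}\in X$ at the $v$-end, fixed by $\pi_+$ on the arc from $u$ to $v$;
\item a point $\eta_{uv}\in Y$ at the $u$-end, fixed by $\pi_+$ on the reverse arc.
\end{itemize}
A Smith legal colouring (in the reversed convention) demands that $\xi_{uv}$ depend only on $u$ and $\eta_{uv}$ only on $v$. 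Nothing forces this. A compatible $\Star$-covering map is chosen independently at each vertex, so for fixed $u$ the points $\xi_{uv}$, $v\in N(u)$, are merely points of one $M$-orbit.

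Already for $M=N=S_2$ on two points, $T$ is a line. You may label the in-arcs at two consecutive vertices of $\V_X$ so that the two arcs leaving the $\V_Y$-vertex between them get different colours. Bipartiteness and singleton arc sets give you nothing here. Passing to $c'(\bun a)=c(\bun{\ba})$ only trades the $\xi$'s for the $\eta$'s, which fail to be constant for the same reason.

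This is exactly where the analogy with Section~\ref{SBurger-Mozes} breaks. There the loops are self-reverse with $h_a=1$ and $\pi_+(c)=\pi_+(\bc)$, so one label sits at both ends and $c(\ba)=c(a)$ is automatic. Without this, your Step~3 only identifies $\cU(\cG)$ with the group whose local actions lie in $M$ and $N$ with respect to the non-Smith colouring read off from $\Sigma$. That is a priori a different subgroup of $\aut(T)$ from $U(M,N)$.

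\textbf{The missing idea is to run the construction the other way round.} Start from a Smith legal colouring: colours $\chi(u)\in X$ for $u\in\V_Y$ and $\chi(v)\in Y$ for $v\in\V_X$, restricting to bijections on every neighbourhood.
\begin{itemize}
\item Define $\pi$ by sending each vertex to the orbit type of its colour.
\item Let $\Sigma$ have bundles $\{w\}\times X(\pi(w))$ and, for every edge $\{u,v\}$ of $T$, one edge joining $(v,\chi(u))$ and $(u,\chi(v))$.
\end{itemize}
Equivalently, choose $\pi_+$ so that all arcs leaving $u$ go to the arc of $\Delta_+$ labelled $\chi(u)$.

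Checking Definition~\ref{DScaffolding} is routine. The adhesion-map condition holds because $\chi(u)$ lies in the $M$-orbit of the representative indexed by $\pi(u)$. The last bullet is the bijectivity of $u\mapsto\chi(u)$ on $N(v)$.

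For this scaffolding, the local map at $v$ of the unique lift of $g\in\aut(T)$ is literally $\chi(u)\mapsto\chi(g(u))$, which is Smith's condition. Smith's condition also preserves orbit types, so elements of $U(M,N)$ are automatically $\pi$-invariant. This repairs a second point in your Step~3: bipartition-preserving automorphisms do not lift to scaffolding automorphisms unless they preserve $\pi$.

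Hence $\cU(\cG)=U(M,N)$ for this particular scaffolding. Theorem~\ref{Tuniversal-group-unique} then transfers the statement to any other scaffolding, including the canonical one you started from.
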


\subsection{Local action diagrams}\label{SLocalActionDiagrams}

The \emph{local action diagrams}  defined by Reid and Smith in \cite[Definition~3.1]{ReidSmith2020} generalise both constructions described above. They resemble in many ways our graph of group actions, and indeed can be seen as a special case of our construction. 

\begin{defi}  (Definition of local action diagrams, \cite[Definition~3.1]{ReidSmith2020}.)  A local action diagram consists of a graph $\Gamma$, a set $X(a)$ for each arc in $\Gamma$ and for each vertex $v$ a group $G(v)$ that is a subgroup of $\sym(X(v))$ where $X(v)$ is the disjoint union of the sets $X(a)$ for all arc $a$ such that $t(a)=v$ and each one of the sets $X(a)$ is an orbit of $G(v)$.  
\end{defi}

From the data provided by a local action diagram Reid and Smith construct a tree $T$ together with a surjective graph homomorphism $\pi: T\to \Gamma$ and a map $\cL: \A T\to \bigsqcup_{a\in \A\Gamma} X(a)$ such that if $v$ is a vertex in $T$ and $a$ is an arc in $\Gamma$ such that $t(a)=\pi(v)$ then the restriction $\cL_{v,a}$ of $\cL$ to $\{b\in \Star(v)\mid \pi(b)=a\}$ is a bijection to $X(a)$.  The \emph{universal group} of the local action diagram is the group of all automorphisms $g$ of $T$ such that for every vertex $v$ in $T$ the permutation $\cL_{g(v)}g\cL_v^{-1}$ of $X(\pi(v))$ is in $G(\pi(v))$.   (In \cite{ReidSmith2020} it is also assumed that $G(v)$ is closed in the permutation topology induced by its action on $X(v)$.  This additional assumption is not needed in our discussion here.)

\medskip

Constructing a graph of group actions containing the same information as a local action diagram is straightforward.  First define $\Delta$ as the graph that has the same vertex and arc sets as $\Gamma$.  The group action at a vertex $v\in \V \Delta$ is just the action of $G(v)$ on $X(v)$ and the action at an arc $a$ is the action of the trivial group on a singleton set $Y(a)=\{x_a\}$ where $x_a$ is some element from $X(a)$.  If the arc $a$ is self-reverse then the agent of inversion is $h_a=1$.  Define $\Phi_a:  Y(a)\to X(t(a))$ as the map that sends $x_a$ to $x_a$.  It is possible to show that the universal group of this graph of group actions coincides with the universal group of the action diagram.

The attentive reader will have noticed that in all three constructions discussed in this section the arc groups in the resulting \gga\ were trivial. We call such a \gga\ free and note that the groups that can be defined by a free \gga\ are precisely the groups that can be defined by local action diagrams.

\begin{defi}
A \gga\ such that the action at every arc is just the trivial group acting on a set with one element is said to be \emph{free}.
\end{defi}

\begin{prop}\label{PTitsProperty}
Let $G$ be a group acting faithfully on a tree $T$.  The action $(G,T)$ is isomorphic to the action of the universal group of a local action diagram by an isomorphism of the trees if and only if it is isomorphic to the action of the universal group of a free \gga\ on the associated tree by an isomorphism of the trees.  
\end{prop}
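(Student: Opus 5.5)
We will prove both directions by translating between local action diagrams and free \gga's. We will compare the two universal groups on a common tree with a common labelling of arcs, via the dictionary sketched just before the statement.

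For the forward direction, start from a local action diagram $(\Gamma,(X(a)),(G(v)))$ with universal group acting on the Reid--Smith tree $T$, with $\pi\colon T\to\Gamma$ and labelling $\cL$. Take the free \gga\ $\cG$ described above: $\Delta=\Gamma$, action $(G(v),X(v))$ at $v$, singleton actions $Y(a)=\{x_a\}$, $\Phi_a(x_a)=x_a$, and inversion agents $h_a=1$.

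The first step is to identify the trees. The $a$-adhesion sets at $t(a)$ are the singletons $\{x\}$ with $x$ in the $G(t(a))$-orbit of $x_a$, which is $X(a)$. Hence $i(a)=|X(a)|$, and $\Star_{\Delta_+}(v)$ is in natural bijection with $\bigsqcup_{t(a)=v}X(a)=X(v)$. The labelling $\cL$ then defines a $\Star$-covering map $T\to\Delta_+$ compatible with $\cG$: for compatibility, $\pi(\bar b)=\overline{\pi(b)}$ holds since $\pi$ is a graph morphism. By Lemma~\ref{LExistence-Star-covering-tree} and Corollary~\ref{CCompatible-Star-covering-map}, $T$ is therefore the associated tree $T_\cG$.

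The second step is to build a canonical scaffolding $\Sigma$ on $T$, exactly as in the Burger--Mozes case. Each arc bundle has one arc, so the arcs of $\Sigma$ correspond bijectively to the arcs of $T$. An arc $b$ with $t(b)=v$ has terminus $(v,\cL(b))$. Consequently the vertex bundle $\Sigma_v$ is identified with $\Star_T(v)$ via $(v,x)\mapsto\cL_v^{-1}(x)$. Every tree automorphism $g$ preserving $\pi$ then lifts uniquely to a scaffolding automorphism: $\Sigma$ is essentially the graph whose vertices are arcs of $T$ and whose edges join $b$ to $\bar b$. Under this lift the local action at $v$ is precisely $\cL_{g(v)}g\cL_v^{-1}$. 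It follows that acceptability coincides with the Reid--Smith condition. Since the action of $\cU(\cG)$ on $\Sigma$ is faithful and determined by the action on $T$, $\cU(\cG)$ acting on $T$ equals the local-action-diagram universal group. For self-reverse arcs, nothing extra is needed, because $h_a=1$ is forced.

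For the converse, start with a free \gga\ $\cG$. Its adhesion sets are singletons $\{\Phi_a(y_a)\}$. For each arc $a$, let $X(a)$ be the $G(t(a))$-orbit of $\Phi_a(y_a)$. The main obstacle is that these orbits need not partition $X(v)$. Two arcs into $v$ may give the same orbit, and some orbits may be hit by no arc. Points in uncovered orbits lie in no adhesion set, so they never meet an edge of $\Sigma$ and impose only the constraint of lying in $G(v)$.

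We therefore plan to first reduce $\cG$ to a free \gga\ $\cG'$ with the same universal-group action on $T$. Let $X'(v)$ be the disjoint union over arcs $a$ into $v$ of copies of $X(a)$, and let $G'(v)$ be the group induced by $G(v)$ on this set, acting diagonally on repeated copies. We must check that the local actions at vertices of $T$ under $\cG'$ are precisely the images of those under $\cG$. This holds because the map $G(v)\to G'(v)$ is surjective, and an automorphism of $T$ is acceptable for $\cG$ exactly when the induced permutations of $\Star_T(v)$ come from $G(v)$. At this point we must argue that uncovered points may be ignored: any element of $G(v)$ realising a given permutation on the covered part is a valid local action, so extending to the uncovered points imposes no further constraint.

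Since $G'(v)$ may fail to be faithful on the orbit copies when orbits repeat, we take $G'(v)\le\sym(X'(v))$ defined as the image. Then $(\Delta,X'(a),G'(v))$ is a local action diagram, and the forward direction applied to it recovers the action of $\cU(\cG)$ on $T_\cG$.
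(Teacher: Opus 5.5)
Your proposal is correct and follows the paper's route. The forward direction is the dictionary of Section~\ref{SLocalActionDiagrams}, which you flesh out by showing the Reid--Smith tree is a $\Star$-covering tree of $\Delta_+$ and lifting $\pi$-preserving tree automorphisms to the one-arc-per-bundle scaffolding. For the converse, your $\cG'$ built from disjoint copies of the orbits $X(a)$ is exactly the arc-reduced \gga\ $\cG''$ of Definition~\ref{DReduced}, and your star-by-star argument plays the role of Lemma~\ref{LReduced}. One small correction: $G(v)$ fails to act faithfully on $X'(v)$ because of orbits that meet no adhesion set, not because of repeated orbits, though taking the image handles this either way.
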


The proof of this proposition is deferred to Section~\ref{SReduced} as it will be much easier to state with some additional definitions in place.

\subsection{Tits' Property (P) and Property (P$_k$)}\label{SPropertyP}

\begin{defi}  (\cite[(4.2)]{Tits1970})   \label{DPropertyP}
    For a path $C$ (finite or infinite) in $T$ define the projection onto $C$ as a map $p_C:\V T\to C$ such that for a vertex $v$ in $T$ the vertex $p_C(x)$ is the vertex in $C$ that is closest to $x$.    Set $F_C=G_{(C)}$.  Furthermore, for $x$ a vertex in $C$ define $T_{C,x}$ as the set $p_C^{-1}(x)$ and then define $F_{C,x}$ as the group $G_{(C)}^{T_{C, x}}$.  (The subtree spanned by $T_{C,x}$ could be called the branch at $x$ relative to $C$ and $F_{C, x}$ is the permutation group on $T_{C, x}$ induced by $G_{(C)}$.)  Restricting to the subsets $T_{C,x}$ gives a group homomorphism $F_C\to \Pi_{x\in C} F_{C,x}$
The group $G$ is said to have Property (P) if this homomorphism is an isomorphism for every path $C$ in $T$.   
\end{defi}

The main result of Tits' paper is \cite[Th\'eor\`eme 4.5]{Tits1970} that says if $G$ is a group acting faithfully on a tree $T$ such that no proper non-trivial subtree is invariant and no end of $T$ is fixed and $G$ has Property (P) then the subgroup $G^+$ generated by stabilizers of arcs is either trivial or simple. 

\begin{prop}\label{PPropertyP}
Let $\cG$ be a free \gga.  Then the universal group acting on $T_{\cG}$ has Property (P). 
\end{prop}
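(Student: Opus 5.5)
To prove Proposition~\ref{PPropertyP}, work with a canonical scaffolding $\Sigma$ for the free \gga\ $\cG$. For a free \gga, every arc bundle of $\Sigma$ consists of a single arc, and every adhesion set is a singleton. Consequently each arc $a$ of $T=T_{\cG}$ corresponds to a single edge of $\Sigma$, joining the point $\Phi_{\pi_+(a)}(x_{\pi(a)})$ of $\Sigma_{t(a)}$ to a point of $\Sigma_{o(a)}$. An element $g\in\autsc(\Sigma)$ is therefore determined by $\tg$ together with the local actions $g_v$, and these must satisfy a compatibility condition: $g_v$ maps the adhesion point of $a$ to the adhesion point of $\tg(a)$ in the appropriate sense.

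The plan is a local-to-global gluing argument. Fix a path $C$ in $T$ and elements $h_x\in F_{C,x}$ for $x\in C$, each realised by some $g^{(x)}\in G_{(C)}$. The goal is a single $g\in G_{(C)}$ that agrees with $g^{(x)}$ on the branch $T_{C,x}$ for every $x$. Since the action of $G$ on $T$ need not be faithful, we define $g$ directly on $\Sigma$: on each vertex bundle $\Sigma_u$ with $u\in T_{C,x}$, let $g$ act as $g^{(x)}$ does. Because the branches $T_{C,x}$ partition $\V T$, this gives a well-defined bijection of $\V\Sigma$ which induces on $T$ the product of the $h_x$. It preserves $\sim$ and $\pi$, and all its local actions lie in the vertex groups, since each local action is a local action of some $g^{(x)}\in G$.

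The main obstacle is showing that $g$ is a graph automorphism of $\Sigma$. Edges inside a branch are preserved because $g$ agrees there with a single automorphism $g^{(x)}$. The critical edges are those of $\Sigma$ lying over edges of $C$ itself, joining $x$ to $x'$ with $x,x'\in C$, where $g$ acts as $g^{(x)}$ on one end and as $g^{(x')}$ on the other. Here freeness is essential. The arc bundle is a single edge $e$ with endpoints $\xi\in\Sigma_x$ and $\xi'\in\Sigma_{x'}$. Since $g^{(x)}$ fixes $C$ pointwise, it fixes the arc from $x'$ to $x$ and hence maps the unique edge of that arc bundle to itself, so it fixes $\xi$. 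By the same argument $g^{(x')}$ fixes $\xi'$. Therefore $g$ fixes both endpoints of $e$, and $e$ is preserved. If we had allowed nontrivial arc groups, $g^{(x)}$ and $g^{(x')}$ could act differently on the matching, and the argument would break down.

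For infinite paths $C$ there are infinitely many branches, but the same pointwise definition works: no limit argument is needed, because $g$ is defined bundle by bundle and each edge is checked locally. Self-reverse arcs cause no trouble, since for a free \gga\ the inversion agent is trivial; alternatively, one can pass to the subdivided \ggawo\ of Section~\ref{sec:inversion-subdivision}. Thus $g\in\cU(\cG)\cap G_{(C)}$ restricts to $h_x$ on each $T_{C,x}$, so the homomorphism $F_C\to\prod_{x\in C}F_{C,x}$ is surjective. It is injective because the $T_{C,x}$ cover $\V T$. This establishes Property (P).
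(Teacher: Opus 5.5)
Your proposal is correct and follows essentially the same route as the paper: define $g$ bundle by bundle on the scaffolding so that it agrees with the chosen element of $G_{(C)}$ on each $\Sigma_{C,x}$, and use freeness (each arc bundle lying over an edge of $C$ is a single arc, hence fixed by every element of $G_{(C)}$) to conclude that $g$ is an acceptable scaffolding automorphism. Your version spells out in more detail why the crossing edges over $C$ are preserved and why these are the only edges between different branches, whereas the paper only asserts these points briefly.
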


\begin{proof}
Let $\Sigma$ be a scaffolding for $\cG$, let $T = T_\Sigma$, and let $G = \cU(cG)$.  Let $C$ be a path in $T$ and define $T_{C,x}$, $F_{C, x}$ and $F_C$ as above.  

The group homomorphism $\varphi\colon F_C\to \Pi_{x\in C} F_{C,x}$ is clearly injective so we just have to show it is surjective. Suppose $(f_x)_{x\in C}$ is an element in $\Pi_{x\in C} F_{C,x}$.  For $x$ in $C$ we let $g_x$ be an element in $G$ that fixes $C$ and acts  on $T_{C, x}$ like $f_x$. Set $\Sigma_{C, x}=\cup_{v\in T_{C, x}}\Sigma_v$. Note that $\V \Sigma=\cup_{x\in C} \Sigma_{C,x}$ and thus one can define a permutation $g$ of $\V \Sigma$ such that if $v\in \Sigma_{C, x}$ then $g(v)=f_x(v) = g_x(v)$.  

We show that $g\in G$. Note that if $x$ and $y$ are adjacent vertices in $C$ then there is just a single arc $a$ in $\Sigma$ such that $o(a)\in \Sigma_{C,y}$ and $t(a)\in \Sigma_{C,x}$.  Each of the elements $g_x$ fixes all such arcs and acts as a partial scaffolding automorphism on $\Sigma_{C,x}$, and thus $g$ is a scaffolding automorphism. It is acceptable because every $g_x$ in its action on $\Sigma_{C,x}$ is acceptable.  

Since clearly $\varphi(g) = (f_x)_{x\in C}$, this shows that $\varphi$ is surjective.
\end{proof}

\begin{remark}
    \begin{enumerate}
        \item  Note that the converse of the above proposition is not true in general, that is, if the universal group of a graph of group actions has Property (P) then we can not conclude that all the arc sets in this \ggawo\  are singletons.   Recall also that every possible group action on a tree arises from a \ggawo.
\item  A consequence of this is that the groups constructed with the methods from \cite{BurgerMozes2000}, \cite{Smith2017} and \cite{ReidSmith2020} all have Property (P).
 \end{enumerate}
\end{remark}

A property closely related to Property (P) is Property (P$_k$) introduced by Banks, Elder and Willis \cite[Definition 5.1]{BanksElderWillis2015}. It is defined analogously to Property (P) with the only difference that instead of fixing paths we fix \lq\lq thick\rq\rq\ paths. 

More precisely, for a set $S\subseteq \V T$ define 
\[
B(S, n) = \{y\in \V T\mid d_T(y,x)\leq n\mbox{ for some }x\in S\}.
\] 
Suppose $k$ is a given positive integer.   Amend the notation from Definition~\ref{DPropertyP} so that now $F=G_{(B(C, k-1))}$ and $F_{C,x}=F^{T_{C,x}}=G_{(B(C, k-1))}^{T_{C,x}}$.  As above we get a homomorphism $F_C\to \prod_{x\in C} F_{C,x}$.  We say that $G$ has $G$ has Property (P$_k$) if this homomorphism is an isomorphism for every path $C$ and that it has Property (IP$_k$) if this homomorphism is an isomorphism for every path $C$ of length 1. Note that  Tits' Property (P) is the same as Property (P$_1$).  Clearly, Property (P$_k$) implies Property (IP$_k$ ).
Banks, Elder and Willis \cite[Corollary~6.4]{BanksElderWillis2015} show that if $G$ is closed then the converse is also true, so Properties (P$_k$) and (IP$_k$) are equivalent for closed groups.

Above it is shown that if $\cG$ is a \gga\ such that every arc set $Y(a)$ has just one element then the action of the universal group on the tree $T_{\cG}$ has Property (P).  The following is an analogue for property (P$_k$).  

\begin{theo}\label{TIP_k}
Suppose that $\cG$ is a \gga\ with a finite base graph $\Delta$. Assume that the action of $G(v)$ on $X(v)$ is closed and subdegree finite for every $v \in \V \Delta$ and that $Y(a)$ is finite for every $a \in \A \Delta$.  Then there is a number $k$ such that $G=\cU(\cG)$ in its action on $T = T_\Sigma$ has Property (IP$_k$).
\end{theo}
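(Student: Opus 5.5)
The plan is to imitate the gluing argument in the proof of Proposition~\ref{PPropertyP}. The new difficulty is that the arc bundle joining two adjacent vertex bundles is now a finite matching rather than a single arc. Moreover, the action of $G$ on $T$ need not be faithful (Example~\ref{ESmall}). We will therefore work in a scaffolding $\Sigma$, chosen canonical, and exploit the freedom to alter an element of $G$ by elements that act trivially on $T$. Throughout, the edge $C=\{\bun x,\bun y\}$ of $T$ is fixed, $\bun a$ is the arc of $T$ with $o(\bun a)=\bun y$ and $t(\bun a)=\bun x$, and $K_r=G_{(B(C,r))}$ denotes the pointwise stabiliser in $G$ of the tree ball. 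Since $\Delta$ is finite, $T$ has only finitely many $G$-orbits of edges, by Theorem~\ref{TQuotient}. So it suffices to find a suitable $k$ for each edge separately and then take the maximum over a set of orbit representatives.

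\emph{Step 1: stabilisation.} Each $K_r$ maps $[a]$ to itself, and $[a]$ is finite because $|[a]|=|Y(\pi(\bun a))|$. Let $L_r\le\sym([a])$ be the permutation group induced by $K_r$ on $[a]$. The groups $L_r$ form a decreasing chain of subgroups of a finite group, so they stabilise at some $L=L_{r_0}$. Now let $K_\infty=\bigcap_r K_r$, the kernel of the action of $G$ on $T$. The key claim is that $K_\infty$ also induces all of $L$ on $[a]$. To prove it, fix $\sigma\in L$. The sets $U_r=\{g\in K_{r_0}\mid g|_{[a]}=\sigma\}$ are nonempty for every $r\ge r_0$, closed in the permutation topology from $\Sigma$, and nested. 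So it is enough to know that $K_{r_0}$ is compact, because then $\bigcap_r U_r\neq\emptyset$.

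\emph{Step 2: compactness, the main obstacle.} The group $G$ is closed on $\Sigma$ by Proposition~\ref{PClosed}(3). Each $Y(a)$ is finite and each $(G(v),X(v))$ is subdegree finite, so $G(v)_x$ has only finitely many images of any given finite adhesion set. By Proposition~\ref{PSubdegree-finite}(2) the action of $G$ on $\Sigma$ is then subdegree finite. The stabiliser of the finite set $t([a])\subseteq[x]$ therefore has only finite orbits. It is open and closed, so it is compact by Lemma~\ref{LCompact}. The group $K_{r_0}$ fixes $[a]$ setwise, so it permutes the finite set $t([a])$. Hence $K_{r_0}$ lies in the setwise stabiliser of $t([a])$, which contains the compact pointwise stabiliser with finite index and is therefore compact. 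Since $K_{r_0}$ is closed, it is compact as well. I expect the verification of the finiteness hypotheses in Proposition~\ref{PSubdegree-finite} to be the delicate point. The orbits of $G(v)_x$ on adhesion sets are finite because adhesion sets are finite and suborbits are finite.

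\emph{Step 3: gluing.} Put $k=r_0+1$. Let $f_x\in F_{C,x}$ and $f_y\in F_{C,y}$ be given, realised by $g_x,g_y\in K_{k-1}$, and let $\sigma_x,\sigma_y\in L$ be their permutations of $[a]$. By Step~1 there are $h_x,h_y\in K_\infty$ inducing $\sigma_x,\sigma_y$ on $[a]$. The elements $g'_x=h_x^{-1}g_x$ and $g'_y=h_y^{-1}g_y$ act on $T$ exactly as $g_x$ and $g_y$ do, and they fix $[a]$, and hence also $[\ba]$, pointwise. Now define $g$ on $\Sigma$ to be $g'_x$ on $\bigcup_{\bun u\in T_{C,x}}[u]$ and $g'_y$ on $\bigcup_{\bun u\in T_{C,y}}[u]$. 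The only arcs of $\Sigma$ between these two parts are those of $[a]\cup[\ba]$, and both elements fix these arcs. Exactly as in the proof of Proposition~\ref{PPropertyP}, $g$ is therefore a scaffolding automorphism. Every local action of $g$ is a local action of $g'_x$ or of $g'_y$, so $g$ is acceptable and $g\in G$. Finally, $g$ fixes $B(C,k-1)$ and restricts to $f_x$ on $T_{C,x}$ and to $f_y$ on $T_{C,y}$. This proves surjectivity of $K_{k-1}\to F_{C,x}\times F_{C,y}$, which is Property (IP$_k$).
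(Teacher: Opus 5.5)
Your proposal is correct and follows essentially the same route as the paper. Both arguments stabilise the groups induced on the finite arc bundle, use compactness to produce a kernel element (acting trivially on $T$) with prescribed action on that bundle, and then glue across the single arc bundle as in Proposition~\ref{PPropertyP}. The differences are cosmetic: you use the finite intersection property where the paper extracts a convergent subsequence, and you correct both sides by kernel elements before gluing; also, in Step~1 you presumably mean $U_r=\{g\in K_r\mid g|_{[a]}=\sigma\}$ rather than $K_{r_0}$.
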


\begin{proof}  
Let $\llbracket b\rrbracket$ be an arc in $T$, and let $[b]$ denote the corresponding arc bundle.  
Let 
\[
H_k = G_{(B_T(\llbracket b\rrbracket,k-1))}^{[b]}.
\]
that is, $H_k$ is the group that the pointwise stabiliser of the ball $(B_T(\llbracket b\rrbracket,k-1))$ in $T$ induces on the arc bundle $[b]$.  Then $H_i \geq H_{i+1}$ for all $i$ and since $[b]$ is finite there must be a number $k$ such that $H_i=H_{i+1}$ for all $i \geq k$.  

Let $h\in G_{(B_T(\llbracket b\rrbracket,k-1))}$ and let $h_0\in H_k$ be the permutation of $[b]$ induced by $h$.  Let $(g_i)$ be a sequence of group elements such that $g_i\in G_{(B_T(\llbracket b\rrbracket,i))}$ and $g_i$ induces the permutation $h_0$ on $[b]$.  Because adhesion sets are finite and stabilizers of vertices in $\Sigma$ are compact we see that the setwise stabilizers of adhesion sets are compact.  Therefore the sequence $(g_i)$ has a subsequence converging to some element $g$ in the universal group.   Clearly $g$ acts on $[b]$ in the same way as $h_0$ and acts trivially on $T$.

Now denote the two half trees obtained from $T$ by removing the edge $\{b, \rev{b}\}$ with endpoints $\bun{u}$ and $\bun{v}$  by $T_{\bun{u}}$ and $T_{\bun{v}}$, respectively, and denote the corresponding subgraphs of $\Sigma$ (induced by the vertex bundles) by $\Sigma_{\bun{u}}$ and $\Sigma_{\bun{v}}$.
Define $h_{\bun u}$ to be an automorphism of $\Sigma$ whose restriction to $\Sigma_{\bun{u}}$ is equal to the action of $h$ on $T_{\bun{u}}$ and whose action on $\Sigma_{\bun{v}}$ is equal to the action of $g$; note that this is indeed a scaffolding automorphism because the actions of $h_{\bun u}$ and $g$ on $[b]$ coincide, and that $h_b$ is acceptable because both $h$ and $g$ are.

This shows that if $C$ is the path consisting of the edge $\{b, \rev{b}\}$, then for every $f_{\bun u} \in F_{C,\bun{u}}$ there is some element $h_{\bun u} \in G$ which acts like $f_{\bun{u}}$ on $T_{\bun{u}}$ and acts trivially on $T_{\bun{v}}$. Analogously, for every $f_{\bun v} \in F_{C,\bun{v}}$ we can construct an element $h_{\bun u} \in G$, thus showing that each pair $(f_{\bun{u}},f_{\bun v})$ is contained in the image of the homomorpism $F_C\to \prod_{x\in C} F_{C,x}$, thus showing that this homomorphism is surjective. Since it is obviously injective, it must be a bijection.

Since $G$ has only finitely many orbits on the edges of $T$ we see that there is some number $k$ such that the above holds for every edge in $T$ and thus the action of $G$ on $T$ has Property (IP$_k$). 
\end{proof}

\begin{coro}
Suppose $\cG$ is a \gga\ such that the underlying graph is finite and for every vertex $v$ in the base graph $\Delta$ the set $X(v)$ is finite.   Then the action of $G$ on the tree $T=T_{\cG}$ has property(P$_k$) for some $k$.
\end{coro}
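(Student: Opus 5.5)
The plan is to derive this from Theorem~\ref{TIP_k} together with the result of Banks, Elder and Willis \cite[Corollary~6.4]{BanksElderWillis2015} that (IP$_k$) and (P$_k$) coincide for closed groups. First I check the hypotheses of Theorem~\ref{TIP_k}. Since $X(v)$ is finite, $G(v)\leq \sym(X(v))$ is a finite group. A finite permutation group is closed in the permutation topology, and all its suborbits are finite, so it is subdegree finite. Each $Y(a)$ embeds via $\Phi_a$ into $X(t(a))$, hence $Y(a)$ is finite. The base graph is finite by assumption. Theorem~\ref{TIP_k} therefore gives a $k$ such that $G=\cU(\cG)$ acting on $T$ has Property (IP$_k$).

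Next I need $G$ to be closed in its action on $T$, which is needed to apply the Banks--Elder--Willis result. Corollary~\ref{CFinite}(1) provides exactly this: for a finite base graph with finite vertex groups, $G$ is closed both in the topology coming from the scaffolding and in the one coming from $T$. One caveat is that the action on $T$ may not be faithful (Example~\ref{ESmall}). So I would apply the Banks--Elder--Willis result to the induced permutation group $G^{\V T}$. Properties (P$_k$) and (IP$_k$) only depend on the permutation group induced on $T$, and closedness was defined for $G^X$, so this causes no issue.

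Combining the two steps, $G^{\V T}$ is a closed group of automorphisms of $T$ with (IP$_k$), hence it has (P$_k$), and so $G$ has (P$_k$) in its action on $T$.

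The main obstacle is the closedness step. Corollary~\ref{CFinite} is stated as a direct consequence of Proposition~\ref{PClosed}, which only handles the scaffolding topology. So I would verify closedness on $T$ directly in case it is needed. Take a limit $g$ in $\aut(T)$ of images of elements $g_i\in G$. On each vertex bundle the local maps lie in finite groups and the bundles are finite. A compactness argument, using Lemma~\ref{LCompact} and the fact that stabilisers in the scaffolding action are compact (finite sets, finite groups), then yields a subsequence of the $g_i$ converging in $\sym(\V\Sigma)$. By Proposition~\ref{PClosed}(3) the limit lies in $G$, and it induces $g$ on $T$.
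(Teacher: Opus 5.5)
Your proof is correct and follows the paper's argument: you verify the hypotheses of Theorem~\ref{TIP_k} to obtain (IP$_k$), invoke Corollary~\ref{CFinite} for closedness of the action on $T$, and then apply the Banks--Elder--Willis equivalence of (IP$_k$) and (P$_k$) for closed groups. Two of your additions are worthwhile, because they spell out steps the paper leaves implicit. First, you pass to $G^{\V T}$ to handle non-faithfulness. Second, you derive closedness on $T$ from closedness on $\Sigma$, using compactness of vertex stabilisers and continuity of $G\to\aut(T)$; the paper does not do this, and Proposition~\ref{PClosed} only treats the scaffolding topology.
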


\begin{proof}
Clearly, the assumptions in Theorem~\ref{TIP_k} are satisfied.
Moreover, it follows from Corollary~\ref{CFinite} that $G$ in its action on $T$ is closed, and thus property (IP$_k$) implies (P$_k$) by the result of Banks, Elder and Willis mentioned above.
\end{proof}

Banks, Elder and Willis then go on to prove the following extensions of Tits' theorem.
For a group $G$ acting on a tree $T$ we let $G^{+_k}$ denote the subgroup of $G$ generated by the family  $\{G_{(B_T(b,k-1))}\}_{b\in \A T}$ of subgroups.

\begin{theo}\textnormal{ (\cite[Theorem~7.3]{BanksElderWillis2015})}
\label{T-BEW}
Let $G$ be a group acting faithfully on a tree $T$.  Suppose  $G$  for some $k$ and that $G$ neither leaves invariant a proper non-empty subtree of $T$ nor fixes an end of $T$ and satisfies Property (P$_k$).  Then every non-trivial subgroup of $G$ normalized by $G^{+_k}$ contains $G^{+_k}$; in particular $G^{+_k}$ is simple or trivial.  
\end{theo}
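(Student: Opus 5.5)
This theorem is quoted from Banks, Elder and Willis \cite[Theorem~7.3]{BanksElderWillis2015}. In the paper it would be cited rather than reproved. If I were to prove it, I would follow Tits' original argument for \cite[Th\'eor\`eme 4.5]{Tits1970} and replace edges by thick edges $B_T(b,k-1)$ throughout.

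\textbf{Step 1 (reduction).} Let $N\neq 1$ be a subgroup normalised by $G^{+_k}$. Since the action is faithful, $N$ moves some vertex. Using the hypotheses that $G$ fixes no end and leaves no proper subtree invariant, I would show that $N$ contains a hyperbolic element. I would then show that the minimal $N$-invariant subtree is $G^{+_k}$-invariant and is all of $T$. This is the standard Tits lemma on minimal subtrees and fixed ends of normal subgroups. It needs $G^{+_k}$ itself to act without fixed ends or invariant proper subtrees, which I would deduce from the corresponding properties of $G$, since $G^{+_k}$ is normal in $G$.

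\textbf{Step 2 (key step).} Fix an arc $b$. I want to show $G_{(B_T(b,k-1))}\le N$. Take a hyperbolic $h\in N$ whose axis $C$ passes through $b$; such $h$ exist by minimality. Write an element $g\in G_{(B_T(b,k-1))}$ using Property (P$_k$) for the path $C$: the fixator of the $(k-1)$-neighbourhood of $C$ splits as a product of its actions on the branches $T_{C,x}$. The first half of the argument decomposes $g$ itself this way. A general element of $G_{(B_T(b,k-1))}$ fixes only a thick edge, not all of $B(C,k-1)$, so I would first split $g$ along the single edge $b$ (this is (IP$_k$)) into pieces supported on the two half-trees. Then, as in Tits' proof, I would write each half-tree piece $g'$ as a commutator $[g'',h]$, where $g''=\prod_{n\ge0}h^{n}g'h^{-n}$ is a convergent infinite product. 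Property (P$_k$) along the axis is what guarantees that this infinite product of elements supported on disjoint regions exists in $G$ and fixes a thick edge, so that $g''\in G^{+_k}$. Since $N$ is normalised by $G^{+_k}$, the commutator $[g'',h]$ lies in $N$. Hence $g'\in N$.

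\textbf{Step 3.} Taking Step 2 over all arcs gives $G^{+_k}\le N$. Applying this to a normal subgroup of $G^{+_k}$ yields that $G^{+_k}$ is simple or trivial.

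The main obstacle is Step 2. The delicate points are ensuring the infinite product converges inside $G$ without any closedness assumption, and checking that all supports are compatible with the thickened path $B(C,k-1)$ rather than $C$ itself.
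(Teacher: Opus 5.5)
The paper gives no proof of this statement; it only quotes \cite[Theorem~7.3]{BanksElderWillis2015}, so your opening remark matches what the paper does. The proof you sketch, however, has a genuine gap in Step~2, exactly at the point you flag as delicate.

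Let $x,y$ be the endpoints of $b$, and let $T_x,T_y$ be the two half-trees obtained by removing the edge. Splitting $g\in G_{(B_T(b,k-1))}$ along $b$ gives $g=g_xg_y$, where $g_x$ acts trivially on $T_y$. You then take the axis $C$ of $h$ through $b$. In that case $C\cap T_x$ is a ray on which $g_x$ can act arbitrarily beyond distance $k-1$ from $x$. So in general $g_x\notin G_{(B(C,k-1))}$, and (P$_k$) along $C$ says nothing about $g_x$.

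Moreover, the conjugates $h^ng_xh^{-n}$ are supported on the half-trees $h^nT_x$. These are nested, not disjoint branches of $C$: they decrease if $h$ translates into $T_x$, and otherwise they increase and the product does not even converge. So $\prod_{n\ge 0}h^ng_xh^{-n}$ can only be formed as a pointwise limit in $\sym(\V T)$. There is no reason for that limit to lie in $G$ unless $G$ is closed, which is not assumed. Your appeal to (P$_k$) for ``elements supported on disjoint regions'' does not apply to these elements.

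The missing idea, which is Tits' actual argument, is to put the axis on the side that $g_x$ fixes. Since $N$ acts minimally without fixed ends (your Step~1), Tits' lemma says every half-tree contains the axis of some hyperbolic element of $N$. Choose a hyperbolic $h\in N$ with axis $C\subseteq T_y$.

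Any vertex of $B(C,k-1)$ lying in $T_x$ is within distance $k-2$ of $x$, hence lies in $B(b,k-1)$. Therefore $g_x$ fixes $B(C,k-1)$ pointwise. Also $T_x\subseteq T_{C,z}$ with $z=p_C(y)$, so $g_x$ is nontrivial on only one branch of $C$. The conjugates $h^ng_xh^{-n}$ for $n\ge 0$ are then supported on the pairwise disjoint branches $T_{C,h^nz}$.

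Now (P$_k$) along $C$ produces an element $u\in G_{(B(C,k-1))}\le G^{+_k}$ whose component on $T_{C,h^nz}$ is that of $h^ng_xh^{-n}$, and which is trivial on all other branches. Comparing components gives $u=g_x\cdot huh^{-1}$, so $g_x=(uhu^{-1})h^{-1}\in N$. Symmetrically, $g_y\in N$ using an axis contained in $T_x$. This version needs no closedness assumption.
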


\begin{coro}
    Suppose that $\cG$ is a \gga\ with a finite base graph $\Delta$ such that for every vertex $v$ in $\Delta$ the stabilizer of every point in $X(v)$ in $G(v)$ is compact with respect to the permutation topology inherited from the action on $X(v)$ and for every arc $a$ in $\Delta$ the set $Y(a)$ is finite.

    Further suppose that $|\Star_{\Delta^+} (v)| > 1$ for every $v \in \V \Delta$ and that there is at least one vertex $v \in \V \Delta$ for which every arc in $\Star_{\Delta}(v)$ corresponds to at least two adhesion sets.
    
    Let $H$ be the universal group of $\cG$, seen as a subgroup of $\aut(T)$.  Then $H^{+_k}$ is simple or trivial.  
\end{coro}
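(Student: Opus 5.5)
The strategy is to apply Theorem~\ref{T-BEW} to $H=\cU(\cG)$ acting on $T=T_{\cG}$. This needs three inputs: Property (P$_k$) for some $k$, faithfulness of the action on $T$, and the absence of an invariant proper non-empty subtree and of a fixed end.

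First, Property (P$_k$). Since point stabilisers in $G(v)$ are compact and $G(v)$ is closed (I assume this is part of the intended hypotheses; otherwise I would pass to closures), Lemma~\ref{LCompact-cocompact} applies. It gives the subdegree finiteness needed in Theorem~\ref{TIP_k}, because compactness of $G(v)_x$ means its orbits are finite. Together with finiteness of $\Delta$ and of each $Y(a)$, Theorem~\ref{TIP_k} yields (IP$_k$) for some $k$. Proposition~\ref{PClosed} shows that $H$ is closed on $\Sigma$. I would then argue that $H$ is closed on $T$ as well: the kernel of the action on $T$ is compact, being contained in a vertex stabiliser, so the image is closed. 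Finally, the Banks--Elder--Willis result quoted above upgrades (IP$_k$) to (P$_k$).

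Faithfulness is a genuine issue, since Corollary~\ref{coro:treeorbits} shows the action on $T$ need not be faithful. I would replace $H$ by its image $\bar H\le\aut(T)$; the statement already says ``seen as a subgroup of $\aut(T)$''. Property (P$_k$) is a statement about the induced action on $T$, so it passes to $\bar H$. I would also check that $\bar H^{+_k}$ is the image of $H^{+_k}$.

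The geometric hypotheses are the main obstacle. Theorem~\ref{TQuotient} gives $T/H\cong\Delta$, which is finite, so $H$ acts cocompactly on $T$. The condition $|\Star_{\Delta_+}(v)|>1$ ensures every vertex of $T$ has degree at least $2$, so $T$ has no leaves. A cocompact action on a leafless tree has no invariant proper subtree. To see this, note that any invariant subtree meets every orbit within bounded distance, and I would use the leaflessness to push the subtree outward until it is all of $T$. For the fixed end I would use the special vertex $v$: every arc in $\Star_\Delta(v)$ has index at least $2$. Hence, at a lift $\bun{w}$ of $v$, the stabiliser $H_{\bun w}$ acts on each fibre of incoming arcs with no fixed arc, because the $a$-adhesion sets form a single $G(v)$-orbit of size at least $2$. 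A fixed end would determine a unique arc at $\bun w$ pointing towards it, and that arc would have to be fixed by $H_{\bun w}$, which is a contradiction.

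Theorem~\ref{T-BEW} then gives that $\bar H^{+_k}$ is simple or trivial.
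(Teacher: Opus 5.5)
Your route is the paper's: Theorem~\ref{TIP_k} for the local property, finitely many orbits on $T$ together with leaflessness of $T$ against invariant subtrees, and the vertex whose incoming arcs all have index at least $2$ against fixed ends, then Theorem~\ref{T-BEW} applied to the image $\bar H\le\aut(T)$. In two respects you are more careful than the paper's proof, which passes directly from Theorem~\ref{TIP_k} to Theorem~\ref{T-BEW}. First, you notice that Theorem~\ref{TIP_k} only yields (IP$_k$), so closedness in $\aut(T)$ is needed before the Banks--Elder--Willis result gives (P$_k$). Second, your bounded-distance argument excludes infinite invariant proper subtrees, not only finite ones. State it as a contradiction: if $S\ne T$ is invariant, leaflessness gives a ray leaving $S$ along which the distance to $S$ is unbounded, whereas finitely many orbits bound it.

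However, your justification for closedness of $\bar H$ does not work. The kernel of the action on $T$ is not compact merely because it lies in a vertex stabiliser. By Theorem~\ref{TLocal-action}, $H_{\bun w}$ induces all of $G(\pi(\bun w))$ on $[w]$, and when $X(v)$ is infinite, $G(v)$ can have infinite orbits even though its point stabilisers are compact. So vertex stabilisers of $T$ need not be compact in the $\Sigma$-topology. More importantly, ``compact kernel, hence closed image'' is not a valid inference. A continuous injective homomorphism from a locally compact group can have non-closed image, e.g.\ discrete $\ZZ$ acting by translations on $\bigsqcup_n \ZZ/n\ZZ$, which is dense in its profinite closure.

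What actually makes the image closed is that some open subgroup for the $T$-topology is compact for the $\Sigma$-topology. Take an arc $\bun a$ of $T$. Its stabiliser $H_{\bun a}$ maps $[a]$ to itself, so it stabilises setwise the finite set $t([a])$. Hence its orbits are finite: the pointwise stabiliser of $t([a])$ has finite index in the setwise stabiliser and lies in a point stabiliser, which has finite orbits by subdegree finiteness on $\Sigma$ (Proposition~\ref{PSubdegree-finite}, using that adhesion sets are finite). As $H$ is closed on $\Sigma$ (Proposition~\ref{PClosed}) and $H_{\bun a}$ is open, hence closed, Lemma~\ref{LCompact} shows $H_{\bun a}$ is compact. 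Its image $\bar H\cap\aut(T)_{\bun a}$ is then a compact open subgroup of $\bar H$. So $\bar H$ is locally compact and therefore closed in $\aut(T)$, and only now does \cite[Corollary~6.4]{BanksElderWillis2015} upgrade (IP$_k$) to (P$_k$).

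Two smaller points. You do not need to assume that $G(v)$ is closed, and ``passing to closures'' would not be admissible because it changes the universal group. A compact open point stabiliser already makes $G(v)$ locally compact, hence closed in $\sym(X(v))$. Also, Lemma~\ref{LCompact-cocompact} presupposes finite suborbits, so it cannot be used to derive them. Use Lemma~\ref{LCompact} instead, or simply that a compact group acting continuously on a discrete set has finite orbits.
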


\begin{proof}
   Finiteness of $\Delta$ implies that there are only finitely many orbits of vertices in the action of $H$ on $T$ by Corollary \ref{coro:treeorbits}. The condition on $\Star_{\Delta^+} (v)$ ensures that $T$ has no leaves and thus is infinite, hence $H$ cannot fix a finite subtree. The condition that there is a vertex where every arc corresponds to at least two adhesion sets prevents $H$ from fixing an end: the stabiliser of a corresponding vertex $\llbracket v \rrbracket$ in $T$ does not fix any edge incident to $\llbracket v \rrbracket$, and thus does not fix any end of $T$.
   Now the corollary is a direct consequence of Theorem~\ref{TIP_k} and Theorem \ref{T-BEW}.
\end{proof}

\section{Reduced \gga's and \lq\lq trees\rq\rq\ with parallel edges}\label{SReduced}

If one is only interested in the action of the universal group of a \gga\ on the associated tree then it is possible to replace the \gga\ with a \lq\lq simpler\rq\rq\ \gga\ such that the action of the universal group of that simpler \gga\ on the associated tree is the same as the action we started with. Below, two, closely related, ways to do this are described.

\begin{defi}   \label{DReduced}
    Let $\cG$ be a \gga.  
\begin{enumerate}
    \item     Construct a \gga\ $\cG'$ in the following way:   The base graph of $\cG'$ is equal to $\Delta$, the base graph of $\cG$.   For every vertex $v\in \V\Delta$ replace the action $(G(v), X(v))$ with the action $(G(v)^{X'(v)}, X'(v))$ where $X'(v)$ is the union of all the adhesion sets in $X(v)$.  For an arc $a$ in $\Delta$ set $(H'(a), Y'(a))$ at an arc $a$ in $\cG'$ equal to $(H(a), Y(a))$.  The embedding $\Phi'_a: Y'(a)\to X'(t(v))$ in $\cG'$ is such that $\Phi'_a(y)=\Phi_a(y)$ for all $y\in Y'(a)=Y(a)$.  The \gga\ $\cG'$ is called the \emph{reduced \gga} of $\cG$ and we say that a \gga\ $\cG$ is reduced if $\cG'=\cG$.  
    \item     Construct a \gga\ $\cG''$ in the following way:   The base graph of $\cG''$ is equal to $\Delta$, the base graph of $\cG$.   For every vertex $v\in \V\Delta$ replace the action $(G(v), X(v))$ with the action $(G(v)^{X''(v)}, X''(v))$ where $X''(v)$ is the disjoint union of the adhesion sets in $X(v)$.
    For an arc $a$ in $\Delta$ set $(H''(a), Y''(a))$ at an arc $a$ in $\cG'$ equal to $(H(a), Y(a))$.  The embedding $\Phi''_a: Y''(a)\to X''(t(v))$ in $\cG''$ is such that $\Phi''_a(y)=\Phi_a(y)\in\Phi_a(Y(a))$ for all $y\in Y''(a)=Y(a)$.  The \gga\ $\cG''$ is called the \emph{arc-reduced \gga} of $\cG$ and we say that a \gga\ $\cG$ is arc-reduced if $\cG''=\cG$.  
\end{enumerate}
\end{defi}

First note that the action of $G(v)$ on $X(v)$ gives a natural action on both $X'(v)$ and $X''(v)$ as the set $X'(v)$ is clearly invariant under $G(v)$.   If a \gga\ is arc-reduced then it is also automatically reduced.  A reduced \gga\ is also arc-reduced if and only if any two distinct adhesion sets are disjoint.  (If $a$ and $b$ are distinct arcs in $\Delta$ we think of an $a$-adhesion set as being distinct from any $b$-adhesion set.)  Suppose $T$ is the associated tree for $\cG$, $G)\cU(\cG)$ and $\Sigma$ is a canonical scaffolding.  If $w$ is a vertex in $T$ and $v$ its image under a $\Star$-covering map then $(G''(v), X''(v))$ is naturally isomorphic to the action of $G_{\{\Sigma_v\}}$ on the set of arcs in $\Sigma$ with terminal vertex in $\Sigma_v$.

\begin{lemm}\label{LReduced}
Let $\cG$ be a \gga, $T$ its associated tree and $G$ its universal group.  Define $\cG', \cG''$ as the reduced and arc-reduced \gga's of $\cG$, respectively.  Then the augmented base digraphs for $\cG'$ and $\cG''$ are equal to the augmented base digraph for $\cG$ and thus the associated trees for all three \gga's are equal.
Furthermore the induced actions of the universal groups for all three \gga's on $T$ are isomorphic.  
\end{lemm}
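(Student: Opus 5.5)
The plan is first to compare augmented base digraphs, and then to compare universal groups by passing between scaffoldings through a restriction map and an extension map. For $\Delta_+$: at a vertex $v$ and an arc $a$ with $t(a)=v$, the $a$-adhesion sets in $X'(v)$ are exactly the $G(v)$-translates of $\Phi_a(Y(a))$. These all lie in $X'(v)$ by definition, and $G(v)^{X'(v)}$ permutes them exactly as $G(v)$ does. So the index $i(a)$ is the same for $\cG$ and $\cG'$. For $\cG''$ the adhesion sets are the disjoint copies of the original adhesion sets, again permuted exactly as $G(v)$ permutes them, so the indices agree again. The embeddings $\Phi'_a,\Phi''_a$ are embeddings of permutation actions, since the setwise stabiliser of $\Phi_a(Y(a))$ in $G(v)^{X'(v)}$ (resp.\ $G(v)^{X''(v)}$) induces the same permutation group on it as in $G(v)$. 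Hence the three augmented digraphs coincide, and by Lemma~\ref{LExistence-Star-covering-tree} the associated trees are equal. The same $\Star$-covering map $\pi_+$ (compatible with each \gga) can be used for all three.

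For the groups, I would work with canonical scaffoldings $\Sigma,\Sigma',\Sigma''$ built on the same $T$ with the same chosen adhesion maps $\Phi_b$, $b\in\A\Delta_+$. For $\Sigma'$ this means taking $\Phi'_b=\Phi_b$ with image in $X'$; for $\Sigma''$ it means the corresponding map into the copy in $X''(v)$. Then $\Sigma'$ is the subgraph of $\Sigma$ spanned by the vertices $(w,x)$ with $x\in X'(\pi(w))$, and it contains every arc of $\Sigma$, since arc endpoints always lie in adhesion sets. Every $g\in\cU(\cG)$ preserves $\Sigma'$, because local actions in $G(v)$ preserve $X'(v)$. Restriction therefore gives a homomorphism $\cU(\cG)\to\cU(\cG')$: the restricted local maps lie in $G(v)^{X'(v)}$. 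This homomorphism commutes with the induced action on $T$.

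Surjectivity onto the induced action on $T$ is handled as follows. Given $h\in\cU(\cG')$, each local map $h_w\in G(v)^{X'(v)}$ is chosen as the restriction of some $\hat h_w\in G(v)$. Defining $\hat h$ on $\Sigma$ by $\hat h(w,x)=(\tilde h(w),\hat h_w(x))$ gives an acceptable scaffolding automorphism. It agrees with $h$ on $\Sigma'$, so it respects all arcs, and vertices outside adhesion sets carry no arcs. Thus $\hat h$ induces the same tree automorphism $\tilde h$. So the images of $\cU(\cG)$ and $\cU(\cG')$ in $\aut(T)$ coincide, which is what "the induced actions are isomorphic" means here. A cleaner alternative is to pick an acceptable partial isomorphism on one bundle and invoke Theorem~\ref{thm:acceptable-isomorphism}, but the direct lifting avoids that.

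For $\cG''$ I would argue in the same way but with arcs instead of vertices. A vertex of $\Sigma''$ in bundle $w$ corresponds to a pair (adhesion set, point), i.e.\ to an arc of $\Sigma$ (or of $\Sigma'$) terminating in $\Sigma_w$, as noted just before the lemma. The map $\cU(\cG')\to\cU(\cG'')$ sends $g$ to its action on these pairs, and the local maps land in $G(v)^{X''(v)}$. The converse lifting uses the fact that an element of $G(v)^{X''(v)}$ comes from some element of $G(v)$, which then acts on $X'(v)$ and, via the arcs, is compatible with the matchings. I expect this arc-reduced step to be the main obstacle. The delicate point is checking that the bijection between $\V\Sigma''$ and the arcs of $\Sigma$ respects adjacency, that is, that the arc matchings of $\Sigma''$ correspond to reversal of arcs in $\Sigma$, so that the lifted map is a genuine scaffolding automorphism inducing the same $\tilde h$. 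This needs care with the twisted adhesion maps at self-reverse arcs; if that gets messy, I would first prove everything for \ggawo's and transfer it via the subdivision of Section~\ref{sec:inversion-subdivision}.
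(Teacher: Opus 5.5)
Your proposal is correct and follows essentially the same route as the paper. You embed the canonical scaffolding of $\cG'$ into that of $\cG$, restrict to get a homomorphism $\cU(\cG)\to\cU(\cG')$ compatible with the action on $T$, and obtain surjectivity by lifting each local map in $G(v)^{X'(v)}$ to an element of $G(v)$. For $\cG''$ the paper only says "an analogous argument". Your factorisation through $\cG'$, identifying $\V\Sigma''$ with the arcs of $\Sigma$ so that the edges of $\Sigma''$ match each arc with its reverse, is a sound way to fill in that step.
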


\begin{proof}  The statement about the isomorphic associated augmented base digraphs and trees is obvious.  

We show that the action of $G'=\cU(\cG')$ on $T$ is isomorphic to the action of $G$.  Note that the canonical scaffolding $\Sigma'$ of $\cG'$ can be embedded in the canonical scaffolding $\Sigma$ of $\cG$, and this embedding gives an action of $G$ on $\Sigma'$ by acceptable (with respect to $\cG'$) scaffolding automorphisms.   This action gives a homomorphism $\psi: G\to G'$.  Suppose $g'$ is an element in $G'$.  For every vertex $\llbracket v\rrbracket$ of, the local action $g'_{\llbracket v\rrbracket}$ of $g'$ at $\llbracket v\rrbracket$ has a corresponding element $g_{v}$ in $G(\pi(v))$. These local actions $g_{v}$ can be combined to give a group element $g\in G$ that acts on $\Sigma'$ in the same way as $g'$ does.   Thus the homomorphism $\psi: G\to G'$ is surjective.  Hence the induced action of $G$ on $T$ is isomorphic to the induced action of $G'$ on $T$. 

An analogous argument can also be used to show that the action of $\cU(\cG'')$ on $T$ is isomorphic to the action of $G$ on $T$.
\end{proof}

Now we are ready to prove Proposition~\ref{PTitsProperty} where it is convenient to use an arc-reduced \gga\ to show that if we have have a free \gga\ then the action of the universal group on the associated tree can also be defined in terms of a local action diagram.

\begin{proof}[Proof of Proposition~\ref{PTitsProperty}]
    In Section~\ref{SLocalActionDiagrams} we showed that given a local action diagram then it is possible to construct a free \gga\ such that the tree associated with the local action diagram is equal to the tree associated with this free \gga\ and the actions of the universal group of the local action diagram is the same as the action of the universal group of the \gga.

    Suppose that $\cG$ is a free \gga, $G$ is its universal group and $T=T_{\cG}$ is its associated tree.  Start by replacing $\cG$ with the arc-reduced \gga\ $\cG''$.  By Lemma \ref{LReduced} above, this doesn't change the action of the universal group on the associated tree. Then the method used to construct a \gga\ from a local action diagram can be reversed and we get a local action diagram whose universal group acts on the tree $T$ in the same way as the universal group of $\cG$ does.
\end{proof}

Finally, we note that the universal group of a \gga\  does not only act on the scaffolding and the corresponding tree, but also on an intermediate structure which can either be obtained by identifying all vertices in each vertex bundle to a single vertex (while keeping parallel edges), or by adding parallel edges to the tree $T$. We call this intermediate structure an \emph{augmented $\Star$-covering tree} and denote it by $T_+$. Clearly, the arcs of $\Sigma$ and the arcs of $T_+$ are in bijection. Thus $\cU$ induces a group of automorphisms of $T_+$ which we denote by $\cU_+$. Note that $\cU_+$ can alternatively defined as the subgroup of $\aut( T_+)$ consisting of all automorphisms whose ``local action'' on $\Star(\llbracket v\rrbracket)$ with respect to some consistent labelling of the arcs is contained in $G(\pi(\llbracket v\rrbracket)$ for every $\llbracket v \rrbracket \in \V T$.

If the \gga\ is reduced, then the action of a scaffolding isomorphism is uniquely determined by the action on the arcs, and thus $\cU$ and $\cU_+$ are in fact the same group. This further strengthens the similarities between graphs of group actions and local action diagrams. To summarise:

\begin{prop}
    Let $\cG$ be a \gga.  Then the actions of $\cU(\cG)$ and $\cU_+(\cG)$ on $T_{\cG}$ are isomorphic.  If the \gga\ $\cG$ is reduced then $T=T_+$ and $\cU(\cG)=\cU_+(\cG)$.
\end{prop}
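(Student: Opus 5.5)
We have to prove two things: the actions of $\cU(\cG)$ and $\cU_+(\cG)$ on $T_{\cG}$ are isomorphic, and if $\cG$ is reduced then $T=T_+$ and $\cU=\cU_+$. By Theorem~\ref{Tuniversal-group-unique} and Theorem~\ref{thm:acceptable-isomorphism} we may fix a canonical scaffolding $\Sigma$. We build $T_+$ as the quotient of $\Sigma$ that collapses each vertex bundle $[v]$ to a single vertex $\bun v$ and keeps every arc of $\Sigma$. Thus $\V T_+=\V T$ and $\A T_+=\A\Sigma$, with $o,t$ and reversal induced from $\Sigma$.

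For the first claim, note that every $g\in\cU(\cG)$ preserves $\sim$. Hence $g$ induces an automorphism $g_+$ of $T_+$: it acts on vertices as $\tilde g$ and on arcs as $g$. The map $g\mapsto g_+$ is a homomorphism $\cU\to\cU_+$, which is surjective by the definition of $\cU_+$ as the image. Collapsing parallel arcs of $T_+$ gives $T$, and the action of $g_+$ on $T$ obtained this way is $\tilde g$. So the actions of $\cU$ and $\cU_+$ on $T$ coincide, via the identity map on $T$ and the quotient homomorphism. The only subtlety is that $\cU_+$ may be a proper quotient of $\cU$. It is still the same permutation group on $\V T$ and $\A T$, so the induced actions are isomorphic in the sense of Section~\ref{SGroup}, where the group acting is the induced permutation group.

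For the reduced case, the key step is to show that $g\mapsto g_+$ is injective. Suppose $g_+$ is trivial. Then $g$ fixes every arc of $\Sigma$ and therefore fixes every vertex of $\Sigma$ that is the endpoint of some arc. Since $\cG$ is reduced, $X(v)=X'(v)$ is the union of the adhesion sets. By the last bullet of (iv) in Definition~\ref{DScaffolding}, every adhesion set in $X(\pi\bun v)$ is realised as $p(t([a]))$ for some arc bundle $[a]$ entering $[v]$. Hence every vertex of $[v]$ is the terminus of some arc, so $g$ fixes $\V\Sigma$ pointwise and $g$ is trivial.

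It follows that $\cU\cong\cU_+$ as abstract groups. The action on $\Sigma$ is recovered from the action on $\A T_+$: each vertex $x\in[v]$ is $t(b)$ for some arc $b$, so $g(x)=t(g_+(b))$. This is the identification $\cU(\cG)=\cU_+(\cG)$ claimed in the statement.

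The main obstacle is the assertion $T=T_+$, which is literally false when adhesion sets overlap, since $T_+$ then has parallel edges. I would read it as identifying $T_+$ with the structure attached to the reduced \gga, i.e.\ as saying that $\cU$ and $\cU_+$ determine the same data. In proving it I would state explicitly that $\Sigma$ embeds in $T_+$ through arcs only. If the intended meaning is literal equality, it needs the extra hypothesis that distinct adhesion sets meet in at most one arc bundle per tree edge. In any case I would reduce this via the bijection $\A\Sigma\leftrightarrow\A T_+$ noted before the statement.
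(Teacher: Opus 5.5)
Your argument for the substantive content is correct, and it follows the paper's own (very brief) argument. The paper only notes that the arcs of $\Sigma$ and of $T_+$ are in bijection, so $\cU$ induces $\cU_+$, and that for reduced $\cG$ a scaffolding automorphism is determined by its action on arcs. Two of your points make that sketch rigorous, and both are right:
\begin{itemize}
\item the kernel of $g\mapsto g_+$ acts trivially on $T$, so both groups induce the same permutation group there;
\item once $X(v)=X'(v)$, the last bullet of (iv) in Definition~\ref{DScaffolding} makes every vertex of $[v]$ the terminus of an arc.
\end{itemize}

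Where you go wrong is the clause $T=T_+$. You are right that it fails literally, but not for the reason you give.
\begin{itemize}
\item The parallel arcs of $T_+$ between $\bun{v}$ and $\bun{w}$ are exactly the $|Y(\pi(\bun{a}))|$ arcs of the single arc bundle $[a]$. So $T_+\neq T$ as soon as some $|Y(a)|\ge 2$, whether or not adhesion sets overlap.
\item Conversely, overlapping adhesion sets create no parallel arcs when every $|Y(a)|=1$.
\item A concrete counterexample is the \gga\ of Section~\ref{SBass-Serre} built from a one-edge graph of groups with nontrivial edge group. Its adhesion sets are the cosets of $\theta_a(H(a))$, which cover each vertex group, so it is reduced (even arc-reduced). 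Yet $T_+$ has $|H(a)|\ge 2$ parallel arcs per edge.
\item Your proposed extra hypothesis, that distinct adhesion sets meet in at most one arc bundle per tree edge, is automatically satisfied, because each edge of $T$ is a single pair of arc bundles. So it repairs nothing.
\end{itemize}

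The correct statement is that $T=T_+$ holds if and only if every $Y(a)$ is a singleton, i.e.\ $\cG$ is free, and reducedness does not give this. The paper's own justification supports only $\cU=\cU_+$ in the reduced case. Rather than the vague reading that $\Sigma$ embeds in $T_+$ through arcs only, you should say the clause is false for reduced \gga's in general and give the counterexample.
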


\bibliographystyle{plain}
\bibliography{references}

\appendix

\section{Examples}\label{SExamples}

This appendix contains some basic examples of how our construction can be used to define new group actions on graphs.  

\subsection{The same local action everywhere}

Take a group $F$ acting transitively on some set $X$, finite or infinite.
Let $\Delta$ be a graph that has a single vertex $v$ and one self-reverse arc $a$.  Define a \gga\ such that the base graph is $\Delta$, the action at the vertex $v$ is the action of $F$ on $X$ and the action at the arc $a$ is the action of $F_{x_0}$ on $X\setminus\{x_0\}$ where $x_0$ is some point in $X$ and the agent of inversion is $h_a=1$.  Finally, $\Phi_a: X\setminus \{x_0\}\to X$ is just the inclusion embedding.  The adhesion sets are sets of the form $X\setminus \{x\}$ where $x$ ranges over the elements in $X$.  We see that $T=T_{\cG}$ is the $|X|$-regular tree.  Let $G$ denote the universal group of $\cG$ and let $\Sigma$ be a canonical scaffolding.  Suppose $g=[\tg; (g_v)_{v\in \V T}]$ is an element of $G$.  Suppose $e=\{v, w\}$ is an edge in $T$. There is an element $x_e\in X$ such that for every $x\in X\setminus \{x_e\}$ the sets $\{(v,x), (w,x)\}$ are edges in $\Sigma$ and these are the only edges in $\Sigma$ with one end vertex in $\Sigma_v$ and the other in $\Sigma_w$.  Now $g$ maps the edge $\{(v,x), (w,x)\}$ to the edge $\{(\tg(v), g_v(x)), (\tg(w), g_w(x))\}$.  Thus $g_w(x)=g_v(x)$ for all $x\in X$ and $g_w=g_v$.  From the above we conclude that $g_v$ is the same for all $v\in\V T$.   This type of group acting on the 3-regular tree is described in \cite[Example~3.3]{BanksElderWillis2015}.

We can also describe this group using the Burger-Mozes construction from Section~\ref{SBurger-Mozes}.    As remarked above, $T$ is the $|X|$-regular tree.  Let $c:\E T\to X$ be map such that $c(e)=x_e$.  Then $c$ is a legal colouring.  The Burger--Mozes group $U(F)$ is defined as the group of all automorphisms $g$ of $T$ such that $cg(c_{|\Star(v)})^{-1}$ is in $F$ for every vertex $v\in T$.  The group $G$, defined above, is the group of all automorphisms $g$ of $T$ such that the map $v\mapsto cg(c_{|\Star(v)})^{-1}$ is constant.  It is easy to show that this is indeed a subgroup of the automorphism group.  A special subgroup of this group is the group of all those automorphism $g$ of the tree $T$ such that $cg(c_{|\Star(v)})^{-1}=1$ for all $v\in \V T$.  This subgroup acts regularly (transitively and the only element having a fixed point is the identity) on $\V T$.  It is easy to see that this group is the free product of $|X|$ copies of $C_2$, the cyclic group with two elements.   We also see that the subgroup $G_v$ of $G$ fixing a given vertex $v$ in $T$ is isomorphic to $F$.

\subsection{Prescribed action on a ball around an edge}

The construction of Burger and Mozes \cite[Section~3.2]{BurgerMozes2000}, see also Section~\ref{SBurger-Mozes} of this paper, produces a group acting on a regular tree so that for each vertex in the tree the group acts locally like some given group $F$.  What if we want a vertex and arc transitive action on a regular tree such that the local action on the 1-ball around an edge is prescribed?

Let $T$ denote a regular tree.  Choose and edge $e=\{x,y\}$  in $T$ an define $T_0$ as the subtree spanned by $B_T(e,1)=B_T(x, 1)\cup B_T(y,1)$.  Let $H$ be a subgroup of the automorphism group of $T_0$ such that $H$ has two orbits on the vertices of $T_0$ (and thus also just two orbits on the edges of $T$).  

We assume that there is a transitive permutation group $F$ acting on $B_T(x,1)$ such that the action of group $H_{x,y}^{B_{T_0}(x,1)}$ on $B_{T_0}(x,1)$ is isomorphic to the action of $F_y$ on $B_T(x,1)$.  Furthermore, we assume that the group $F$ is generated by $F_y$ and its conjugates in $F$.

If such a group $G$ acting on $T$ exists then the quotient graph $T/G$ would have just one vertex and just one self-reverse arc.  It is convenient not to use this graph as a base graph for our \gga\ but to use instead its \lq\lq barycentric\rq\rq\  subdivision; the tree we obtain will be the barycentric subdivision of $T$, compare Section \ref{sec:inversion-subdivision}. 

To construct the group, let $\Delta$, the base graph for our \gga\ $\cG$, be the graph with two vertices $v$ and $w$ and two arcs $a$ and $\ba$ so that $v=t(a)$ and $w=o(a)$.  The action at $v$ is the action of a group $G(v)=F$ on $X(v)=B_{T_0}(x,1)$ and the action at $w$ is the action of $G(w)=H$ on $\V T_0$.   The action at the arc $a$ is the action of $H_x$ on $Y(a)=B_{T_0}(x,1)\setminus \{y\}$.  The embeddings $\Phi_a, \Phi_{\ba}$ are just the embeddings of the set $Y(a)$ into $X(v)$ and $X(w)$, respectively.   We see that there are precisely $|X(v)|$ adhesions sets in $X(v)$ and precisely two adhesion sets in $X(w)$ and the associated tree $T_{\cG}$ is the barycentric subdivision of $T$.  Thus we can think of the universal group of $\cG$ as acting on $T$ and it is easy to see that this action has the desired properties.  

Various obvious variations of the construction are possible, for instance having the group $G$ we construct act with two orbits on the vertices of $T$ and just one orbit on the arcs. 

\subsection{Odd and even}

Let $d\geq 3$ be a given positive integer.  
Our \gga\ has the graph with just one vertex $v$ and just one self-reverse arc $a$ as a base graph.  The action at the vertex $v$ is the action of $S_d$ on the set $X(v)=\{1, \ldots, d\}\times\{+1, -1\}$ such that if $g\in S_d$ then $g(k, \varepsilon)=(g(k), \sign(g)\varepsilon)$.  The action at the arc $a$ is the action of $C_2=\{+1, -1\}$ on the set $\{+1, -1\}$ such that the embedding $\Phi_a:  \{+1, -1\}\to \{1, \ldots, d\}\times\{+1, -1\}$ is given by $\Phi_a(\varepsilon)=(1, \varepsilon)$ and $h_a=1$ is the agent of inversion.  Let $G$ denote its universal group.  The associated tree $T=\cG$ is the $d$-regular tree.  Suppose we have a legal colouring $c$ of $T$ as in Section~\ref{SBurger-Mozes}.  If $u$ is a vertex in $T$ then the map $G\to S_d$ given by $g\mapsto g_u=cg(c_{|\Star(u)})^{-1}$ is surjective by Theorem~\ref{TLocal-action}.   Furthermore, the parity of $g_u$ is the same for every vertex $v\in \V T$.   To see this, we consider the action of $G$ on a canonical scaffolding graph $\Sigma$.   There are two edges in each arc bundle and these two edges have end vertices $\{(u, (k, +1)), (u', (k, +1))\}$ and $\{(u, (k, -1)), (u', (k, -1))\}$, respectively, where $u,u'$ are adjacent vertices in $T$.   Thus we can think of each edge in $\Sigma$ as being labelled with either $+1$ or $-1$.  Suppose $g=[\tg; (g_u)_{u\in \V T}]$ is an element from the universal group with $g_u\in S_d$.   If $g_u$ is even then an edge with end vertex in $\Sigma_u$ will be sent to an edge with the same label, but if $g_u$ is odd then an edge with an end vertex in $\Sigma_u$ will be sent to an edge with the opposite label.  Conversely, if an edge with an end vertex in $\Sigma_u$ is sent to an edge with the same label then $g_u$ has to be even and if such an edge is sent to an edge with the opposite label then $g_{u}$ is odd.   From this we see that if $u'$ is a vertex in $T$ adjacent to $u$ then $g_{u'}$ has the same parity as $g_v$.  Hence, all the $g_u$'s have the same parity.

One can play with this idea in various ways.  Let $\cG$ be a \gga\ described as follows:  The base graph has two vertices $v$ and $w$, and one pair of arcs $a$ and $\ba$,   The action at $v$ is the action of $G(v)=S_d$, $d\geq 3$ on the set $X(v)=\{1, \ldots, d\}\times\{+1, -1\}$ as described above and the action at $w$ is the natural action of $G(w)=S_d\times \{+1, -1\}$ on $X(w)=\{1, \ldots, d\}\times\{+1, -1\}$. 
The action at the arc $a$ is the action of $C_2=\{+1, -1\}$ on the set $\{+1, -1\}$ such that the embedding $\Phi_a:  \{+1, -1\}\to \{1, \ldots, d\}\times\{+1, -1\}$ is given by $\Phi_a(\varepsilon)=(1, \varepsilon)$ and $\Phi_{\ba}$ is defined in the same way.   The tree $T=T_{\cG}$ is just the $d$-regular tree.  For a vertex $u$ in $T$ and $g\in G$ we define $g_u$ in the same way as above.  Now we see that if $\pi$ is a $\Star$-covering map then the parity of $g_u$ is the same for all $u\in \pi^{-1}(v)$, but it is possible that the parities of $g_u$ and $g_{u'}$ are distinct for distinct vertices $u, u'\in\pi^{-1}(v)$.  

These examples could also be described using the idea of a \lq\lq tree\rq\rq\ with multiple edges as in the previous section.    In the examples above, we get a graph $T_+$ that looks like the $d$-regular tree except that if two vertices are adjacent then there are precisely two distinct edges having these two vertices as end vertices.  For each pair of adjacent vertices in $T_+$ we label one of the edges that has those vertices as end vertices with \lq\lq +\rq\rq\ and the other with \lq\lq $-$\rq\rq.  In both examples above, we see that if one \lq\lq +\rq\rq\ edge is mapped to a \lq\lq $-$\rq\rq\ edge then every \lq\lq +\rq\rq\ edge is mapped to a \lq\lq $-$\rq\rq\ edge.  In the first example and edge with end vertex $u$ is mapped to an edge of the opposite sign if and only if the local action at $u$ has odd parity, but in the second example this only holds for vertices in $T_+$ that are in $\pi^{-1}(v)$.

In these two examples we were playing with the homomorphism $S_d\to C_2$ in the cases when the universal group had just one or two orbits on the associated tree.  it is straightforward to use these ideas when the groups acting at the vertices map surjectively onto some other groups and also when we want more than two orbits on the vertex set of the tree.

\end{document}